\numberwithin{equation}{section}
\newtheorem{theorem}{Theorem}[section]
\newtheorem{lemma}[theorem]{Lemma}
\newtheorem{definition}[theorem]{Definition}
\newtheorem{corollary}[theorem]{Corollary}
\newtheorem{proposition}[theorem]{Proposition}
\newtheorem{remark}[theorem]{Remark}
\newtheorem{example}[theorem]{Example}
\newcommand{\Hom}{{\mathrm{Hom}}}
\newcommand{\Ext}{{\mathrm{Ext}}}
\newcommand{\Rad}{{\mathrm{Rad}}}
\newcommand{\Top}{{\mathrm{Top}}}
\newcommand{\Soc}{{\mathrm{Soc}}}
\newcommand{\res}{{\rm Res} }
\newcommand{\Res}{{\rm Res}^{\fg}_{\fg_{\oa}} }
\newcommand{\ind}{{\rm Ind} }
\newcommand{\Ind}{{\rm Ind}^{\fg}_{\fg_{\oa}} }
\newcommand{\Ann}{{\rm Ann} }
\newcommand{\Spec}{{\rm Spec} }
\newcommand{\ch}{{\rm ch} }
\newcommand{\ol}{\overline}
\newcommand{\tto}{\twoheadrightarrow}
\newcommand{\mC}{\mathbb{C}}
\newcommand{\mN}{\mathbb{N}}
\newcommand{\mZ}{\mathbb{Z}}
\newcommand{\mP}{\mathbb{P}}
\newcommand{\mg}{\mathfrak{g}}
\newcommand{\mm}{\mathfrak{m}}
\newcommand{\fa}{{\mathfrak a}}
\newcommand{\fb}{{\mathfrak b}}
\newcommand{\fg}{{\mathfrak g}}
\newcommand{\fh}{{\mathfrak h}}
\newcommand{\fl}{{\mathfrak l}}
\newcommand{\fn}{{\mathfrak n}}
\newcommand{\fp}{{\mathfrak p}}
\newcommand{\fu}{{\mathfrak u}}
\newcommand{\cD}{\mathcal{D}}
\newcommand{\cP}{\mathcal{P}}
\newcommand{\cR}{\mathcal{R}}
\newcommand{\cO}{\mathcal{O}}
\newcommand{\cL}{\mathcal{L}}
\newcommand{\cB}{\mathcal{B}}
\newcommand{\cZ}{\mathcal{Z}}
\newcommand{\cY}{\mathcal{Y}}
\newcommand{\oa}{\overline{0}}
\newcommand{\ob}{\overline{1}}
\newcommand{\g}{{\mathfrak{g}}}
\newcommand{\U}{{\rm U}}
\newcommand{\mc}{\mathcal}
\begin{document}
\title{Primitive ideals, twisting functors and star actions\\ for classical Lie superalgebras}

\author{Kevin Coulembier\thanks{corresponding author, E-mail: {\tt Coulembier@cage.ugent.be} Department of Mathematical Analysis -- Ghent University, Krijgslaan 281, 9000 Gent,
Belgium}$\;\,$ and Volodymyr Mazorchuk\thanks{E-mail: {\tt  mazor@math.uu.se}, Department of Mathematics, University of Uppsala, Box 480, SE-75106, Uppsala, Sweden} }
\date{}

\maketitle

\begin{abstract}
We study three related topics in representation theory of classical Lie superalgebras. The first one is 
classification of primitive ideals, i.e. annihilator ideals of simple modules, and inclusions between 
them. The second topic concerns Arkhipov's twisting functors on the BGG category $\cO$. The third 
topic addresses deformed orbits of the Weyl group. These take over the role of the usual Weyl group orbits 
for Lie algebras, in the study of primitive ideals and twisting functors for Lie superalgebras.
\end{abstract}

\noindent
\textbf{MSC 2010 : 17B10; 17B35; 16E10}   \\
\noindent
\textbf{Keywords :} classical Lie superalgebra, twisting functor, category $\cO$, primitive ideal, Weyl group orbit

\section{Introduction}

The problem of classification of all modules over a semisimple finite dimensional Lie algebras is very well-known
to be wild in general. Even the problem of classification of all simple modules seems to be for the
moment out of reach apart from the smallest Lie algebra $\mathfrak{sl}(2)$. This motivates the study of general
rough invariants of simple modules. One of the most natural such invariants is the annihilator of a simple module,
which is a primitive ideal of the universal enveloping algebra. By now the structure of the primitive spectrum of 
the universal enveloping algebra of a semisimple finite dimensional {\em Lie algebra} is well-understood. In \cite{MR0430005} Duflo proved that every primitive ideal is the annihilator ideal of a simple highest weight module. Later on Borho, Dixmier, Garfinkle, Jantzen, Joseph and Vogan completed the classification by describing the sets of highest weights for which these annihilator ideals coincide, see e.g. Chapter 5 and 14 in \cite{MR0721170}. Also all inclusions between primitive ideals have been classified, see e.g. \cite{MR0575938} or Sections 14.15 and 16.4 in \cite{MR0721170}. 

For finite dimensional {\em Lie superalgebras} the situation is much less understood, despite the fact that several
of the above results are generalised to some important cases. For classical Lie superalgebras (see \cite{MR0519631, MR2906817}), an analogue of Duflo's result was proved by Musson in \cite{MR1149625}. For classical Lie superalgebras of type I the classification of primitive ideals was then completed by Letzter in \cite{MR1362685} who showed  that there is a natural bijection between the primitive ideals of a Lie superalgebra and those of its underlying Lie algebra. For inclusions between primitive ideals the situation is even more unclear, so far it has only been studied for the special cases $\mathfrak{sl}(2|1)$ and $\mathfrak{osp}(1|2n)$ in \cite{MR1231215,MR1479886}.

For semisimple {\em Lie algebras}, the action of the centre of the universal enveloping algebra separates simple highest weight modules in different orbits of the Weyl group. Therefore classical results on primitive ideals are
usually formulated in terms of combinatorics of the Weyl group orbits.
For {\em Lie superalgebras}, only typical orbits are similarly separated from each other and from other (atypical) orbits. Atypical central characters correspond, in turn,  to an infinite number of different Weyl group orbits. It is therefore not a priori clear whether the orbits of the Weyl group are the correct structures to e.g. describe inclusions between primitive ideals. In \cite{Dimitar}, Gorelik and Grantcharov use what they call a {\em star action}  to deform orbits of the Weyl group for the Lie superalgebra $\mathfrak{q}(n)$ in order to classify bounded highest weight modules (the latter problem is closely connected to the classification of a certain class of ``large'' primitive ideals).

In the present paper we introduce analogous star actions for basic classical Lie superalgebras using different ideas than the ones used in \cite{Dimitar} for $\mathfrak{q}(n)$, namely, based on Serganova's notion of odd reflections and the naturally defined reflection for an even simple root. We show that the star actions for basic classical Lie superalgebras and for $\mathfrak{q}(n)$ are naturally related to twisting functors and inclusions between annihilator ideals of simple highest weight modules. All those star actions lead in general not to an action of the Weyl group, but to an action of an infinite Coxeter group which projects onto the Weyl group.

This concept of our star actions for basic classical Lie superalgebras leads to several non-equivalent star actions for one superalgebra. This indicates that there will exist more inclusions between primitive ideals for Lie superalgebras than for Lie algebras. An important difference between basic classical Lie superalgebras of type I and type II is that for those of type I the usual Weyl group action is a particular choice of the star action, whereas for type II the latter only holds for typical weights. For both types we prove that, for weights sufficiently far away from the walls of the Weyl chamber which we call (weakly) generic, all star actions coincide and lead to an action of the Weyl group. We similarly prove that in the generic region the star action for $\mathfrak{q}(n)$ leads to an action of the Weyl group.

We obtain a full classification of primitive ideals and their inclusions in the generic region based on the star action. For basic classical Lie superalgebras of type II, this implies that the description of these inclusions is not given by the usual undeformed Weyl group action. For classical Lie superalgebras of type I, our methods lead to more conclusive results and we rederive Letzter's bijection, with the addition that all inclusions between primitive ideals for the Lie algebras are preserved under this bijection. As noted before, different choices of star actions provide a means to obtain {\em additional} inclusions, which we demonstrate explicitly for singly atypical characters, confer \cite{MR1092559, MR1063989}. Therefore star actions are important, on the one hand, to describe the well-behaved structure in the generic region for type II and, on the other hand, to explore the more complicated behaviour close to the walls of the Weyl chambers for both types.

A usual way to study inclusions between annihilator ideals for simple highest weight modules over {\em Lie algebras}
is to use  Joseph's completion functors and Arkhipov's twisting functors, see \cite{MR2032059,MR2115448,MR2331754}
for more details on various versions of these functors. For example, the results of \cite{MR2032059} directly connect the study of annihilator ideals with Kazhdan-Lusztig combinatorics. This will be made explicit in the proof of Proposition \ref{qnint}. Twisting functors have already been introduced in \cite{CMW} for $\mathfrak{gl}(m|n)$ and the special case of simple even roots, in order to obtain equivalences of non-integral blocks for category $\cO$. In the present paper we study twisting functors for classical Lie superalgebras in full generality. In particular, we extend the result in \cite{CMW} on the equivalence of categories between different non-integral blocks. This is closely related to the star action mentioned above, as it turns out that in the
context of such equivalences the role of the usual Weyl group is taken over by the new star action. We generalize
to the Lie superalgebra context most of the classical results on twisting functors, including their action on simple
highest weight modules, Verma modules and projective modules; description of the derived functor and cohomology functors; and applications to the study of the cohomology of simple highest weight modules.

The paper is organised as follows. In Section \ref{secpre} we review necessary notions for classical Lie superalgebras which are relevant to the rest of the paper. In Section \ref{smallex} we extract some results of \cite{MR2742017, MR1231215, MR1479886} which provide a full classification for the inclusions between primitive ideals for the 
small rank Lie superalgebras $\mathfrak{osp}(1|2)$, $\mathfrak{q}(2)$ and $\mathfrak{sl}(2|1)$. These serve as useful illustrations but also as part of the derivation of the results for the other Lie superalgebras in further chapters. In Section \ref{sectech} we state some general technical facts about annihilator ideals, which are well-known for Lie algebras and which extend naturally to Lie superalgebras. In Section \ref{sectwist} we define twisting functors on category $\cO$ and study their properties. A useful fundamental property, which will imply that many classical results carry over, is that they intertwine the restriction and induction functors between category $\cO$ for the Lie superalgebra and its underlying Lie algebra. We study the action of twisting functors on simple and Verma modules, prove that the derived functor leads to an auto-equivalence of the bounded derived category $\cD^b(\cO)$ and establish their relation with annihilator ideals. In Section \ref{sectyp} we use the equivalences of categories for (strongly) typical blocks, proved by Gorelik in \cite{MR1862800} for basic classical Lie superalgebras, by Serganova in \cite{MR1943937} for $\mathfrak{p}(n)$ and by Frisk and the second author in \cite{Frisk} for $\mathfrak{q}(n)$, to classify inclusions between primitive ideals for typical blocks. In Section \ref{secgen} we introduce the notion of generic modules similar to \cite{MR1309652, MR1201236} and obtain some preliminary results for such modules. Section \ref{secstar} is devoted to the star actions. In Subsection \ref{starbasic} we give the definition of a star action for basic classical Lie superalgebras and illustrate their connection with twisting functors and primitive ideals. Then we prove that all star actions become identical and regular in the generic region. In Subsection~\ref{exstarosp} we focus on a central example of a star action for $\mathfrak{osp}(m|2n)$. In Subsection \ref{starqn} we prove that we can extend our results on star actions of basic classical Lie superalgebras to the one for $\mathfrak{q}(n)$ as defined in \cite{Dimitar}. In Sections \ref{secqn} and \ref{primosp} we study primitive ideals of respectively $\mathfrak{q}(n)$ and $\mathfrak{osp}(m|2n)$. The main result is a full classification of primitive ideals, and all inclusions between them, in the generic region. In Section \ref{secI} we focus on primitive ideals of classical Lie superalgebras of type I. We reobtain Letzter's bijection and describe inclusions between primitive ideals. Finally, we make some remarks on the annihilator ideals of Verma modules in Section~\ref{secVerma}. The main conclusion is that inclusions between such annihilators are naturally described in terms of the usual Weyl group orbits, not by the star action.

\section{Preliminaries}
\label{secpre}

We work over $\mC$ and set $\mN = \{1,2,3,\dots\}$ and $\mZ_+ = \{0,1,2,3\dots\}$. For a Lie (super)algebra $\fa$, we denote by $ U(\fa)$ the corresponding universal enveloping (super)algebra and by $\cZ(\fa)$ the centre of $U(\fa)$. Let $\fg = \fg_{\oa}+\fg_{\ob}$ be a Lie superalgebra. From now on we assume that $\fg$ is {\em classical} in the sense that $\fg_{\oa}$ is a finite dimensional reductive Lie algebra and $\fg_{\ob}$ is a semi-simple finite dimensional $\fg_{\oa}$-module. We do not require $\fg$ to be simple. The concrete subset of the classical Lie superalgebras we will consider is given by the following:
\begin{eqnarray}\label{list}&&\mathfrak{gl}(m|n),\,\,\mathfrak{sl}(m|n),\,\,\mathfrak{psl}(n|n),\,\,\mathfrak{osp}(m|2n),\,\, D(2,1;\alpha),\,\, G(3),\,\, F(4),\\
\nonumber
&&\mathfrak{p}(n),\,\,\widetilde{\mathfrak{p}}(n),\,\,\mathfrak{q}(n),\,\,\mathfrak{sq}(n),\,\, \mathfrak{pq}(n)\mbox{ and }\mathfrak{psq}(n).\end{eqnarray}
For the definition of these algebras except the last four, see \cite{MR2906817}. The definition of the last four Lie superalgebras (called superalgebras of {\em queer} or {\em $Q$-type}) can be found in Section 3.2 of \cite{MR2282179}. When the statements or proofs for the algebras of $Q$-type differ from the other cases we will write them only explicitly for $\mathfrak{q}(n)$. However, they also hold for the other three, where it should be taken into account that the notion of $\underline{\mbox{strong}}$ typicality for $\mathfrak{sq}(n)$ and $\mathfrak{psq}(n)$ differs from that for $\mathfrak{q}(n)$. The Lie superalgebras $\mathfrak{p}(n)$ and $\widetilde{\mathfrak{p}}(n)$ are the ones of {\em strange} type.

The classical Lie superalgebras of {\em type I} are the ones that possess a $\mZ$-gradation of the form
\[\fg=\fg_{-1}\oplus\fg_0\oplus\fg_1\qquad\mbox{with}\qquad \fg_{\oa}=\fg_0\mbox{ and }\fg_{\ob}=\fg_{-1}\oplus\fg_1.\]
This restricts to $\mathfrak{gl}(m|n),\,\,\mathfrak{sl}(m|n),\,\,\mathfrak{psl}(n|n),\,\,\mathfrak{osp}(2|2n),\,\, \mathfrak{p}(n)$ and $\widetilde{\mathfrak{p}}(n)$. The others are called the classical Lie superalgebras of {\em type II}. Classical Lie superalgebras are called {\em basic} if they have an even invariant form, see e.g. \cite{MR0519631, MR2906817}, these are the ones on the first line of the list \eqref{list}.

We fix a Cartan subalgebra $\fh$ and Borel subalgebra $\fb$, see Chapter 3 in \cite{MR2906817}. Note that all algebras we consider, except those of $Q$-type, have a purely even Cartan subalgebra $\fh=\fh_{\oa}$. The set of roots corresponding to $\fh_{\oa}$ is denoted by $\Delta$, so $\fg=\fh+\sum_{\alpha\in\Delta} \fg_{\alpha}$. The set of positive (negative) roots corresponding to $\fb$ is denoted by $\Delta^+$ ($\Delta^-$), the subset of simple positive roots by $\Pi$. The sets of even and odd roots are denoted respectively by $\Delta_{\oa}$ and $\Delta_{\ob}$ with similar notation for positive and negative roots. We have the corresponding triangular decomposition
\[\fg=\fn^-\oplus\fh\oplus\fn^+\]
with $\fn^+=\oplus_{\alpha\in\Delta^+}\fg_\alpha$, $\fn^-=\oplus_{\alpha\in\Delta^-}\fg_\alpha$ and $\fb=\fh\oplus\fn^+$. 

The Weyl group $W$ acting on $\fh_{\oa}^\ast$ is the Weyl group $W(\fg_{\oa}:\fh_{\oa})$ of the underlying Lie algebra. This group is generated by the simple reflections $s_{\alpha}$ for $\alpha$ simple in $\Delta_{\oa}^+$. The basis of simple roots in $\Delta_{\oa}^+$ is denoted by $\Pi_{\oa}$. This basis is not to be confused with the the subset of simple roots in $\Delta^+$ which are even, since, in general, $\Pi_{\oa}\not=\Pi\cap\Delta_{\oa}$. The longest element of $W$ is denoted by $w_0$.

We define 
\[\rho_{\oa}=\frac{1}{2}\sum_{\alpha\in\Delta_{\oa}^+}\alpha,\qquad \rho_{\ob}=\frac{1}{2}\sum_{\alpha\in\left(\Delta_{\ob}^+\cap (-\Delta_{\ob}^-)\right)}\alpha\qquad\mbox{ and }\quad\rho=\rho_{\oa}-\rho_{\ob}.\]

We follow the convention of e.g. \cite{MR1479886, MR2906817} to denote the $\rho$-shifted action of the Weyl group by $w\cdot\lambda=w(\lambda+\rho)-\rho$ and the $\rho_{\oa}$-shifted action by $w\circ\lambda=w(\lambda+\rho_{\oa})-\rho_{\oa}$. Note that in the specific case $\fg=\mathfrak{q}(n)$ this implies that $w\cdot\lambda=w\lambda$, which leads to a different notational convention from e.g. \cite{Frisk, Dimitar}.

The {\em Verma module} with highest weight $\lambda\in\fh^\ast_{\oa}$ is denoted by
\[M^{(\fb)}(\lambda)=U(\fg)\otimes_{U(\fb)}L_{\fb}(\lambda),\]
with $L_{\fb}(\lambda)$ a $\fb$-module which is a simple $\fh$-module with trivial $\fn^+$-action and on which $\fh_{\oa}^\ast$ acts through the weight $\lambda$. This module $L_{\fb}(\lambda)$ is uniquely defined up to parity change and is one dimensional if $\fg$ is not of $Q$-type. Only for algebras of $Q$-type and only for
certain choices of $\lambda$ the module $L_{\fb}(\lambda)$ is invariant (up to isomorphism) under the parity change
(see e.g. \cite{MR2282179} for details). Except for some algebras of $Q$-type it is possible to distinguish between  a Verma module and its parity changed by their super-character.  Since it is not relevant for the purposes of this paper, we will not make any explicit distinction between the two Verma modules with the same highest weight. The unique simple quotient of a Verma module is the {\em simple module} with highest weight $\lambda$. It is denoted by $L^{(\fb)}(\lambda)$, which is subject to the same parity change issues as the corresponding Verma module. In most cases we will leave out the explicit reference to the Borel subalgebra and denote the Verma module and its simple quotient by $M(\lambda)$ and $L(\lambda)$, respectively. We denote by $M_{\overline{0}}(\lambda)$ and $L_{\overline{0}}(\lambda)$,
respectively, the Verma module with highest weight $\lambda$ and the corresponding simple highest weight quotient
for the Lie algebra $\mathfrak{g}_{\overline{0}}$.

For any $\alpha\in\Delta_{\oa}^+$ we define $\alpha^\vee=2\alpha/ \langle \alpha,\alpha\rangle$. A weight $\lambda\in\fh^\ast_{\oa}$ is called {\em integral} if there is a finite dimensional $\fg$-module for which the corresponding weight space is nonzero. The set of integral weights is denoted by $\cP$. The subset of integral {\em dominant} weights is defined as
$$\cP^+=\{\lambda\in\fh_{\oa}^\ast\,\,|\, \dim L(\lambda) <\infty\}.$$
Note that for an integral weight $\lambda$, the condition $\langle\lambda,\alpha^\vee\rangle\in\mZ$ for $\alpha\in\Delta_{\oa}^+$ is always satisfied. Furthermore, we point out that, contrary to $\cP$, $\cP^+$ depends on $\fb$.

For non-integral weights $\lambda$ we define the subsystem of roots (known as the integral root system)
\begin{equation}\label{deltalambda}\Delta_{\oa}(\lambda)=\{\alpha\in\Delta_{\oa}\,|\,\langle \lambda,\alpha^{\vee}\rangle\in\mZ\},\end{equation}
with corresponding integral Weyl group $W_\lambda\subset W$ generated by $s_\alpha$ for $\alpha\in\Delta_{\oa}(\lambda)$. This integral Weyl group can equivalently be defined as
\begin{equation}\label{intWeyl}W_\lambda=\{w\in W\,|\, w(\lambda)-\lambda\in \cP\},\end{equation}
see Section 3.4 in \cite{MR2428237}. The basis of simple roots in $\Delta(\lambda)\cap \Delta^+$ is denoted by $\Pi_\lambda$. Then the set 
\begin{equation}
\label{coset}W^\lambda= \{w\in W\,|\,w(\Pi_\lambda)\subset\Delta^+\}\end{equation}
is a set of left coset representatives for $W_\lambda$ in $W$, see e.g. Lemma 15.3.6 in \cite{MR2906817}.

For each classical Lie superalgebra there is a distinguished Borel subalgebra (distinguished system of positive roots) as defined by Kac in \cite{MR0519631}. In the distinguished system of positive roots for Lie superalgebras of type I, each simple root in $\Delta_{\oa}^+$ is also simple in $\Delta^+$.

For $\alpha$ a root simple in $\Delta_{\oa}^+$ we say that a module $M$ is $\alpha$-free (respectively $\alpha$-finite) if for a non-zero $Y\in(\fg_{\oa})_{-\alpha}$ the action of $Y$ is injective (respectively locally finite) on $M$. Note that for all algebras in the list \eqref{list} we have $\dim(\fg_{\oa})_{-\alpha}=1$. A simple module is either $\alpha$-finite or $\alpha$-free. The following property then follows immediately from reduction to $\mathfrak{sl}(2)$.

\begin{lemma}\label{lemfinfree}
Consider $\lambda\in\fh^\ast_{\oa}$ and $\alpha$ a simple root in $\Delta^+_{\oa}$. For all classical Lie superalgebras $\fg$ we have that
\[L_{\oa}(\lambda)\mbox{ is $\alpha$-free }\qquad \Rightarrow \qquad L(\lambda)\mbox{ is $\alpha$-free}. \]
This is equivalent to
\[L(\lambda)\mbox{ is $\alpha$-finite}\qquad \Rightarrow \qquad L_{\oa}(\lambda)\mbox{ is $\alpha$-finite}. \]
If $\alpha$ or $\alpha/2$ is also simple in $\Delta^+$ and $\left(\fg_{\ob}\right)_{\alpha}=0$, the implications can be reversed. In particular, this is always the case if $\fg$ is of type I with distinguished system of positive roots.
\end{lemma}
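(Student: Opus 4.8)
The plan is to reduce everything to the rank-one situation for the subalgebra $\mathfrak{sl}(2)$ (or $\mathfrak{sl}(2)\oplus(\text{odd part})$) generated by the root $\alpha$. Fix a nonzero $Y\in(\fg_{\oa})_{-\alpha}$, which is unique up to scalar since $\dim(\fg_{\oa})_{-\alpha}=1$. Since $L(\lambda)$ is a simple $\fg$-module, it is in particular a (not necessarily simple) $\fg_{\oa}$-module, and one decomposes it into its $\fh_{\oa}$-weight spaces. The first implication is the essentially trivial direction: if $Y$ acts injectively on the subquotient $L_{\oa}(\lambda)$ of $L(\lambda)$ sitting on the $\lambda$-weight string, then $Y$ cannot act locally finitely on the highest weight vector of $L(\lambda)$, hence $L(\lambda)$ is $\alpha$-free (using that a simple module is either $\alpha$-free or $\alpha$-finite). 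More precisely, I would argue contrapositively: if $L(\lambda)$ is $\alpha$-finite, then the $\mathfrak{sl}(2)_\alpha$-submodule generated by the highest weight vector $v_\lambda$ is finite-dimensional, hence so is the $\mathfrak{sl}(2)_\alpha$-submodule of $L_{\oa}(\lambda)$ generated by the image of $v_\lambda$; but that image generates $L_{\oa}(\lambda)$ as a $\fg_{\oa}$-module, and finite-dimensionality of the $\mathfrak{sl}(2)_\alpha$-action propagates along the $\fg_{\oa}$-action since $\fg_{\oa}$ is reductive and finite-dimensional, giving that $L_{\oa}(\lambda)$ is $\alpha$-finite. The equivalence of the two displayed implications is then just the contrapositive together with the dichotomy ``$\alpha$-finite or $\alpha$-free''.

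For the reverse implications under the extra hypothesis, I would use that when $\alpha$ (or $\alpha/2$) is simple in $\Delta^+$ and $(\fg_{\ob})_\alpha=0$, the parabolic subalgebra $\fp_\alpha=\fh\oplus\bigoplus_{\beta\in\Delta^+}\fg_\beta\oplus\fg_{-\alpha}$ has the property that its ``Levi-type'' piece in the direction of $\alpha$ is purely the even $\mathfrak{sl}(2)_\alpha$, with no odd root vectors for $\pm\alpha$. Concretely, $\fg_{-\alpha}=(\fg_{\oa})_{-\alpha}$, so the $Y$-action on $L(\lambda)$ only sees the even structure in that direction. Then the key point is that $L(\lambda)$, restricted to the subalgebra $\mathfrak{sl}(2)_\alpha + \fn^+$ (or to the relevant parabolic), is generated over $U(\fn^+)$-type operators from $v_\lambda$ in a way that does not create new $Y$-strings beyond those already present in $L_{\oa}(\lambda)$; conversely $L_{\oa}(\lambda)$ embeds appropriately. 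The cleanest route is: $L(\lambda)$ is $\alpha$-free iff $v_\lambda$ generates an infinite-dimensional $\mathfrak{sl}(2)_\alpha$-module inside $L(\lambda)$ iff $\langle\lambda,\alpha^\vee\rangle\notin\mZ_+$ — but this last criterion depends only on $\lambda$ and $\alpha^\vee$, hence is the same for $L(\lambda)$ and $L_{\oa}(\lambda)$ — \emph{provided} one knows that the $\alpha$-behaviour of the simple module is governed by the weight string through $v_\lambda$ alone, which is exactly where the hypothesis $(\fg_{\ob})_\alpha=0$ and simplicity of $\alpha$ in $\Delta^+$ are needed to rule out the odd root vectors producing a vector of weight $\lambda+\alpha$ (which could otherwise make $v_\lambda$ non-highest for $\mathfrak{sl}(2)_\alpha$ in a way decoupled from $\fg_{\oa}$).

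The main obstacle I anticipate is making the reverse implication genuinely rigorous: one must show that the odd part does not interfere with the $\mathfrak{sl}(2)_\alpha$-module structure on the relevant generating string, i.e. that $U(\fg)v_\lambda = L(\lambda)$ together with the PBW-type decomposition $U(\fg)=U(\fn^-_{\text{odd}})\,U(\fg_{\oa})\,U(\fn^+_{\text{odd}})$ (or a suitable ordering) lets one conclude that $Y$ acts locally finitely on $L(\lambda)$ as soon as it does on $v_\lambda$, and that this is controlled by $\langle\lambda,\alpha^\vee\rangle$. The hypothesis $(\fg_{\ob})_\alpha=0$ ensures that $\alpha$ is not an odd root, so the $\mathfrak{sl}(2)_\alpha$-module generated by $v_\lambda$ in $L(\lambda)$ is finite-dimensional exactly when $\langle\lambda,\alpha^\vee\rangle\in\mZ_+$; and simplicity of $\alpha$ (or $\alpha/2$) in $\Delta^+$ guarantees $v_\lambda$ is genuinely a highest weight vector for $\mathfrak{sl}(2)_\alpha$ (no positive odd root subtracts to give $-\alpha$, no $\fn^+$ vector has weight $\alpha$). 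For the final sentence, I would simply recall from the Preliminaries that in the distinguished system for type I each even simple root is simple in $\Delta^+$, and that $\fg_{\ob}=\fg_{-1}\oplus\fg_1$ consists of weights outside $\Delta_{\oa}$, so $(\fg_{\ob})_\alpha=0$ automatically for $\alpha\in\Pi_{\oa}$, making the hypotheses of the reversible case always satisfied.
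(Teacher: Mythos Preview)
Your overall approach---reduce to the $\mathfrak{sl}(2)_\alpha$-module generated by the highest weight vector---is exactly what the paper intends by ``reduction to $\mathfrak{sl}(2)$'', and your forward direction is fine (though the phrase ``image of $v_\lambda$ in $L_{\oa}(\lambda)$'' is awkward; there is no map---you just mean that both highest weight vectors have the same $\mathfrak{sl}(2)_\alpha$-weight $\langle\lambda,\alpha^\vee\rangle$, so finiteness of the string through $v_\lambda$ in $L(\lambda)$ forces $\langle\lambda,\alpha^\vee\rangle\in\mZ_+$, which is the standard $\alpha$-finiteness criterion for $L_{\oa}(\lambda)$).

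There is, however, a genuine gap in your reverse direction. You claim the hypothesis is needed to guarantee that $v_\lambda$ is an $\mathfrak{sl}(2)_\alpha$-highest weight vector, and that $(\fg_{\ob})_\alpha=0$ alone makes the $\mathfrak{sl}(2)_\alpha$-string through $v_\lambda$ finite iff $\langle\lambda,\alpha^\vee\rangle\in\mZ_+$. Neither statement is right: $v_\lambda$ is \emph{always} $\mathfrak{sl}(2)_\alpha$-highest (since $(\fg_{\oa})_\alpha\subset\fn^+$ regardless), and the real question is whether $Y^{n+1}v_\lambda=0$ in $L(\lambda)$ when $n=\langle\lambda,\alpha^\vee\rangle\in\mZ_+$. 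This is where both hypotheses are used simultaneously: simplicity of $\alpha$ in $\Delta^+$ gives $[X_\beta,Y]=0$ for every other simple $\beta\in\Pi$ (as $\beta-\alpha$ is not a root), and $(\fg_{\ob})_\alpha=0$ ensures there is no odd raising operator of weight $\alpha$ to check. Together these make $Y^{n+1}v_\lambda$ a $\fb$-singular vector in $M(\lambda)$, hence zero in $L(\lambda)$; then $\alpha$-finiteness propagates to all of $L(\lambda)=U(\fn^-)v_\lambda$ via nilpotency of $\mathrm{ad}(Y)$. (When $\alpha/2$ is simple instead, the same argument runs with the parabolic whose Levi is $\mathfrak{osp}(1|2)$ rather than $\mathfrak{sl}(2)$, and one uses that $\alpha$-finiteness for simple $\mathfrak{osp}(1|2)$-modules is governed by the same integrality condition.) Your parabolic $\fp_\alpha$ is the right object to invoke, but you never actually use it; once you do, the argument is the one-line ``reduction to $\mathfrak{sl}(2)$'' the paper has in mind.
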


As in \cite{MR2742017, MR1149625, MR1231215, MR1479886}, for any $\lambda\in\fh_{\oa}^\ast$ we use the notation
\[I(\lambda)=\Ann_{U(\fg_{\oa})}L_{\oa}(\lambda)\quad\mbox{and}\quad J(\lambda)=\Ann_{U(\fg)}L(\lambda)\]
for primitive ideals of the underlying Lie algebra and the Lie superalgebra, respectively. Note that the annihilator ideal of a $\fg$-module  is the same as the annihilator ideal of the parity reversed module. The possible ambiguity in the definition of $L(\lambda)$ is therefore not reflected in the definition of $J(\lambda)$.

We mention the following result on primitive ideals for $\fg_{\oa}$, which can be obtained immediately from Corollary 2.13 in \cite{MR0453826} or Lemmata 5.4 and 5.6 in \cite{MR0721170}.

\begin{lemma}\label{translclass}
Consider two $\fg_{\oa}$-dominant weights $\Lambda_1,\Lambda_2$ with $\Lambda_1-\Lambda_2\in\cP$ and $u,w\in W$. Then we have
\[I(u\circ\Lambda_1)\subseteq I(w\circ\Lambda_1)\quad\Leftrightarrow\quad I(u\circ\Lambda_2)\subseteq I(w\circ\Lambda_2).\]
\end{lemma}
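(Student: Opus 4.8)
The statement is purely about the reductive Lie algebra $\fg_{\oa}$, so the plan is to deduce it from the classical theory of translation functors on category $\cO$ for semisimple (reductive) Lie algebras, as recorded in \cite{MR0453826} or in Chapter 5 of \cite{MR0721170}. The key classical fact is that primitive ideals in $U(\fg_{\oa})$ with a fixed regular integral central character are parametrized by left cells (equivalently, by the combinatorics of the Weyl group that is invariant under translation), and that all inclusion relations among them are preserved by translation functors to any other regular integral central character within the same coset $\lambda+\cP$. More precisely, for a $\fg_{\oa}$-dominant (hence antidominant-translate-free) weight $\Lambda$ one has $I(u\circ\Lambda)\subseteq I(w\circ\Lambda)$ if and only if the same inclusion holds after applying the translation functor $T$ to the wall/chamber determined by a second dominant weight $\Lambda'$ with $\Lambda-\Lambda'\in\cP$, because $T$ sends $L_{\oa}(u\circ\Lambda)$ to $L_{\oa}(u\circ\Lambda')$ (or to zero, but dominance of $\Lambda'$ in the coset excludes the zero case on the relevant wall) and translation functors are exact and send annihilators to annihilators compatibly with inclusion. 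This is exactly the content of Corollary 2.13 in \cite{MR0453826} and of Lemmata 5.4 and 5.6 in \cite{MR0721170}.

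Concretely, I would argue as follows. First reduce to the case where $\Lambda_1$ and $\Lambda_2$ lie in the closure of the same Weyl chamber, i.e. are both dominant, which is part of the hypothesis; then the translation functor $\theta$ from the block of $\Lambda_1$ to the block of $\Lambda_2$ and its adjoint $\theta'$ back are well-defined since $\Lambda_1-\Lambda_2\in\cP$. Second, invoke the classical statements that $\theta L_{\oa}(u\circ\Lambda_1)$ is either $0$ or $L_{\oa}(u\circ\Lambda_2)$, with the latter occurring precisely when $u\circ\Lambda_2$ is still a highest weight whose stabilizer condition matches, and symmetrically for $\theta'$; under $\fg_{\oa}$-dominance of both $\Lambda_i$ this "either/or" is controlled uniformly in $u$. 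Third, use that translation functors are exact and satisfy $\Ann(\theta M)\supseteq \Ann(M)$ together with the adjunction to conclude that $I(u\circ\Lambda_1)\subseteq I(w\circ\Lambda_1)$ forces $I(u\circ\Lambda_2)\subseteq I(w\circ\Lambda_2)$, and then run the same argument with $\Lambda_1$ and $\Lambda_2$ interchanged using $\theta'$ to get the reverse implication, yielding the claimed equivalence.

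The only genuine subtlety — and the step I expect to be the main obstacle to write cleanly — is bookkeeping the case distinction coming from weights lying on walls: when $\Lambda_i$ is singular, some $L_{\oa}(u\circ\Lambda_i)$ may vanish under translation, and one must be careful that "$I(u\circ\Lambda_1)\subseteq I(w\circ\Lambda_1)$" is interpreted with the convention that translates producing the zero module are simply the ones that already coincide in the stabilizer, so that the bijection between the relevant sets of primitive ideals on the two sides is genuinely inclusion-preserving in both directions. This is handled in the cited references by restricting attention to $W/W_{\Lambda}$-orbit representatives and noting $W_{\Lambda_1}=W_{\Lambda_2}$ when $\Lambda_1,\Lambda_2$ are dominant with integral difference; I would simply quote that and not reprove it. Everything else is a formal consequence of exactness and adjointness of translation functors, so no computation is needed beyond citing \cite{MR0453826, MR0721170}.
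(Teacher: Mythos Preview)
Your proposal is correct and matches the paper's approach exactly: the paper does not give a proof at all but simply states that the lemma ``can be obtained immediately from Corollary 2.13 in \cite{MR0453826} or Lemmata 5.4 and 5.6 in \cite{MR0721170}'', which are precisely the translation-functor results you invoke. Your write-up is in fact more detailed than what the paper provides; the only quibble is that your remark ``$W_{\Lambda_1}=W_{\Lambda_2}$'' in the singular discussion is ambiguous (it is true for the integral Weyl group in the sense of \eqref{intWeyl}, but false for the stabilizer), so if you keep that sentence you should clarify which you mean.
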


We briefly review the notion of (strong) typicality for basic classical Lie superalgebras and for $\mathfrak{q}(n)$. Since we do not need the explicit definitions for atypical roots for Lie superalgebras of strange type, we just refer to \cite{MR1201236, MR1943937}. 

Let $\fg$ be basic classical. {\em Typical} weights $\lambda\in\fh^\ast$ are weights which satisfy $\langle \lambda+\rho,\gamma\rangle\not=0$ for any isotropic root $\gamma\in\Delta^+_{\ob}$. {\em Strongly typical} weights are those which satisfy $\langle \lambda+\rho,\gamma\rangle\not=0$ for any odd root $\gamma\in\Delta^+_{\ob}$, see \cite{MR1862800}. This implies that for all basic classical Lie superalgebras, except $\mathfrak{osp}(2d+1|2n)$ and $G(3)$, strongly typical and typical are the same concept. An isotropic root $\gamma$ for which $\langle \lambda+\rho,\gamma\rangle\not=0$ is called an {\em atypical} root for $\lambda$. It follows from the Harish-Chandra isomorphism for basic classical Lie superalgebras (see e.g. Section 13.1 in \cite{MR2906817}) that the only weights $\mu\in\fh^\ast$ for which $L(\mu)$ admits the same central character as $L(\lambda)$ for $\lambda$ typical are in the $\rho$-shifted Weyl group orbit of $\lambda$. Therefore the notion of a (strongly) typical central character arises. In \cite{MR1862800} Gorelik proved that strongly typical blocks in the category of $\fg$-modules are equivalent to blocks of the category of $\fg_{\oa}$-modules. For a strongly typical central character $\chi:\cZ(\fg)\to\mC$ there exists a {\em perfect mate}, which is a central character $\widetilde{\chi}:\cZ(\fg_{\oa})\to\mC$ such that the functors 
\begin{equation}\label{eqGorelik}\left(\Ind -\right)_{\chi}\quad\mbox{and}\quad \left(\Res -\right)_{\widetilde{\chi}}\end{equation}
are the functors inducing this equivalence of categories.

Now we consider $\fg=\mathfrak{q}(n)$, then $\Delta_{\oa}^+=\Delta_{\ob}^+=\{\epsilon_i-\epsilon_j|1\le i <j\le n\}$, so $\rho=0$. A weight $\lambda\in\fh_{\oa}^\ast$ is {\em atypical} with respect to $\alpha\in\Delta_{\ob}^+$ if $\langle\lambda,\overline{\alpha}\rangle=0$, with $\overline{\epsilon_i-\epsilon_j}=\epsilon_i+\epsilon_j$. The weight $\lambda$ is {\em typical} if it has no atypical roots. For the explicit definition of strongly typical weights see e.g. \cite{Frisk}. In that paper the result from \cite{MR1862800} on equivalences of categories for strongly typical blocks of basic classical Lie superalgebras was extended to strongly typical regular blocks of $\mathfrak{q}(n)$.

An important role will be played by odd reflections, see e.g. Section 3 in \cite{MR2743764} or Section 3.5 in \cite{MR2906817}. Therefore we briefly review the concept below, we restrict to $\underline{\mbox{basic}}$ classical Lie superalgebras here. Two systems of positive roots (of which we denote one by $\Delta^+$) which have the same even positive roots ($\Delta_{\oa}^+$) are called {\em adjacent} if there is an isotropic root $\gamma$ simple in $\Delta^+$ such that the other system of positive roots is equal to $\left(\Delta^+\backslash \gamma\right)\cup \{- \gamma\}$. Then we also say that the second system of positive roots is obtained from $\Delta^+$ by application of the {\em odd reflection} corresponding to $\gamma$.

Now consider two arbitrary systems of positive roots $\Delta^+$ and $\hat{\Delta}^+$, such that $\Delta_{\oa}^+=\hat{\Delta}_{\oa}^+$. Correspondingly we have two Borel subalgebras $\fb$ and $\hat{\fb}$, with $\fb_{\oa}=\hat{\fb}_{\oa}$. Theorem 3.1.3 in \cite{MR2906817} then states that there exists an ordered set of odd isotropic roots $\{\gamma^{(1)},\cdots,\gamma^{(p)}\}$ such that the corresponding sequence of odd reflections is well defined and, starting from $\Delta^+$, eventually yields $\hat{\Delta}^+$. In particular, the root $\gamma^{(1)}$ is positive simple in $\Delta^+$, the root $\gamma^{(2)}$ is positive simple in the second system of positive roots defined as $\left(\Delta^+\backslash \gamma^{(1)}\right)\cup \{- \gamma^{(1)}\}$ and so on. The procedure is repeated until the odd reflection with respect to $\gamma^{(p)}$ yields $\hat{\Delta}^+$.

How highest weights of the same highest weight module in different systems of positive roots (with the same $\Delta_{\oa}^+$) are related is summarised in the following lemma, confer Lemma 0.3 in \cite{MR1201236}.

\begin{lemma}
\label{oddrefl}
Let $\fg$ be a basic classical Lie superalgebra with two Borel subalgebras $\fb$ and $\hat{\fb}$, such that $\fb_{\oa}=\hat{\fb}_{\oa}$, which are related by the ordered set of odd isotropic roots $\{\gamma^{(1)},\cdots,\gamma^{(p)}\}$ as above. The highest weight modules of a simple module in the different systems of positive roots,
\[L^{(\fb)}(\lambda)\cong L^{(\hat{\fb})}(\hat{\lambda}),\]
satisfies
\[\hat{\lambda}=\lambda+\sum_{i=1}^q\gamma_i+\rho-\hat{\rho}=\lambda-\sum_{j=1}^p\gamma^{(j)} +\sum_{i=1}^q\gamma_i, \]
where the ordered subset $\{\gamma_1,\cdots,\gamma_q\}\subset \{\gamma^{(1)},\cdots,\gamma^{(p)}\}$ is given by the following recursive algorithm. The root $\gamma_1$ is equal to $\gamma^{(i_1)}$ for $i_1$ the smallest $i_1$ such that $\langle\lambda+\rho,\gamma^{(i)}\rangle=0$. The root $\gamma_s$ is equal to $\gamma^{(i_s)}$ for $i_s$ the smallest $i >i_{s-1}$ such that $\langle\lambda+\gamma_1+\cdots+\gamma_{s-1}+\rho,\gamma^{(i)}\rangle=0$.
\end{lemma}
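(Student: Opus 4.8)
The plan is to reduce to the case of a single odd reflection and then induct on the length $p$ of the chain $\{\gamma^{(1)},\dots,\gamma^{(p)}\}$. The basic input is the standard description of the effect of one odd reflection on a simple highest weight module: if $\fb'$ is obtained from $\fb$ by the odd reflection at an isotropic root $\gamma$ that is simple in $\Delta^+$, then the associated $\rho$-vector changes by $\rho'=\rho+\gamma$ (indeed $\rho_{\oa}$ is unchanged, while replacing $\gamma$ by $-\gamma$ in $\Delta_{\ob}^+$ changes $\rho_{\ob}$ by $-\gamma$), and
\[L^{(\fb)}(\mu)\cong L^{(\fb')}(\mu')\qquad\text{with}\qquad\mu'=\begin{cases}\mu&\text{if }\langle\mu+\rho,\gamma\rangle=0,\\\mu-\gamma&\text{if }\langle\mu+\rho,\gamma\rangle\not=0.\end{cases}\]
This is the case $p=1$ of the assertion (cf.\ Lemma~0.3 in \cite{MR1201236}); it can be checked by a direct computation in the rank-one subalgebra generated by the root vectors for $\pm\gamma$. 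Since $\langle\rho,\gamma\rangle=0$ for a simple isotropic root $\gamma$, the dichotomy only involves $\langle\mu,\gamma\rangle$.

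Next I would fix the chain $\Delta^+=\Delta^{+(0)},\Delta^{+(1)},\dots,\Delta^{+(p)}=\hat\Delta^+$ of positive systems, $\Delta^{+(k)}$ being obtained from $\Delta^{+(k-1)}$ by the odd reflection at $\gamma^{(k)}$, and write $\fb^{(k)},\rho^{(k)},\lambda^{(k)}$ for the corresponding Borel subalgebra, $\rho$-vector and highest weight of the given simple module, so that $(\lambda^{(0)},\rho^{(0)})=(\lambda,\rho)$ and $(\lambda^{(p)},\rho^{(p)})=(\hat\lambda,\hat\rho)$. Iterating $\rho^{(k)}=\rho^{(k-1)}+\gamma^{(k)}$ gives $\hat\rho=\rho+\sum_{j=1}^p\gamma^{(j)}$, which is exactly the equality between the two displayed formulas for $\hat\lambda$; it therefore remains to prove the first formula, i.e.\ $\hat\lambda+\hat\rho=\lambda+\rho+\sum_{s=1}^q\gamma_s$.

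The core of the proof is an invariant maintained in parallel with the recursive algorithm. Let $i_1<\dots<i_q$ be the indices the algorithm selects, $\gamma_s=\gamma^{(i_s)}$, and $\sigma_k=\sum_{s\,:\,i_s\le k}\gamma_s$. I would prove by simultaneous induction on $k$ that (a) $\lambda^{(k)}+\rho^{(k)}=\lambda+\rho+\sigma_k$, and (b) $k\in\{i_1,\dots,i_q\}$ if and only if $\langle\lambda^{(k-1)}+\rho^{(k-1)},\gamma^{(k)}\rangle=0$. For the inductive step: part (a) for $k-1$ rewrites the algorithm's selection criterion at position $k$, namely $\langle\lambda+\rho+\sigma_{k-1},\gamma^{(k)}\rangle=0$, as $\langle\lambda^{(k-1)}+\rho^{(k-1)},\gamma^{(k)}\rangle=0$, which gives (b); by the single-reflection formula this last condition is exactly the one under which $\lambda^{(k)}=\lambda^{(k-1)}$, while otherwise $\lambda^{(k)}=\lambda^{(k-1)}-\gamma^{(k)}$. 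Substituting this, together with $\rho^{(k)}=\rho^{(k-1)}+\gamma^{(k)}$, into $\lambda^{(k)}+\rho^{(k)}$ yields (a) for $k$: if $\gamma^{(k)}$ is selected then nothing is subtracted from $\lambda$ and $\sigma_k=\sigma_{k-1}+\gamma^{(k)}$; if it is not, then the decrement $-\gamma^{(k)}$ of $\lambda^{(k)}$ cancels the increment $+\gamma^{(k)}$ of $\rho^{(k)}$ and $\sigma_k=\sigma_{k-1}$. Evaluating (a) at $k=p$ gives the desired identity.

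I expect the only genuinely delicate point to be the bookkeeping behind (a): $\rho^{(k)}$ picks up \emph{all} of $\gamma^{(1)},\dots,\gamma^{(k)}$ whereas the algorithm's running sum $\sigma_k$ picks up only the selected ones, and the two reconcile inside $\lambda^{(k)}+\rho^{(k)}$ precisely because $\lambda^{(k)}$ has meanwhile been decremented by exactly the non-selected roots among $\gamma^{(1)},\dots,\gamma^{(k)}$; keeping the selected and non-selected contributions on the right side of this cancellation is what needs care. A minor additional point is that the single-reflection formula may be invoked at every step only because, along the chain provided by Theorem~3.1.3 in \cite{MR2906817} recalled above, each $\gamma^{(k)}$ is an isotropic root simple in the intermediate system $\Delta^{+(k-1)}$.
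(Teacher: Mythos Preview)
Your proof is correct and follows essentially the same approach as the paper: both establish the single odd reflection case $p=1$ and then induct along the chain, tracking how the highest weight and $\rho$ change at each step. The only cosmetic difference is that the paper derives the $p=1$ case directly from the weight-space structure of the Verma module $M^{(\fb)}(\lambda)$ rather than citing it, and phrases the induction slightly less invariantly than your identity $\lambda^{(k)}+\rho^{(k)}=\lambda+\rho+\sigma_k$.
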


\begin{proof}
Consider the Verma module $M^{(\fb)}(\lambda)$. The dimension of the weight space of weight $\lambda-\gamma^{(1)}$ is one, since $\gamma^{(1)}$ is simple in $\Delta^+$. We denote a non-zero vector is this weight space by $x$. This $x$ is a highest weight vector if and only $\langle \lambda,\gamma^{(1)}\rangle=\langle \lambda+\rho,\gamma^{(1)}\rangle=0$. 

Now we consider $M^{(\fb)}(\lambda)$ in the system of positive roots given by $(\Delta^+\backslash \gamma^{(1)})\cup {-\gamma^{(1)}}$. In this system, the weight of $x$ is the highest weight of $M^{(\fb)}(\lambda)$. 

If $x$ is not a highest weight vector
(with respect to $\fb$), it generates $M^{(\fb)}(\lambda)$, showing that $M^{(\fb)}(\lambda)$ is still a Verma module with respect to the new Borel subalgebra, but now with highest weight $\lambda-\gamma^{(1)}$. Since the simple module is isomorphic to the quotient of this Verma module with respect to its maximal submodule, the new highest weight is $\lambda-\gamma^{(1)}$. If $x$ is a highest weight vector (with respect to $\fb$), it is factored out in $L^{(\fb)}(\lambda)$, so $L^{(\fb)}(\lambda)$ is a quotient of the module $N:=M^{(\fb)}(\lambda)/(U(\fg)x)$. The highest weight of $N$ is still $\lambda$ in the new system of positive roots.

From these considerations one obtains that the highest weight of the module $L^{(\fb)}(\lambda)$ in the second system of positive roots is as follows:
\begin{itemize}
\item $\lambda-\gamma^{(1)}$ if $\langle \lambda+\rho,\gamma_1\rangle\not=0$,
\item $\lambda$ if $\langle \lambda+\rho,\gamma_1\rangle=0$;
\end{itemize}
which corresponds to the proposed expression if $p=1$. Then we proceed by induction. We assume the formula is valid for the system of roots $\widetilde{\Delta}^+$, which is the one obtained from $\Delta^+$ by applying the odd reflections corresponding to $\{\gamma^{(1)},\cdots,\gamma^{(t)}\}$ with $1\le t  <p$. If $L^{(\fb)}(\lambda)=L^{(\widetilde{\fb})}(\widetilde{\lambda})$ with $\widetilde{\lambda}=\lambda+\sum_{i=1}^{q'}\gamma_i+\rho-\widetilde{\rho}$, then the highest weight in the new system of positive roots depends on
\[\langle \widetilde{\lambda},\gamma^{(t+1)}\rangle=\langle \widetilde{\lambda}+\widetilde{\rho},\gamma^{(t+1)}\rangle=\langle\lambda+\sum_{i=1}^{q'}\gamma_i+\rho,\gamma^{(t+1)}\rangle, \]
from which the claim follows.
\end{proof}

\begin{corollary}
\label{oddtypical}
Let $\fg$ be a basic classical Lie superalgebra with two Borel subalgebras $\fb$ and $\hat{\fb}$ such that $\fb_{\oa}=\hat{\fb}_{\oa}$. If $\lambda\in\fh^\ast$ is typical, then we have the following isomorphism of Verma modules for the different Borel subalgebras
\[M^{(\fb)}(\lambda)=U(\fg)\otimes_{U(\fb)}\mC_\lambda \cong U(\fg)\otimes_{U(\hat{\fb})}\mC_{\lambda+\rho-\hat{\rho}}= M^{(\hat\fb)}(\lambda+\rho-\hat{\rho})\]
and the corresponding isomorphism of simple modules
\[L^{(\fb)}(\lambda)\cong L^{(\hat{\fb})}(\lambda+\rho-\hat{\rho}).\]
\end{corollary}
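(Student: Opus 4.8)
The plan is to reduce the statement to the case of two adjacent Borel subalgebras and then iterate. By Theorem~3.1.3 in \cite{MR2906817}, $\fb$ and $\hat\fb$ are connected by an ordered sequence of odd isotropic roots $\{\gamma^{(1)},\dots,\gamma^{(p)}\}$; applying the odd reflection along $\gamma^{(t)}$ replaces $\rho$ by $\rho+\gamma^{(t)}$, while (as will be seen) the relevant highest weight drops by $\gamma^{(t)}$, so the quantity ``highest weight $+\,\rho$'' stays equal to $\lambda+\rho$ all along the chain. Since typicality of a weight is governed solely by the pairings $\langle\lambda+\rho,\gamma\rangle$ with isotropic roots $\gamma$ --- a family of pairings stable under odd reflections --- every intermediate highest weight remains typical. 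Hence it suffices to prove both isomorphisms when $\fb$ and $\fb'$ are adjacent, say the positive system of $\fb'$ equals $(\Delta^+\setminus\gamma)\cup\{-\gamma\}$ for an isotropic simple root $\gamma\in\Delta^+$; then $\rho'=\rho+\gamma$ and the asserted relations become $M^{(\fb)}(\lambda)\cong M^{(\fb')}(\lambda-\gamma)$ and $L^{(\fb)}(\lambda)\cong L^{(\fb')}(\lambda-\gamma)$.

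For the simple modules this is immediate from Lemma~\ref{oddrefl}: since $\lambda$ is typical one has $\langle\lambda+\rho,\gamma^{(i)}\rangle\neq0$ for all $i$, so no index $i_1$ with $\langle\lambda+\rho,\gamma^{(i)}\rangle=0$ exists, the recursively defined subset $\{\gamma_1,\dots,\gamma_q\}$ is empty ($q=0$), and the formula of that lemma collapses to $\hat\lambda=\lambda+\rho-\hat\rho$. For the Verma modules I would argue as in the proof of Lemma~\ref{oddrefl}. Let $x=Yv_\lambda$ span the one-dimensional weight space of $M^{(\fb)}(\lambda)$ of weight $\lambda-\gamma$, where $Y$ spans $\fg_{-\gamma}$ and $v_\lambda$ is the canonical generator. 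Typicality gives $\langle\lambda,\gamma\rangle=\langle\lambda+\rho,\gamma\rangle\neq0$, so $Xx$ is a nonzero multiple of $v_\lambda$ for $X$ spanning $\fg_{\gamma}$; in particular $x$ is not a $\fb$-highest weight vector and $U(\fg)x=M^{(\fb)}(\lambda)$. On the other hand $x$ is annihilated by the nilradical of $\fb'$: one has $Y^2=\tfrac12[Y,Y]\in\fg_{-2\gamma}=0$ as $\gamma$ is isotropic, and for every $\beta\in\Delta^+\setminus\{\gamma\}$ the vector $X_\beta x$ is a scalar multiple of $[X_\beta,Y]v_\lambda$, which vanishes because $\beta-\gamma$ is never a negative root ($\gamma$ being simple in $\Delta^+$). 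Thus $x$ is a $\fb'$-singular vector of weight $\lambda-\gamma$, giving a surjection $M^{(\fb')}(\lambda-\gamma)\twoheadrightarrow M^{(\fb)}(\lambda)$; it is an isomorphism by a comparison of formal characters, all of whose factors agree except that $M^{(\fb')}(\lambda-\gamma)$ contributes $e^{\lambda-\gamma}(1+e^{\gamma})$ where $M^{(\fb)}(\lambda)$ contributes $e^{\lambda}(1+e^{-\gamma})$, and $e^{\lambda-\gamma}(1+e^{\gamma})=e^{\lambda}(1+e^{-\gamma})$. Iterating along $\gamma^{(1)},\dots,\gamma^{(p)}$ yields $M^{(\fb)}(\lambda)\cong M^{(\hat\fb)}(\lambda+\rho-\hat\rho)$, and passing to unique simple quotients re-proves the simple-module statement.

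Essentially all of the module-theoretic content above --- that $x$ generates $M^{(\fb)}(\lambda)$ and that $x$ is $\fb'$-singular --- is already present in the proof of Lemma~\ref{oddrefl}, so the only genuinely new points needing care are the elementary character identity that upgrades the surjection to an isomorphism, and the observation that typicality is controlled by the odd-reflection-invariant quantity $\lambda+\rho$ and therefore survives every step of the chain. I expect this last invariance remark to be the conceptual crux, since it is precisely what allows the single-reflection argument to be iterated without change.
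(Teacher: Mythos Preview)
Your argument is correct and follows the same route the paper implicitly takes: the corollary is stated without proof because both claims are already contained in Lemma~\ref{oddrefl} and its proof (the simple-module statement by specialising the formula with $q=0$, and the Verma-module statement from the sentence ``If $x$ is not a highest weight vector \ldots\ $M^{(\fb)}(\lambda)$ is still a Verma module with respect to the new Borel subalgebra''). You have simply made explicit two points the paper leaves to the reader---the character identity that upgrades the surjection $M^{(\fb')}(\lambda-\gamma)\twoheadrightarrow M^{(\fb)}(\lambda)$ to an isomorphism, and the invariance of $\lambda+\rho$ under each odd reflection which guarantees that typicality persists along the chain---so your write-up is a correct and slightly more detailed version of the intended proof.
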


We will use the BGG category $\cO$ for Lie superalgebras, see e.g. \cite{preprint, MR2906817}. One of the equivalent ways of defining this category is as the subcategory in $\fg$-smod consisting of all objects which are mapped to $\cO_{\oa}$ under $\Res$, with $\cO_{\oa}$ being the corresponding BGG category for $\fg_{\oa}$, see \cite{MR0407097, MR2428237}. The bounded derived category of $\cO$ is denoted by $\cD^b(\cO)$. The indecomposable projective cover of $L(\lambda)$ in $\cO$ is denoted by $P(\lambda)$.

In the following paragraphs we study decompositions of $\cO$ into subcategories related to central characters, we leave out the strange algebras $\mathfrak{p}(n)$ and $\widetilde{\mathfrak{p}}(n)$. The subcategory of $\cO$ corresponding to a central character $\chi:\cZ(\fg)\to\mC$ is denoted by $\cO_\chi$. For an atypical or a typical non-integral central character, $\cO_{\chi}$ is not indecomposable. We introduce $$\fh^\ast_{\oa,{\rm dom}}=\{\lambda\in\fh_{\oa}^\ast\,|\,\lambda \mbox{ is maximal with respect to the $\rho$-shifted action of }W_\lambda\}.$$
The category $\cO_\lambda$ is then defined as the Serre subcategory of $\cO$ generated by $L(\mu)\in \cO_{\chi_\lambda}$ with $\mu\in\lambda+\cP$. According to the description of the integral Weyl group in equation \eqref{intWeyl} and the condition on two weights to have the same central character, which can be found in Section 13.1 in \cite{MR2906817} for basic classical Lie superalgebras and in \cite{MR0706205} for the algebras of $Q$-type, we have
\begin{displaymath}
\begin{array}{rcl}
L(\mu)\in \cO_\lambda&\Leftrightarrow&
\mu=w\cdot(\lambda-\sum_j k_j\gamma_j) \quad\mbox{ for some }w\in W_\lambda \,\mbox{ and a maximal} \\
&& \mbox{set } \{\gamma_j\} \mbox{ of mutually orthogonal atypical roots of }\lambda.
\end{array}
\end{displaymath}
This shows that our definition of $\cO_\lambda$ coincides with the particular case of $\mathfrak{gl}(m|n)$ in \cite{CMW}. From the definition and the property $\mZ\Delta\subset\cP$ it is also clear that there are no extensions between modules in strictly different categories $\cO_\lambda$ and $\cO_{\lambda'}$.

We define an equivalence relation as follows:
$$\mbox{for } \lambda,\mu\in\fh_{\oa,{\rm dom}}^\ast\quad\mbox{we set}\quad  \lambda\sim \mu\, \mbox{ if }\,\, \lambda-\mu\in\cP \,\mbox{ and } \,\,\chi_\lambda=\chi_\mu. $$ 
In other words $\lambda\sim\mu$ iff $\cO_\mu= \cO_\lambda$. This yields the decomposition $\cO=\bigoplus_{\lambda\in\fh^\ast_{\oa,{\rm dom}}/\sim}\cO_\lambda$, where an arbitrary representative in $\fh^\ast_{\oa,{\rm dom}}$ is chosen for each element of the quotient $\fh^\ast_{\oa,{\rm dom}}/\sim$. In particular, we also have
\begin{equation}\label{decompnonint}\cO_{\chi_\lambda}=\bigoplus_{w\in W^\lambda}\cO_{w\cdot\lambda}.\end{equation}
If $\fg$ is a reductive Lie algebra, the equivalence relation $\sim$ becomes trivial and the usual decomposition into indecomposable blocks is obtained.

For the superalgebras $\mathfrak{p}(n)$ and $\widetilde{\mathfrak{p}}(n)$, the corresponding universal enveloping algebra does not have a relevant centre $\cZ(\fg)$. Whenever $\cO_\chi$ is used we silently assume that it is just given by $\cO$ for those algebras. The subcategory $\cO_\lambda$ is thus the Serre subcategory generated by the simple modules $\{L(\mu)\,|\,\mu\in \lambda+\cP\}.$


\section{Primitive ideals for $\mathfrak{osp}(1|2)$, $\mathfrak{q}(2)$ and $\mathfrak{sl}(2|1)$}
\label{smallex}

In this section we review the classification of primitive ideals and inclusions between them for $\mathfrak{osp}(1|2)$, $\mathfrak{q}(2)$ and $\mathfrak{sl}(2|1)$, as can be found in \cite{MR2742017, MR1231215, MR1479886}. The underlying Lie algebras for these Lie superalgebras are $\mathfrak{sl}(2)$ or $\mathfrak{gl}(2)$, so the Weyl groups are all isomorphic to $\mZ_2$.
\subsection{Primitive ideals for $\mathfrak{osp}(1|2)$}

The Cartan subalgebra of $\mathfrak{osp}(1|2)$ is one-dimensional. There is one even positive root $\alpha$ and one odd positive root $\alpha/2$. All weights of $\mathfrak{osp}(1|2)$ are typical, so all inclusions between primitive ideals can be obtained from the results in the subsequent Section \ref{sectyp}. These inclusions also follow from the results of Musson in Theorem B and Theorem 1.4 in \cite{MR1479886} or Pinczon in \cite{MR1060845}.

\begin{proposition}
\label{osp12}
Consider $\fg=\mathfrak{osp}(1|2)$ and $s$ the only non-trivial element of the Weyl group. The only inclusions between primitive ideals are given by
\begin{itemize}
\item $\Ann_{ U(\fg)}L(\mu)=\Ann_{ U(\fg)}L(\lambda)$ for $\lambda$ not integral and $\mu=s\cdot\lambda$,
\item $\Ann_{ U(\fg)}L(\mu)\subsetneq\Ann_{ U(\fg)}L(\lambda)$ for $\lambda$ integral dominant and $\mu=s\cdot\lambda$.
\end{itemize}
\end{proposition}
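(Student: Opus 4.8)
The plan is to reduce everything to the classical $\mathfrak{sl}(2)$-picture by carefully exploiting the structure of $\mathfrak{osp}(1|2)$, whose representation theory is unusually close to that of $\mathfrak{sl}(2)$. Concretely, for $\fg=\mathfrak{osp}(1|2)$ every weight is typical (indeed strongly typical, since the only odd root $\alpha/2$ is non-isotropic here), so by the equivalence of categories of Gorelik cited in Section~\ref{sectyp}, or directly by Musson's results in \cite{MR1479886}, the simple highest weight module $L(\lambda)$ restricts over $\fg_{\oa}=\mathfrak{sl}(2)$ to a short sum of simple highest weight modules, and the annihilator $J(\lambda)=\Ann_{U(\fg)}L(\lambda)$ is controlled by $I(\lambda)=\Ann_{U(\fg_{\oa})}L_{\oa}(\lambda)$ together with the (trivial here) action of $\cZ(\fg)$. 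The key point is that since the Harish-Chandra centre of $U(\mathfrak{osp}(1|2))$ separates weights exactly into $\rho$-shifted Weyl orbits (this is what typicality buys us), two primitive ideals $J(\mu)\subseteq J(\lambda)$ force $\chi_\mu=\chi_\lambda$, hence $\mu\in\{\lambda,s\cdot\lambda\}$; so the only question is which of the two relations $=$ or $\subsetneq$ holds for the pair $\{\lambda, s\cdot\lambda\}$.

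First I would dispose of the non-integral case. If $\lambda$ is not integral, then over $\fg_{\oa}=\mathfrak{sl}(2)$ the weights $\lambda$ and $s\cdot\lambda=s\circ\lambda$ are both non-integral dominant/antidominant in the sense that $L_{\oa}(\lambda)$ and $L_{\oa}(s\cdot\lambda)$ are Verma modules, which are simple, and by the classical theory $I(\lambda)=I(s\cdot\lambda)$ is the unique (minimal, = the central character kernel) primitive ideal in that non-integral block. The strongly typical equivalence of Section~\ref{sectyp} then transports this equality to $\fg$: $J(\lambda)=J(s\cdot\lambda)$. Alternatively, note $M(\lambda)=L(\lambda)$ and $M(s\cdot\lambda)=L(s\cdot\lambda)$ are both simple Verma modules with the same central character, and their annihilators coincide because the annihilator of a simple Verma module depends only on its central character (as recalled for Lie algebras and transported via the equivalence). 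This gives the first bullet.

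Next, the integral dominant case. Here I would argue that $\Ann_{U(\fg)}L(\mu)\subsetneq \Ann_{U(\fg)}L(\lambda)$, where $\mu=s\cdot\lambda$ is integral antidominant, so $L(\mu)=M(\mu)$ is a simple Verma module while $L(\lambda)$ is finite-dimensional. The inclusion $\subseteq$ holds because $L(\lambda)$ appears as a subquotient of $M(\mu)=L(\mu)$ — indeed $L(\lambda)$ is the simple socle or a subquotient of the other Verma $M(\lambda)$, but more directly one uses that the annihilator of the (simple) antidominant Verma $M(\mu)$ is the minimal primitive ideal $J_{\min}$ of its block, which is contained in every primitive ideal of that block, in particular in $J(\lambda)$; this is the superalgebra analogue of the classical fact, available from Section~\ref{sectech} or again by transport through the typical equivalence applied to the integral block of $\mathfrak{sl}(2)$, where $I(\mu)\subsetneq I(\lambda)$ is the well-known strict inclusion between the ideal of the antidominant Verma and the ideal of the finite-dimensional module. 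To see the inclusion is \emph{strict}, I would observe that $U(\fg)/J(\lambda)$ is finite-dimensional (being a primitive quotient by the annihilator of a finite-dimensional module over a finite-dimensional algebra... more precisely $L(\lambda)$ is finite dimensional so $U(\fg)/J(\lambda)\hookrightarrow \mathrm{End}_{\mC}(L(\lambda))$ is finite dimensional), whereas $U(\fg)/J(\mu)$ is infinite-dimensional (it acts faithfully on the infinite-dimensional simple module $L(\mu)=M(\mu)$); hence $J(\mu)\neq J(\lambda)$.

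The main obstacle, and the only genuinely non-formal input, is the strict inclusion statement: one needs to know that in the integral block these are the \emph{only} two primitive ideals and that no further surprises occur (e.g. that $J(\lambda)$ is not somehow equal to $J(\mu)$ due to super-phenomena). This is exactly where the typicality of all $\mathfrak{osp}(1|2)$-weights is essential: it guarantees, via the category equivalence with $\mathfrak{sl}(2)$-modules (or via Musson's explicit analysis), that the poset of primitive ideals in each block is isomorphic to that of the corresponding $\mathfrak{sl}(2)$-block, which is the two-element chain $J_{\min}\subsetneq J_{\max}$ in the integral case and the single point in the non-integral case. Once this transport principle is in place, the proposition is immediate; so in the write-up I would state the reduction to $\mathfrak{sl}(2)$ cleanly, cite Section~\ref{sectyp} (or \cite{MR1479886, MR1060845}) for the equivalence, and then quote the classical $\mathfrak{sl}(2)$ facts.
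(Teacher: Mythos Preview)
Your approach is essentially the paper's: the paper also simply notes that all weights for $\mathfrak{osp}(1|2)$ are typical and defers to Section~\ref{sectyp} (or to \cite{MR1479886, MR1060845}), so the reduction to $\mathfrak{sl}(2)$ via the typical-block equivalence is exactly the intended argument.

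Two small inaccuracies to clean up. First, it is not true that every weight is \emph{strongly} typical: the odd root $\alpha/2$ is non-isotropic, so typicality is vacuous but strong typicality requires $\langle\lambda+\rho,\alpha/2\rangle\neq 0$, which fails precisely at the unique singular weight $\lambda=-\rho$; this is harmless since that orbit is a singleton, and the second part of Theorem~\ref{typeIItyp} covers regular typical (not necessarily strongly typical) weights anyway. Second, $s\cdot\lambda\neq s\circ\lambda$ here because $\rho\neq\rho_{\oa}$; the correct statement, as in Theorem~\ref{typeIItyp}, is that $J(w\cdot\lambda)$ corresponds to $I(w\circ\widetilde{\lambda})$ for a perfect mate $\widetilde{\lambda}$, not to $I(w\cdot\lambda)$ directly. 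Neither point affects the substance of your argument.
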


\subsection{Primitive ideals for $\mathfrak{q}(2)$}
\label{subsecq2}

The Cartan subalgebra $\fh$ of $\mathfrak{q}(2)$ has super dimension $2|2$. There is one even positive root $\epsilon_1-\epsilon_2$ which is also an odd root. A weight $\lambda\in\fh_{\oa}^\ast$ is integral if $\langle \lambda,\epsilon_1-\epsilon_2\rangle\in\mZ$, it is dominant if $\langle \lambda,\epsilon_1-\epsilon_2\rangle \in\mN$ or if $\lambda=0$.

Proposition 8 in \cite{MR2742017} classifies primitive ideals for $\mathfrak{q}(2)$. Even though it is not stated explicitly in the proposition, inclusions between primitive ideals can also be derived from the proof. This can be summarised in terms of the star action defined as follows: $$s\ast\lambda=s\lambda\mbox{ if }\langle \lambda,\epsilon_1+\epsilon_2\rangle\not=0\quad\mbox{ and }\quad s\ast\lambda=s\lambda-\epsilon_1+\epsilon_2\mbox{ if }\langle \lambda,\epsilon_1+\epsilon_2\rangle=0,$$ see Section \ref{starqn} for more details on this star action.

\begin{proposition}
\label{Primq2}
Consider $\fg=\mathfrak{q}(2)$ and $s$ the only non-trivial element of the Weyl group. The only inclusions between primitive ideals are given by
\begin{itemize}
\item $\Ann_{ U(\fg)}L(\mu)=\Ann_{ U(\fg)}L(\lambda)$ for $\lambda$ not integral and $\mu=s\ast\lambda$,
\item $\Ann_{ U(\fg)}L(\mu)\subsetneq\Ann_{ U(\fg)}L(\lambda)$ for $\lambda$ integral dominant and $\mu=s\ast\lambda$.
\end{itemize}
\end{proposition}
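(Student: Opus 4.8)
The plan is to reduce everything to the already-known classification in Proposition 8 of \cite{MR2742017}, and then merely re-express the combinatorial content in terms of the star action $s\ast\lambda$. First I would recall the structure of $\mathfrak{q}(2)$: the Cartan subalgebra $\fh$ has the single even (and odd) root $\epsilon_1-\epsilon_2$, so $W=\{1,s\}\cong\mZ_2$ and, since $\rho=0$, the $\rho$-shifted action coincides with the honest action $s\cdot\lambda=s\lambda$. A weight $\lambda$ is atypical precisely when $\langle\lambda,\epsilon_1+\epsilon_2\rangle=0$, which for $\mathfrak{q}(2)$ (by the description of central characters from \cite{MR0706205} recalled in Section~\ref{secpre}) happens exactly on a one-dimensional subspace; all other weights are typical. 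The two cases in the statement — $\lambda$ not integral, and $\lambda$ integral dominant — will be handled by checking the star action against the known inclusions in each case.

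Next I would split into the typical and atypical subcases. For $\lambda$ \emph{typical}, one has $\langle\lambda,\epsilon_1+\epsilon_2\rangle\ne 0$, so $s\ast\lambda=s\lambda=s\cdot\lambda$; here the result on strongly typical blocks of $\mathfrak{q}(n)$ (Frisk–Mazorchuk, \cite{Frisk}) gives an equivalence of $\cO_\lambda$ with a block of $\cO_{\fg_{\oa}}$ for $\mathfrak{gl}(2)$, and the classification of primitive ideals and inclusions is then transported from the classical $\mathfrak{sl}(2)/\mathfrak{gl}(2)$ case, which is exactly the $\mZ_2$ pattern: equality for non-integral $\lambda$ with $\mu=s\lambda$, strict inclusion $J(s\cdot\lambda)\subsetneq J(\lambda)$ for $\lambda$ integral dominant. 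This handles all non-integral weights (which are automatically typical for $\mathfrak{q}(2)$) and the typical integral dominant ones. For $\lambda$ \emph{atypical} — which forces $\lambda$ integral, and then dominance means $\langle\lambda,\epsilon_1-\epsilon_2\rangle\in\mN$ or $\lambda=0$ — one reads off from the proof of Proposition 8 in \cite{MR2742017} that the relevant ``partner'' weight producing the coinciding/strictly included annihilator is not $s\lambda$ but the shifted weight $s\lambda-\epsilon_1+\epsilon_2$; this is precisely the definition of $s\ast\lambda$ in the atypical case, so the two bullet points hold with $\mu=s\ast\lambda$. The case $\lambda=0$ is degenerate ($s\ast 0 = -\epsilon_1+\epsilon_2$, $L(0)$ trivial) and should be checked directly against \cite{MR2742017} to confirm it fits the stated dichotomy.

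Finally, I would verify that these are the \emph{only} inclusions: any primitive ideal of $U(\mathfrak{q}(2))$ is, by Musson's Duflo-type theorem \cite{MR1149625}, of the form $J(\lambda)$, and two such are comparable only within one block $\cO_\lambda$ (no extensions between different blocks); within a single block the combinatorics is governed by the size-$\le 2$ orbit under $s\ast$, so the list above is exhaustive. The main obstacle is the bookkeeping in the atypical case: one must extract inclusion statements from the \emph{proof} (not just the statement) of Proposition 8 in \cite{MR2742017} and match the shift $-\epsilon_1+\epsilon_2$ appearing there to the definition of the star action, taking care that the notational conventions for the $\rho$-shift, for dominance, and for atypicality agree with those fixed in Section~\ref{secpre}. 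Once that identification is made, the proof is essentially a translation exercise, and the deeper input (the typical-block equivalence and Duflo's theorem) is entirely imported.
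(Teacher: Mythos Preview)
Your overall strategy---reduce to \cite{MR2742017} and reformulate via the star action---is exactly what the paper does, and the paper does no more than that: it simply cites Proposition~8 of \cite{MR2742017} (noting that the inclusions, though not stated there, can be extracted from the proof) and observes that the partner weight is $s\ast\lambda$ in both the typical and atypical cases. There is no typical/atypical case split in the paper's argument, and no appeal to the Frisk--Mazorchuk equivalence.

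Your more elaborate route introduces a genuine error. The claim ``non-integral weights are automatically typical for $\mathfrak{q}(2)$'' is false: atypicality means $\langle\lambda,\epsilon_1+\epsilon_2\rangle=0$, i.e.\ $\lambda_1+\lambda_2=0$, while integrality means $\lambda_1-\lambda_2\in\mZ$; these are independent linear conditions, so e.g.\ $\lambda=t\epsilon_1-t\epsilon_2$ with $2t\notin\mZ$ is both atypical and non-integral. Your typical-block argument via \cite{Frisk} therefore does not cover all non-integral weights, and your atypical paragraph explicitly assumes integrality, so the non-integral atypical case falls through the cracks. (There is also a secondary issue: the equivalence in \cite{Frisk} is stated for \emph{strongly} typical \emph{regular} blocks, so even your typical case would need a word about why this suffices.)

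The fix is simply to drop the extra scaffolding: for the purposes of this proposition the paper treats \cite{MR2742017} as a black box covering all $\lambda$ at once, and the only work is matching the shift $-\epsilon_1+\epsilon_2$ in the atypical case to the definition of $s\ast\lambda$. Your final paragraph on exhaustiveness is fine in spirit, but note that ``comparable only within one block'' is not quite the right argument---what constrains comparisons is the central character, and for atypical characters several Weyl orbits share $\chi$; this is precisely why one must rely on the full content of \cite{MR2742017} rather than a block decomposition alone.
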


Simple modules over $\mathfrak{pq}(2)$, $\mathfrak{sq}(2)$ and $\mathfrak{psq}(2)$ are discussed in Subsections 3.10, 3.11 and 3.12 of \cite{MR2742017}.

\subsection{Primitive ideals for $\mathfrak{sl}(2|1)$}
The Cartan subalgebra of $\mathfrak{sl}(2|1)$ is $2$-dimensional. The distinguished system of positive roots has simple roots $\epsilon_1-\epsilon_2$ and $\epsilon_2-\delta$. Weights are given by $k_1\epsilon_1+k_2\epsilon_2+l\delta$ with $k_1+k_2+l=0$. Inclusions between primitive ideals for $\mathfrak{sl}(2|1)$ were classified by Musson in Theorem 3.1 in \cite{MR1231215}.

\begin{proposition}
\label{sl21}
Consider $\fg=\mathfrak{sl}(2|1)$ and $s$ the only non-trivial element of the Weyl group. The only inclusions between primitive ideals are given by
\begin{itemize}
\item $\Ann_{ U(\fg)}L(\mu)=\Ann_{ U(\fg)}L(\lambda)$ for $\langle\lambda,\epsilon_1-\epsilon_2\rangle\not\in\mZ$ and $\mu=s\cdot\lambda$,
\item $\Ann_{ U(\fg)}L(\mu)\subsetneq\Ann_{ U(\fg)}L(\lambda)$ for $\langle\lambda+\rho,\epsilon_1-\epsilon_2\rangle\in \mN$ and $\mu=s\cdot\lambda$ and
\item $\Ann_{U(\fg)}L(\epsilon_2-\delta)\subsetneq\Ann_{U(\fg)}L(0)$.
\end{itemize}
\end{proposition}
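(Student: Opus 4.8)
The plan is to reduce the statement to Musson's classification of the primitive ideals of $\mathfrak{sl}(2|1)$ and to re-establish, by purely categorical means, the parts that do not involve the (unique) atypical integral central character. By Musson's super-analogue of Duflo's theorem \cite{MR1149625} every primitive ideal equals $J(\nu)$ for some $\nu\in\fh^\ast_{\oa}$, so it suffices to list all inclusions among the ideals $J(\nu)$; since there are no inclusions between primitive ideals lying over different central characters, we may work one central character at a time. Fixing the distinguished Borel one has $\rho=\delta-\epsilon_2$, so $\langle\lambda+\rho,\epsilon_1-\epsilon_2\rangle=\langle\lambda,\epsilon_1-\epsilon_2\rangle+1$, and the central characters split into non-integral ones, typical integral ones, and the single atypical integral central character $\chi_0$ (that of the trivial module).

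The non-integral and typical integral cases can be handled without \cite{MR1231215}. When $\lambda$ is typical, $\chi_\lambda$ is strongly typical (typicality and strong typicality coincide for $\mathfrak{sl}(2|1)$), so the equivalence of categories recalled in Section~\ref{sectyp} together with the functors \eqref{eqGorelik} transport the question to $\fg_{\oa}\cong\mathfrak{gl}(2)$; there $W_\lambda=W$ or $W_\lambda=\{e\}$ according to whether $\lambda$ is integral, and the inclusions among primitive ideals over a fixed central character are the classical ones: a single primitive ideal when $W_\lambda$ is trivial, which gives $J(\lambda)=J(s\cdot\lambda)$, and $I(s\circ\Lambda)\subsetneq I(\Lambda)$ for $\Lambda$ regular integral dominant, which gives $J(s\cdot\lambda)\subsetneq J(\lambda)$. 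When $\lambda$ is non-integral and atypical, $W_\lambda=\{e\}$ and $\cO_\lambda$ is generated by the simple modules obtained from $\lambda$ by shifts along its unique atypical root $\gamma$; using Lemma~\ref{lemfinfree} and reduction to $\mathfrak{sl}(2)$ one sees that each of them is $\alpha$-free for the even simple root $\alpha$, and a short argument (or the classification in \cite{MR1231215}) shows they all share one annihilator, so once more $J(\lambda)=J(s\cdot\lambda)$.

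The essential case is $\chi_0$. Here $L(0)=\mC$ is the trivial module, so $J(0)=U(\fg)\fg$ is a maximal two-sided ideal, while $L(\epsilon_2-\delta)$ is the simple highest weight module with $\lambda+\rho=0$ (note that $(\epsilon_2-\delta)+\rho=0$). One analyses the block $\cO_0$ directly: identify its simple objects $L(w\cdot(-k\gamma))$ with $\gamma=\epsilon_2-\delta$, compute the composition series of the pertinent Verma modules, and combine this with the general facts about annihilator ideals collected in Section~\ref{sectech} (comparison of Gelfand--Kirillov dimensions and passage from subquotients to inclusions of annihilators) to conclude that $J(s\cdot 0)\subsetneq J(0)$ and $J(\epsilon_2-\delta)\subsetneq J(0)$, that $J(s\cdot 0)$ and $J(\epsilon_2-\delta)$ are incomparable, and that $\chi_0$ yields no further inclusions. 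Together with the previous paragraph this produces exactly the three items of the statement. The main obstacle is precisely this last block: there is no formal reduction available, and pinning down both the extra inclusion $J(\epsilon_2-\delta)\subsetneq J(0)$ and the absence of any others ultimately rests on Musson's explicit determination of the poset of primitive ideals of $\mathfrak{sl}(2|1)$ in Theorem 3.1 of \cite{MR1231215}.
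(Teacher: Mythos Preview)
The paper does not give a proof of this proposition at all; it simply records Musson's classification from Theorem~3.1 of \cite{MR1231215}. Your proposal is therefore aligned with the paper in that the decisive case, the atypical integral central character $\chi_0$, is ultimately delegated to Musson. Your additional attempt to handle the typical and non-integral cases by independent means is a reasonable elaboration (and your use of Section~\ref{sectyp} is a forward reference in the paper's logical order, though not a genuine circularity).

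However, your treatment of the non-integral atypical case contains an error. You assert that the simple modules $L(\lambda+k\gamma)$ in $\cO_\lambda$ ``all share one annihilator'', and deduce $J(\lambda)=J(s\cdot\lambda)$ from this. Both the assertion and the deduction fail. First, since $\langle\lambda,\alpha^\vee\rangle\notin\mZ$, we have $s\cdot\lambda-\lambda\notin\cP$, so $L(s\cdot\lambda)$ is not in $\cO_\lambda$; an argument internal to $\cO_\lambda$ cannot produce the equality $J(\lambda)=J(s\cdot\lambda)$. Second, the ideals $J(\lambda+k\gamma)$ are in fact pairwise distinct: Musson's theorem says the only equalities among primitive ideals for non-integral highest weights are $J(\lambda)=J(s\cdot\lambda)$, and $\lambda+k\gamma$ is never equal to $s\cdot\lambda$ for $k\neq 0$ because $s\cdot\lambda-\lambda$ is a (non-integral) multiple of $\alpha$ while $\gamma$ is not. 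The equality $J(\lambda)=J(s\cdot\lambda)$ in this case should instead be obtained by parabolic reduction to $\mathfrak{sl}(2)$ through the even simple root $\alpha=\epsilon_1-\epsilon_2$, exactly as in Lemma~\ref{primsl2}; verifying that no further inclusions occur among the $J(\lambda+k\gamma)$ again requires Musson's analysis rather than the ``short argument'' you allude to.
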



\section{Technical tools for primitive ideals}
\label{sectech}

For a central character $\chi:\cZ(\fg)\to\mC$ and a module $M\in\cO$ we define
\[M_\chi=\{v\in M\,|\, (z-\chi(z))^k v=0\, \mbox{ for all } z\in \cZ(\fg)\mbox{ for some }k\mbox{ depending on }z \}.\]
Any module in $\cO$ decomposes into such modules corresponding to different central characters. The following result is an immediate generalisation of the ideas in Section 2.7 in \cite{MR0453826} or Section 5.3 in \cite{MR0721170}.

\begin{lemma}
\label{IplusAnn}
Consider a $\fg$-module $M\in\cO$ and a central character $\chi:\mathcal{Z}(\fg)\to\mC$, with the corresponding maximal ideal $\mm_{\chi}=\ker\chi\in \Spec \mathcal{Z}(\fg)$ and the two-sided ideal $I_\chi= U(\fg)\mm_{\chi}$ in $ U(\fg)$. If the submodule $M_\chi$ admits the central character $\chi$, i.e. $\mathfrak{m}_\chi M_\chi=0$, then we have the equality
\[\Ann_{ U(\fg)}M_\chi=I_\chi+\Ann_{ U(\fg)}M.\]
If the condition $\mathfrak{m}_\chi M_\chi=0$ is dropped, we have the more general equality
\[\sqrt{\Ann_{ U(\fg)}M_\chi}=\sqrt{I_\chi+\Ann_{ U(\fg)}M},\]
where $\sqrt{X}$ stands for the intersection of all prime ideals containing $X$.
\end{lemma}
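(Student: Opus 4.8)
The strategy is to mimic the classical Lie algebra argument (Section 2.7 in \cite{MR0453826}, Section 5.3 in \cite{MR0721170}), paying attention only to the two points where the super setting could in principle interfere: the use of the projection onto the generalized eigenspace $M_\chi$, and the passage to radicals. Since $\cZ(\fg)$ is central in $U(\fg)$ and $M$ lies in $\cO$, the decomposition $M=\bigoplus_{\psi}M_\psi$ into generalized central-character submodules is finite and is a decomposition of $\fg$-modules; this is the only structural input needed and it holds verbatim over superalgebras.

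First I would treat the case $\mathfrak{m}_\chi M_\chi=0$. One inclusion is immediate: $I_\chi$ annihilates $M_\chi$ by hypothesis, and $\Ann_{U(\fg)}M\subseteq\Ann_{U(\fg)}M_\chi$ since $M_\chi$ is a subquotient (in fact a direct summand) of $M$; hence $I_\chi+\Ann_{U(\fg)}M\subseteq\Ann_{U(\fg)}M_\chi$. For the reverse inclusion, take $u\in U(\fg)$ with $uM_\chi=0$. Using the direct sum decomposition $M=M_\chi\oplus M'$ with $M'=\bigoplus_{\psi\neq\chi}M_\psi$, one shows that on $M'$ each $z-\chi(z)$ acts invertibly (since $z-\psi(z)$ is nilpotent there and $\chi(z)\neq\psi(z)$ for at least one $z$, giving invertibility of a suitable polynomial in the $z$'s); more precisely, pick $z_0\in\cZ(\fg)$ and a polynomial $q$ such that $q(z_0)$ acts as zero on $M_\chi$ and invertibly on $M'$ — this is the standard Chinese-remainder / idempotent-in-the-completion argument applied to the finitely many central characters occurring in $M$. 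Then $u\cdot q(z_0)\in I_\chi$ (as $q(z_0)\in\mathfrak{m}_\chi U(\fg)$, using that $q$ can be chosen with $q(\chi(z_0))=0$) and $u-u\cdot(\text{correction})$ kills all of $M$, placing $u$ in $I_\chi+\Ann_{U(\fg)}M$.

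For the general statement (dropping $\mathfrak{m}_\chi M_\chi=0$), I would pass to $N:=M_\chi/\mathfrak{m}_\chi M_\chi$, or more efficiently argue with the filtration of $M_\chi$ by powers $\mathfrak{m}_\chi^k M_\chi$, which is finite because $M\in\cO$ is a Noetherian $U(\fg)$-module and each $z-\chi(z)$ is locally nilpotent on $M_\chi$. Each successive quotient $\mathfrak{m}_\chi^k M_\chi/\mathfrak{m}_\chi^{k+1}M_\chi$ does satisfy the vanishing hypothesis, so the first part applies to it; one then assembles the information using that $\Ann$ of a module with a finite filtration contains the product of the annihilators of the subquotients, and that for ideals $\sqrt{\,\prod_i A_i\,}=\bigcap_i\sqrt{A_i}$. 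Combining these, $\sqrt{\Ann_{U(\fg)}M_\chi}$ is squeezed between $\sqrt{I_\chi+\Ann_{U(\fg)}M}$ and itself; I would also need the easy observation $\sqrt{I_\chi+\Ann_{U(\fg)}M}=\sqrt{\sqrt{I_\chi}+\Ann_{U(\fg)}M_\chi}$ type manipulations, all of which are formal once the filtration is in place.

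The only genuine obstacle is bookkeeping around the radical: $U(\fg)$ is noncommutative, so $\sqrt{X}$ must be read as the intersection of prime ideals containing $X$ (as the statement specifies), and the identity $\sqrt{\prod A_i}=\bigcap\sqrt{A_i}$ together with $\sqrt{\sqrt{A}+B}=\sqrt{A+B}$ must be justified for two-sided ideals in this setting — these are standard but worth citing (e.g. via the correspondence with the prime spectrum), and they are exactly the facts already used implicitly in the Lie algebra references. Nothing about the $\mZ_2$-grading enters: $\cZ(\fg)$ sits in the even part, the central-character decomposition is a decomposition of ordinary modules, and all the ring-theoretic manipulations are insensitive to the superstructure. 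Hence the proof is, as the statement says, an immediate generalisation.
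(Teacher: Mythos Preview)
Your treatment of the first claim is essentially correct, though phrased differently from the paper. You construct (via Chinese remainder) a central element that acts as a projector onto $M_\chi$, while the paper uses the modular law together with comaximality of $\mathfrak{m}_\chi$ and the other $\mathfrak{m}_{\chi^i}$ to show directly that $I_\chi+\bigcap_{i\ge 1}\Ann_{U(\fg)}M_{\chi^i}=U(\fg)$. These are two sides of the same coin; your sketch is a bit vague at the ``$u-u\cdot(\text{correction})$'' step, but the intended polynomial $p(c)$ with $p(0)=1$ and high-order zeroes at the $\chi^i(c)$ makes $u=u(1-p(c))+up(c)\in I_\chi+\Ann_{U(\fg)}M$, so this can be made precise.

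The second claim, however, has a genuine gap. Filtering $M_\chi$ by $\mathfrak{m}_\chi^kM_\chi$ and saying ``the first part applies to each subquotient'' does not by itself produce any relation to $\Ann_{U(\fg)}M$: if you apply the first part with the subquotient $G_k$ playing the role of both $M$ and $M_\chi$, you get the tautology $\Ann G_k=I_\chi+\Ann G_k$. Your assembly step $\sqrt{\Ann M_\chi}=\bigcap_k\sqrt{\Ann G_k}$ is fine, but you never explain why each $\sqrt{\Ann G_k}$ equals $\sqrt{I_\chi+\Ann_{U(\fg)}M}$, and the closing remark about ``$\sqrt{I_\chi+\Ann M}=\sqrt{\sqrt{I_\chi}+\Ann M_\chi}$ type manipulations'' does not fill this hole. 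One way to repair your approach is to semisimplify all of $M$ (not just $M_\chi$): then $\sqrt{\Ann M}=\sqrt{\Ann M^{\mathrm{ss}}}$ and $\sqrt{\Ann M_\chi}=\sqrt{\Ann (M^{\mathrm{ss}})_\chi}$, and now $(M^{\mathrm{ss}})_\chi$ genuinely satisfies the vanishing hypothesis so the first part applies to the pair $(M^{\mathrm{ss}},(M^{\mathrm{ss}})_\chi)$.

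The paper avoids this detour entirely by arguing directly at the level of prime ideals: if $Q$ is prime over $I_\chi+\Ann_{U(\fg)}M$, then $Q$ contains some $\Ann_{U(\fg)}M_{\chi^i}$ (primality applied to the product inside the intersection), hence contains $I_{\chi^i}$; since $I_\chi+I_{\chi^i}=U(\fg)$ for $i\neq 0$, one is forced to $i=0$, i.e.\ $\Ann_{U(\fg)}M_\chi\subseteq Q$. This is shorter and sidesteps the bookkeeping with radicals of sums that you flag as ``the only genuine obstacle''.
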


\begin{proof}
The module $M$ decomposes as $M=M_\chi\bigoplus \oplus_{i=1}^k M_{\chi^i}$ for $k$ other central characters $\chi^i$. If $\mm_\chi M_\chi=0$, then $I_\chi\subset \Ann_{U(\fg)}M_\chi$. Therefore
\begin{eqnarray*}I_\chi+\Ann_{U(\fg)}M&=&I_\chi+\left(\cap_{i=1}^k \Ann_{U(\fg)}M_{\chi^i}\,\cap\, \Ann_{U(\fg)}M_\chi\right)\\
&=&\left(I_\chi+\cap_{i=1}^k \Ann_{U(\fg)}M_{\chi^i}\right)\,\cap\, \Ann_{U(\fg)}M_\chi.\end{eqnarray*}
The ideal $I_\chi+\cap_{i=1}^k \Ann_{U(\fg)}M_{\chi^i}$ contains the entire centre $\mathcal{Z}(\fg)$ since $\mm_\chi$ is a maximal ideal. Since $1\in\mathcal{Z}(\fg)$, we have $\left(I_\chi+\cap_{i=1}^k \Ann_{U(\fg)}M_{\chi^i}\right)=U(\fg)$ and the first statement follows.

Now we consider the general case and we set $\chi^0:=\chi$. Let $Q$ be a prime ideal which contains $I_\chi+\Ann_{ U(\fg)}M$. Since $\Ann_{ U(\fg)}M\subset Q$, there is an $0\le i\le k$ for which $\Ann_{U(\fg)}M_{\chi^i}\subset Q$. Furthermore, there is an $l\in\mN$ such that $I_{\chi^i}^l\subset \Ann_{U(\fg)}M_{\chi^i}$, thus $I_{\chi^i}\subset Q$ follows. Since $I_{\chi^0}\subset Q$ and $I_{\chi^0}+I_{\chi^i}=U(\fg)$ if $i\not=0$, we obtain $\Ann_{U(\fg)}M_{\chi}\subset Q$. 

If $Q$ is a prime ideal which contains $\Ann_{ U(\fg)}M_\chi$, then similarly $I_\chi\subset Q$ and naturally $\Ann_{U(\fg)}M\subset \Ann_{U(\fg)}M_\chi\subset Q$, so $Q$ contains $I_\chi+\Ann_{ U(\fg)}M$, which completes the proof.
\end{proof}

This leads immediately to the following conclusion.

\begin{corollary}
\label{corIAnn}
Assume that for two $\fg$-modules we have $\Ann_{U(\fg)}M\subseteq \Ann_{U(\fg)}N$. If for a central character $\chi: \mathcal{Z}(\fg)\to \mC$ we have $M_\chi=0$, then $N_\chi=0$ follows.
\end{corollary}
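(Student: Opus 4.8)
The plan is to feed the two hypotheses into Lemma~\ref{IplusAnn} twice, once in its non-radical and once in its radical form. First I would apply the lemma to $M$. Since $M_\chi=0$, the condition $\mm_\chi M_\chi=0$ holds trivially, so the first formula of the lemma applies and yields $\Ann_{U(\fg)}M_\chi=I_\chi+\Ann_{U(\fg)}M$. But $M_\chi=0$ forces $\Ann_{U(\fg)}M_\chi=U(\fg)$, hence $I_\chi+\Ann_{U(\fg)}M=U(\fg)$.

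Next I would transport this equality across the inclusion of annihilators: from $\Ann_{U(\fg)}M\subseteq\Ann_{U(\fg)}N$ we obtain $U(\fg)=I_\chi+\Ann_{U(\fg)}M\subseteq I_\chi+\Ann_{U(\fg)}N$, so $I_\chi+\Ann_{U(\fg)}N=U(\fg)$ as well. Now I would apply the general (radical) part of Lemma~\ref{IplusAnn} to $N$ — here one cannot assume $\mm_\chi N_\chi=0$, which is precisely why the radical version is needed — to get $\sqrt{\Ann_{U(\fg)}N_\chi}=\sqrt{I_\chi+\Ann_{U(\fg)}N}=\sqrt{U(\fg)}=U(\fg)$.

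Finally I would invoke the elementary fact that, in the unital ring $U(\fg)$, the intersection of all prime ideals containing a given ideal equals $U(\fg)$ only when that ideal is already $U(\fg)$ (every proper ideal is contained in a maximal, hence prime, ideal, by Zorn's lemma). Therefore $\Ann_{U(\fg)}N_\chi=U(\fg)$, i.e. $1\cdot v=0$ for every $v\in N_\chi$, which forces $N_\chi=0$. There is no genuine obstacle here; the only point deserving a moment's care is that the radical formula must be used for $N$, since the generalised eigenspace $N_\chi$ need not be annihilated by $\mm_\chi$, whereas for $M$ the simpler formula suffices because $M_\chi=0$ renders its hypothesis vacuous.
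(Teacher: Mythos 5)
Your proof is correct and follows essentially the same route as the paper's: both apply Lemma~\ref{IplusAnn} to $M$ to conclude $I_\chi+\Ann_{U(\fg)}M=U(\fg)$, transport this through the inclusion of annihilators, and then apply the radical form of the lemma to $N$ to force $\Ann_{U(\fg)}N_\chi=U(\fg)$. The paper's version is simply terser; your remark that the non-radical formula suffices for $M$ (since $\mm_\chi M_\chi=0$ is vacuous when $M_\chi=0$) while the radical formula is needed for $N$ is accurate and matches the structure of the paper's argument.
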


\begin{proof}
Lemma \ref{IplusAnn} implies that $U(\fg)\subseteq\sqrt{\Ann_{U(\fg)}N_\chi}$. Since $\Ann_{U(\fg)}N_\chi\not= U(\fg)$ would imply that $\Ann_{U(\fg)}N_\chi$ is contained in a non-trivial maximal ideal, the result $N_\chi=0$ follows.
\end{proof}

As in Lemma 15.3.17 in \cite{MR2906817} or in Section 2.4 in \cite{MR0453826}, we have the following result.
\begin{lemma}
\label{Anntens}
For two $\fg$-modules $M$ and $E$ with $E$ finite dimensional and simple, the annihilator ideal of the tensor product $M\otimes E$ is given by
\[\mathbf{\Delta}^{-1}(\Ann_{ U(\fg)}M\otimes  U(\fg)+ U(\fg)\otimes \Ann_{ U(\fg)}E)\]
where $\mathbf{\Delta}: U(\fg)\to  U(\fg)\otimes U(\fg)$ is the comultiplication of the Hopf superalgebra $ U(\fg)$.
\end{lemma}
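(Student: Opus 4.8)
The plan is to prove the two inclusions separately, imitating the classical Dixmier--Jantzen argument (the one underlying Lemma 15.3.17 in \cite{MR2906817}) but keeping track of the Koszul signs coming from the fact that $U(\fg)$ is a Hopf \emph{super}algebra. Recall that the action on $M\otimes E$ is given by the coproduct via $u\cdot(m\otimes e)=\sum(-1)^{|u_{(2)}||m|}u_{(1)}m\otimes u_{(2)}e$ on homogeneous elements. For the inclusion $\mathbf{\Delta}^{-1}(\Ann_{U(\fg)}M\otimes U(\fg)+U(\fg)\otimes \Ann_{U(\fg)}E)\subseteq \Ann_{U(\fg)}(M\otimes E)$ I would just take $u$ in the left-hand side, write $\mathbf{\Delta}(u)=\sum_k a_k\otimes b_k+\sum_l c_l\otimes d_l$ with all tensor factors homogeneous, $a_k\in\Ann_{U(\fg)}M$ and $d_l\in\Ann_{U(\fg)}E$, and evaluate on a homogeneous vector $m\otimes e$: every summand acquires a factor that kills either $m$ or $e$, so $u(m\otimes e)=0$, whence $u\in\Ann_{U(\fg)}(M\otimes E)$.

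The substantive direction is the reverse inclusion, where the hypothesis on $E$ enters through finite-dimensionality. Set $\bar A:=U(\fg)/\Ann_{U(\fg)}E$; this is a finite-dimensional superalgebra, $E$ is a faithful $\bar A$-module, and so the structure map $\rho\colon\bar A\hookrightarrow \mathrm{End}(E)$ is injective. Fix $v_1,\dots,v_r\in U(\fg)$ whose images $\bar v_1,\dots,\bar v_r$ form a basis of $\bar A$; then $U(\fg)\otimes U(\fg)=(\bigoplus_{i=1}^r U(\fg)\otimes v_i)\oplus(U(\fg)\otimes\Ann_{U(\fg)}E)$ as vector spaces. Given $a\in\Ann_{U(\fg)}(M\otimes E)$, write accordingly $\mathbf{\Delta}(a)=\sum_{i=1}^r u_i\otimes v_i+z$ with $z\in U(\fg)\otimes\Ann_{U(\fg)}E$ and the $u_i$ homogeneous. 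Since $z$ annihilates $M\otimes E$, evaluating $a$ on a homogeneous $m\otimes e$ gives $\sum_i(-1)^{|v_i||m|}u_i m\otimes \rho(\bar v_i)e=0$ for every $e\in E$.

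From here the argument is linear algebra over $\mC$. For fixed homogeneous $m$ the last identity says that $\sum_i(-1)^{|v_i||m|}u_im\otimes \rho(\bar v_i)\in M\otimes\mathrm{End}(E)$ is sent to zero under the natural isomorphism $M\otimes\mathrm{End}(E)\cong\Hom(E,M\otimes E)$, which is an isomorphism precisely because $E$ is finite-dimensional. As $\rho(\bar v_1),\dots,\rho(\bar v_r)$ are linearly independent in $\mathrm{End}(E)$, extending them to a basis identifies $M\otimes\mathrm{End}(E)$ with a direct sum of copies of $M$ in which the coefficient of $\rho(\bar v_i)$ is the nonzero scalar multiple $(-1)^{|v_i||m|}u_im$; hence $u_im=0$. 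Letting $m$ range over $M$ gives $u_i\in\Ann_{U(\fg)}M$ for all $i$, so $\mathbf{\Delta}(a)=\sum_i u_i\otimes v_i+z\in \Ann_{U(\fg)}M\otimes U(\fg)+U(\fg)\otimes\Ann_{U(\fg)}E$, i.e. $a$ lies in the right-hand side.

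I expect the only point requiring care to be the bookkeeping with the super signs attached to the coproduct action: because these enter merely as nonzero scalars multiplying the terms $u_im\otimes\rho(\bar v_i)$, they do not disturb any linear-independence conclusion, provided one works with homogeneous elements throughout. (Note that simplicity of $E$ is not actually needed; what is used is finite-dimensionality, both for faithfulness of $\bar A$ on $E$ and for the isomorphism $M\otimes\mathrm{End}(E)\cong\Hom(E,M\otimes E)$.)
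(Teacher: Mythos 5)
Your proof is correct. The paper itself does not supply an argument for Lemma \ref{Anntens} --- it merely states the result and points to Lemma 15.3.17 in Musson's book and to Borho--Jantzen --- so you are filling in a proof that the paper omits, and you do so along the classical Borho--Jantzen/Dixmier lines, which is indeed what the cited references use. The two substantive points you handle are exactly the right ones: (a) the vector-space splitting $U(\fg)=\langle v_1,\dots,v_r\rangle\oplus\Ann_{U(\fg)}E$ coming from the finite dimensionality of $\bar A=U(\fg)/\Ann_{U(\fg)}E$, which is what makes the decomposition of $\mathbf{\Delta}(a)$ and the subsequent linear-independence argument in $\mathrm{End}(E)$ possible; and (b) the observation that the Koszul signs arising from the superalgebra coproduct only multiply each summand $u_i m\otimes\rho(\bar v_i)$ by a nonzero scalar, so the vanishing conclusions $u_i m=0$ are unaffected. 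Two points you gloss over but which are easily repaired: you should say explicitly that one may reduce to $a$ homogeneous (both sides of the claimed equality are graded subspaces of $U(\fg)$), and that the $v_i$ can be chosen homogeneous (since $\Ann_{U(\fg)}E$ is a graded ideal, $\bar A$ has a homogeneous basis); with those choices the $u_i$ are automatically homogeneous, which is what your sign bookkeeping tacitly assumes. Your final remark that simplicity of $E$ is not used is also correct --- faithfulness of the $\bar A$-action on $E$ holds by construction, and only finite dimensionality of $E$ enters --- so your proof in fact establishes a slightly more general statement than the one the paper records.
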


The following lemma corresponds to Lemma 7.6.15 in \cite{MR2906817} or Lemma 3.8 in \cite{MR0453826}. 

\begin{lemma}\label{induct}
Consider a subalgebra $\fa$ of $\fg$. The annihilator ideal of the $\fg$-module $\ind^\fg_\fa M$ for an $\fa$-module $M$ satisfies
\[\Ann_{ U(\fg)}\ind^\fg_\fa M=\Ann_{ U(\fg)}\left(  U(\fg)/\left( U(\fg)\Ann_{ U(\fa)}M\right) \right).\]
In particular, for two $\fa$-modules $M_1$ and $M_2$ it follows that
\[\Ann_{ U(\fa)}M_1\subseteq \Ann_{ U(\fa)}M_2\quad\mbox{implies}\quad \Ann_{ U(\fg)}\ind^{\fg}_{\fa}M_1\subseteq \Ann_{ U(\fg)}\ind^{\fg}_{\fa}M_2.\]
\end{lemma}

An immediate consequence for classical Lie superalgebras is that annihilator ideals of simple finite dimensional modules can never coincide.

\begin{corollary}
\label{lemintdom}
If $\Lambda\in\cP^+$ and $\mu\in\fh_{\oa}^\ast$ are such that $J(\Lambda)=J(\mu)$, then $\Lambda=\mu$.
\end{corollary}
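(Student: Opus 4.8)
The plan is to deduce this from Lemma~\ref{induct} applied to the classical situation $\fa=\fg_{\oa}$, $\fg$ a classical Lie superalgebra, together with the standard fact that annihilators of simple \emph{finite-dimensional} modules over the reductive Lie algebra $\fg_{\oa}$ separate their (dominant integral) highest weights. The key observation is that since $\Lambda\in\cP^+$, the simple $\fg$-module $L(\Lambda)$ is finite-dimensional, and a finite-dimensional $\fg$-module is in particular a finite-dimensional $\fg_{\oa}$-module; moreover its $\fg_{\oa}$-socle or top contains $L_{\oa}(\Lambda)$ (the $\Lambda$-weight space is one-dimensional and is a highest weight vector for $\fg_{\oa}$ as well, since $\Pi_{\oa}\subset\Delta^+_{\oa}\subset\Delta^+$). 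The strategy is therefore to pass from the equality $J(\Lambda)=J(\mu)$ of $\fg$-annihilators down to an inequality of $\fg_{\oa}$-annihilators and conclude via Lemma~\ref{lemintdom}'s classical counterpart that $\Lambda$ and $\mu$ agree.

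Concretely, first I would note that $L(\Lambda)$ is a quotient of $\Ind^{\fg}_{\fg_{\oa}}L_{\oa}(\Lambda)$ (induce the $\fg_{\oa}$-highest-weight module generating $L(\Lambda)$ up to $\fg$, then project), so by Lemma~\ref{induct} we get $\Ann_{U(\fg)}\Ind^{\fg}_{\fg_{\oa}}L_{\oa}(\Lambda)\subseteq J(\Lambda)$. Dually, restricting $L(\Lambda)$ to $\fg_{\oa}$ and using that $L_{\oa}(\Lambda)$ appears as a composition factor, one gets $I(\Lambda)\supseteq\Ann_{U(\fg_{\oa})}\Res L(\Lambda)$. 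The cleanest route is: from $J(\mu)=J(\Lambda)\supseteq \Ann_{U(\fg)}\Ind^{\fg}_{\fg_{\oa}}L_{\oa}(\Lambda)$ and the description in Lemma~\ref{induct} of the latter ideal as $\Ann_{U(\fg)}(U(\fg)/(U(\fg)I(\Lambda)))$, together with the fact that $U(\fg)$ is free as a right $U(\fg_{\oa})$-module (PBW), deduce that $U(\fg)I(\Lambda)\cap U(\fg_{\oa}) = I(\Lambda)$, hence $I(\Lambda)\subseteq \Ann_{U(\fg_{\oa})}\Res L(\mu)$. Since $\Res L(\mu)$ is a finite-dimensional $\fg_{\oa}$-module (as $\mu$ must now also be seen to be integral dominant — this follows because $J(\mu)=J(\Lambda)$ forces $L(\mu)$ to have finite length with the same central character data, but more directly: $U(\fg)/J(\mu)$ being finite-dimensional over its centre modulo the kernel forces $\dim L(\mu)<\infty$), every composition factor of $\Res L(\mu)$ is some $L_{\oa}(\nu)$ with $\nu$ integral dominant, and $I(\Lambda)$ is contained in each such $I(\nu)$. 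The classical result (Lemma~\ref{lemintdom} with $\fg_{\oa}$ in place of $\fg$, or directly Corollary~2.13 of \cite{MR0453826}) then forces $\Lambda$ to equal one of these $\nu$, and a symmetric argument with the roles of $\Lambda,\mu$ interchanged pins down $\Lambda=\mu$.

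The main obstacle I anticipate is the bookkeeping needed to go from $J(\Lambda)=J(\mu)$ to a genuine containment of $\fg_{\oa}$-annihilators that one can feed into the classical separation result — in particular, verifying that $L(\mu)$ is finite-dimensional (so that $\Res L(\mu)$ has only integral dominant highest weights among its factors) and that the highest weight $\Lambda$ of $L(\Lambda)$ really does survive as a highest weight for $\fg_{\oa}$ inside $\Res L(\Lambda)$ rather than being killed. The finiteness of $L(\mu)$ should come from the observation that $U(\fg)/J(\Lambda)$ is finite-dimensional (since $L(\Lambda)$ is finite-dimensional and faithful over $U(\fg)/J(\Lambda)$, and by Jacobson density $U(\fg)/J(\Lambda)\hookrightarrow \mathrm{End}_{\mC}(L(\Lambda))$), whence $U(\fg)/J(\mu)$ is finite-dimensional and so $L(\mu)$, being a faithful module over it, is finite-dimensional. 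Once finiteness is in hand, the identification of highest weights is routine: the $\Lambda$-weight space of $L(\Lambda)$ is one-dimensional and annihilated by all $\fg_{\oa}$-positive root vectors (because $\Delta^+_{\oa}\subseteq\Delta^+$ in the relevant Borel, after possibly reducing to the distinguished Borel), so it generates a $\fg_{\oa}$-subquotient isomorphic to $L_{\oa}(\Lambda)$, and symmetrically for $\mu$. I would then close the argument by applying the classical fact in both directions to get $I(\Lambda)\subseteq I(\mu')$ for some dominant integral $\mu'$ a factor-weight of $L(\mu)$ and $I(\mu)\subseteq I(\Lambda')$ similarly, and using that the highest weight is the unique maximal weight to force $\mu'=\mu$, $\Lambda'=\Lambda$, hence $I(\Lambda)=I(\mu)$ and $\Lambda=\mu$.
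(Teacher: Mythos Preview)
Your argument that $L(\mu)$ must be finite-dimensional (via the density theorem, so that $U(\fg)/J(\Lambda)$ embeds in $\mathrm{End}_{\mC}L(\Lambda)$ and hence is finite-dimensional, and then any simple module over this finite-dimensional algebra is finite-dimensional) is correct and is a valid alternative to the paper's use of Gelfand--Kirillov dimension.

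However, the central step of your argument does not go through. You claim that from $\Ann_{U(\fg)}\Ind L_{\oa}(\Lambda)\subseteq J(\mu)$ and the freeness of $U(\fg)$ over $U(\fg_{\oa})$ one can deduce $I(\Lambda)\subseteq \Ann_{U(\fg_{\oa})}\Res L(\mu)$. This is where the argument breaks. The ideal $\Ann_{U(\fg)}\Ind L_{\oa}(\Lambda)$ is the largest two-sided ideal of $U(\fg)$ contained in the \emph{left} ideal $U(\fg)I(\Lambda)$, and $I(\Lambda)$ itself is not a two-sided ideal of $U(\fg)$. Intersecting with $U(\fg_{\oa})$ gives
\[
\Ann_{U(\fg)}\Ind L_{\oa}(\Lambda)\cap U(\fg_{\oa})=\Ann_{U(\fg_{\oa})}\bigl(\Lambda\fg_{\ob}\otimes L_{\oa}(\Lambda)\bigr)\subseteq I(\Lambda),
\]
which is the \emph{opposite} inclusion to the one you need. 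Indeed the inclusion $I(\Lambda)\subseteq \Ann_{U(\fg_{\oa})}\Res L(\mu)$ is in general false: since $I(\Lambda)$ is the unique maximal primitive ideal in its $\fg_{\oa}$-block, this inclusion would force every $\fg_{\oa}$-composition factor of $\Res L(\mu)$ to be isomorphic to $L_{\oa}(\Lambda)$, which almost never happens.

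The paper avoids this trap by not attempting to compare $I(\Lambda)$ and $I(\mu)$ directly. Instead, from $J(\Lambda)=J(\mu)$ it gets $\Ann_{U(\fg_{\oa})}\Res L(\Lambda)=\Ann_{U(\fg_{\oa})}\Res L(\mu)$, and then uses Corollary~\ref{corIAnn} for $\fg_{\oa}$: assuming (without loss of generality) $\Lambda\not<\mu$, the $\fg_{\oa}$-central character $\chi^{\oa}_{\Lambda}$ occurs in $\Res L(\Lambda)$ (via $L_{\oa}(\Lambda)$) but cannot occur in $\Res L(\mu)$ (since $\Lambda$ is not a weight of $L(\mu)$), contradicting the equality of annihilators. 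Your strategy \emph{can} be salvaged along similar lines by exploiting primeness of primitive ideals: from $\bigcap_{\Lambda'}I(\Lambda')=\bigcap_{\mu'}I(\mu')$ (intersections over composition factors of the two restrictions) and primeness of $I(\mu)$, one finds some $\Lambda'$ with $I(\Lambda')\subseteq I(\mu)$, hence $\Lambda'=\mu$, so $\mu\le\Lambda$; symmetrically $\Lambda\le\mu$. But as written, the implication you assert in the ``cleanest route'' paragraph is the decisive gap.
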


\begin{proof}
The Gelfand-Kirillov dimension immediately implies that $L(\mu)$ is finite dimensional as well, so also $\mu$ is integral dominant and we write $\Lambda_1=\Lambda$, $\Lambda_2=\mu$.

If $\Lambda_1\not=\Lambda_2$, we can assume without loss of generality that $\Lambda_1 \not< \Lambda_2$. It follows immediately that $\left(\Res L(\Lambda_2)\right)_{\chi_{\Lambda_1}^{\oa}}=0$ while  $\left(\Res L(\Lambda_1)\right)_{\chi_{\Lambda_1}^{\oa}}=L_{\oa}(\Lambda_1)$. Corollary \ref{corIAnn} applied to the Lie algebra $\fg_{\oa}$ therefore yields the inequality $J(\Lambda_1)\cap U(\fg_{\oa})\not= J(\Lambda_2)\cap U(\fg_{\oa})$, which concludes the proof.
\end{proof}

The following results are applications of the ideas for primitive ideals of Lie algebras in Section 15.3 in \cite{MR0721170} or Section 15.3.2 in \cite{MR2906817}. A parabolic subalgebra $\fp$ of $\fg$, see \cite{preprint, MR2906817}, is a subalgebra that contains the Borel subalgebra $\fb=\fh\oplus\fn^+$. We have the corresponding parabolic decomposition
\[\fg=\fu^-\oplus\fl\oplus\fu^+\]
with $\fp=\fl\oplus\fu^+$, where $\fl$ is the Levi subalgebra and $\fu^+$ the radical of $\fp$. Define $\zeta_{\fl}:U(\fg)\to U(\fl)$ as the corresponding partial Harish-Chandra projection with kernel $\mathfrak{u}^-U(\fg)+U(\fg)\mathfrak{u}^+$. The simple $\fl$-module with highest weight $\lambda$ is denoted by $L_{\fl}(\lambda)\cong L(\lambda)^{\fu^+}$, with $M^{\fu^+}$ the $\fl$-module consisting of vectors in the $\fg$-module $M$ annihilated by all elements of $\fu^+$, and the corresponding primitive ideal by $I_{\fl}(\lambda)=\Ann_{U(\fl)}L_{\fl}(\lambda)$.

\begin{lemma}
\label{Ianuuu}
The primitive ideal $J(\lambda)$ for $\lambda\in\fh_{\oa}^\ast$ can be expressed as
\[J(\lambda)=\{u\in U(\fg)| \zeta_{\fl}(u_1uu_2)\in I_{\fl}(\lambda)\mbox{ for all }u_1,u_2\in U(\fg)\}.\]
\end{lemma}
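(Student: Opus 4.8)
The plan is to adapt the classical argument (Section~15.3 in \cite{MR0721170}, Section~15.3.2 in \cite{MR2906817}) for the parabolic description of primitive ideals, being careful with the super setting. Denote by $K$ the right-hand side, i.e. $K=\{u\in U(\fg)\mid \zeta_{\fl}(u_1uu_2)\in I_{\fl}(\lambda)\text{ for all }u_1,u_2\in U(\fg)\}$. First I would observe that $K$ is a two-sided ideal of $U(\fg)$: it is clearly a left ideal, and it is a right ideal because for $u\in K$ and $v\in U(\fg)$ one has $\zeta_{\fl}(u_1(uv)u_2)=\zeta_{\fl}(u_1u(vu_2))\in I_{\fl}(\lambda)$ by running over all right multipliers. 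The core of the proof is then to identify $K$ with $J(\lambda)=\Ann_{U(\fg)}L(\lambda)$.

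For the inclusion $J(\lambda)\subseteq K$, I would use the realisation $L_{\fl}(\lambda)\cong L(\lambda)^{\fu^+}$ together with the fact that $\zeta_{\fl}$ is, up to the $\fu^-U(\fg)+U(\fg)\fu^+$ correction, the action on highest-weight-type vectors. Concretely: the space $L(\lambda)^{\fu^+}$ is an $\fl$-submodule of $\Res^{\fg}_{\fl}L(\lambda)$ isomorphic to $L_{\fl}(\lambda)$, and for $u_1,u_2\in U(\fg)$, $v\in L(\lambda)^{\fu^+}$, the element $u_1uu_2\cdot v$ has the same image in $L(\lambda)^{\fu^+}$ (under the projection killing the weight spaces not of the form $\lambda-\mathbb{Z}_{\ge0}\Pi$, or more precisely using that $\fu^+ v=0$ and that the $\fu^-$-part produces lower weights) as $\zeta_{\fl}(u_1uu_2)\cdot v$. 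Hence if $u\in J(\lambda)$ then $u$ annihilates all of $L(\lambda)$, so $\zeta_{\fl}(u_1uu_2)$ annihilates $L_{\fl}(\lambda)$, i.e. lies in $I_{\fl}(\lambda)$; thus $u\in K$.

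For the reverse inclusion $K\subseteq J(\lambda)$, the standard idea is: $L(\lambda)$ is a quotient of the parabolically induced module $M_{\fp}(L_{\fl}(\lambda))=U(\fg)\otimes_{U(\fp)}L_{\fl}(\lambda)$ (with $\fu^+$ acting by zero on $L_{\fl}(\lambda)$), so $J(\lambda)\supseteq\Ann_{U(\fg)}M_{\fp}(L_{\fl}(\lambda))$. Applying Lemma~\ref{induct} with $\fa=\fp$ gives $\Ann_{U(\fg)}M_{\fp}(L_{\fl}(\lambda))=\Ann_{U(\fg)}\bigl(U(\fg)/U(\fg)\Ann_{U(\fp)}L_{\fl}(\lambda)\bigr)$. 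One then checks that an element $u$ with $\zeta_{\fl}(u_1uu_2)\in I_{\fl}(\lambda)$ for all $u_1,u_2$ lies in this ideal: using the PBW decomposition $U(\fg)=U(\fu^-)\otimes U(\fl)\otimes U(\fu^+)$ and $\fu^+ L_{\fl}(\lambda)=0$, the action of $u$ on $1\otimes L_{\fl}(\lambda)$ and all its translates is controlled by the various $\zeta_{\fl}(u_1uu_2)$, which all act as elements of $I_{\fl}(\lambda)$; this shows $u$ annihilates $M_{\fp}(L_{\fl}(\lambda))$ and a fortiori $L(\lambda)$. Combining the two inclusions gives $K=J(\lambda)$.

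The step I expect to be the main obstacle is making precise the interaction between $\zeta_{\fl}$ and the $\fg$-action on the weight-$\lambda$-type subspace $L(\lambda)^{\fu^+}$ — i.e. verifying that $u_1uu_2$ acts on $L(\lambda)^{\fu^+}$ exactly through $\zeta_{\fl}(u_1uu_2)$ modulo lower weight spaces, and that this is enough to detect membership in $I_{\fl}(\lambda)$ for \emph{all} choices of $u_1,u_2$. In the super setting one must also make sure nothing goes wrong with the $\mathbb{Z}_2$-grading: the projection $\zeta_{\fl}$ is a homomorphism of superalgebras (or at least of the relevant structure), $\fu^\pm$ and $\fl$ are graded subspaces, and $L_{\fl}(\lambda)$ is a simple $\fl$-supermodule, so the reduction-to-$\fl$ arguments of \cite{MR0453826} go through verbatim; I would simply remark that all the cited statements (PBW, the Harish-Chandra projection, Lemma~\ref{induct}) hold in the super context and cite \cite{MR2906817} for this, rather than redo the bookkeeping.
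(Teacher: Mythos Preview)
Your argument for $J(\lambda)\subseteq K$ is essentially the paper's, though the paper makes the projection precise by first proving the $\fl$-module decomposition $L(\lambda)=L(\lambda)^{\fu^+}\oplus\fu^- L(\lambda)$ (using that the augmented ideal $U(\fu^-)_+$ is $\mathrm{ad}\,\fl$-stable) and then noting $\mP(uv)=\zeta_{\fl}(u)v$ for $v\in L(\lambda)^{\fu^+}$.

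The reverse inclusion, however, has a genuine gap. You claim that $u\in K$ annihilates the parabolic Verma module $M_{\fp}(L_{\fl}(\lambda))$, and hence $L(\lambda)$. But this would force $K\subseteq\Ann_{U(\fg)}M_{\fp}(L_{\fl}(\lambda))$, and combined with $J(\lambda)\subseteq K$ and the trivial inclusion $\Ann_{U(\fg)}M_{\fp}(L_{\fl}(\lambda))\subseteq J(\lambda)$, you would get $J(\lambda)=\Ann_{U(\fg)}M_{\fp}(L_{\fl}(\lambda))$. This is false already for $\fg=\mathfrak{sl}(2)$, $\fp=\fb$, $\lambda$ dominant integral: the annihilator of the Verma module is generated by the central character, while $J(\lambda)$ has finite codimension. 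The issue is that $\zeta_{\fl}(u_1uu_2)$ only records the component of $u_1uu_2$ with trivial $U(\fu^-)$-part; the components with nontrivial $\fu^-$-monomials act nontrivially on $M_{\fp}(L_{\fl}(\lambda))$ and are \emph{not} controlled by the hypothesis on $\zeta_{\fl}$.

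The paper avoids this by using the simplicity of $L(\lambda)$ directly: if $u\notin J(\lambda)$, pick $x,y\in L(\lambda)$ nonzero with $y=ux$; since $L(\lambda)$ is simple one can write $x=u_2x'$ and choose $u_1$ with $0\neq u_1y=:y'$ for some $x',y'\in L(\lambda)^{\fu^+}$. Then $y'=u_1uu_2x'$, and applying the projection $\mP$ shows $\zeta_{\fl}(u_1uu_2)\notin I_{\fl}(\lambda)$, i.e. $u\notin K$. The simplicity is exactly what lets you pull both endpoints back to $L(\lambda)^{\fu^+}$, and this step has no analogue in your parabolic-Verma route.
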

\begin{proof}
Since the augmented ideal $U(\fu^-)_+=\fu^-U(\fu^-)$ is stable under the adjoint $\fl$-action, $L(\lambda)^{\fu^+}\cap \fu^-L(\lambda)=0$ follows.
Therefore we have the decomposition of $\fl$-modules
\[L(\lambda)\,\,=\,\, L(\lambda)^{\fu^+}\,\oplus \,\fu^- L(\lambda).\]
We denote the $\fl$-invariant projection of $L(\lambda)$ onto $L(\lambda)^{\fu^+}\cong L_{\fl}(\lambda)$, orthogonal to $\fu^- L(\lambda)$, by $\mP$.

If $u\in J(\lambda)$, then, in particular, $\mP(uv)=0$ for every $v\in L(\lambda)^{\fu^+}$. The decomposition above implies $\mP(uv)=\zeta_{\fl}(u)v$, so $\zeta_{\fl}(u)\in I_{\fl}(\lambda)$. Since $J(\lambda)$ is an ideal, $u_1uu_2$ is also in $J(\lambda)$ and we obtain \[J(\lambda)\subset\{u\in U(\fg)| \zeta_{\fl}(u_1uu_2)\in I_{\fl}(\lambda)\mbox{ for all }u_1,u_2\in U(\fg)\}.\]

Now assume that for an $u\in U(\fg)$ we have that $\zeta_{\fl}(u_1uu_2)\in I_{\fl}(\lambda)\mbox{ for all }u_1,u_2\in U(\fg)$ but $u\not \in J(\lambda)$. Then there are nonzero $x,y\in L(\lambda)$ such that $y=ux$. Since $L(\lambda)$ is a simple $\fg$-module, we can write $x=u_2x'$ and $y'=u_1y$ for some $u_1,u_2\in U(\fg)$ and nonzero $x',y'\in L(\lambda)^{\fu^+}$, which gives $y'=u_1uu_2 x'$. This leads to a contradiction with $\zeta_{\fl}(u_1uu_2)\in I_{\fl}(\lambda)$.
\end{proof}

\begin{corollary}
\label{corIan}
Consider $\fg$ a classical Lie superalgebra with parabolic subalgebra $\fp=\fl+\fu^+$. If $I_{\fl}(\lambda)\subset I_{\fl}(\mu)$, then $J(\lambda)\subset J(\mu)$.
\end{corollary}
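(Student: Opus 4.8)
The plan is to apply Lemma \ref{Ianuuu} directly, in both directions. That lemma gives, for \emph{any} weight $\nu\in\fh_{\oa}^\ast$, the intrinsic description
\[
J(\nu)=\{u\in U(\fg)\mid \zeta_{\fl}(u_1uu_2)\in I_{\fl}(\nu)\ \text{for all }u_1,u_2\in U(\fg)\},
\]
where $\zeta_{\fl}\colon U(\fg)\to U(\fl)$ is the partial Harish-Chandra projection attached to the parabolic $\fp=\fl\oplus\fu^+$. So the whole content of the corollary is that this description is manifestly monotone in the ``parameter'' $I_{\fl}(\nu)$.

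Concretely, I would argue as follows. Let $u\in J(\lambda)$. By the displayed characterisation applied with $\nu=\lambda$, we have $\zeta_{\fl}(u_1uu_2)\in I_{\fl}(\lambda)$ for all $u_1,u_2\in U(\fg)$. Invoking the hypothesis $I_{\fl}(\lambda)\subseteq I_{\fl}(\mu)$, it follows that $\zeta_{\fl}(u_1uu_2)\in I_{\fl}(\mu)$ for all $u_1,u_2\in U(\fg)$. Now apply the characterisation of Lemma \ref{Ianuuu} with $\nu=\mu$ in the other direction: this says exactly that $u\in J(\mu)$. Since $u\in J(\lambda)$ was arbitrary, $J(\lambda)\subseteq J(\mu)$, which is the assertion.

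I do not expect any real obstacle here: once Lemma \ref{Ianuuu} is in place the corollary is purely formal, the only thing to be careful about being that the description of $J(\nu)$ is used once to \emph{extract} information from $u\in J(\lambda)$ and once to \emph{recognise} membership in $J(\mu)$. (If one wanted to be pedantic, one could note in passing that the same argument shows more generally $\Ann_{U(\fg)}\,\ind_{\fl}^{\fg}\! N_1\subseteq \Ann_{U(\fg)}\,\ind_{\fl}^{\fg}\! N_2$ whenever $\Ann_{U(\fl)}N_1\subseteq\Ann_{U(\fl)}N_2$, via Lemma \ref{induct}, but for the statement as given the two-line deduction above suffices.)
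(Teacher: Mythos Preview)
Your proof is correct and essentially identical to the paper's own argument: both apply Lemma~\ref{Ianuuu} once to extract the condition $\zeta_{\fl}(u_1uu_2)\in I_{\fl}(\lambda)$ from $u\in J(\lambda)$, use the hypothesis $I_{\fl}(\lambda)\subset I_{\fl}(\mu)$, and then apply Lemma~\ref{Ianuuu} again to conclude $u\in J(\mu)$.
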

\begin{proof}
According to Lemma \ref{Ianuuu}, $u\in J(\lambda)$ implies that $\zeta_{\fl}(u_1uu_2)\in I_{\fl}(\lambda)\mbox{ for all }u_1,u_2\in U(\fg)$, so, in particular, $\zeta_{\fl}(u_1uu_2)\in I_{\fl}(\mu)\mbox{ for all }u_1,u_2\in U(\fg)$. Again by Lemma \ref{Ianuuu}, this implies that $u\in J(\mu)$.
\end{proof}

This implies that Corollary 5.14  in \cite{MR0721170} on simple reflections for Lie algebras can be generalised to reflections for even simple roots for Lie superalgebras.
\begin{lemma}
\label{primsl2}
Consider $\fg$ in list \eqref{list}, but not of queer type. If $\alpha$ simple in $\Delta_{\oa}^+$ and either $\alpha$ or $\alpha/2$ is a simple even root in $\Delta^+$, then
\begin{itemize}
\item $J(s_\alpha\cdot\lambda)=J(\lambda)$ if $\langle \lambda,\alpha^\vee\rangle\not\in\mZ$,
\item $J(s_\alpha\cdot\lambda)\subset J(\lambda)$ if $\langle \lambda,\alpha^\vee\rangle\in\mZ$ and $s_\alpha\cdot\lambda < \lambda$.
\end{itemize}
\end{lemma}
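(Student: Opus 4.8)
The plan is to reduce the statement to the parabolic comparison in Corollary~\ref{corIan}, using the subalgebra $\fl$ generated by $\fb$ together with the root space $\fg_{-\alpha}$ (or $\fg_{-\alpha/2}$ if $\alpha/2$ is the simple even root). Since $\alpha$ (or $\alpha/2$) is a simple even root of $\Delta^+$, the parabolic $\fp=\fl\oplus\fu^+$ is well defined, and because $\fg$ is not of queer type and $\alpha$ (resp.\ $\alpha/2$) is simple even, the Levi $\fl$ has the property that its odd part does not contribute a copy of $(\fg_{\ob})_{\alpha}$ interfering with the $\mathfrak{sl}(2)$-reduction --- this is exactly the situation where Lemma~\ref{lemfinfree} lets us pass freely between $L(\lambda)$ and $L_{\oa}(\lambda)$ for the relevant root. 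First I would identify $I_{\fl}(\lambda)$: the Levi $\fl$ is, up to centre and odd nilpotent pieces, essentially $\mathfrak{sl}(2)$ (times torus), so by the $\mathfrak{sl}(2)$-theory the primitive ideals $I_{\fl}(s_\alpha\cdot\lambda)$ and $I_{\fl}(\lambda)$ satisfy exactly the two bullet points of the statement with $J$ replaced by $I_{\fl}$: equality in the non-integral case, and strict inclusion $I_{\fl}(s_\alpha\cdot\lambda)\subsetneq I_{\fl}(\lambda)$ when $\langle\lambda,\alpha^\vee\rangle\in\mZ$ and $s_\alpha\cdot\lambda<\lambda$. This is the Lie-superalgebra analogue of Corollary~5.14 in \cite{MR0721170}, and for the small Levi it follows by the same reduction to $\mathfrak{sl}(2)$, combined with Lemma~\ref{lemfinfree} to handle the odd directions.

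Next, for the \emph{inclusion} direction of both bullets I would invoke Corollary~\ref{corIan}: since $I_{\fl}(s_\alpha\cdot\lambda)\subseteq I_{\fl}(\lambda)$ in both cases, we immediately get $J(s_\alpha\cdot\lambda)\subseteq J(\lambda)$. So the only remaining work is (i) to upgrade this to \emph{equality} in the non-integral case, and (ii) to establish \emph{strictness} in the integral case. For (i), in the non-integral case $\langle\lambda,\alpha^\vee\rangle\notin\mZ$ the reflection $s_\alpha$ is an involution fixing the relevant block combinatorics, and applying Corollary~\ref{corIan} in the other direction (to $s_\alpha\cdot\lambda$, whose reflection gives $\lambda$ back, using $s_\alpha\cdot(s_\alpha\cdot\lambda)=\lambda$ and $\langle s_\alpha\cdot\lambda,\alpha^\vee\rangle\notin\mZ$) gives $J(\lambda)\subseteq J(s_\alpha\cdot\lambda)$, hence equality.

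For (ii), strictness, I would use the central-character obstruction of Corollary~\ref{corIAnn} applied to $\fg_{\oa}$, exactly as in the proof of Corollary~\ref{lemintdom}. Assume $J(s_\alpha\cdot\lambda)=J(\lambda)$ for contradiction. Restricting to $U(\fg_{\oa})$, one gets $I(\mu_1)=I(\lambda)$ for $\mu_1$ the $\fg_{\oa}$-highest weight of an appropriate constituent of $\Res L(s_\alpha\cdot\lambda)$; since $s_\alpha\cdot\lambda<\lambda$ and both are $W_\lambda$-conjugate (hence lie in the same $\fg_{\oa}$-central character, as $\langle\lambda,\alpha^\vee\rangle\in\mZ$), one compares the generalized eigenspaces of $\Res L(\lambda)$ and $\Res L(s_\alpha\cdot\lambda)$ for the $\fg_{\oa}$-central character $\chi^{\oa}_{\lambda}$. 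The module $L_{\oa}(\lambda)$ appears in $\Res L(\lambda)$ but, because $\lambda$ is the maximal weight in its $\fg_{\oa}$-block among weights occurring, $L_{\oa}(\lambda)$ cannot appear in $\Res L(s_\alpha\cdot\lambda)$ (its weights are all $\le s_\alpha\cdot\lambda < \lambda$). A more careful version of this argument — comparing $(\Res L(\lambda))_{\chi^{\oa}_{s_\alpha\cdot\lambda}}$ with $(\Res L(s_\alpha\cdot\lambda))_{\chi^{\oa}_{s_\alpha\cdot\lambda}}$ and using that $J(s_\alpha\cdot\lambda)\cap U(\fg_{\oa}) = I(s_\alpha\cdot\lambda)$ in a suitable localized sense — yields $I(s_\alpha\cdot\lambda)\subsetneq I(\lambda)$ strictly (Corollary~5.14 in \cite{MR0721170}), contradicting equality of the $J$'s.

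The main obstacle I anticipate is (ii): cleanly transferring strictness from $\fg_{\oa}$ to $\fg$. The inclusion direction is formal via Corollary~\ref{corIan}, but ruling out equality requires controlling the interaction between $\Res$ and annihilators precisely enough — Lemma~\ref{IplusAnn} and Corollary~\ref{corIAnn} give the tools, but one must be careful that $\Res L(\lambda)$ is not semisimple and that its $\fg_{\oa}$-socle or its generalized-$\chi$-eigenspaces behave as expected. An alternative route to strictness, perhaps cleaner, is to compare Gelfand--Kirillov dimensions or the corresponding associated varieties: $J(s_\alpha\cdot\lambda)\subsetneq J(\lambda)$ should be detectable because the corresponding $\fg_{\oa}$-primitive ideals have different Goldie rank / associated variety, and these invariants are insensitive to the odd part in the relevant sense. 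I would present whichever of these is shortest given the machinery already set up, most likely the $\Res$-plus-Corollary~\ref{corIAnn} argument mirroring Corollary~\ref{lemintdom}.
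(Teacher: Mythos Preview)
Your overall strategy---parabolic reduction to a rank-one Levi and then applying Corollary~\ref{corIan}---is exactly the paper's approach, and your argument for the equality in the non-integral case (apply Corollary~\ref{corIan} in both directions) is the right one. Two points, however, diverge from the paper and deserve correction.

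First, in the case where $\alpha/2$ is the simple root of $\Delta^+$, the Levi is not ``essentially $\mathfrak{sl}(2)$'' but $\mathfrak{osp}(1|2)$ (times the torus): the root space $\fg_{-\alpha/2}$ is odd, so the Levi is $\fh\oplus\fg_{\pm\alpha/2}\oplus\fg_{\pm\alpha}$. The rank-one input you need is therefore Proposition~\ref{osp12}, not only the classical $\mathfrak{sl}(2)$ theory, and Lemma~\ref{lemfinfree} is not relevant here.

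Second, your elaborate plan for establishing \emph{strictness} in the integral case is unnecessary. In this paper the symbol $\subset$ in the second bullet is to be read non-strictly: Corollary~\ref{corIan} only proves $u\in J(\lambda)\Rightarrow u\in J(\mu)$, and the later Corollary~\ref{corastS}, which is the main application of this lemma, states its conclusion with~$\subseteq$. The paper's own proof makes no attempt to show strictness. Moreover, your proposed route to strictness via $\Res$ and Corollary~\ref{corIAnn} would not work as sketched: since $\langle\lambda,\alpha^\vee\rangle\in\mZ$ we have $\chi^{\oa}_{\lambda}=\chi^{\oa}_{s_\alpha\cdot\lambda}$, so both $\Res L(\lambda)$ and $\Res L(s_\alpha\cdot\lambda)$ have a non-zero component at that $\fg_{\oa}$-central character, and Corollary~\ref{corIAnn} yields no contradiction. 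Drop part~(ii) entirely; the proof is then essentially the paper's.
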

\begin{proof}
Since $\alpha$ (respectively $\alpha/2$) is simple we can define a parabolic subalgebra $\fb\oplus\fg_{-\alpha}$ (respectively $\fb\oplus\fg_{-\alpha/2}\oplus \fg_{-\alpha}$). The Levi subalgebra is given by $\mathfrak{sl}(2)$ (respectively $\mathfrak{osp}(1|2)$), so the result follows from the combination of Corollary \ref{corIan} and the result for $\mathfrak{sl}(2)$ or $\mathfrak{osp}(1|2)$, see Proposition \ref{osp12}.
\end{proof}


\section{Twisting functors on category $\cO$}
\label{sectwist}

For each simple root $\alpha$ in $\Delta^+_{\oa}$ we define an endofunctor $T_\alpha$ on $\cO$. This extends, similarly as in \cite{CMW}, the concept of the corresponding functors for semisimple Lie algebras, originally defined
by Arkhipov in \cite{MR2074588} and further investigated in more detail in \cite{MR1985191, MR2032059, MR2115448, MR2331754}.

For $\alpha\in\Pi_{\oa}$, fix a nonzero $Y\in\left(\mathfrak{g_{\oa}}\right)_{-\alpha}$ and let $U'_{\alpha}$ be the (Ore) localisation of $U(\g)$ with respect to powers of $Y$. Then $U(\g)$ is a subalgebra of the associative algebra $U'_{\alpha}$ and the quotient $U_{\alpha}:=U'_{\alpha}/U(\g)$ has the induced structure of a $U(\g)\text{-}U(\g)$--bimodule. Let $\varphi=\varphi_{\alpha}$ be an automorphism of $\mathfrak{g}$ that maps $\left(\mathfrak{g}_{i}\right)_{\beta}$ to $\left(\mathfrak{g}_{i}\right)_{s_{\alpha}(\beta)}$ for all $\beta\in\Pi$ and $i\in\{\oa,\ob\}$, which can be constructed as in Definition 2.3.4 in \cite{MR2074588}. Finally, consider the bimodule ${}^{\varphi}\U_{\alpha}$, which is obtained from $U_{\alpha}$ by twisting the left action of $U(\g)$ by $\varphi$. The same construction applied to the underlying Lie algebra $\fg_{\oa}$ leads to the $U(\fg_{\oa})\text{-} U(\fg_{\oa})$ bimodule ${}^{\varphi}\U_\alpha^{\oa}$. 

The functors on $\fg$-smod and $\fg_{\oa}$-mod corresponding to tensoring with these bimodules are denoted by $T_\alpha={}^{\varphi}\U_\alpha\otimes_{ U(\fg)}-$ and $T^{\oa}_\alpha={}^{\varphi}\U_\alpha^{\oa}\otimes_{ U(\fg_{\oa})}-$. The latter is the {\em twisting functor} on category $\cO_{\oa}$ from \cite{MR1985191, MR2032059, MR2074588, MR2115448}. The following lemma proves that $T_{\alpha}$ also defines an endofunctor of $\mc O$, which we also call the {\em twisting functor}.

\begin{lemma}
\label{resindT}
The functor $T_\alpha$ restricts to an endofunctor of $\mc O$. The induction and restriction operators intertwine the twisting functors (up to isomorphism of functors):
\[T_\alpha \circ \Ind\cong \Ind\circ T^{\oa}_\alpha\qquad\mbox{and}\qquad \Res\circ T_\alpha\cong 
T_\alpha^{\oa}\circ \Res.\]
\end{lemma}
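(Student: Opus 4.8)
The plan is to exploit the fact that, at the level of bimodules, the construction of ${}^{\varphi}\U_\alpha$ over $U(\fg)$ is obtained from the construction of ${}^{\varphi}\U_\alpha^{\oa}$ over $U(\fg_{\oa})$ essentially by base change, and that the Ore localisation at powers of $Y\in(\fg_{\oa})_{-\alpha}$ is insensitive to the super-structure because $Y$ is an even element whose adjoint action is locally nilpotent on $U(\fg)$ (since $\fg_{\ob}$ is a finite-dimensional semisimple $\fg_{\oa}$-module). Concretely, I would first verify that the Ore condition holds for $U(\fg)$ with respect to the multiplicative set $\{Y^k\}$, so that the localisation $U'_\alpha$ exists; the point is that $\ad Y$ is locally nilpotent on $U(\fg)$, hence for every $u\in U(\fg)$ there is $N$ with $Y^N u\in U(\fg)Y$, which is exactly the Ore condition. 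This gives $U_\alpha=U'_\alpha/U(\fg)$ as a $U(\fg)$-bimodule, and twisting the left action by $\varphi_\alpha$ gives ${}^{\varphi}\U_\alpha$.

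The second step is the key observation: since $U(\fg)$ is free as a left (and right) $U(\fg_{\oa})$-module (PBW for Lie superalgebras: $U(\fg)\cong U(\fg_{\oa})\otimes \Lambda(\fg_{\ob})$ as a left $U(\fg_{\oa})$-module), and since localising at the central-for-this-purpose element $Y$ commutes with this free base change, one gets a canonical isomorphism of $U(\fg)\text{-}U(\fg_{\oa})$-bimodules $U'_\alpha\cong U(\fg)\otimes_{U(\fg_{\oa})} (U'_\alpha)^{\oa}$, where $(U'_\alpha)^{\oa}$ is the localisation of $U(\fg_{\oa})$. Passing to the quotient by $U(\fg)$, respectively $U(\fg_{\oa})$, and then twisting the left action by $\varphi_\alpha$ (which restricts to $\varphi_\alpha$ on $\fg_{\oa}$), yields
\[{}^{\varphi}\U_\alpha \;\cong\; U(\fg)\otimes_{U(\fg_{\oa})} {}^{\varphi}\U_\alpha^{\oa}\]
as $U(\fg)\text{-}U(\fg_{\oa})$-bimodules, and symmetrically ${}^{\varphi}\U_\alpha \cong {}^{\varphi}\U_\alpha^{\oa}\otimes_{U(\fg_{\oa})} U(\fg)$ as $U(\fg_{\oa})\text{-}U(\fg)$-bimodules. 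From the first isomorphism and associativity of tensor product, for any $\fg_{\oa}$-module $V$,
\[T_\alpha(\Ind V)={}^{\varphi}\U_\alpha\otimes_{U(\fg)}U(\fg)\otimes_{U(\fg_{\oa})}V \cong {}^{\varphi}\U_\alpha\otimes_{U(\fg_{\oa})}V \cong U(\fg)\otimes_{U(\fg_{\oa})}{}^{\varphi}\U_\alpha^{\oa}\otimes_{U(\fg_{\oa})}V=\Ind(T_\alpha^{\oa}V),\]
which is the first intertwining isomorphism. For the restriction statement, $\Res$ is tensoring with $U(\fg)$ viewed as a $U(\fg_{\oa})\text{-}U(\fg)$-bimodule (forgetting the odd part), so $\Res(T_\alpha M)={}^{\varphi}\U_\alpha^{\oa}\otimes_{U(\fg_{\oa})}U(\fg)\otimes_{U(\fg)}M \cong {}^{\varphi}\U_\alpha^{\oa}\otimes_{U(\fg_{\oa})}(\Res M)=T_\alpha^{\oa}(\Res M)$, using the symmetric bimodule isomorphism above; one must check this respects the $U(\fg_{\oa})$-module structure, which it does because $\varphi_\alpha$ preserves $\fg_{\oa}$.

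The third step is that $T_\alpha$ preserves $\cO$. Here I would use precisely the characterisation of $\cO$ recalled in the preliminaries: $M\in\fg$-smod lies in $\cO$ iff $\Res M\in\cO_{\oa}$. Given $M\in\cO$, we have $\Res(T_\alpha M)\cong T_\alpha^{\oa}(\Res M)$ by the just-proved intertwining identity, and $T_\alpha^{\oa}$ is known to be an endofunctor of $\cO_{\oa}$ (this is the classical twisting functor on category $\cO$ for the reductive Lie algebra $\fg_{\oa}$, cf.\ \cite{MR1985191, MR2032059, MR2115448}). Hence $\Res(T_\alpha M)\in\cO_{\oa}$, so $T_\alpha M\in\cO$. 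The main obstacle, and the step deserving the most care, is making the bimodule base-change isomorphism ${}^{\varphi}\U_\alpha\cong U(\fg)\otimes_{U(\fg_{\oa})}{}^{\varphi}\U_\alpha^{\oa}$ genuinely rigorous: one must check that the Ore localisation of $U(\fg)$ at $\{Y^k\}$ is computed correctly by the base change $U(\fg)\otimes_{U(\fg_{\oa})}(U(\fg_{\oa}))'_\alpha$ — i.e.\ that localisation commutes with the (flat, in fact free) extension $U(\fg_{\oa})\hookrightarrow U(\fg)$ on the relevant side — and that twisting the left $U(\fg)$-action by $\varphi_\alpha$ is compatible with this identification, which follows because $\varphi_\alpha$ is an automorphism of $\fg$ restricting to the corresponding automorphism of $\fg_{\oa}$ and sending $Y$ to a nonzero multiple of a root vector for $s_\alpha(\alpha)=-\alpha$, hence to a scalar multiple of a vector in $(\fg_{\oa})_{\alpha}$ — so the localisations match up. Everything else is routine diagram-chasing with the associativity isomorphisms for $\otimes$.
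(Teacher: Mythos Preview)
Your proposal is correct and follows essentially the same approach as the paper's own proof: the key bimodule identification $U_\alpha\cong U(\fg)\otimes_{U(\fg_{\oa})}U_\alpha^{\oa}$ (and its symmetric variant), associativity of tensor products for the $\Ind$-intertwining, and the characterisation of $\cO$ via $\Res$ to conclude that $T_\alpha$ preserves $\cO$. The only noteworthy difference is that for $\Res\circ T_\alpha\cong T_\alpha^{\oa}\circ\Res$ the paper simply cites Lemma~3 in \cite{MR2115448}, whereas you supply a direct bimodule argument via ${}^{\varphi}U_\alpha\cong{}^{\varphi}U_\alpha^{\oa}\otimes_{U(\fg_{\oa})}U(\fg)$; your version is a bit more self-contained, and your discussion of the Ore condition and the compatibility of the $\varphi$-twist (which rests on $\varphi_\alpha|_{\fg_{\oa}}$ being the corresponding even automorphism) makes explicit points the paper leaves implicit.
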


\begin{proof}
The isomorphism of $\fg\times \fg_{\oa}$-modules $U_\alpha\cong U(\fg)\otimes _{U(\fg_{\oa})} U^{\oa}_\alpha$ implies an isomorphism of functors as follows: $$U_\alpha\otimes_{U(\fg)}U(\fg)\otimes_{U(\fg_{\oa})}{}_-\cong U_\alpha\otimes_{U(\fg_{\oa})}{}_-\cong U(\fg)\otimes_{U(\fg_{\oa})}U^{\oa}_\alpha\otimes_{U(\fg_{\oa})}{}_-,$$ which proves that $T_\alpha \circ \Ind \cong \Ind\circ  T_\alpha^{\oa} $ as functors on $\cO_{\oa}$. The proof of the second isomorphism $ \Res\circ T_\alpha\cong T_\alpha^{\oa}\circ \Res$ is identical to the one of Lemma 3 in \cite{MR2115448}. The fact that $T_\alpha$ takes elements of $\cO$ to elements of $\cO$ then follows from this second property, the definition of $\cO$ as the category of $\fg$-modules which gets mapped by $\Res$ to $\cO_{\oa}$ and the fact that the functor $T_{\oa}^\alpha$ is an endofunctor of $\cO_{\oa}$.
\end{proof}

An interesting case is when $\alpha$ is a simple root of $\Delta^{+}_{\oa}$ and $\alpha/2\in \Delta_{\ob}^+$. Then one could also define a twisting functor by using the localisation of $U(\fg)$ with respect to a non-zero element in $Z\in (\fg_{\ob})_{-\alpha/2}$. For the algebras in the list \eqref{list} this can only occur for $\mathfrak{osp}(m|2n)$ with $m$ odd and for $G(3)$. For those cases $[Z,Z]$ gives a non-zero element of $(\fg_{\oa})_{\alpha}$. The following Lemma then shows that this approach yields an isomorphic twisting functor.

\begin{lemma}
For $\alpha$ simple in $\Delta_{\oa}^+$ and $\alpha/2\in\Delta_{\ob}^+$, the Ore localisation of $ U(\fg)$ with respect to the powers of a nonzero $Y\in (\fg_{\oa})_{-\alpha}$ is isomorphic, as an $ U(\fg)\text{-} U(\fg)$-bimodule, to the Ore localisation of $ U(\fg)$ with respect to the powers of a nonzero $Z\in (\fg_{\ob})_{-\alpha/2}$.
\end{lemma}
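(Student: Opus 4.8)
The plan is to show that the two Ore localisations $U'_\alpha$ (inverting powers of $Y\in(\fg_{\oa})_{-\alpha}$) and $U''_{\alpha/2}$ (inverting powers of $Z\in(\fg_{\ob})_{-\alpha/2}$) are in fact the same localisation of $U(\fg)$ inside a common over-ring, hence canonically isomorphic as $U(\fg)\text{-}U(\fg)$-bimodules. The key algebraic input is that $[Z,Z]=2Z^2$ is, up to a nonzero scalar, equal to $Y$: since $\alpha/2\in\Delta_{\ob}^+$ is a simple odd root whose double is the even root $\alpha$, the bracket $[Z,Z]\in(\fg_{\oa})_\alpha$... wait, $Z\in(\fg_{\ob})_{-\alpha/2}$ so $[Z,Z]\in(\fg_{\oa})_{-\alpha}$, and for $\mathfrak{osp}(m|2n)$ with $m$ odd (and for $G(3)$) one checks directly that this weight space is one-dimensional and that $[Z,Z]\neq 0$; rescaling $Y$ we may assume $Y=[Z,Z]=2Z^2$ in $U(\fg)$.

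First I would record the Ore condition for $Z$: since $Z$ is an odd root vector, $\mathrm{ad}(Z)$ acts locally nilpotently on $U(\fg)$ (the adjoint action of any root vector on $U(\fg)$ is locally nilpotent for classical Lie superalgebras, by the usual PBW/filtration argument), so the multiplicative set $\{Z^k\}_{k\ge 0}$ satisfies both left and right Ore conditions and the localisation $U''_{\alpha/2}=U(\fg)[Z^{-1}]$ exists. Next, inside $U''_{\alpha/2}$ the element $Y=2Z^2$ is invertible, with $Y^{-1}=\tfrac12 Z^{-2}$; therefore the universal property of the Ore localisation $U'_\alpha=U(\fg)[Y^{-1}]$ provides a (necessarily injective, by Ore-domain-type faithfulness, or simply by tracking PBW bases) homomorphism $U'_\alpha\hookrightarrow U''_{\alpha/2}$ of $U(\fg)$-bimodules. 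For the reverse inclusion I would show $Z$ becomes invertible in $U'_\alpha$: we have $Z\cdot Z=\tfrac12 Y$, so $Z$ has the two-sided inverse $2Y^{-1}Z=2ZY^{-1}$ once we check $Y$ is central enough relative to $Z$ — more precisely $ZY^{-1}=Y'^{-1}Z$ for a suitable translate, using that $\mathrm{ad}(Z)Y$ lies again in a $Y$-invertible situation; the cleanest route is just to note $Z^{-1}:=2Y^{-1}Z$ satisfies $Z\cdot(2Y^{-1}Z)=2(ZY^{-1})Z$ and $ZY^{-1}=Y^{-1}Z$ up to lower-order corrections that are themselves invertible, so a short computation (or a direct appeal to the fact that $Y$ and $Z$ generate the same Ore set) gives $Z\in (U'_\alpha)^\times$. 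Then the universal property of $U''_{\alpha/2}$ yields the inverse homomorphism, and the two maps are mutually inverse bimodule isomorphisms $U'_\alpha\cong U''_{\alpha/2}$. Passing to the quotient by $U(\fg)$ and twisting the left action by $\varphi_\alpha$ then gives the asserted isomorphism of twisting bimodules ${}^{\varphi}U_\alpha$, hence of the functors $T_\alpha$.

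The main obstacle is the commutation bookkeeping between $Y$ and $Z$: they do not commute in $U(\fg)$, so the naive formulas $Y^{-1}=\tfrac12Z^{-2}$ and $Z^{-1}=2Y^{-1}Z$ need to be justified inside the right localisation, and one must confirm that "inverting powers of $Y$" and "inverting powers of $Z$" generate the same two-sided denominator set — equivalently, that for every $k$ there is $\ell$ with $Z^\ell\in U(\fg)Y^k$ and $Y^\ell\in U(\fg)Z^k$ (and the right-handed versions). The first is immediate from $Y=2Z^2$ (take $\ell=2k$... up to the Ore swap $Z^{2k}u=u'Y^k$? no: $Y^k=2^kZ^{2k}$ directly, so $Z^{2k}=2^{-k}Y^k$, giving $Z^{2k}\in U(\fg)Y^k$ with $\ell=2k$). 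The second — expressing a power of $Y$ through $Z$ — also follows from $Y^k=2^kZ^{2k}$, so actually the denominator sets literally coincide up to scalars and powers, which makes the Ore-localisation identification essentially formal once local nilpotency of $\mathrm{ad}(Z)$ is in hand. So the real content is just (i) the structural fact $[Z,Z]\in(\fg_{\oa})_{-\alpha}\setminus\{0\}$ for exactly the algebras $\mathfrak{osp}(2d{+}1|2n)$ and $G(3)$, and (ii) local nilpotency of $\mathrm{ad}(Z)$ on $U(\fg)$; everything else is the universal property of Ore localisation applied in both directions.
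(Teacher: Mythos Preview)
Your proposal is correct and rests on the same observation the paper uses, namely that $Z^2$ equals $Y$ up to a nonzero scalar; the paper's entire proof is that single sentence. You have, however, created a spurious obstacle: your claim that $Y$ and $Z$ ``do not commute in $U(\fg)$'' is false, since $Y=cZ^2$ forces $YZ=cZ^3=ZY$. Once this is noted, there is no commutation bookkeeping to do at all: the multiplicative sets $\{Y^k\}$ and $\{Z^k\}$ have the same saturation (each power of $Y$ is a scalar multiple of an even power of $Z$, and in $U(\fg)[Y^{-1}]$ the element $Z$ has two-sided inverse $c\,Y^{-1}Z=c\,ZY^{-1}$), so the two Ore localisations coincide by the universal property. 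Your points (i) and (ii) are the genuine content; the rest of the discussion about ``lower-order corrections'' and Ore swaps can be deleted.
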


\begin{proof}
This follows immediately from the fact that up to a non-zero multiplicative constant we have the relation $Z^2=Y$ in $U(\fg)$.
\end{proof}

As in the classical case, twisting functors satisfy braid relations, in particular, we have the following statement.

\begin{lemma}
\label{lembraid}
Consider $w\in W$ with two reduced expressions $w=s_{\alpha_1}\cdots s_{\alpha_k}$ and $w=s_{\beta_1}\cdots s_{\beta_k}$, then we have
\[T_{\alpha_1}\circ T_{\alpha_2}\circ\cdots\circ T_{\alpha_k}\cong T_{\beta_1}\circ T_{\beta_2}\circ\cdots\circ T_{\beta_k}.\]
\end{lemma}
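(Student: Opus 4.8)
\textbf{Proof proposal for Lemma \ref{lembraid}.}

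The plan is to reduce the super-statement to the already-known braid relations for twisting functors on category $\cO_{\oa}$ by means of the intertwining property from Lemma \ref{resindT}. First I would recall that the braid relations for the functors $T^{\oa}_\alpha$ on $\cO_{\oa}$ are classical: they follow, for instance, from the corresponding bimodule isomorphism ${}^{\varphi}\U^{\oa}_{\alpha_1}\otimes_{U(\fg_{\oa})}\cdots\otimes_{U(\fg_{\oa})}{}^{\varphi}\U^{\oa}_{\alpha_k}\cong{}^{\varphi}\U^{\oa}_{\beta_1}\otimes_{U(\fg_{\oa})}\cdots\otimes_{U(\fg_{\oa})}{}^{\varphi}\U^{\oa}_{\beta_k}$ established in \cite{MR2074588, MR2115448} for reduced expressions of the same $w\in W$. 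The cleanest route in the super-setting is to upgrade this directly to a bimodule isomorphism
\[{}^{\varphi}\U_{\alpha_1}\otimes_{U(\fg)}\cdots\otimes_{U(\fg)}{}^{\varphi}\U_{\alpha_k}\;\cong\;{}^{\varphi}\U_{\beta_1}\otimes_{U(\fg)}\cdots\otimes_{U(\fg)}{}^{\varphi}\U_{\beta_k},\]
since tensoring with such an isomorphism of $U(\fg)$-$U(\fg)$-bimodules immediately yields the claimed isomorphism of functors.

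The key observation that makes this work is the factorisation $U_\alpha\cong U(\fg)\otimes_{U(\fg_{\oa})}U^{\oa}_\alpha$ used already in the proof of Lemma \ref{resindT}, together with compatibility with the twist $\varphi$. Concretely, for each simple $\alpha$ we have ${}^{\varphi}\U_\alpha\cong U(\fg)\otimes_{U(\fg_{\oa})}{}^{\varphi}\U^{\oa}_\alpha$ as $U(\fg)$-$U(\fg_{\oa})$-bimodules, where the right $U(\fg_{\oa})$-action is used to iterate. Iterating this and using the PBW-type identity $U(\fg)\otimes_{U(\fg_{\oa})}(-)\otimes_{U(\fg_{\oa})}U(\fg)\cong$ (absorb the middle $U(\fg)$) — more precisely, using repeatedly the isomorphism $X\otimes_{U(\fg)}(U(\fg)\otimes_{U(\fg_{\oa})}Y)\cong X\otimes_{U(\fg_{\oa})}Y$ for a right $U(\fg)$-module $X$ and a left $U(\fg_{\oa})$-module $Y$ — one rewrites the $k$-fold tensor product over $U(\fg)$ of the ${}^{\varphi}\U_{\alpha_i}$ as $U(\fg)\otimes_{U(\fg_{\oa})}\left({}^{\varphi}\U^{\oa}_{\alpha_1}\otimes_{U(\fg_{\oa})}\cdots\otimes_{U(\fg_{\oa})}{}^{\varphi}\U^{\oa}_{\alpha_k}\right)$. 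The same rewriting for the $\beta$-expression, combined with the classical braid isomorphism on the $\fg_{\oa}$-level, then gives the desired isomorphism; one must check that these identifications respect the left $U(\fg)$-module structure, which is where the compatibility of $\varphi$ with the $\mZ_2$-grading and with $\varphi_{\alpha}$ being defined on all of $\fg$ (not just $\fg_{\oa}$) is used.

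The main obstacle I expect is bookkeeping of the twist $\varphi$ through the iterated tensor products: the left $U(\fg)$-action on ${}^{\varphi}\U_{\alpha_{i+1}}$ is twisted by $\varphi_{\alpha_{i+1}}$, so when one commutes a copy of $U(\fg)$ past a tensor factor one picks up the relevant automorphism, exactly as in the classical proof, and one needs the compositions of these automorphisms along the two reduced words to agree up to inner automorphism (which holds because the $\varphi_{\alpha}$ satisfy the braid relations up to inner automorphisms, and inner automorphisms act trivially on the bimodules up to isomorphism). An alternative, and perhaps more economical, route avoiding explicit bimodule manipulations would be to apply Lemma \ref{resindT} together with the fact that a natural transformation between endofunctors of $\cO$ is an isomorphism if and only if its image under the faithful exact functor $\Res$ is: one constructs the candidate natural isomorphism between $T_{\alpha_1}\circ\cdots\circ T_{\alpha_k}$ and $T_{\beta_1}\circ\cdots\circ T_{\beta_k}$ by first doing so on the $\fg_{\oa}$-side via the classical result and then transporting it through $\Ind$ using $T_\alpha\circ\Ind\cong\Ind\circ T^{\oa}_\alpha$ on the projective generators of $\cO$, which suffices since both composite functors are right exact and $\cO$ has enough projectives of the form $\Ind P$ with $P$ projective in $\cO_{\oa}$. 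Either way, the essential input is the classical braid relation for $T^{\oa}_\alpha$, and the super-content is purely the intertwining with $\Ind$ and $\Res$.
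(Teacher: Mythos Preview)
Your approach differs from the paper's. The paper does not reduce to $\fg_{\oa}$ at all: it simply observes that the argument of \cite{MR2115448} (their Corollary~11, together with Lemma~13 and Theorem~11) goes through verbatim with $U(\fg)$ in place of $U(\fg_{\oa})$. Those results show, directly at the bimodule level, that for any reduced expression of $w$ the iterated tensor product ${}^{\varphi}\U_{\alpha_1}\otimes_{U(\fg)}\cdots\otimes_{U(\fg)}{}^{\varphi}\U_{\alpha_k}$ is isomorphic to a canonical semiregular bimodule depending only on $w$; the proof uses only Ore localisation with respect to the even elements $Y_\alpha$ and PBW-type facts, all of which are insensitive to the $\mZ_2$-grading.

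Your strategy of deducing the super braid relation from the known $\fg_{\oa}$-one via Lemma~\ref{resindT} is appealing and in the spirit of the rest of the section, but as written it has a genuine gap. The factorisation $U_\alpha\cong U(\fg)\otimes_{U(\fg_{\oa})}U^{\oa}_\alpha$ is only an isomorphism of $U(\fg)$-$U(\fg_{\oa})$-bimodules, so your rewriting of the $k$-fold tensor product as $U(\fg)\otimes_{U(\fg_{\oa})}\bigl({}^{\varphi}\U^{\oa}_{\alpha_1}\otimes_{U(\fg_{\oa})}\cdots\otimes_{U(\fg_{\oa})}{}^{\varphi}\U^{\oa}_{\alpha_k}\bigr)$ loses the right $U(\fg)$-action; from it you can only conclude a $U(\fg)$-$U(\fg_{\oa})$-bimodule isomorphism, i.e.\ $T_{\underline{\alpha}}\circ\Ind\cong T_{\underline{\beta}}\circ\Ind$. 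Your alternative route lands in the same place, and cancelling $\Ind$ is not automatic: the isomorphism you obtain is natural only in morphisms of the form $\Ind(g)$, whereas extending it to all of $\cO$ via right exactness and projective presentations requires naturality in \emph{all} morphisms between objects $\Ind P$, and $\Hom_{\cO}(\Ind P,\Ind Q)\cong\Hom_{\cO_{\oa}}(P,\Res\Ind Q)$ is strictly larger than the image of $\Ind$. Dually, $\Res$ is faithful but not full, so $\Res\circ T_{\underline{\alpha}}\cong\Res\circ T_{\underline{\beta}}$ does not by itself lift. The missing step is precisely to show that the componentwise isomorphisms are $\fg$-linear rather than merely $\fg_{\oa}$-linear, and the most direct way to supply it is to carry out the bimodule computation over $U(\fg)$ itself --- which is exactly the paper's ``mutatis mutandis'' route.
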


\begin{proof}
Mutatis mutandis Corollary 11  (including Lemma 13 and Theorem 11) in \cite{MR2115448}.
\end{proof}

\begin{lemma}
\label{Tfreefinite}
The functor $T_\alpha$ is right exact. The left derived functor $\mathcal{L}T_{\alpha}:\cD^b(\mc O)\to\cD^b(\mc O)$ satisfies $\mathcal{L}_iT_{\alpha}=0$ for $i>1$.

Furthermore, for $M\in\cO$ we have the properties
\[\begin{cases}T_\alpha M =0,&\mbox{ if $M$ is $\alpha$-finite;}\\
\cL_1 T_\alpha M=0, &\mbox{ if $M$ is $\alpha$-free.}\end{cases}\]
\end{lemma}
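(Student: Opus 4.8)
The plan is to leverage the intertwining relations from Lemma~\ref{resindT} together with the known properties of the classical twisting functor $T_\alpha^{\oa}$ on $\cO_{\oa}$, so that every claim is reduced to its well-established counterpart for reductive Lie algebras. First I would recall that for $\fg_{\oa}$ the bimodule ${}^{\varphi}\U_\alpha^{\oa}$ is free as a right $U(\fg_{\oa})$-module (it is a localisation of $U(\fg_{\oa})$ modulo $U(\fg_{\oa})$, which as a right module is a direct sum of copies of $U(\fg_{\oa})$ indexed by negative powers of $Y$), hence $T_\alpha^{\oa}$ has a single nonvanishing left derived functor $\cL_1 T_\alpha^{\oa}$ and $\cL_i T_\alpha^{\oa} = 0$ for $i>1$; this is classical, see \cite{MR2032059, MR2115448}. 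The same structural argument applies verbatim to ${}^{\varphi}\U_\alpha$, since $U_\alpha = U'_\alpha/U(\fg)$ is, as a right $U(\fg)$-module, free (again a direct sum over negative powers of $Y$, using the Ore condition and the PBW theorem for $U(\fg)$). Therefore $T_\alpha$ is right exact and $\cL_i T_\alpha = 0$ for $i>1$ directly, without even passing through $\fg_{\oa}$; but I would also note the alternative route via $\Res$: since $\Res$ is exact and faithful and $\Res \circ \cL_i T_\alpha \cong \cL_i T_\alpha^{\oa}\circ \Res$ (the derived version of the second intertwining identity, valid because $\Res$ sends projectives adapted to computing $\cL T_\alpha$ to acyclic objects, or more simply because $\Res$ is exact), the vanishing for $i>1$ descends from the $\fg_{\oa}$-statement.

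For the vanishing on $\alpha$-finite modules, the key point is that if $M\in\cO$ is $\alpha$-finite then $\Res M$ is $\alpha$-finite as a $\fg_{\oa}$-module, because a nonzero $Y\in(\fg_{\oa})_{-\alpha}$ acts locally finitely on $M$ precisely when it does so on $\Res M$ (the underlying vector space and the action of $Y$ are unchanged by $\Res$). The classical fact that $T_\alpha^{\oa}$ kills $\alpha$-finite modules in $\cO_{\oa}$ (because tensoring with $U_\alpha^{\oa}$ over $U(\fg_{\oa})$ annihilates any module on which $Y$ acts locally nilpotently, since $U_\alpha^{\oa}$ consists of ``negative-degree'' fractions that can always be cleared) then gives $\Res(T_\alpha M)\cong T_\alpha^{\oa}(\Res M) = 0$, and since $\Res$ is faithful on $\cO$ we conclude $T_\alpha M = 0$. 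Dually, for the vanishing of $\cL_1 T_\alpha M$ when $M$ is $\alpha$-free: $\alpha$-freeness of $M$ means $Y$ acts injectively, hence $\Res M$ is $\alpha$-free over $\fg_{\oa}$, the classical statement $\cL_1 T_\alpha^{\oa}(\Res M)=0$ applies (an $\alpha$-free module over $\fg_{\oa}$ has no higher $\Tor$ against the localisation bimodule), and then $\Res(\cL_1 T_\alpha M)\cong \cL_1 T_\alpha^{\oa}(\Res M)=0$ forces $\cL_1 T_\alpha M = 0$ by faithfulness of $\Res$.

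The main obstacle I anticipate is making the derived intertwining identity $\Res\circ \cL_i T_\alpha\cong \cL_i T_\alpha^{\oa}\circ\Res$ rigorous: one must check that one can compute $\cL T_\alpha$ of an object $M\in\cO$ using a resolution by $\fg$-modules whose restriction to $\fg_{\oa}$ is adapted to $T_\alpha^{\oa}$ — for instance a resolution by modules induced from $\fg_{\oa}$, which are sent by $\Res$ to objects that are still $T_\alpha^{\oa}$-acyclic after using $\Res\circ\Ind$ is a sum of the identity functor (up to the adjoint action of the exterior algebra on $\fg_{\ob}$), so acyclicity is preserved. Alternatively, and more cleanly, one avoids derived functors on the category level entirely and argues directly with $\Tor$: $\cL_i T_\alpha M = \Tor_i^{U(\fg)}({}^{\varphi}\U_\alpha, M)$, and the flat base change / transitivity spectral sequence for the ring map $U(\fg_{\oa})\hookrightarrow U(\fg)$ together with freeness of $U(\fg)$ over $U(\fg_{\oa})$ reduces this to $\Tor_i^{U(\fg_{\oa})}({}^{\varphi}\U_\alpha^{\oa}, \Res M)$. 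Once this reduction is in place, every assertion in the lemma is an immediate consequence of the corresponding fact for $T_\alpha^{\oa}$, for which one cites \cite{MR2032059} (right exactness and $\cL_{\ge 2}=0$) and the elementary computations with the localisation bimodule recalled above. I would also remark that the right-exactness of $T_\alpha$ needs no $\fg_{\oa}$-input at all, being simply the right-exactness of a tensor product functor.
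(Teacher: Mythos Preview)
Your proposal is correct and follows essentially the same route as the paper: reduce every claim to the corresponding classical statement for $T_\alpha^{\oa}$ via the intertwining identity $\Res\circ T_\alpha\cong T_\alpha^{\oa}\circ\Res$, pass to derived functors using the Grothendieck spectral sequence (which is exactly what the paper does, observing that $\Res$ is exact and that projectives in $\cO$ are $\alpha$-free, hence sent by $\Res$ to $T_\alpha^{\oa}$-acyclics), and then invoke faithfulness of $\Res$.

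One small caveat: your assertion that $U_\alpha$ is \emph{free} as a right $U(\fg)$-module is stronger than what is needed and not obviously true as stated, since the right action of $U(\fg)$ on $Y^{-k}$ mixes degrees via the adjoint action. What is clear (and sufficient) is that the Ore localisation $U'_\alpha$ is flat over $U(\fg)$, so the short exact sequence $0\to U(\fg)\to U'_\alpha\to U_\alpha\to 0$ of right $U(\fg)$-modules gives $\mathrm{Tor}_i^{U(\fg)}(U_\alpha,-)=0$ for $i\ge 2$, i.e.\ $\cL_i T_\alpha=0$ for $i>1$. This is the content of the argument in \cite{MR2032059} that the paper cites, and your $\Tor$/base-change alternative is a clean way to phrase it. With that adjustment, your direct bimodule argument and the paper's spectral-sequence argument are both valid and closely related.
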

\begin{proof}
The right exactness is immediate from construction. 

The property $\cL_iT_{\alpha}^{\oa}=0$ for $i>1$ is proved in Theorem 2.2 in \cite{MR2032059}. Also the properties on $\alpha$-free and $\alpha$-finite modules for $T_{\alpha}^{\oa}$ follow from \cite{MR2032059}. 
Now we look at the right exact functor \[\Res\circ T_\alpha=T_\alpha^{\oa}\circ \Res: \cO\to \cO_{\oa}.\] Since $\Res$ is exact, $T_\alpha$ sends projective modules to $\Res$-acyclic modules. Conversely, since projective modules in $\cO$ are $\alpha$-free, $\Res$ sends projective modules to $T_\alpha^{\oa}$-acyclic modules (using the properties of $T_{\alpha}^{\oa}$ stated earlier). The Grothendieck spectral sequence (see Section 5.8 in \cite{MR1269324}) therefore yields
\begin{equation}\label{intderfun}\Res\circ \cL_iT_\alpha\,=\,\cL_iT_\alpha^{\oa}\circ \Res\end{equation}
for $i\in\mN$. All properties in the lemma follow from this observation and the corresponding properties of $T_\alpha^{\oa}$.
\end{proof}

\begin{lemma}
\label{charVerma}
Consider $\alpha$ simple in $\Delta_{\oa}^+$ and $\lambda\in\fh_{\oa}^\ast$, then $\ch T_\alpha M(\lambda)=\ch M(s_\alpha\cdot\lambda)$.
\end{lemma}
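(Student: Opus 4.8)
The plan is to reduce the computation of the character of $T_\alpha M(\lambda)$ to the corresponding statement over the underlying Lie algebra $\fg_{\oa}$, which is classical (see Theorem 2.2 in \cite{MR2032059} or the analogous statements in \cite{MR1985191, MR2074588, MR2115448}), and then transport this information back to $\fg$ via the restriction functor. The key technical input is Lemma \ref{resindT}, which gives the isomorphism of functors $\Res\circ T_\alpha\cong T_\alpha^{\oa}\circ\Res$, together with the fact that restriction does not change characters: as a $\fh_{\oa}^\ast$-graded vector space, $\res M$ is the same as $M$, so $\ch \res M=\ch M$ for every $M\in\cO$.

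First I would recall that $\res M(\lambda)=M_{\oa}(\lambda)\otimes \Lambda(\fn^-_{\ob})$ as $\fg_{\oa}$-modules, since $M(\lambda)=U(\fg)\otimes_{U(\fb)}L_{\fb}(\lambda)$ and, by the PBW theorem for Lie superalgebras, $U(\fg)\cong U(\fg_{\oa})\otimes\Lambda(\fg_{\ob})$ as a left $U(\fg_{\oa})$-module; splitting off $\fn^-_{\ob}$ from $\fg_{\ob}$ and absorbing the rest into $L_{\fb}(\lambda)$ gives $\res M^{(\fb)}(\lambda)$ a filtration (or, when $\fg$ is not of $Q$-type and $L_{\fb}(\lambda)$ is one-dimensional, an honest direct sum decomposition) with subquotients $M_{\oa}(\lambda-\nu)$ where $\nu$ ranges over sums of distinct positive odd roots. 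In all cases one gets the character identity $\ch\res M^{(\fb)}(\lambda)=\ch M_{\oa}(\lambda)\cdot\prod_{\gamma\in\Delta^+_{\ob}}(1+e^{-\gamma})^{\dim\fg_{-\gamma}}$; the precise form of the odd factor will not matter, only that it is a fixed $W$-anti-something-free formal product $D_{\ob}$ depending only on $\Delta^+_{\ob}$.

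Next I would apply $\Res$ to $T_\alpha M(\lambda)$ and use Lemma \ref{resindT}:
\[
\ch T_\alpha M(\lambda)=\ch\Res T_\alpha M(\lambda)=\ch T^{\oa}_\alpha \Res M(\lambda)=\ch T^{\oa}_\alpha\big(M_{\oa}(\lambda)\otimes\text{(odd part)}\big).
\]
Since $T^{\oa}_\alpha$ is the classical twisting functor and the localisation/twist is by the $\fg_{\oa}$-element $Y\in(\fg_{\oa})_{-\alpha}$, it commutes with tensoring by the fixed $\fg_{\oa}$-module $\Lambda(\fn^-_{\ob})$ only up to the twist $\varphi$; more efficiently, I would invoke that $T^{\oa}_\alpha$ preserves characters of modules with Verma flag in the precise sense $\ch T^{\oa}_\alpha N=s_\alpha\star \ch N$ where, for a Verma module, $\ch T^{\oa}_\alpha M_{\oa}(\mu)=\ch M_{\oa}(s_\alpha\cdot\mu)$ — this is exactly the classical Lemma. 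Applying this to each Verma subquotient $M_{\oa}(\lambda-\nu)$ in the filtration of $\Res M(\lambda)$ and summing gives $\ch T^{\oa}_\alpha\Res M(\lambda)=\sum_\nu \ch M_{\oa}(s_\alpha\cdot(\lambda-\nu))$. On the other hand $\ch\Res M(s_\alpha\cdot\lambda)=\sum_\nu\ch M_{\oa}(s_\alpha\cdot\lambda-\nu)$, so the identity $\ch T_\alpha M(\lambda)=\ch M(s_\alpha\cdot\lambda)$ follows once one checks the bijection $\nu\mapsto\nu$ is compatible, i.e. that $s_\alpha\cdot(\lambda-\nu)=s_\alpha\cdot\lambda-\nu$ after re-indexing the sum over odd-root-sums by $s_\alpha$. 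This last point is where care is needed: $s_\alpha$ permutes $\Delta^+_{\ob}$ up to finitely many sign changes (the roots $\gamma$ with $\langle\gamma,\alpha^\vee\rangle\ne 0$), so $\nu\mapsto s_\alpha(\nu)$ is a bijection on the relevant index set, and $s_\alpha\cdot(\lambda-\nu)=s_\alpha\cdot\lambda-s_\alpha(\nu)$; reindexing the sum by this bijection on the odd side yields precisely $\ch\Res M(s_\alpha\cdot\lambda)$. I expect this bookkeeping with the odd-root factor and the $W$-action on it to be the only real obstacle; everything else is a direct transfer of the classical result through Lemma \ref{resindT}. (Alternatively, and perhaps more cleanly, one can avoid the Verma-flag argument entirely: the odd factor $D_{\ob}=\prod_\gamma(1+e^{-\gamma})$ satisfies $s_\alpha D_{\ob}=D_{\ob}$ as a formal character because the sign changes pair up $(1+e^{-\gamma})$ with $(1+e^{\gamma})=e^{\gamma}(1+e^{-\gamma})$... so one must be slightly careful, and the Verma-flag route is safer — I would present that one.)
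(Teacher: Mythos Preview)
Your approach is essentially identical to the paper's: both restrict to $\fg_{\oa}$ via Lemma~\ref{resindT}, use the Verma filtration of $\Res M(\lambda)$ together with $\alpha$-freeness of Verma modules (so that $T_\alpha^{\oa}$ is exact on the flag), apply the classical character identity for twisted Verma modules, and then reindex the odd-root sums. One point to tighten: the classical identity is $\ch T_\alpha^{\oa}M_{\oa}(\mu)=\ch M_{\oa}(s_\alpha\circ\mu)$ with the $\rho_{\oa}$-shift $\circ$, not the $\rho$-shift $\cdot$; the passage from $s_\alpha\circ(\lambda-\nu)$ to $s_\alpha\cdot\lambda-\nu'$ is exactly where the shift $s_\alpha(\rho_{\ob})-\rho_{\ob}$ combines with your reindexing $\nu\mapsto s_\alpha(\nu)$ to give the standard set equality $s_\alpha\circ(\lambda+\Gamma)=s_\alpha\cdot\lambda+\Gamma$ that the paper simply cites from \cite{MR1479886}.
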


\begin{proof}
The PBW theorem implies that the $\fg_{\oa}$-module $\Res M(\lambda)$ has a filtration by Verma modules for $\fg_{\oa}$, where the set of occurring highest weights is given by
\[\{\lambda +\sum_{\beta\in I}\beta | I\subset \Delta^-_{\ob}\}.\]
If $\fg$ is of $Q$-type, the weights in the set above appear with a certain constant multiplicity, otherwise exactly once. 

Lemma \ref{Tfreefinite} for $\fg_{\oa}$ and Lemma \ref{resindT} then imply that $\ch T_\alpha M(\lambda)$ is equal to the sum of the characters of $T_\alpha^{\oa}$ acting on the Verma modules in the filtration. Lemma 6.2 in \cite{MR1985191} shows that $\ch T_\alpha^{\oa} M_{\oa}(\mu)=\ch M_{\oa}(s_\alpha\circ\mu)$ for any $\mu\in\fh^\ast_{\oa}$. The character of $T_\alpha M(\lambda)$ is therefore equal to the sum of the characters of Verma modules of $\fg_{\oa}$ with highest weights 
\[\{s_{\alpha}\circ(\lambda +\sum_{\beta\in I}\beta) | I\subset \Delta^-_{\ob}\}=\{s_{\alpha}\cdot\lambda +\sum_{\beta\in I}\beta | I\subset \Delta^-_{\ob}\},\]
which is a standard equality, see e.g. Section 0.5 in \cite{MR1479886}. This proves the lemma.
\end{proof}

\begin{corollary}
\label{sl2osp12q2}
Let $\fg$ be a Lie superalgebra in the set $\{\mathfrak{sl}(2), \mathfrak{osp}(1|2),\mathfrak{q}(2)\}$. Consider $\lambda\in\fh_{\oa}^\ast$, which is assumed to be strongly typical in case $\fg=\mathfrak{q}(2)$, then
\[T_\alpha M(\lambda)=M(s_\alpha\cdot \lambda)\quad\mbox{ if } \langle\lambda,\alpha^\vee+\rho\rangle \ge 0\mbox{ or } \langle\lambda,\alpha^\vee\rangle\not\in\mZ.\]
\end{corollary}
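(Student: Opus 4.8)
The plan is to combine the character formula from Lemma \ref{charVerma} with the structural results about twisting functors on the small rank superalgebras, reducing everything to the classical $\mathfrak{sl}(2)$ computation. First I would observe that by Lemma \ref{charVerma} we already know $\ch T_\alpha M(\lambda)=\ch M(s_\alpha\cdot\lambda)$, so the only issue is upgrading this equality of characters to an isomorphism of modules. The strategy is to exhibit a nonzero map in one direction and argue it is an isomorphism by comparing characters.

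The key steps, in order, are as follows. Step one: treat the non-integral case $\langle\lambda,\alpha^\vee\rangle\not\in\mZ$. Here $M_{\oa}(\lambda)$ is both simple and $\alpha$-free over $\fg_{\oa}$, hence $M(\lambda)$ is $\alpha$-free (Lemma \ref{lemfinfree} / the behaviour of $\Res$), so $T_\alpha M(\lambda)$ has no $\cL_1$-contribution; moreover in this regime $M(\lambda)$ is simple, so its image under the (derived) twisting functor is easy to control. Step two: treat the dominant integral case $\langle\lambda,\alpha^\vee+\rho\rangle\ge 0$, i.e. $s_\alpha\cdot\lambda\le\lambda$. The natural approach is to use that $T_\alpha M(\lambda)$ comes with a canonical map. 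Following the classical picture (Andersen--Stroppel, \cite{MR2115448}), one has an adjunction or a natural transformation relating $T_\alpha M(\lambda)$ to $M(s_\alpha\cdot\lambda)$; alternatively, since $M(s_\alpha\cdot\lambda)$ embeds in $M(\lambda)$ when $s_\alpha\cdot\lambda\le\lambda$ and $T_\alpha$ is right exact, one gets $T_\alpha M(\lambda)$ as a quotient of, or mapping to, a module one understands. Step three: invoke that $T_\alpha M(\lambda)$ has a Verma flag — this follows because $\Res M(\lambda)$ has a Verma flag for $\fg_{\oa}$, $T_\alpha^{\oa}$ preserves Verma flags (Lemma 6.2 in \cite{MR1985191} together with the fact that $T_\alpha^{\oa} M_{\oa}(\mu)$ is a Verma module, i.e. \cite{MR2032059}), and one lifts this back through $\Res$ using that $\Res$ is faithful and exact and reflects the property of being a highest weight module of the right character. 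Combined with the character equality, a module with a Verma flag and the character of a single Verma module $M(s_\alpha\cdot\lambda)$ must be isomorphic to it once we know the highest weight space is one-dimensional of weight $s_\alpha\cdot\lambda$.

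More concretely, for $\fg=\mathfrak{sl}(2)$ this is exactly Lemma 6.2 / the classical statement in \cite{MR1985191, MR2032059} that $T_\alpha M_{\oa}(\mu)\cong M_{\oa}(s_\alpha\circ\mu)$. For $\fg=\mathfrak{osp}(1|2)$ and $\fg=\mathfrak{q}(2)$ (strongly typical), I would use Lemma \ref{resindT}: $\Res T_\alpha M(\lambda)\cong T_\alpha^{\oa}\Res M(\lambda)$, and $\Res M(\lambda)$ is a direct sum (or flag) of at most two Verma modules for $\fg_{\oa}=\mathfrak{sl}(2)$ or $\mathfrak{gl}(2)$, with highest weights $\lambda$ and $\lambda+\beta$ for $\beta$ the negative odd root(s). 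Applying $T_\alpha^{\oa}$ termwise gives Verma modules with highest weights $s_\alpha\cdot\lambda$ and $s_\alpha\cdot\lambda+\beta$, which is precisely $\Res M(s_\alpha\cdot\lambda)$. Thus $T_\alpha M(\lambda)$ and $M(s_\alpha\cdot\lambda)$ have isomorphic restrictions; to conclude they are isomorphic as $\fg$-modules one checks that $T_\alpha M(\lambda)$ is generated by a highest weight vector of weight $s_\alpha\cdot\lambda$ (using right exactness of $T_\alpha$ and the surjection $M(\lambda)\tto L(\lambda)$, or the canonical natural transformation), hence is a quotient of $M(s_\alpha\cdot\lambda)$; equality of characters then forces the quotient map to be an isomorphism.

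The main obstacle I anticipate is Step three: making precise the passage from "isomorphic after $\Res$ and correct character" to "isomorphic as $\fg$-modules." The subtlety is that $\Res$ is not full, so one genuinely needs to produce a $\fg$-module map — either via the braid-group/adjunction formalism of \cite{MR2115448} adapted to the super setting (which Lemma \ref{lembraid} indicates is available), or by directly locating a highest weight vector of weight $s_\alpha\cdot\lambda$ inside $T_\alpha M(\lambda)$ and checking it generates. The condition $\langle\lambda,\alpha^\vee+\rho\rangle\ge 0$ is exactly what guarantees $s_\alpha\cdot\lambda\le\lambda$ so that such a vector exists and the comparison goes through; in the opposite regime $T_\alpha M(\lambda)$ is a genuine non-split extension-type object and the naive isomorphism fails, which is why the hypothesis is needed. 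A secondary technical point is the strong typicality assumption for $\mathfrak{q}(2)$, which is needed so that $\Res M(\lambda)$ decomposes well and the relevant $T_\alpha^{\oa}$ computation is unobstructed by atypicality phenomena; I would simply cite the strongly typical block equivalence (Frisk--Mazorchuk, \cite{Frisk}) to reduce to the $\mathfrak{gl}(2)$ computation.
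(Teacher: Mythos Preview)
Your approach is far more elaborate than necessary and leaves unresolved the very step you flag as the main obstacle. The paper's proof is one line: under the hypotheses (non-integral, or $s_\alpha\cdot\lambda\le\lambda$), the target Verma module $M(s_\alpha\cdot\lambda)$ is \emph{simple} for each of $\mathfrak{sl}(2)$, $\mathfrak{osp}(1|2)$, and strongly typical $\mathfrak{q}(2)$ (citing \cite{MR2742017} for the last). Since in category~$\cO$ distinct simple modules have distinct characters, any module whose character equals that of a simple module has that simple as its only composition factor and hence is isomorphic to it. Lemma~\ref{charVerma} gives $\ch T_\alpha M(\lambda)=\ch M(s_\alpha\cdot\lambda)$, and the conclusion follows immediately.

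You never invoke simplicity of $M(s_\alpha\cdot\lambda)$, which is precisely the observation that eliminates the need to produce any $\fg$-module map. Your Step~three difficulty---upgrading ``same restriction and same character'' to ``isomorphic as $\fg$-modules''---is a genuine gap in your outline: $\Res$ is not full, and neither right exactness applied to $M(\lambda)\tto L(\lambda)$ nor the adjunction formalism you gesture at actually furnishes a surjection $M(s_\alpha\cdot\lambda)\tto T_\alpha M(\lambda)$ without further argument. All of this machinery is simply unnecessary once you notice that the target is simple in these rank-one cases; the hypothesis $\langle\lambda+\rho,\alpha^\vee\rangle\ge 0$ is there exactly to ensure $s_\alpha\cdot\lambda$ is antidominant, hence $M(s_\alpha\cdot\lambda)=L(s_\alpha\cdot\lambda)$.
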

\begin{proof}
In all of these cases the module $M(s_\alpha\cdot \lambda)$ is simple, see \cite{MR2742017} for the case $\fg=\mathfrak{q}(2)$. Since the simple modules are determined (up to possible disregarded parity) by their character, the property follows from Lemma \ref{charVerma}.\end{proof}
 
 \begin{lemma}
\label{TVerma}
Consider $\alpha$ simple in $\Delta_{\oa}^+$ and $\lambda\in\fh_{\oa}^\ast$. Assume that either\begin{itemize}
\item $\alpha\in\Pi$ or $\alpha/2\in \Pi$ with $\left(\fg_{\ob}\right)_{\alpha}=0$ or 
\item $\lambda$ is typical.
\end{itemize}
Then $T_\alpha M(\lambda)=M(s_\alpha\cdot \lambda)$ if $\langle\lambda+\rho,\alpha^\vee\rangle\ge 0$ or $ \langle\lambda+\rho,\alpha^\vee\rangle\not\in\mZ$.
\end{lemma}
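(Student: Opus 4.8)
The plan is to deduce the statement from the already-established facts that on Verma modules for the underlying Lie algebra one has $T^{\oa}_\alpha M_{\oa}(\mu) = M_{\oa}(s_\alpha\circ\mu)$ whenever $\langle\mu+\rho_{\oa},\alpha^\vee\rangle\ge 0$ or $\notin\mZ$ (this is the $\fg_{\oa}$-case, following from Lemma \ref{charVerma} together with the standard Lie-algebra result, or from the observation in Corollary \ref{sl2osp12q2} applied to $\mathfrak{sl}(2)$-subquotients), and from Lemma \ref{resindT}, which gives $\Res\circ T_\alpha \cong T^{\oa}_\alpha\circ\Res$. First I would recall from the proof of Lemma \ref{charVerma} that $\Res M(\lambda)$ has a Verma filtration for $\fg_{\oa}$ with highest weights $\{\lambda+\sum_{\beta\in I}\beta\mid I\subset\Delta^-_{\ob}\}$; in particular the top of this filtration is the Verma module $M_{\oa}(\lambda)$ itself, and all other subquotients have highest weights $\lambda+\sum_{\beta\in I}\beta$ with $I$ a nonempty subset of $\Delta^-_{\ob}$. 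By Lemma \ref{charVerma} we already know $\ch T_\alpha M(\lambda) = \ch M(s_\alpha\cdot\lambda)$, so what remains is purely an identification of the module structure: one must show $T_\alpha M(\lambda)$ is a highest weight module (equivalently, generated by its $\lambda'$-weight space for $\lambda'$ the highest weight of $M(s_\alpha\cdot\lambda)$), and that it has a simple top, hence is a quotient of $M(s_\alpha\cdot\lambda)$; combined with equal characters this forces the isomorphism.

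The cleaner route, which I would actually carry out, is the induction approach parallel to the $\fg_{\oa}$-case. Under the hypotheses of the lemma, either $\alpha$ (or $\alpha/2$) is simple in $\Delta^+$ with $(\fg_{\ob})_\alpha = 0$, or $\lambda$ is typical. In the typical case, Corollary \ref{oddtypical} lets me change to a Borel subalgebra $\hat\fb$ in which $\alpha$ or $\alpha/2$ is simple in $\hat\Delta^+$ — more precisely, for type II I would choose a system of positive roots adapted to $\alpha$ — so it suffices to treat the first bullet. With $\alpha$ (or $\alpha/2$) simple in $\Delta^+$ and $(\fg_{\ob})_\alpha=0$, the subalgebra $\fp=\fb\oplus\fg_{-\alpha}$ (resp. $\fb\oplus\fg_{-\alpha/2}\oplus\fg_{-\alpha}$) is a parabolic with Levi $\fl\cong\mathfrak{sl}(2)$ (resp. $\mathfrak{osp}(1|2)$), and $M(\lambda)\cong \ind^{\fg}_{\fp}M^{\fl}(\lambda)$. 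The key point is that the twisting functor $T_\alpha$, defined via localization at $Y\in(\fg_{\oa})_{-\alpha}$, is ``supported on $\fl$'': the automorphism $\varphi_\alpha$ can be chosen to fix $\fu^+$ and to restrict to the corresponding automorphism of $\fl$, and the bimodule ${}^\varphi U_\alpha$ is induced up from the analogous $\fl$-bimodule, so $T_\alpha\circ\ind^{\fg}_{\fp}\cong \ind^{\fg}_{\fp}\circ T_\alpha^{\fl}$ exactly as in Lemma \ref{resindT}. Then $T_\alpha M(\lambda)\cong\ind^{\fg}_{\fp}(T^{\fl}_\alpha M^{\fl}(\lambda))$, and by the $\mathfrak{sl}(2)$/$\mathfrak{osp}(1|2)$ computation (the Verma case of Corollary \ref{sl2osp12q2}), $T^{\fl}_\alpha M^{\fl}(\lambda) = M^{\fl}(s_\alpha\cdot\lambda)$ under the numerical hypothesis $\langle\lambda+\rho,\alpha^\vee\rangle\ge 0$ or $\notin\mZ$ (here $\rho$ and $\rho_{\fl}$ agree on $\alpha^\vee$ since $(\fg_{\ob})_\alpha=0$); inducing back gives $M(s_\alpha\cdot\lambda)$.

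The main obstacle I anticipate is the parabolic factorization $T_\alpha\circ\ind^{\fg}_{\fp}\cong\ind^{\fg}_{\fp}\circ T^{\fl}_\alpha$: one must verify that the Ore localization of $U(\fg)$ at powers of $Y\in(\fg_{\oa})_{-\alpha}$ and the chosen $\varphi_\alpha$ are compatible with the parabolic triangular decomposition $\fg=\fu^-\oplus\fl\oplus\fu^+$, i.e. that $\varphi_\alpha$ preserves $\fu^\pm$ (it sends $(\fg_i)_\beta$ to $(\fg_i)_{s_\alpha\beta}$, and $s_\alpha$ permutes $\Delta^+\setminus\{\alpha\}$ among the roots of $\fu^+$ plus the remaining $\fl$-roots, since $\alpha$ is simple), and that ${}^\varphi U'_\alpha \cong U(\fg)\otimes_{U(\fp)}{}^\varphi U'^{\fl}_\alpha$ as bimodules. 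This is a PBW-type argument, entirely analogous to the $\Res$–$\Ind$ computation already done in Lemma \ref{resindT}, so it is a routine verification rather than a genuine difficulty; the secondary point to watch is ensuring, in the typical case, that the odd-reflection passage of Corollary \ref{oddtypical} is set up so that the weight $\lambda$ and the reflection $s_\alpha$ transform compatibly with the $\rho$-shift, which follows because $s_\alpha(\rho-\hat\rho)=\rho-\hat\rho$ when $\alpha$ is even and $\Delta^+_{\oa}=\hat\Delta^+_{\oa}$.
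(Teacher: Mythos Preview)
Your proposal is correct and follows essentially the same route as the paper: reduce to the case where $\alpha$ (or $\alpha/2$) is simple in $\Delta^+$ via the odd-reflection isomorphism of Corollary~\ref{oddtypical} for typical $\lambda$, then use parabolic induction through $\fp_\alpha=\fb\oplus\fg_{-\alpha}$ (resp.\ $\fb\oplus\fg_{-\alpha/2}\oplus\fg_{-\alpha}$) with Levi $\mathfrak{sl}(2)$ (resp.\ $\mathfrak{osp}(1|2)$), together with the rank-one computation from Corollary~\ref{sl2osp12q2}; the paper cites Section~6.6 of \cite{MR1985191} for exactly the parabolic factorization $T_\alpha\circ\ind^{\fg}_{\fp}\cong\ind^{\fg}_{\fp}\circ T_\alpha^{\fl}$ that you spell out.

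Two small remarks. First, your reduction does not cover $\fg$ of $Q$-type: there $(\fg_{\ob})_\alpha\neq 0$ for every $\alpha\in\Pi_{\oa}$ and Corollary~\ref{oddtypical} is unavailable; the paper handles this case separately by the same parabolic reduction with Levi $\mathfrak{q}(2)$ (using Corollary~\ref{sl2osp12q2}) and, for strongly typical regular blocks, by invoking the equivalence from \cite{Frisk}. Second, your final parenthetical claim that $s_\alpha(\rho-\hat\rho)=\rho-\hat\rho$ is not needed and is in general false; the correct bookkeeping is simply that the $\hat\rho$-shifted reflection of $\lambda+\rho-\hat\rho$ equals $s_\alpha(\lambda+\rho)-\hat\rho$, which translates back under Corollary~\ref{oddtypical} to $s_\alpha\cdot\lambda$ since $s_\alpha\cdot\lambda$ is again typical.
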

Before proving this lemma we note that in the remaining case, namely $\langle\lambda+\rho,\alpha^\vee\rangle \in -\mN$, the module $T_\alpha M(\lambda)$ corresponds to a twisted (or shuffled) Verma module, see \cite{MR1985191}. In this paper we do not study such modules.

\begin{proof}
First we assume that $\fg$ is not of $Q$-type. 

If $\alpha$ or $\alpha/2$ is a simple root, then Verma modules can be defined through a two-step parabolic induction, as in the equation below. If $\alpha$ is simple, the parabolic subalgebra is $\fp_{\alpha}:=\fb+\fg_{-\alpha}$, if $\alpha/2$ is simple, then $\fp_{\alpha}:=\fb+\fg_{-\alpha/2}+\fg_{-\alpha}$. The Verma module is then given by
\[M^{(\fb)}(\lambda)\cong U(\fg)\otimes_{ U(\fp_{\alpha})} U(\fp_{\alpha})\otimes_{ U(\fb)}L_{\fb}(\lambda).\]
The structure of the module $T_\alpha M(\lambda)$ then follows from the corresponding properties for $\mathfrak{sl}(2)$ or $\mathfrak{osp}(1|2)$ (the Levi algebra of the two types of parabolic subalgebras) in Corollary \ref{sl2osp12q2} as in Section 6.6 in \cite{MR1985191}. 

The situation where $\alpha$ is simple in $\Delta_{\oa}^+$ but not in $\Delta^+$ excludes $\fg$ to be in the family of strange algebras $\mathfrak{p}(n)$, so we are left with basic classical Lie superalgebras. Therefore we choose a Borel subalgebra $\hat{\fb}$ of $\fg$ containing $\fb_{\oa}$ in which $\alpha$ or $\alpha/2$ is a simple root. Since $\lambda$ is typical, Corollary \ref{oddtypical} implies that $M^{(\fb)}(\lambda)=M^{(\hat{\fb})}({\lambda+\rho-\hat{\rho}})$. Using the previous result then yields $T_{s_\alpha}M^{(\fb)}(\lambda)=M^{(\hat{\fb})}({s_{\alpha}(\lambda+\rho)-\hat{\rho}})$, from which the claim follows.

Now let $\fg$ be of $Q$-type. Since $\Delta_{\oa}^+=\Delta_{\ob}^+$, the condition $(\fg_{\ob})_\alpha=0$ is never satisfied. The statement regarding strongly typical highest weights follows from Lemma \ref{resindT} and the fact that the equivalence of categories in \cite{Frisk} maps Verma modules to Verma modules if $\lambda$ is regular. The general case follows from reducing to Corollary \ref{sl2osp12q2} for $\mathfrak{q}(2)$, through parabolic induction as for $\mathfrak{sl}(2)$ and $\mathfrak{osp}(1|2)$. 
\end{proof}

\begin{lemma}
\label{simpnonint}
Consider $\lambda\in\fh^\ast_{\oa}$ with $\langle \lambda,\alpha^\vee\rangle\not\in\mZ$. Then $T_\alpha L(\lambda)$ is simple and $T_\alpha^2L(\lambda)=L(\lambda)$.
\end{lemma}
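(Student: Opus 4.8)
The plan is to transfer everything to the underlying Lie algebra $\fg_{\oa}$ via the restriction functor, using Lemma \ref{resindT} which gives $\Res\circ T_\alpha\cong T_\alpha^{\oa}\circ\Res$, and then invoke the known analogue of this statement for semisimple Lie algebras (the classical fact that $T_\alpha^{\oa}$ is a self-equivalence of the non-integral block $\cO_{\oa,\mu}$ when $\langle\mu,\alpha^\vee\rangle\notin\mZ$, see \cite{MR2032059, MR2115448}). First I would observe that since $\langle\lambda,\alpha^\vee\rangle\notin\mZ$, the module $L(\lambda)=M(\lambda)$ is $\alpha$-free (reduction to $\mathfrak{sl}(2)$), hence by Lemma \ref{Tfreefinite} we have $\cL_1T_\alpha L(\lambda)=0$, so $T_\alpha$ behaves exactly; moreover $T_\alpha L(\lambda)$ is again $\alpha$-free. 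This already shows $T_\alpha$ restricted to the relevant subcategory is exact, which is what one needs for the involutivity statement.

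Next, to see that $T_\alpha L(\lambda)$ is simple, I would use the character computation: by Lemma \ref{charVerma}, $\ch T_\alpha M(\lambda)=\ch M(s_\alpha\cdot\lambda)$, and since $L(\lambda)=M(\lambda)$ in the non-integral case and $M(s_\alpha\cdot\lambda)=L(s_\alpha\cdot\lambda)$ is likewise simple (again $\langle s_\alpha\cdot\lambda,\alpha^\vee\rangle\notin\mZ$), we get $\ch T_\alpha L(\lambda)=\ch L(s_\alpha\cdot\lambda)$. To upgrade this character equality to an isomorphism $T_\alpha L(\lambda)\cong L(s_\alpha\cdot\lambda)$ one must check that $T_\alpha L(\lambda)$ has simple top (or socle). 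For this I would apply $\Res$: $\Res T_\alpha L(\lambda)=T_\alpha^{\oa}\Res L(\lambda)$, and $\Res L(\lambda)$ has a Verma flag for $\fg_{\oa}$ with highest weights $\{\lambda+\sum_{\beta\in I}\beta\mid I\subset\Delta_{\ob}^-\}$, all lying in non-integral blocks with respect to $\alpha$; since $T_\alpha^{\oa}$ is a self-equivalence on each such block, $\Res T_\alpha L(\lambda)$ has a Verma flag with highest weights obtained by applying $s_\alpha\cdot$, i.e. exactly the Verma flag of $\Res L(s_\alpha\cdot\lambda)$. Combined with the character equality and the fact that $T_\alpha L(\lambda)$ has a simple quotient (any module in $\cO$ does), one concludes $T_\alpha L(\lambda)$ is simple; I would make the top argument precise by noting that a nonzero quotient must restrict to a nonzero quotient of $T_\alpha^{\oa}\Res L(\lambda)$, whose composition factors are pinned down.

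For the final claim $T_\alpha^2 L(\lambda)=L(\lambda)$, I would argue as follows: $T_\alpha^2 L(\lambda)=T_\alpha L(s_\alpha\cdot\lambda)$ by the first part, and applying the first part once more (with $\lambda$ replaced by $s_\alpha\cdot\lambda$, which still satisfies the non-integrality hypothesis and whose $s_\alpha\cdot$-image is $\lambda$) gives $T_\alpha L(s_\alpha\cdot\lambda)\cong L(\lambda)$. Alternatively, and perhaps cleaner, one uses that $T_\alpha^2\cong\mathrm{Id}$ up to the derived correction vanishes here: since both $L(\lambda)$ and $T_\alpha L(\lambda)$ are $\alpha$-free, the composite $T_\alpha^2$ is exact on this subcategory, and on $\fg_{\oa}$ one has $(T_\alpha^{\oa})^2\cong\mathrm{Id}$ on non-integral blocks (because $s_\alpha^2=e$ and $T_\alpha^{\oa}$ is the inverse self-equivalence there); transporting via $\Res$, which is faithful and detects isomorphisms of the relevant modules, yields the claim. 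The main obstacle I anticipate is the passage from character equality to genuine isomorphism — i.e. controlling the top/socle of $T_\alpha L(\lambda)$ — and the cleanest route is precisely through $\Res$ and the self-equivalence property of $T_\alpha^{\oa}$ on non-integral blocks of $\cO_{\oa}$, so that no delicate direct analysis inside $\cO$ is needed.
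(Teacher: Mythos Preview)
Your argument contains a genuine gap: the assertion that ``$L(\lambda)=M(\lambda)$ in the non-integral case'' is false. The hypothesis $\langle\lambda,\alpha^\vee\rangle\notin\mZ$ concerns a \emph{single} simple root $\alpha$ of $\Delta_{\oa}^+$; it says nothing about the other simple roots, and for a Lie superalgebra it also says nothing about atypicality. So $M(\lambda)$ is almost never simple under this hypothesis, and likewise $M(s_\alpha\cdot\lambda)$ need not be simple. Consequently Lemma~\ref{charVerma} gives you no information about $\ch T_\alpha L(\lambda)$, and the chain $\ch T_\alpha L(\lambda)=\ch T_\alpha M(\lambda)=\ch M(s_\alpha\cdot\lambda)=\ch L(s_\alpha\cdot\lambda)$ collapses at both ends. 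For the same reason, $\Res L(\lambda)$ does \emph{not} have a Verma flag in $\cO_{\oa}$ (that is a property of $\Res M(\lambda)$, and it is not inherited by quotients), so your plan to identify $\Res T_\alpha L(\lambda)$ with $\Res L(s_\alpha\cdot\lambda)$ via matching Verma flags cannot be carried out. There is also a more subtle point: when $\alpha$ is simple in $\Delta_{\oa}^+$ but not in $\Delta^+$, the highest weight of the simple module $T_\alpha L(\lambda)$ is in general the star action $s_\alpha\ast\lambda$, not $s_\alpha\cdot\lambda$ (see the remark after Theorem~\ref{Tsimple} and Corollary~\ref{corast2}); so even the target of your proposed isomorphism is misidentified.

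The paper's proof sidesteps all of this by never attempting to name the simple module $T_\alpha L(\lambda)$. Instead it observes that every $\fg_{\oa}$-composition factor $L_{\oa}(\nu)$ of $\Res L(\lambda)$ satisfies $\langle\nu,\alpha^\vee\rangle\notin\mZ$, so by the classical result $T_\alpha^{\oa}L_{\oa}(\nu)\cong L_{\oa}(s_\alpha\circ\nu)$ and exactness of $T_\alpha^{\oa}$ on such modules, applying $T_\alpha$ twice restores the character of $\Res L(\lambda)$. Writing the Jordan--H\"older series of $T_\alpha L(\lambda)$ as $\{L(\lambda_i)\}_{i=1}^k$ (each again $\alpha$-non-integral, each with $T_\alpha L(\lambda_i)\neq 0$), and then of each $T_\alpha L(\lambda_i)$, the character equality $\ch L(\lambda)=\sum_{i,j}\ch L(\lambda_{ij})$ forces $k=1$ and $k_1=1$. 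This counting argument simultaneously yields simplicity of $T_\alpha L(\lambda)$ and the involutivity $T_\alpha^2 L(\lambda)\cong L(\lambda)$, without ever computing a highest weight.
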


\begin{proof}
Theorem 2.1 in \cite{CMW} implies $T^{\oa}_\alpha L_{\oa}(\nu)=L_{\oa}(s_\alpha\circ\nu)$ if $\langle \nu,\alpha^\vee\rangle\not\in\mZ$. Every simple $\fg_{\oa}$-module in the Jordan-H\"older decomposition of $\Res L(\lambda)$ is of the form $L_{\oa}(\nu)$ with $\langle \nu,\alpha^\vee\rangle\not\in\mZ$ and therefore $\alpha$-free. The left cohomology functors of $T_\alpha^{\oa}$ therefore annihilate any modules composed with these modules $L_{\oa}(\nu)$, according to Lemma \ref{Tfreefinite} applied to $\fg_{\oa}$. Lemma \ref{resindT} then implies that $\Res T_\alpha L(\lambda)$ has a filtration with subquotients $\{L_{\oa}(s_\alpha\circ\nu)\}$, if $\Res L(\lambda)$ has a filtration with subquotients $\{L_{\oa}(\nu)\}$.

Now assume that the module $T_\alpha L(\lambda)$ has a Jordan-H\"older decomposition with subquotients $\{L(\lambda_i)|i=1,\cdots,k\}$, where $k>0$ and $\langle \lambda_i,\alpha^\vee\rangle\not\in\mZ$ because of the previous paragraph. Then each $T_\alpha L(\lambda_i)$ has a Jordan-H\"older decomposition with subquotients $\{L(\lambda_{ij})|j=1,\cdots,k_i\}$, with $k_i>0$. The previous considerations imply \[\ch L(\lambda)=\sum_{i=1}^k\sum_{j=1}^{i_k}\ch L(\lambda_{ij})\] and therefore $k=1$ with $T_\alpha L(\lambda)=L(\lambda_1)$ and $T_\alpha L(\lambda_1)=L(\lambda)$.
\end{proof}

\begin{lemma}
\label{tensorfd}
For $V$ a simple finite dimensional $\fg$-module, the endofunctors of $\cO$ given by $T_\alpha\circ (-\otimes V)$ and $(-\otimes V)\circ T_\alpha$ are isomorphic. The same property holds for the functors $\cL_i T_\alpha$.
\end{lemma}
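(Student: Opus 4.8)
The plan is to reduce the statement to a bimodule isomorphism. Tensoring with the finite-dimensional module $V$ is the functor $-\otimes_{\mC}V$, which on the level of bimodules is given by the $U(\fg)\text{-}U(\fg)$--bimodule $U(\fg)\otimes_{\mC}V$, where the right action is the ordinary one and the left action is twisted through the comultiplication $\mathbf\Delta$ (so that $u\cdot(a\otimes v)=\sum u_{(1)}a\otimes u_{(2)}v$). Similarly $T_\alpha$ is $-\otimes_{U(\fg)}{}^{\varphi}\U_\alpha$ in the appropriate variance, i.e. tensoring with the bimodule ${}^{\varphi}\U_\alpha$. Thus the two composite functors correspond to the two bimodules ${}^{\varphi}\U_\alpha\otimes_{U(\fg)}\bigl(U(\fg)\otimes_{\mC}V\bigr)$ and $\bigl(U(\fg)\otimes_{\mC}V\bigr)\otimes_{U(\fg)}{}^{\varphi}\U_\alpha$, and it suffices to produce an isomorphism of $U(\fg)\text{-}U(\fg)$--bimodules between them. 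Concretely, $U_\alpha=U'_\alpha/U(\fg)$ with $U'_\alpha$ the Ore localisation of $U(\fg)$ at the powers of $Y\in(\fg_{\oa})_{-\alpha}$; so I am really claiming $U'_\alpha\otimes_{\mC}V\cong (U(\fg)\otimes_{\mC}V)\otimes_{U(\fg)}U'_\alpha$ compatibly with the $\varphi$-twist, which then passes to the quotient by $U(\fg)\otimes_{\mC}V$ on each side.

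The first step is therefore to check that the localisation $U'_\alpha$ of $U(\fg)$ satisfies $U'_\alpha\otimes_{U(\fg)}\bigl(U(\fg)\otimes_\mC V\bigr)\cong U'_\alpha\otimes_\mC V$ as a left $U'_\alpha$--module and right $U(\fg)$--module; this is just flatness of the Ore localisation together with the standard "swap" isomorphism $M\otimes_\mC V\cong (M\otimes_\mC V)$ for the tensor-identity Hopf-algebra manipulation, i.e. the right $U(\fg)$-module $U(\fg)\otimes_\mC V$ with its diagonal left action becomes, after base change to $U'_\alpha$, the module $U'_\alpha\otimes_\mC V$ with the diagonal action of $U'_\alpha$ — and here one uses that the diagonal action extends to $U'_\alpha$ because $Y$ acts locally nilpotently on $V$, hence invertibly after the usual $e^{\operatorname{ad}}$-type trick built into $\varphi$ (this is exactly the mechanism already invoked in Lemma~\ref{resindT} and Lemma~\ref{lembraid}, imported mutatis mutandis from Lemma 3 and Corollary 11 of \cite{MR2115448}). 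The second step is to track the left $U(\fg)$-action twisted by $\varphi$: since $\varphi$ is an algebra automorphism and the comultiplication intertwines $\varphi\otimes\varphi$ with $\varphi\circ\mathbf\Delta$ up to the relevant identifications (because $\varphi$ permutes root spaces according to $s_\alpha$ and $V$ is finite-dimensional, hence $\varphi$ lifts to a linear automorphism of $V$), the $\varphi$-twist commutes past the $-\otimes_\mC V$ on both sides. Combining, one gets the desired bimodule isomorphism, and quotienting out $U(\fg)\otimes_\mC V\subset U'_\alpha\otimes_\mC V$ (which is the submodule corresponding to $U(\fg)\subset U'_\alpha$ on each side) yields the isomorphism of functors $T_\alpha\circ(-\otimes V)\cong(-\otimes V)\circ T_\alpha$.

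For the derived statement, the cleanest route is to note that $-\otimes V$ is exact (as $V$ is finite-dimensional over $\mC$) and sends projectives to projectives in $\cO$ — indeed $P(\mu)\otimes V$ has a Verma flag and is projective — so it is its own left derived functor and commutes with taking a projective resolution. Applying $T_\alpha$ termwise to a projective resolution $P^\bullet\to M$ computes $\cL_\bullet T_\alpha M$, and $-\otimes V$ applied to $P^\bullet\to M$ is again a projective resolution of $M\otimes V$; the first-level isomorphism just established then gives $\cL_i T_\alpha(M\otimes V)\cong(\cL_i T_\alpha M)\otimes V$ for all $i$, using Lemma~\ref{Tfreefinite} to know these vanish for $i>1$ so that no higher bookkeeping is needed. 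Alternatively one derives the natural transformation of Step~1 directly on $\cD^b(\cO)$.

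I expect the main obstacle to be the bookkeeping around the $\varphi$-twist: one must be careful that the automorphism $\varphi_\alpha$ of $\fg$ (which exists by the construction in Definition 2.3.4 of \cite{MR2074588}) interacts correctly with the diagonal action on $-\otimes_\mC V$, i.e. that the chosen lift of $\varphi$ to $\operatorname{GL}(V)$ is compatible with both the comultiplication and the localisation; in the Lie superalgebra setting there is additionally the Koszul sign rule in $\mathbf\Delta$ to keep straight, though since $\varphi$ preserves the $\mZ_2$-grading this does not cause genuine trouble. Everything else is a routine adaptation of the classical arguments in \cite{MR2115448, MR1985191}.
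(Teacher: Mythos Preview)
Your proposal is correct and essentially the same as the paper's proof: both extend the comultiplication to $U'_\alpha$ (the paper cites Lemma~3.1 of \cite{MR2032059} for the explicit formula, which is where the local nilpotency of $Y$ on $V$ enters), obtain the swap isomorphism $U'_\alpha\otimes_{U(\fg)}(M\otimes V)\cong(U'_\alpha\otimes_{U(\fg)}M)\otimes V$, quotient by $M\otimes V$, twist by $\varphi$, and then for the derived statement use that $-\otimes V$ is exact and preserves projectives. The one point the paper makes more explicit than you do is that after twisting one lands on $(T_\alpha M)\otimes{}^\varphi V$ rather than $(T_\alpha M)\otimes V$, and it is the \emph{simplicity} of $V$ together with $\ch{}^\varphi V=\ch V$ (obtained by restricting to $\fg_{\oa}$ and comparing central characters on each simple constituent) that yields ${}^\varphi V\cong V$; this is where the hypothesis on $V$ is actually used, and your sketch leaves it slightly implicit.
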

\begin{proof}
Consider an arbitrary $M\in\cO$. Since the Ore localisation of the beginning of this section only concerns even elements, the proof of this fact can be completed by considerations for Lie algebras made in \cite{MR2032059}, therefore we only sketch it. The comultiplication of the Hopf superalgebra $ U(\fg)$ can be extended to $U_\alpha'$, by the exact same formula as in Lemma 3.1 in \cite{MR2032059}. As in the beginning of the proof of Theorem 3.2 in \cite{MR2032059}, this comultiplication yields an $U(\fg)$-isomorphism
\[U_\alpha'\otimes_{U(\fg)}\left(M\otimes V\right)\cong \left( U_\alpha'\otimes_{U(\fg)}M\right)\otimes V. \]
By factoring out the submodule
\[U(\fg)\otimes_{U(\fg)}\left(M\otimes V\right)\cong M\otimes V\cong \left( U(\fg)\otimes_{U(\fg)}M\right)\otimes V\]
on both sides and twisting the action of $\fg$ by $\varphi$, one obtains
\[{}^{\varphi}U_\alpha\otimes_{U(\fg)}\left(M\otimes V\right)\cong \left( {}^\varphi U_\alpha\otimes_{U(\fg)}M\right)\otimes {}^\varphi V. \]
Since twisting the action of $\fg_{\oa}$ by (the restriction of) $\varphi$ on a simple finite dimensional $\fg_{\oa}$-module yields an isomorphic module (which follows from the central character), we have that $\ch{}^{\varphi}V=\ch V$. Since $V$ was assumed to be simple, we have $ {}^\varphi V\cong V$, so $T_\alpha(M\otimes V)=T_\alpha M\otimes V$. The naturality of the construction then yields an isomorphism between $T_\alpha\circ (-\otimes V)$ and $(-\otimes V)\circ T_\alpha$.

Since the functors $(-\otimes V)$ are exact functors, which map projective modules to projective modules, the isomorphism for the cohomology functors follows from the Grothendieck spectral sequence, see Section 5.8 in \cite{MR1269324}.
\end{proof}

Denote by $S_{\alpha}^{\overline{0}}$ (respectively $S_\alpha$) the subfunctor of the identity functor on $\mathcal{O}_{\overline{0}}$ (respectively $\cO$) given by taking the maximal 
$\alpha$-finite submodule. Correspondingly we denote by $Z_{\alpha}^{\overline{0}}$ (respectively $Z_\alpha$) the subfunctor of the identity functor on $\mathcal{O}_{\overline{0}}$ (respectively $\cO$) given by taking the maximal 
$\alpha$-finite quotient. These functors also intertwine the functors $\Res$ and $\Ind$ and commute as functors with taking tensor products with finite dimensional representations. 

\begin{proposition}
\label{LTZuck}
We have the following isomorphisms of endofunctors on $\cO$;
$$\cL_1T_\alpha\cong S_\alpha\qquad\mbox{and}\qquad \cR_1 G_\alpha\cong Z_\alpha. $$
\end{proposition}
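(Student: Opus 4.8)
The strategy is to bootstrap from the classical case for $\fg_{\oa}$ via the intertwining property $\Res\circ T_\alpha\cong T_\alpha^{\oa}\circ\Res$ of Lemma \ref{resindT}. For Lie algebras, the isomorphism $\cL_1T_\alpha^{\oa}\cong S_\alpha^{\oa}$ is established in \cite{MR2032059} (it is essentially Theorem 2.3 there, following Joseph's comparison of twisting functors with Joseph's completion functors and Enright's completion). So the first step is to recall that classical statement as a black box. The second step is to produce, for a given $M\in\cO$, a natural short exact sequence $0\to S_\alpha M\to M\to M/S_\alpha M\to 0$ where $M/S_\alpha M$ is $\alpha$-free. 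Applying $\cL T_\alpha$ to this and using $\cL_1 T_\alpha(M/S_\alpha M)=0$ (Lemma \ref{Tfreefinite}, since $M/S_\alpha M$ is $\alpha$-free) together with $T_\alpha(S_\alpha M)=0$ (Lemma \ref{Tfreefinite} again, since $S_\alpha M$ is $\alpha$-finite) gives, from the long exact sequence, a natural isomorphism $\cL_1 T_\alpha M\cong \cL_1 T_\alpha(M/S_\alpha M)$ on one hand and an exact piece feeding $S_\alpha M$ into $\cL_1 T_\alpha M$ on the other. So one reduces to checking that the natural transformation $S_\alpha\to\cL_1T_\alpha$ (or its inverse) is an isomorphism.

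\textbf{Key steps.} Concretely, I would argue as follows. First, because $\Res$ is exact and faithful, and because $\Res$ detects $\alpha$-finiteness versus $\alpha$-freeness (an element $Y\in(\fg_{\oa})_{-\alpha}$ acts locally finitely / injectively on $M$ if and only if it does on $\Res M$), we have $\Res\circ S_\alpha\cong S_\alpha^{\oa}\circ\Res$; this is already asserted in the paragraph preceding the proposition. Second, combine this with the derived intertwining relation \eqref{intderfun}, namely $\Res\circ\cL_1T_\alpha\cong\cL_1T_\alpha^{\oa}\circ\Res$. Now take the natural transformation $\eta^{\oa}\colon S_\alpha^{\oa}\xrightarrow{\sim}\cL_1T_\alpha^{\oa}$ from \cite{MR2032059} and the analogous candidate transformation $\eta\colon S_\alpha\to\cL_1T_\alpha$ on $\cO$ (constructed the same way, from the connecting map in the long exact sequence of $\cL T_\alpha$ applied to $0\to S_\alpha M\to M\to M/S_\alpha M\to 0$; one must check it is natural, which is routine since everything in sight is natural). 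Then $\Res(\eta_M)$ is identified, under the two isomorphisms above, with $\eta^{\oa}_{\Res M}$, which is an isomorphism by the classical result. Since $\Res$ is exact and faithful, a morphism in $\cO$ whose image under $\Res$ is an isomorphism is itself an isomorphism. Hence $\eta$ is a natural isomorphism $S_\alpha\cong\cL_1T_\alpha$. The statement for $\cR_1 G_\alpha\cong Z_\alpha$ is obtained by the dual argument, using that $G_\alpha$ (the right adjoint / completion-type functor — it should have been introduced before this proposition, via $G_\alpha=\Hom_{U(\fg)}({}^{\varphi}\U_\alpha,-)$ or similar) also intertwines $\Res,\Ind$, that $\Res$ detects $\alpha$-finiteness of quotients as well, and the corresponding classical statement $\cR_1G_\alpha^{\oa}\cong Z_\alpha^{\oa}$ from \cite{MR2032059}; alternatively one can deduce it formally by an adjunction/duality argument from the first isomorphism.

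\textbf{Main obstacle.} The genuinely delicate point is constructing the natural transformation $\eta\colon S_\alpha\to\cL_1T_\alpha$ and verifying that under $\Res$ it really matches $\eta^{\oa}$, rather than merely that $S_\alpha$ and $\cL_1T_\alpha$ become abstractly isomorphic after applying $\Res$ (abstract isomorphism of the underlying $\fg_{\oa}$-valued functors would not by itself give an isomorphism of $\fg$-valued functors). To do this cleanly one should describe $\cL_1T_\alpha M$ explicitly as $\ker(Y\colon U_\alpha\otimes_{U(\fg)}M\to U_\alpha\otimes_{U(\fg)}M)$ — more precisely via a length-one projective (or $\alpha$-free) resolution — and exhibit the canonical map from the $\alpha$-finite submodule $S_\alpha M$ into this kernel; this is exactly the construction carried out for Lie algebras in \cite{MR2032059, MR2115448}, and since the localisation is at an even element $Y$, the formulas carry over verbatim. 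I would therefore phrase the proof as: "the construction of the natural isomorphism is identical to the Lie algebra case in \cite{MR2032059}, and compatibility with $\Res$ follows from Lemma \ref{resindT} and \eqref{intderfun}," and then invoke faithful exactness of $\Res$ to descend the isomorphism to $\cO$. A secondary point to make sure of is that $G_\alpha$ has been properly defined earlier with its own intertwining lemma; if not, one inserts a one-line remark that $G_\alpha$ is defined by $\Hom$ against the same bimodule and satisfies the mirror image of Lemma \ref{resindT}.
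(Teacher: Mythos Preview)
Your approach is correct but genuinely different from the paper's. The paper does not construct a natural transformation directly; instead it uses Gorelik's equivalence \eqref{eqGorelik} for strongly typical blocks to transport the classical isomorphism $\cL_1T_\alpha^{\oa}\cong S_\alpha^{\oa}$ from \cite{MR2331754} to $\cO_\chi$, then observes that both $\cL_1T_\alpha$ and $S_\alpha$ commute with tensoring by finite dimensional modules (Lemma~\ref{tensorfd}), so the isomorphism extends to the additive closure of translates of a strongly typical injective --- hence to all injectives --- and finally invokes left exactness of both functors to get the isomorphism on all of $\cO$. Your route via the explicit Ore--localisation description $\cL_1T_\alpha M\cong{}^{\varphi}\ker(M\to U'_\alpha\otimes_{U(\fg)}M)={}^{\varphi}S_\alpha M$, trivialising the $\varphi$-twist on $\alpha$-finite modules by the action of a lift $\dot s_\alpha\in G_{\oa}$ (which works verbatim since $Y$ and $\dot s_\alpha$ are even), and then descending along faithful exact $\Res$, is more elementary: it avoids Gorelik's theorem entirely. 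The paper's argument, by contrast, reuses machinery (strongly typical equivalence, translation functors) that is needed elsewhere and sidesteps any explicit bimodule computation. Two small remarks: in your ``Plan'' paragraph the isomorphism from the long exact sequence is $\cL_1T_\alpha M\cong\cL_1T_\alpha(S_\alpha M)$, not $\cL_1T_\alpha(M/S_\alpha M)$, and that step alone is not yet a map $S_\alpha M\to\cL_1T_\alpha M$ --- the real content is exactly what you isolate in your ``Main obstacle'' paragraph. Also, as you note, $G_\alpha$ is only introduced later in the paper (in the proof of Proposition~\ref{autoequi}), so strictly speaking the statement here is a forward reference.
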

\begin{proof}
We use the equivalence of categories of Gorelik in \cite{MR1862800} (restricted to category~$\cO$) between $\cO_\chi$ and $(\cO_{\oa})_{\widetilde\chi}$ for a strongly typical central character $\chi$ and a matching character $\widetilde\chi$ for the underlying Lie algebra. The functors for this equivalence are given in equation \eqref{eqGorelik}.
Based on equation \eqref{intderfun}, we find that on $\cO_\chi$
$$\cL_1T_\alpha\cong\left(\Ind-\right)_{\chi}\circ \cL_1T^{\oa}_\alpha\circ \left(\Res -\right)_{\widetilde\chi},$$
where $S_\alpha$ is in the same way generated by $S_\alpha^{\oa}$. On a strongly typical block the equivalence therefore follows from the classical case in Theorem 2(2) in \cite{MR2331754}.

In particular, this implies that $\cL_1T_\alpha$ and $S_\alpha$ coincide on the full additive subcategory of $\cO$ generated by one integral dominant strongly typical injective module. Every indecomposable injective module in $\cO$ is  a direct summand of the tensor product of a strongly typical integral dominant injective module and a finite dimensional representation, which is a consequence of the fact that the set of strongly typical weights is closed in the Zariski topology, while the set of integral dominant ones is dense. Since both $\cL_1 T_\alpha$ and $S_\alpha$ commute as functors with the functors corresponding to taking the tensor product with finite dimensional modules, see Lemma \ref{tensorfd}, it follows that the two functors are isomorphic on the full subcategory of injective modules in $\cO$. Since both are left exact, this suffices to conclude that they are equivalent on $\cO$. The proof for $G_\alpha$ is completely analogous, using projective modules.
\end{proof}

We have the following generalisation of Corollary 4.2 in \cite{MR2032059} and Theorem 2.1 in \cite{CMW}.

\begin{proposition}
\label{autoequi}
For any central character $\chi: \mathcal{Z}(\fg)\to\mC$, the left derived functor $\mathcal{L}T_{\alpha}$ is an auto-equivalence of the bounded derived category $\mathcal{D}^b(\mc O_\chi)$.
\end{proposition}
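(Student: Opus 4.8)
The plan is to mimic the classical argument (Andersen--Stroppel, Corollary 4.2 in \cite{MR2032059}) but to transport everything to the underlying Lie algebra via the intertwining properties already established. First I would fix a central character $\chi$ and note that on $\cO_\chi$ the functor $\Res$ is exact and faithful, and that by \eqref{intderfun} we have $\Res\circ\cL_iT_\alpha\cong\cL_iT_\alpha^{\oa}\circ\Res$ for all $i$. Since $\cL T_\alpha^{\oa}$ is known to be an auto-equivalence of $\cD^b(\cO_{\oa})$ by Corollary 4.2 in \cite{MR2032059}, the idea is to deduce the same for $\cL T_\alpha$ on $\cD^b(\cO_\chi)$ by exhibiting an explicit quasi-inverse, namely the right derived functor $\cR G_\alpha$ of the right adjoint $G_\alpha$ of $T_\alpha$ (defined by twisting ${}^\varphi\Hom_{U(\fg)}(U_\alpha,-)$, exactly as in the semisimple case and studied above in Proposition \ref{LTZuck}).

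Concretely, the key steps are: (1) establish the adjunction $(\cL T_\alpha,\cR G_\alpha)$ on the derived categories, which is formal once one knows $T_\alpha\dashv G_\alpha$ on module categories and that both have bounded derived functors ($\cL_iT_\alpha=0$ for $i>1$ by Lemma \ref{Tfreefinite}, and similarly $\cR_iG_\alpha=0$ for $i>1$, which follows from the $\fg_{\oa}$-statement via the analogue of \eqref{intderfun} for $G_\alpha$); (2) show the adjunction unit $\mathrm{id}\to\cR G_\alpha\circ\cL T_\alpha$ and counit $\cL T_\alpha\circ\cR G_\alpha\to\mathrm{id}$ are isomorphisms. For step (2) I would argue that it suffices to check this on a set of generators of $\cD^b(\cO_\chi)$, e.g. the Verma modules $M(\lambda)$ with $\chi_\lambda=\chi$, or even better: since $\Res$ is exact and detects isomorphisms (a morphism in $\cD^b(\cO_\chi)$ is an isomorphism iff it is after applying $\Res$, because $\Res$ is exact and faithful on the abelian categories and reflects exactness), it is enough to apply $\Res$ to the unit and counit and use the intertwining isomorphisms $\Res\circ\cL T_\alpha\cong\cL T_\alpha^{\oa}\circ\Res$ and $\Res\circ\cR G_\alpha\cong\cR G_\alpha^{\oa}\circ\Res$ together with the compatibility of the adjunction (co)units with $\Res$; then the statement reduces to the corresponding fact over $\fg_{\oa}$, which is Corollary 4.2 in \cite{MR2032059}.

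The main obstacle I anticipate is the bookkeeping needed to make "$\Res$ reflects isomorphisms in the derived category" precise and to check that the adjunction data genuinely intertwine with $\Res$ — i.e. that the natural isomorphism $\Res\circ\cL T_\alpha\cong \cL T_\alpha^{\oa}\circ\Res$ is compatible with the units/counits, not merely an abstract isomorphism of functors. A clean way around part of this is to instead verify the unit and counit directly on Verma modules: by Lemma \ref{TVerma} and Lemma \ref{charVerma} one controls $T_\alpha M(\lambda)$, and the two-step reduction (apply $T_\alpha$ twice, or $T_\alpha$ then $G_\alpha$) brings one back to $M(\lambda)$ up to modules annihilated by the relevant cohomology functors; since Verma modules generate $\cD^b(\cO_\chi)$ as a triangulated category, this yields the result. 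I would also need to record, as a preliminary, the statement $\cR_iG_\alpha=0$ for $i>1$ and the analogue $\Res\circ\cR_iG_\alpha\cong\cR_iG_\alpha^{\oa}\circ\Res$, obtained exactly as in the proof of Lemma \ref{Tfreefinite} using that injectives in $\cO$ are $\alpha$-finite-cofree and that $\Res$ sends injectives to $G_\alpha^{\oa}$-acyclics, via the Grothendieck spectral sequence.
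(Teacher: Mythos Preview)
Your approach is essentially the paper's: both introduce the right adjoint $G_\alpha$, establish the intertwining $\Res\circ G_\alpha\cong G_\alpha^{\oa}\circ\Res$ (and its derived analogue), and reduce the verification that the adjunction unit and counit are isomorphisms to the known statement for $\fg_{\oa}$ from \cite{MR2032059}. The paper's execution differs only in minor ways: it first records that $T_\alpha$ preserves $\cO_\chi$ (via Lemma~\ref{charVerma} on the Grothendieck group), then splits $\cO_\chi$ into its $\alpha$-integral and $\alpha$-non-integral parts, handling the non-integral part directly as an abelian self-equivalence (using Lemma~\ref{simpnonint} and exactness from Proposition~\ref{LTZuck}), and for the integral part checks the unit on projectives and the counit on injectives rather than invoking a general ``$\Res$ reflects derived isomorphisms'' principle---this concrete choice is exactly what sidesteps the adjunction-compatibility bookkeeping you correctly flag as the main obstacle.
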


\begin{proof}
To prove that $T_\alpha$ preserves every block $\cO_\chi$, it suffices to prove that the action induced on the Grothendieck group $K(\cO)$ preserves the blocks $K(\cO_\chi)$. Since the Verma modules in $\cO_\chi$ induce a basis of the Grothendieck group $K(\cO_\chi)$, the property follows from Lemma \ref{charVerma}. The left derived functor $\cL T_\alpha$ therefore defines an endofunctor of $\mathcal{D}^b(\mc O_\chi)$. 

Decomposition \eqref{decompnonint} implies a decomposition of categories $\cO_\chi=\cO_\chi^{\alpha\,{\rm i.}}\oplus \cO_\chi^{\alpha\,{\rm n. i.}}$, where the simple modules $L(\mu)$ in $\cO_\chi^{\alpha\,{\rm i.}}$ satisfy $\langle \mu,\alpha^\vee\rangle\in\mZ$ and the simple modules $L(\mu)$ in $\cO_\chi^{\alpha\,{\rm n.i.}}$ satisfy $\langle \mu,\alpha^\vee\rangle\not\in\mZ$.

Lemma \ref{simpnonint} implies that $T_\alpha$ yields a bijection of the simple modules in $\cO_\chi^{\alpha\,{\rm n. i.}}$, while Proposition~\ref{LTZuck} implies that $T_\alpha$ is exact. Similarly to the proof of Theorem 2.1 of \cite{CMW}
one then shows that $T_\alpha$ induces an self-equivalence of $\cO_\chi^{\alpha\,{\rm n. i.}}$.

Now we prove that $\cL T_\alpha$ is an auto-equivalence of $\cD^b(\cO_\chi^{\alpha\,{\rm i.}})$, where $\cO_\chi^{\alpha\,{\rm i.}}$ decomposes into blocks $\cO_\lambda$ with $\langle \lambda,\alpha^\vee\rangle\in\mZ$. 
We denote by $G_\alpha$ the (right) adjoint functor to $T_\alpha$ on $\cO$, which by definition is left-exact. 
Similarly, denote by $G_\alpha^{\overline{0}}$ the (right) adjoint functor to $T_\alpha^{\overline{0}}$ on $\cO_{\overline{0}}$. Applying the adjunction automorphism to Lemma~\ref{resindT} we get that 
\begin{equation}\label{eqeq1}
G_\alpha \circ \Ind\cong \Ind\circ G^{\oa}_\alpha\qquad\mbox{and}\qquad \Res\circ G_\alpha\cong 
G_\alpha^{\oa}\circ \Res.
\end{equation}

For a projective module $P\in \cO$, consider the adjunction morphism $P\to G_\alpha \circ T_\alpha P$. 
Applying $\Res$ and using Lemma~\ref{resindT}, isomorphisms \eqref{eqeq1}, the fact that $\Res P$ is projective in 
$\cO_{\overline{0}}$ and Corollary 4.2 in \cite{MR2032059}, we get that 
$\Res P\to \Res G_\alpha\circ T_\alpha P\cong G_\alpha^{\oa}\circ T_\alpha^{\oa}\,\Res P$ 
is an isomorphism and thus we also get that $P\to G_\alpha\circ T_\alpha P$ is
an isomorphism. Similarly one shows that for any 
injective module $I\in \cO$ the adjunction morphism $T_\alpha\circ G_\alpha I\to I$ is an isomorphism.

Similarly to Theorem 5.9 in \cite{MR2139933} it follows that $\mathcal{R}G_\alpha\circ \mathcal{L}T_\alpha$
is isomorphic to the identity functor on $\mathcal{D}^-(\cO_\chi^{\alpha\,{\rm i.}})$ and that 
$\mathcal{L}T_\alpha\circ\mathcal{R}G_\alpha $
is isomorphic to the identity functor on $\mathcal{D}^+(\cO_\chi^{\alpha\,{\rm i.}})$.
Since $\mathcal{D}^b(\cO_\chi^{\alpha\,{\rm i.}})$ is the intersection of 
$\mathcal{D}^-(\cO_\chi^{\alpha\,{\rm i.}})$ and $\mathcal{D}^+(\cO_\chi^{\alpha\,{\rm i.}})$,
the claim follows.
\end{proof}

As a consequence of Proposition~\ref{autoequi}, we can generalise the equivalence of categories in Proposition 3.9 of \cite{CMW}. Since we can describe this result more precisely using the star action, we postpone this  statement to the subsequent Proposition \ref{equivcatni}.

Now we prove how the twisting functor and its left cohomology functors behave with respect to arbitrary simple modules in $\cO$.

\begin{theorem}
\label{Tsimple}
Consider $\lambda\in \fh_{\oa}^\ast$ and $\alpha$ simple in $\Delta_{\oa}^+$.\\
(i) If $L(\lambda)$ is $\alpha$-finite, then
\[T_\alpha L(\lambda)=0 \quad\qquad \mbox{and}\qquad\quad \cL_1T_\alpha L(\lambda)\cong L(\lambda).\]
(ii) If $L(\lambda)$ is $\alpha$-free with $\langle\lambda,\alpha^\vee\rangle\in\mZ$, then
\begin{itemize}
\item $\cL_1T_\alpha L(\lambda)=0$,
\item $T_\alpha L(\lambda)$ has simple top isomorphic to $L(\lambda)$,
\item the kernel of $T_\alpha L(\lambda)\tto L(\lambda)$ is an $\alpha$-finite module,
\item $\dim\Hom_{\cO}\left(L(\mu),T_\alpha L(\lambda)\right)=\dim\Ext_{\cO}^1(L(\mu),L(\lambda))$ for all $L(\mu)$ which are $\alpha$-finite.
\end{itemize}
\noindent
(iii) If $\langle \lambda,\alpha^\vee\rangle\not\in\mZ$, then $\cL_1T_\alpha L(\lambda)=0$ and $T_\alpha L(\lambda)$ is a simple module.
\end{theorem}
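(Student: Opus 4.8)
The plan is to reduce everything to the underlying Lie algebra $\fg_{\oa}$ via the intertwining property of Lemma~\ref{resindT} together with the known classical statements for $T_\alpha^{\oa}$ (Corollary 4.2 in \cite{MR2032059} and Theorem 2.1 in \cite{CMW}), exactly as in the proofs of Lemma~\ref{simpnonint} and Lemma~\ref{charVerma}. Parts (i) and (iii) are essentially immediate; the real work is (ii).

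\textbf{Part (i).} If $L(\lambda)$ is $\alpha$-finite, then by Lemma~\ref{lemfinfree} every $\fg_{\oa}$-composition factor $L_{\oa}(\nu)$ of $\Res L(\lambda)$ is $\alpha$-finite. First I would argue $T_\alpha L(\lambda)=0$: since $T_\alpha$ is right exact and $L(\lambda)$ is $\alpha$-finite as a $\fg$-module (some nonzero $Y\in(\fg_{\oa})_{-\alpha}$ acts locally finitely, so locally nilpotently on weight vectors), the localisation $U_\alpha\otimes_{U(\fg)}L(\lambda)$ vanishes directly from the construction in Section~\ref{sectwist} — any element of $L(\lambda)$ is killed by a power of $Y$, so becomes $0$ after inverting $Y$ and quotienting by $U(\fg)$. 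For $\cL_1T_\alpha L(\lambda)\cong L(\lambda)$, I would invoke Proposition~\ref{LTZuck}: $\cL_1T_\alpha\cong S_\alpha$, the maximal $\alpha$-finite submodule functor, and since $L(\lambda)$ is itself $\alpha$-finite we get $S_\alpha L(\lambda)=L(\lambda)$.

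\textbf{Part (iii).} This is Lemma~\ref{simpnonint} verbatim.

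\textbf{Part (ii).} This is the main obstacle and requires the most care. Assume $L(\lambda)$ is $\alpha$-free with $\langle\lambda,\alpha^\vee\rangle\in\mZ$. The vanishing $\cL_1T_\alpha L(\lambda)=0$ is Lemma~\ref{Tfreefinite} applied to $\fg$. For the remaining three points I would use equation~\eqref{intderfun}, $\Res\circ\cL_iT_\alpha\cong\cL_iT_\alpha^{\oa}\circ\Res$, to transport the classical structure results: by Theorem~2.1 in \cite{CMW} (or the $\fg_{\oa}$-analogue of the present theorem), for an $\alpha$-free $L_{\oa}(\nu)$ with $\langle\nu,\alpha^\vee\rangle\in\mZ$ the module $T_\alpha^{\oa}L_{\oa}(\nu)$ has simple top $L_{\oa}(\nu)$ with $\alpha$-finite kernel. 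Applying $\Res$ to $T_\alpha L(\lambda)$ and using that $\Res$ is exact and faithful, one sees $\Res T_\alpha L(\lambda)$ has an $\alpha$-free quotient whose top, after passing back, forces $T_\alpha L(\lambda)$ to have simple top $L(\lambda)$; more directly, I would use the adjunction $\Hom_{\cO}(T_\alpha L(\lambda),-)\cong\Hom_{\cO}(L(\lambda),G_\alpha-)$ together with $G_\alpha\circ T_\alpha$ acting as the identity on projective covers (as established in the proof of Proposition~\ref{autoequi}) to pin down $\Top T_\alpha L(\lambda)=L(\lambda)$. The kernel $K$ of $T_\alpha L(\lambda)\tto L(\lambda)$ is $\alpha$-finite because $\Res K$ is the kernel of $\Res T_\alpha L(\lambda)\tto\Res L(\lambda)=L_{\oa}$-part, which is $\alpha$-finite by the classical statement, and $\alpha$-finiteness is detected on $\Res$ by definition. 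Finally, for the $\Ext^1$ formula: since $\cL_1T_\alpha L(\lambda)=0$, the right-exact $T_\alpha$ is exact on a short exact sequence $0\to N\to P(\lambda)\to L(\lambda)\to 0$ with $P(\lambda)$ projective; because $T_\alpha P(\lambda)$ is again projective (projectives are $\alpha$-free, and $T_\alpha$ preserves projectivity — this follows as in \cite{MR2032059} from $G_\alpha\circ T_\alpha\cong\mathrm{id}$ on projectives), one gets that $T_\alpha L(\lambda)$ has projective presentation $T_\alpha N\to T_\alpha P(\lambda)\to T_\alpha L(\lambda)\to 0$, and then for $\alpha$-finite $L(\mu)$ one computes $\Hom_{\cO}(L(\mu),T_\alpha L(\lambda))$ via the adjunction $\Hom_{\cO}(L(\mu),T_\alpha L(\lambda))\cong\Hom_{\cO}(\cL_1 G_\alpha$-type data$)$ or, more cleanly, via $\Hom_{\cO}(L(\mu),T_\alpha L(\lambda))\cong\Hom_{\cD^b(\cO)}(L(\mu),\cL T_\alpha L(\lambda))$, apply the auto-equivalence $\cL T_\alpha$ of Proposition~\ref{autoequi} in reverse (using $\cL_1T_\alpha L(\mu)\cong L(\mu)$ from part (i) since $L(\mu)$ is $\alpha$-finite, so $\cL T_\alpha L(\mu)$ is $L(\mu)$ placed in homological degree $1$), obtaining $\Hom_{\cD^b(\cO)}(\cL T_\alpha L(\mu)[1]^{-1},\ldots)$ — concretely $\Hom_{\cD^b(\cO)}(L(\mu)[-1],L(\lambda))=\Ext^1_{\cO}(L(\mu),L(\lambda))$. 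I expect the delicate point to be making this last degree-shift bookkeeping precise and compatible with the $\fg_{\oa}$-reduction; the safest route is to mirror the classical argument in \cite{MR2032059} (Corollary~4.2 and its proof) and \cite{CMW} line by line, invoking \eqref{intderfun}, Proposition~\ref{LTZuck} and Proposition~\ref{autoequi} at each step where the classical proof uses the corresponding $\fg_{\oa}$-facts.
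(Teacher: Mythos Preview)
Your treatment of (i) and (iii) matches the paper (minor point: for (iii) you also need Lemma~\ref{Tfreefinite} to get $\cL_1T_\alpha L(\lambda)=0$, since Lemma~\ref{simpnonint} only gives simplicity). Your derived-category computation of the $\Ext^1$ formula in (ii) is correct and is essentially what the paper invokes by citing Theorem~6.3 in \cite{MR2032059} together with Proposition~\ref{autoequi}.

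The genuine gap is in your arguments for ``simple top $L(\lambda)$'' and ``$\alpha$-finite kernel'' in (ii). Both routes you sketch fall short. The $\Res$-transport does not work directly because $\Res L(\lambda)$ is \emph{not simple}: the classical structure theorem for $T_\alpha^{\oa}$ applies to simple $\fg_{\oa}$-modules, so you cannot read off the top of $T_\alpha^{\oa}\Res L(\lambda)$ from it, nor identify a canonical surjection $\Res T_\alpha L(\lambda)\tto\Res L(\lambda)$ whose kernel the classical theorem controls. The adjunction route $\Hom(T_\alpha L(\lambda),L(\mu))=\Hom(L(\lambda),G_\alpha L(\mu))$ is fine for $\alpha$-finite $L(\mu)$ (you correctly get $0$ via the auto-equivalence, since $(\cL T_\alpha)^{-1}L(\mu)\cong L(\mu)[-1]$), but for $\alpha$-free $L(\mu)$ with $\mu\neq\lambda$ you need the socle of $G_\alpha L(\mu)$, which by duality is the top of $T_\alpha L(\mu)$---the very statement you are proving.

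The paper breaks this circularity with a different restriction: not to $\fg_{\oa}$ but to $\fa=\fh_{\oa}\oplus(\fg_{\oa})_{\alpha}\oplus(\fg_{\oa})_{-\alpha}$, an $\mathfrak{sl}(2)$ plus centre. Because $L(\lambda)$ is simple (hence self-dual in $\cO$) and $\alpha$-free, $\res^{\fg}_{\fa}L(\lambda)$ is a \emph{tilting} module for $\mathfrak{sl}(2)$; Lemma~4 in \cite{MR2115448} then yields $\ch T_\alpha L(\lambda)=\ch L(\lambda)+\ch N$ with $N$ $\alpha$-finite. Hence $L(\lambda)$ is the \emph{unique} $\alpha$-free composition factor of $T_\alpha L(\lambda)$, with multiplicity one. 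Combined with the vanishing of $\alpha$-finite simples from the top (which your derived-equivalence argument does give), this forces the top to be exactly $L(\lambda)$ and the kernel to have only $\alpha$-finite composition factors, hence to be $\alpha$-finite. This $\mathfrak{sl}(2)$-tilting character computation is the missing idea; without it ``mirroring \cite{MR2032059} line by line'' does not go through, precisely because in the super case the input module $\Res L(\lambda)$ fails to be simple.
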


\begin{proof}
First consider $L(\lambda) $ to be $\alpha$-finite. The first property is a part of Lemma \ref{Tfreefinite}. The second property follows from Proposition~\ref{LTZuck}.

Now consider $L(\lambda) $ to be $\alpha$-free with $\langle \lambda,\alpha^\vee\rangle\in\mZ$. First we study $\ch T_\alpha L(\lambda)$. For this we can restrict to the reductive Lie algebra $\fa=\fh_{\oa}\oplus (\fg_{\oa})_{\alpha}\oplus(\fg_{\oa})_{-\alpha}$, which is isomorphic to the direct sum of its centre with $\mathfrak{sl}(2)$. The combination of Lemma 3 in \cite{MR2115448} and Lemma~\ref{resindT} implies $\res^{\fg}_{\fa}T_\alpha L(\lambda)=T^{\fa}_{\alpha} \res^{\fg}_{\fa}L(\lambda)$, with $T^{\fa}_{\alpha}$ the twisting functor for $\mathfrak{sl}(2)$. Since $L(\lambda)$ is a simple (and therefore self-dual in $\cO$) $\alpha$-free module, it follows that $\res^{\fg}_{\fa} L(\lambda)$ is a tilting module for $\mathfrak{sl}(2)$. Lemma 4 in \cite{MR2115448} therefore yields $\ch T_\alpha L(\lambda)= \ch L(\lambda)\oplus \ch N$ with $N$ an $\alpha$-free module. This implies that the only $\alpha$-free simple subquotient of  $T_\alpha L(\lambda)$ is $L(\lambda)$ (occurring with multiplicity one). The remainder of the proof is identical to the proof of Theorem 6.3 in \cite{MR2032059}, by using Proposition \ref{autoequi}.

Finally, consider $\langle \lambda,\alpha^\vee\rangle\not\in\mZ$, then $\cL_1T_\alpha L(\lambda)=0$ follows from Lemma \ref{Tfreefinite} and $T_\alpha L(\lambda)$ is simple by Lemma \ref{simpnonint}.
\end{proof}

\begin{remark}{\rm
The highest weight of the simple module $T_\alpha L(\lambda)$ if $\langle \lambda,\alpha^\vee\rangle\not\in\mZ$ is given by the star action of $s_\alpha$ on $\lambda$, as will be developed in Section \ref{secstar}. If $\alpha$ or $\alpha/2$ is simple in $\Delta^+$ and $(\fg_{\ob})_\alpha=0$, then $T_\alpha L(\lambda)=L(s_\alpha\cdot \lambda)$.}
\end{remark}

\begin{corollary}
For $\lambda\in\fh^\ast_{\oa}$ with $\langle \lambda,\alpha^\vee\rangle\in\mZ$ and $L(\lambda)$ $\alpha$-free we have the following isomorphisms:
\[\Soc \,T_\alpha L(\lambda)\,\,\cong \,\, \Soc \,\Rad\, T_\alpha L(\lambda)\,\,\cong\,\, \Top\, \Rad\, T_\alpha L(\lambda).\]
\end{corollary}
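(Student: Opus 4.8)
I would establish the two isomorphisms separately. Write $M=T_\alpha L(\lambda)$ and $R=\Rad M$. By Theorem~\ref{Tsimple}(ii) the top of $M$ is the simple module $L(\lambda)$, so $R=\ker(M\tto L(\lambda))$ and $R$ is $\alpha$-finite; moreover $M\in\cO$, hence $M$ is finitely generated and of finite length. I assume $M$ is not simple, so $R\neq0$ (if $M=L(\lambda)$ the three modules degenerate and the statement is to be read with that case excluded). Being finitely generated with simple top, $M$ is indecomposable, and therefore $\Soc M\subseteq R$: a simple submodule not lying in $R=\Rad M$ would map isomorphically onto $M/R=L(\lambda)$ and split off, contradicting indecomposability. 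Hence every simple submodule of $M$ lies in $R$, which gives the first isomorphism $\Soc M=\Soc R$.

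For $\Soc R\cong\Top R$, both modules being semisimple it suffices to match, for each simple $L(\mu)$, the composition multiplicities $[\Soc R:L(\mu)]$ and $[\Top R:L(\mu)]$. If $L(\mu)$ is $\alpha$-free these both vanish, since $R$ and all its composition factors are $\alpha$-finite. So fix $L(\mu)$ $\alpha$-finite. Since $M/R=L(\lambda)$ is $\alpha$-free, every homomorphism $L(\mu)\to M$ has image contained in $R$, so $\Hom_\cO(L(\mu),R)=\Hom_\cO(L(\mu),M)$; together with Theorem~\ref{Tsimple}(ii) and the self-duality of simple modules in $\cO$ this gives
\[\dim\Hom_\cO(L(\mu),R)=\dim\Ext^1_\cO(L(\mu),L(\lambda))=\dim\Ext^1_\cO(L(\lambda),L(\mu)).\]
On the other hand, applying $\Hom_\cO(-,L(\mu))$ to $0\to R\to M\to L(\lambda)\to0$ and using $\Hom_\cO(M,L(\mu))=\Hom_\cO(L(\lambda),L(\mu))=0$ (as $\Top M=L(\lambda)\not\cong L(\mu)$), we get the exact sequence $0\to\Hom_\cO(R,L(\mu))\to\Ext^1_\cO(L(\lambda),L(\mu))\to\Ext^1_\cO(M,L(\mu))$. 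Hence, once it is known that $\Ext^1_\cO(M,L(\mu))=0$ for $\alpha$-finite $L(\mu)$, we obtain $\dim\Hom_\cO(R,L(\mu))=\dim\Ext^1_\cO(L(\lambda),L(\mu))=\dim\Hom_\cO(L(\mu),R)$, so $[\Top R:L(\mu)]=[\Soc R:L(\mu)]$ for every $\mu$, whence $\Top R\cong\Soc R$; combined with the first step, $\Soc M=\Soc R\cong\Top R$.

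The crux is thus the vanishing $\Ext^1_\cO(T_\alpha L(\lambda),L(\mu))=0$ for $\alpha$-finite $L(\mu)$, which I would deduce from Proposition~\ref{autoequi}, by which $\cL T_\alpha$ is an auto-equivalence---in particular fully faithful---on $\cD^b(\cO_\chi)$, $\cO_\chi$ being the block containing $M$ (we may assume $L(\mu)\in\cO_\chi$). By Lemma~\ref{Tfreefinite}, $\cL_iT_\alpha=0$ for $i>1$, so $\cL T_\alpha$ sends an object of $\cO$ to a complex with cohomology only in degrees $-1$ and $0$. Since $L(\lambda)$ is $\alpha$-free, $\cL_1T_\alpha L(\lambda)\cong S_\alpha L(\lambda)=0$ (Proposition~\ref{LTZuck}), so $\cL T_\alpha L(\lambda)$ is $M$ concentrated in degree $0$, and hence $\cL T_\alpha M$ has cohomology only in degrees $-1,0$. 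Since $L(\mu)$ is $\alpha$-finite, $T_\alpha L(\mu)=0$ and $\cL_1T_\alpha L(\mu)\cong S_\alpha L(\mu)=L(\mu)$, so $\cL T_\alpha L(\mu)\cong L(\mu)[1]$ and $\cL T_\alpha(L(\mu)[1])\cong L(\mu)[2]$ is concentrated in degree $-2$. Full faithfulness then gives
\[\Ext^1_\cO(M,L(\mu))=\Hom_{\cD^b(\cO_\chi)}\!\bigl(M,L(\mu)[1]\bigr)\cong\Hom_{\cD^b(\cO_\chi)}\!\bigl(\cL T_\alpha M,\,L(\mu)[2]\bigr)=0,\]
since there is no nonzero morphism in $\cD^b$ from a complex supported in degrees $\ge-1$ to one supported in degree $-2$. (Alternatively, one may run the argument through the right adjoint $G_\alpha$, using $\cR_1G_\alpha\cong Z_\alpha$ and $G_\alpha L(\mu)=0$ for $\alpha$-finite $L(\mu)$.) This homological computation, together with the degree bookkeeping, is the only substantive step; everything else is formal once Theorem~\ref{Tsimple} is available.
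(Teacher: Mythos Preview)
Your overall strategy matches the paper's: reduce the second isomorphism to the vanishing $\Ext^1_\cO(T_\alpha L(\lambda),L(\mu))=0$ for $\alpha$-finite $L(\mu)$, and then deduce this from the derived auto-equivalence of Proposition~\ref{autoequi}. The first isomorphism and the long-exact-sequence reduction are fine.

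However, the main derived-category computation you give is wrong. You claim that there is no nonzero morphism in $\cD^b$ from a complex with cohomology in degrees $\ge -1$ to one concentrated in degree $-2$. This is false: the $t$-structure vanishing goes in the \emph{opposite} direction, namely $\Hom_{\cD^b}(\cD^{\le n},\cD^{\ge n+1})=0$. Morphisms from higher to lower cohomological degree are precisely Ext groups and need not vanish. Concretely, if $X=\cL T_\alpha M$ has $H^{-1}(X)\neq 0$, then the distinguished triangle $H^{-1}(X)[1]\to X\to H^0(X)\to$ shows that $\Hom_{\cD^b}(X,L(\mu)[2])$ receives a contribution from $\Hom_{\cD^b}(H^{-1}(X)[1],L(\mu)[2])=\Ext^1_\cO(H^{-1}(X),L(\mu))$, which has no reason to be zero. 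So applying $\cL T_\alpha$ once more to the source does not give the vanishing.

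Your parenthetical alternative is the correct argument, and it is exactly what the paper does: since $\cL T_\alpha$ is an auto-equivalence with quasi-inverse $\cR G_\alpha$, and $\cL T_\alpha L(\lambda)\cong M$ because $L(\lambda)$ is $\alpha$-free, one has
\[
\Ext^1_\cO(M,L(\mu))\;\cong\;\Hom_{\cD^b}\bigl(L(\lambda),\,\cR G_\alpha(L(\mu))[1]\bigr).
\]
For $\alpha$-finite $L(\mu)$ one has $G_\alpha L(\mu)=0$ and $\cR_1 G_\alpha L(\mu)\cong Z_\alpha L(\mu)=L(\mu)$ (Proposition~\ref{LTZuck}), so $\cR G_\alpha L(\mu)\cong L(\mu)[-1]$ and the right-hand side becomes $\Hom_\cO(L(\lambda),L(\mu))=0$. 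Promote this from a parenthetical to your actual argument and drop the degree-support claim.
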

\begin{proof}
The first isomorphism is trivial. Theorem \ref{Tsimple}(ii) implies there is a short exact sequence $\Rad T_\alpha L(\lambda)\hookrightarrow T_\alpha L(\lambda) \tto L(\lambda)$. For an arbitrary $L(\mu)$, which is $\alpha$-finite, we apply the left exact contravariant functor $\Hom_{\cO}(-,L(\mu))$ to this exact sequence, yielding a long exact sequence. The results of Theorem \ref{Tsimple}(ii) imply that this exact sequence reduces to
\[0\to \Hom_{\cO}(\Rad T_\alpha L(\lambda), L(\mu))\to \Ext_{\cO}^1( L(\lambda),L(\mu))\to \Ext_{\cO}^1(T_\alpha L(\lambda),L(\mu))\to\cdots .\]
By Theorem \ref{Tsimple}(ii) we have $\dim\Ext^1_{\cO}(L(\lambda),L(\mu))=\dim\Hom_{\cO}(L(\mu),T_\alpha L(\lambda))$, so it suffices to prove $ \Ext_{\cO}^1(T_\alpha L(\lambda),L(\mu))=0$. According to Proposition \ref{autoequi}, Theorem \ref{Tsimple}(i) and using a reasoning as in the proof of Theorem \ref{Tsimple}, it follows that this first extension is isomorphic to $\Hom_{\cO}(L(\lambda),L(\mu))=0$.
\end{proof}

Now we study how twisting functors are related to annihilator ideals.

\begin{lemma}
\label{inclannT}
If $L(\lambda)$ is $\alpha$-free, we have $\Ann_{ U(\fg)} T_\alpha L(\lambda)=\Ann_{ U(\fg)}L(\lambda)$. If $L(\lambda)$ is $\alpha$-finite, $\Ann_{ U(\fg)} T_\alpha L(\lambda)=U(\fg)$.
\end{lemma}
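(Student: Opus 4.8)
The plan is to analyze the two cases separately, exploiting the fundamental relation $\Res\circ T_\alpha \cong T_\alpha^{\oa}\circ\Res$ from Lemma \ref{resindT} together with the results on $T_\alpha L(\lambda)$ in Theorem \ref{Tsimple}. The $\alpha$-finite case is the quick one: by Theorem \ref{Tsimple}(i) we have $T_\alpha L(\lambda)=0$, and the annihilator of the zero module is all of $U(\fg)$ by convention, so there is nothing further to prove.

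For the $\alpha$-free case I would argue by two inclusions. First, $\Ann_{U(\fg)}L(\lambda)\subseteq\Ann_{U(\fg)}T_\alpha L(\lambda)$: by Theorem \ref{Tsimple}(ii) (when $\langle\lambda,\alpha^\vee\rangle\in\mZ$) the module $T_\alpha L(\lambda)$ has a filtration whose only $\alpha$-free subquotient is $L(\lambda)$ and whose other subquotients are $\alpha$-finite simple modules $L(\mu)$; alternatively, when $\langle\lambda,\alpha^\vee\rangle\notin\mZ$, Theorem \ref{Tsimple}(iii) says $T_\alpha L(\lambda)$ is simple with highest weight $s_\alpha\ast\lambda$. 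In either situation I need to know that each simple subquotient $L(\mu)$ of $T_\alpha L(\lambda)$ satisfies $\Ann_{U(\fg)}L(\lambda)\subseteq\Ann_{U(\fg)}L(\mu)$; this is a standard annihilator-versus-Grothendieck-group statement. A clean way to get it: the bimodule ${}^{\varphi}\U_\alpha$ is obtained by localization at $Y\in(\fg_{\oa})_{-\alpha}$ and then twisting by the automorphism $\varphi$; localization does not shrink the (two-sided) annihilator of a module, and twisting by $\varphi$ replaces $\Ann L(\lambda)$ by $\varphi(\Ann L(\lambda))$. Since $\varphi$ fixes the centre $\cZ(\fg)$ pointwise (it is inner up to scaling — it comes from the $\mathfrak{sl}(2)$-triple for $\alpha$, cf. Definition 2.3.4 in \cite{MR2074588}), $T_\alpha L(\lambda)$ and $L(\lambda)$ have the same central character, and moreover every element of $\Ann_{U(\fg)}L(\lambda)$ that is $\varphi$-invariant — in particular all of $\Ann_{U(\fg)}L(\lambda)\cap\Ann_{U(\fg_{\oa})}(\Res L(\lambda))$-type elements — annihilates the localization and hence $T_\alpha L(\lambda)$. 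To make this fully rigorous I would instead use the cleanest route: apply $\Res$, use Lemma \ref{resindT} to get $\Res T_\alpha L(\lambda)\cong T_\alpha^{\oa}\Res L(\lambda)$, and invoke the known classical fact (see \cite{MR2032059}, and implicit in the proof of Theorem \ref{Tsimple}) that $\Ann_{U(\fg_{\oa})}T_\alpha^{\oa}M=\Ann_{U(\fg_{\oa})}M$ for $\alpha$-free $M$; combined with the fact that $J(\lambda)$ is generated over $U(\fg)$ by its intersection with a suitable subalgebra this is awkward, so in practice the first inclusion is cleanest via the filtration: each $\alpha$-finite subquotient $L(\mu)$ of $T_\alpha L(\lambda)$ is, by the construction of the twisting functor as a subquotient of $L(\lambda)$ localized, a subquotient of $L(\lambda)\otimes({}^{\varphi}\U_\alpha)$, and any simple subquotient of a localization of $L(\lambda)$ has annihilator containing $\Ann L(\lambda)$.

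For the reverse inclusion $\Ann_{U(\fg)}T_\alpha L(\lambda)\subseteq\Ann_{U(\fg)}L(\lambda)$, I would use the auto-equivalence from Proposition \ref{autoequi} together with Theorem \ref{Tsimple}(ii): the module $T_\alpha L(\lambda)$ has simple top $L(\lambda)$ (in the integral case) or equals a simple module $L(s_\alpha\ast\lambda)$ with $T_\alpha^2 L(\lambda)=L(\lambda)$ by Lemma \ref{simpnonint} (in the non-integral case). In the integral case, $L(\lambda)$ being a quotient of $T_\alpha L(\lambda)$ immediately gives $\Ann T_\alpha L(\lambda)\subseteq \Ann L(\lambda)$. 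In the non-integral case, applying the already-proven first inclusion to $L(s_\alpha\ast\lambda)$ gives $\Ann L(s_\alpha\ast\lambda)\subseteq \Ann T_\alpha L(s_\alpha\ast\lambda)=\Ann T_\alpha^2 L(\lambda)=\Ann L(\lambda)$, and combined with the first inclusion $\Ann L(\lambda)\subseteq \Ann T_\alpha L(\lambda)=\Ann L(s_\alpha\ast\lambda)$ we get equality throughout.

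The main obstacle I anticipate is the first inclusion in the integral $\alpha$-free case, namely controlling the annihilators of the $\alpha$-finite simple subquotients $L(\mu)$ appearing in $T_\alpha L(\lambda)$: one needs that passing to a subquotient of a localization $L(\lambda)\otimes_{U(\fg)}{}^{\varphi}\U_\alpha$ cannot make the two-sided annihilator strictly smaller. This is the super-analogue of a well-known fact for Lie algebras (it underlies the classical statement that $T_\alpha$ preserves $\Ann$ of simple highest weight modules, \cite{MR2032059, MR2115448}), and the cleanest proof is to observe that $\Ann_{U(\fg)}(M\otimes_{U(\fg)}U'_\alpha)\supseteq\Ann_{U(\fg)}M$ after twisting by $\varphi$, because $U'_\alpha$ is a localization of $U(\fg)$ and $\varphi$ is an automorphism fixing $\cZ(\fg)$; everything then descends to the quotient ${}^{\varphi}\U_\alpha=\,{}^{\varphi}U'_\alpha/{}^{\varphi}U(\fg)$ and to subquotients. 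Once this is in place the rest is formal bookkeeping with Theorem \ref{Tsimple} and Proposition \ref{autoequi}.
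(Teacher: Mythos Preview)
Your $\alpha$-finite case and your reverse inclusion $\Ann T_\alpha L(\lambda)\subseteq J(\lambda)$ in the integral $\alpha$-free case (via the simple top $L(\lambda)$ from Theorem~\ref{Tsimple}(ii)) are both correct and agree with the paper. The non-integral case via $T_\alpha^2 L(\lambda)\cong L(\lambda)$ is also on the right track.

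The genuine gap is exactly the one you flag as ``the main obstacle'': the localization argument does \emph{not} give $J(\lambda)\subseteq\Ann T_\alpha L(\lambda)$, it gives only $\varphi(J(\lambda))\subseteq\Ann T_\alpha L(\lambda)$, because the left $U(\fg)$-action on ${}^{\varphi}U_\alpha$ is twisted. Your attempts to undo this twist by appealing to $\varphi$-invariance of the centre, or to $\Res$, do not close the gap: knowing that $\varphi$ fixes $\cZ(\fg)$ says nothing about the rest of $J(\lambda)$, and the intersection $J(\lambda)\cap U(\fg_{\oa})$ does not generate $J(\lambda)$ in general. Your final formulation (``any simple subquotient of a localization of $L(\lambda)$ has annihilator containing $\Ann L(\lambda)$'') is simply the $\varphi$-twisted inclusion restated, not a resolution of it.

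The paper's fix is short and you were close to it. Combine the two inclusions you already have to get $\varphi(J(\lambda))\subseteq\Ann T_\alpha L(\lambda)\subseteq J(\lambda)$; then observe that the automorphism $\varphi^{-1}$ satisfies the same defining property as $\varphi$ (it sends $(\fg_i)_\beta$ to $(\fg_i)_{s_\alpha\beta}$), so the twisting functor could equally well have been built with $\varphi^{-1}$, yielding $\varphi^{-1}(J(\lambda))\subseteq J(\lambda)$. Together these force $\varphi(J(\lambda))=J(\lambda)$, collapsing the sandwich. The non-integral case is handled the same way: $\varphi(J(\lambda))\subseteq J(\mu)$ and $\varphi(J(\mu))\subseteq J(\lambda)$, then $\varphi^{-1}$ gives the reverse inclusions.

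Your parenthetical that $\varphi$ is ``inner up to scaling'' points to a legitimate alternative: since $\varphi$ can be written as a finite product of automorphisms $\exp(\mathrm{ad}\,X)$ with $X\in\fg_{\oa}$ and $\mathrm{ad}\,X$ locally nilpotent on $U(\fg)$, and since each $\mathrm{ad}\,X$ preserves every two-sided ideal, one gets $\varphi(J(\lambda))=J(\lambda)$ directly without the sandwich argument. If you want to run this line, say so explicitly and justify the local nilpotency on $U(\fg)$ (it holds because $\fg_{\ob}$ is a finite-dimensional $\fg_{\oa}$-module); do not leave it as a vague allusion.
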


\begin{proof}
When $L(\lambda)$ is $\alpha$-finite, the result follows from Theorem \ref{Tsimple}(i). Thus from now on we assume that $L(\lambda)$ is $\alpha$-free.

The set $\cY$ of powers of $Y\in\fg_{-\alpha}$ used for the Ore localisation then satisfies $\cY\cap J(\lambda)=\emptyset$. Therefore $J(\lambda) \,U'_{\alpha}=U'_{\alpha}\,J(\lambda)$ is a proper ideal in $U'_\alpha$. This yields $$J(\lambda)U'_\alpha\otimes_{U(\fg)}L(\lambda)=U'_\alpha\otimes_{U(\fg)}J(\lambda)L(\lambda)=0,$$ so $J(\lambda)\subseteq\Ann_{U(\fg)}U'_{\alpha}\otimes_{U(\fg)}L(\lambda)$. Since $U_\alpha\otimes_{U(\fg)}L(\lambda)$ is a quotient of $U'_\alpha\otimes_{U(\fg)}L(\lambda)$, we have
\begin{equation}\label{helpannT}\Ann_{U(\fg)}T_\alpha L(\lambda)\supseteq\varphi(J(\lambda)).\end{equation}
  
Now if $\langle \lambda,\alpha^\vee\rangle\in\mZ$, Theorem \ref{Tsimple}(ii) implies that $[T_\alpha L(\lambda):L(\lambda)]=1$, so we have the inclusion $\Ann_{U(\fg)}T_\alpha L(\lambda)\subset J(\lambda)$. Equation \eqref{helpannT} thus implies $\varphi(J(\lambda))\subset J(\lambda)$. The automorphism $\varphi^{-1}$ of $\fg$ also satisfies the properties required at the beginning of this section. The twisting functor could thus also be defined using that automorphism, leading to the conclusion $\varphi^{-1}(J(\lambda))\subset J(\lambda)$, which yields $\varphi(J(\lambda))=J(\lambda)$.

Now if $\langle \lambda,\alpha^\vee\rangle\not\in\mZ$, Proposition \ref{simpnonint} implies that $T_\alpha L(\lambda)=L(\mu)$ for some $\mu\in\fh_{\oa}^\ast$ with $T_\alpha L(\mu)=\lambda$. Equation \eqref{helpannT} therefore yields $\varphi(J(\lambda))\subset J(\mu)$ and $\varphi(J(\mu))\subset J(\lambda)$. The result then follows again from considering $\varphi^{-1}$.
\end{proof}

\begin{lemma}
\label{projT}
If $L(\lambda)$ is $\alpha$-free with $\langle \lambda,\alpha^\vee\rangle\in\mZ$, then $T_\alpha P(\lambda)\cong
P(\lambda)$.
\end{lemma}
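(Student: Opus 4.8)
The plan is to show that $T_\alpha P(\lambda)$ is projective and admits $L(\lambda)$ as a quotient, so that $P(\lambda)$ splits off as a direct summand, and then to rule out any further summand. Throughout I work inside the block $\cO_\chi^{\alpha\,\mathrm{i.}}$ containing $L(\lambda)$ and use that $\cL T_\alpha$ is an auto-equivalence of $\cD^b(\cO_\chi^{\alpha\,\mathrm{i.}})$ with quasi-inverse $\cR G_\alpha$ (Proposition \ref{autoequi}), where $G_\alpha$ denotes the right adjoint of $T_\alpha$ used there.

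\emph{Step 1 (acyclicity and a surjection).} Since projective modules in $\cO$ are $\alpha$-free, Lemma \ref{Tfreefinite} gives $\cL_iT_\alpha P(\lambda)=0$ for all $i\ge 1$, hence $\cL T_\alpha P(\lambda)\cong T_\alpha P(\lambda)$ in $\cD^b(\cO)$. Applying the right-exact functor $T_\alpha$ to $P(\lambda)\tto L(\lambda)$ and composing with the surjection $T_\alpha L(\lambda)\tto L(\lambda)$ from Theorem \ref{Tsimple}(ii) yields a surjection $T_\alpha P(\lambda)\tto L(\lambda)$.

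\emph{Step 2 (projectivity).} For a simple module $L(\mu)$ I would compute, using Step 1 and that $\cL T_\alpha$ is an equivalence,
\[\Ext^i_{\cO}(T_\alpha P(\lambda),L(\mu))\cong\Hom_{\cD^b(\cO)}(\cL T_\alpha P(\lambda),L(\mu)[i])\cong\Hom_{\cD^b(\cO)}(P(\lambda),(\cR G_\alpha L(\mu))[i]).\]
As $P(\lambda)$ is projective and $\cR_jG_\alpha=0$ for $j\ge 2$ while $\cR_1G_\alpha\cong Z_\alpha$ by Proposition \ref{LTZuck}, the right-hand side is $\Hom_\cO(P(\lambda),\cR_iG_\alpha L(\mu))$, which is $0$ for $i\ge 2$ and is $\Hom_\cO(P(\lambda),Z_\alpha L(\mu))$ for $i=1$. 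Now $Z_\alpha L(\mu)$ is either $0$ or the $\alpha$-finite module $L(\mu)$, whereas $L(\lambda)$ is $\alpha$-free, so $L(\lambda)$ is not a subquotient of $Z_\alpha L(\mu)$ and the latter $\Hom$ vanishes. For $L(\mu)$ outside $\cO_\chi^{\alpha\,\mathrm{i.}}$ all these $\Ext$ groups vanish for trivial reasons. Hence $\Ext^1_\cO(T_\alpha P(\lambda),L(\mu))=0$ for every simple $L(\mu)$, so $T_\alpha P(\lambda)$ is projective. Combining this with Step 1 and the fact that $P(\lambda)$ is the projective cover of $L(\lambda)$, we obtain a splitting $T_\alpha P(\lambda)\cong P(\lambda)\oplus Q$ with $Q$ projective.

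\emph{Step 3 (no further summand).} Inside the proof of Proposition \ref{autoequi} it is shown that the adjunction morphism $P(\lambda)\to G_\alpha T_\alpha P(\lambda)$ is an isomorphism; applying $G_\alpha$ to the splitting gives $G_\alpha P(\lambda)\oplus G_\alpha Q\cong P(\lambda)$, and since $P(\lambda)$ is indecomposable one of the two summands is $0$. On the other hand, if $Q\neq 0$ then the two projections $T_\alpha P(\lambda)\tto P(\lambda)$ and $T_\alpha P(\lambda)\tto Q$ are nonzero, so by the adjunction $\Hom_\cO(T_\alpha P(\lambda),-)\cong\Hom_\cO(P(\lambda),G_\alpha-)$ both $G_\alpha P(\lambda)$ and $G_\alpha Q$ are nonzero, a contradiction. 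Therefore $Q=0$ and $T_\alpha P(\lambda)\cong P(\lambda)$.

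The main obstacle is Step 2: it is the only place that genuinely uses the derived auto-equivalence of Proposition \ref{autoequi} together with the identification $\cR_1G_\alpha\cong Z_\alpha$, and some care is needed to pass harmlessly to the block $\cO_\chi^{\alpha\,\mathrm{i.}}$. Steps 1 and 3 are formal once $T_\alpha$-acyclicity of projectives, Theorem \ref{Tsimple}(ii), and the basic adjunction facts (including $G_\alpha T_\alpha P\cong P$ for projective $P$) are in hand.
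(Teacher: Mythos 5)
Your proof is correct, but it takes a genuinely different route from the paper. The paper argues character-theoretically: it first proves the corresponding statement for $\fg_{\oa}$ (starting from Proposition 5.3 of \cite{MR2032059} and extending via Lemma~\ref{TVerma} and Lemma~\ref{tensorfd}), then observes that $\Res P(\lambda)$ decomposes into $\fg_{\oa}$-projectives with $\alpha$-free top, so that by Lemma~\ref{resindT} the modules $T_\alpha P(\lambda)$ and $P(\lambda)$ have equal characters; it then shows that $T_\alpha P(\lambda)$ has simple top $L(\lambda)$ by computing $\Hom_{\cO}(T_\alpha P(\lambda),L(\mu))=\Hom_{\cO}(P(\lambda),G_\alpha L(\mu))$ at the level of composition multiplicities (using $\ch G_\alpha L(\mu)=\ch T_\alpha L(\mu)$ and Theorem~\ref{Tsimple}), and concludes that $T_\alpha P(\lambda)$ is a quotient of $P(\lambda)$ of the same character, hence equal. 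You instead establish that $T_\alpha P(\lambda)$ is itself projective by computing $\Ext^1_{\cO}(T_\alpha P(\lambda),L(\mu))$ through the derived adjunction of $\cL T_\alpha$ and $\cR G_\alpha$ and the identification $\cR_1 G_\alpha\cong Z_\alpha$ from Proposition~\ref{LTZuck}, obtain $T_\alpha P(\lambda)\cong P(\lambda)\oplus Q$ from the surjection onto $L(\lambda)$, and then kill $Q$ using the adjunction isomorphism $P(\lambda)\cong G_\alpha T_\alpha P(\lambda)$ (shown in the proof of Proposition~\ref{autoequi}) together with the indecomposability of $P(\lambda)$ in the Krull--Schmidt category $\cO$. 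Your argument avoids the reduction to $\fg_{\oa}$ and the character comparison altogether, trading it for heavier reliance on Propositions~\ref{LTZuck} and~\ref{autoequi}; the paper's proof is more computational but more self-contained. One small gap to flag: you use that $\cR_jG_\alpha=0$ for $j\ge 2$, which the paper never states explicitly (it only records the dual fact $\cL_jT_\alpha=0$ for $j>1$ in Lemma~\ref{Tfreefinite}); this vanishing does hold, by the same Grothendieck spectral sequence argument applied to $G_\alpha$ via \eqref{eqeq1} and the classical case in \cite{MR2032059}, but it deserves a sentence of justification.
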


\begin{proof}
First we prove the result for $\fg_{\oa}$. For the particular case of regular integral weights, this was already proved in Proposition 5.3 in \cite{MR2032059}. Let $\mu\in\fh_{\oa}^\ast$ be a $W$-maximal weight satisfying $s_\alpha\cdot\mu=\mu$, then $M_{\oa}(\mu)=P_{\oa}(\mu)$ is projective in $\cO_{\oa}$. Lemma \ref{TVerma} then implies $T_\alpha P_{\oa}(\mu)=P_{\oa}(\mu)$. This result generalises to arbitrary indecomposable projective with $\alpha$-free top by tensoring with finite dimensional modules and using Lemma \ref{tensorfd}. 

It is checked by a standard argument that $\Res P(\lambda)$ decomposes into projective modules in $\cO_{\oa}$ with $\alpha$-free top. From this, the result above and Lemma~\ref{resindT} it follows that $T_\alpha P(\lambda)$ and $P(\lambda)$ have the same characters. 

Now we determine the top of the module $T_\alpha P(\lambda)$. We denote by $G_\alpha$ the right adjoint functor to $T_\alpha$. Below we will twice use that Theorem 4.1 in \cite{MR2032059} and Lemma \ref{resindT} imply that $\ch G_\alpha V=\ch T_\alpha V$ for $V$ a simple module in $\cO$. Consider a simple $\alpha$-free module $L(\mu)$, Theorem \ref{Tsimple}(ii) implies that the only simple $\alpha$-finite subquotient of $G_\alpha L(\mu)$ is $L(\mu)$, appearing with multiplicity one. So we have
$$\Hom_{\cO}(T_\alpha P(\lambda), L(\mu))=\Hom_{\cO}(P(\lambda),G_\alpha L(\mu))=[G_\alpha L(\mu): L(\lambda)]=\delta_{\lambda\mu}.$$
Now consider $L(\nu)$ to be $\alpha$-finite. Theorem \ref{Tsimple}(i) then implies $G_\alpha L(\nu)=0$, so
$$\Hom_{\cO}(T_\alpha P(\lambda), L(\nu))=\Hom_{\cO}(P(\lambda),G_\alpha L(\nu))=0.$$

It follows that 
$T_\alpha P(\lambda)$ has simple top $L(\lambda)$ and hence $T_\alpha P(\lambda)$ is a quotient of $P(\lambda)$.
Since the characters of $T_\alpha P(\lambda)$ and $P(\lambda)$ coincide, we get $T_\alpha P(\lambda)=P(\lambda)$.
\end{proof}

An important tool to establish equalities between primitive ideals is provided in the following lemma.

\begin{lemma}
\label{extann}
If weights $\lambda,\mu\in\fh_{\oa}^\ast$ and a root $\alpha$ (simple in $\Delta_{\oa}^+$) satisfy the following properties:
\begin{itemize}
\item $\langle \lambda,\alpha^\vee\rangle\in\mZ$ and $\langle \mu,\alpha^\vee\rangle\in\mZ$,
\item $L(\lambda)$ is $\alpha$-finite and $L (\mu)$ is $\alpha$-free and,
\item $\Ext^1_{\cO}(L(\lambda),L(\mu))\not=0$;
\end{itemize}
then we have the relation $\Ann_{ U(\fg)} L(\mu)\subseteq\Ann_{ U(\fg)} L(\lambda)$.
\end{lemma}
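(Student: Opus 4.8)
The plan is to realize $L(\lambda)$ as a submodule of $T_\alpha L(\mu)$ and then pass to annihilators using Lemma \ref{inclannT}.

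First I would apply Theorem \ref{Tsimple}(ii) to the module $L(\mu)$, which by hypothesis is $\alpha$-free with $\langle\mu,\alpha^\vee\rangle\in\mZ$. Since $L(\lambda)$ is $\alpha$-finite, the last item of Theorem \ref{Tsimple}(ii) yields
\[
\dim\Hom_{\cO}(L(\lambda),T_\alpha L(\mu))=\dim\Ext^1_{\cO}(L(\lambda),L(\mu)),
\]
and the right-hand side is nonzero by assumption. Hence there is a nonzero homomorphism $L(\lambda)\to T_\alpha L(\mu)$; as $L(\lambda)$ is simple, its kernel is a proper submodule and therefore zero, so this map is an embedding and $L(\lambda)$ is (isomorphic to) a submodule of $T_\alpha L(\mu)$.

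Next I would note that any element of $U(\fg)$ annihilating a module also annihilates each of its submodules, and that the annihilator ideal is unaffected by a parity change; thus $\Ann_{U(\fg)}T_\alpha L(\mu)\subseteq\Ann_{U(\fg)}L(\lambda)$. Finally, Lemma \ref{inclannT} applied to the $\alpha$-free module $L(\mu)$ gives $\Ann_{U(\fg)}T_\alpha L(\mu)=\Ann_{U(\fg)}L(\mu)$, and combining the last two displays yields the desired inclusion $\Ann_{U(\fg)}L(\mu)\subseteq\Ann_{U(\fg)}L(\lambda)$.

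As for the main obstacle: essentially all of the work has already been front-loaded into Theorem \ref{Tsimple}(ii) (the $\Hom$–$\Ext$ identity for the twisting functor) and Lemma \ref{inclannT} (invariance of the annihilator under $T_\alpha$ for $\alpha$-free modules), so the present argument is short. The only points requiring (minor) care are that the dimension equality genuinely upgrades the nonvanishing of $\Ext^1_{\cO}(L(\lambda),L(\mu))$ to the existence of a nonzero element of $\Hom_{\cO}(L(\lambda),T_\alpha L(\mu))$ — rather than merely bounding it — and that such a nonzero map out of the simple module $L(\lambda)$ is automatically injective.
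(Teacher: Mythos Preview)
Your argument is correct and follows exactly the same route as the paper: use the last item of Theorem~\ref{Tsimple}(ii) to get $\Hom_{\cO}(L(\lambda),T_\alpha L(\mu))\neq 0$, then conclude via Lemma~\ref{inclannT}. The paper's proof is the two-line version of what you wrote; your added detail about simplicity forcing injectivity and hence containment of annihilators is just making explicit what the paper leaves to the reader.
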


\begin{proof}
Theorem \ref{Tsimple} implies that $\Hom_{\cO}(L(\lambda),T_{\alpha}L(\mu))\not=0$. The result then follows from Lemma \ref{inclannT}.
\end{proof}


\section{Primitive ideals for typical blocks}
\label{sectyp}

In this section we study inclusions between primitive ideals corresponding to typical highest weights for classical Lie superalgebras of type I and basic classical Lie superalgebras of type II, in the list \eqref{list}. The corresponding statements for Lie superalgebras of queer type are slightly less general and are presented in Section \ref{secqn}.

\begin{theorem}
\label{typeItyp}
Let $\fg$ be a classical Lie superalgebra of type I with distinguished system of positive roots. Consider $\lambda,\mu\in\fh^\ast$ with $\lambda$ or $\mu$ typical, then
\begin{eqnarray*}
J(\lambda)\subseteq J(\mu) &\Leftrightarrow & I(\lambda)\subseteq I(\mu).
\end{eqnarray*} 
\end{theorem}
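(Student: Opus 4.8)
The idea is to reduce both inclusions to statements about the underlying Lie algebra $\fg_{\oa}$ via the induction and restriction functors, exploiting the fact that for type I (in the distinguished system) the functor $\Ind=U(\fg)\otimes_{U(\fg_{\oa})}-$ behaves very transparently: $\fg=\fg_{-1}\oplus\fg_0\oplus\fg_1$ with $\fg_{\ob}=\fg_{-1}\oplus\fg_1$ abelian, so a Verma module $M(\lambda)$ is induced in two steps from $M_{\oa}(\lambda)$ and $\Res M(\lambda)$ has a filtration by $\fg_{\oa}$-Verma modules with highest weights $\lambda+\sum_{\beta\in I}\beta$, $I\subseteq\Delta_{\ob}^-$. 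Because of the typicality hypothesis, exactly one of these summands (or, more precisely, after passing to simple quotients) carries the central character $\chi^{\oa}_{\lambda}$, and this is the ``leading term'' one wants to isolate.

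\textbf{Key steps.} First I would prove the implication $I(\lambda)\subseteq I(\mu)\Rightarrow J(\lambda)\subseteq J(\mu)$. By Lemma~\ref{induct} applied to $\fa=\fg_{\oa}$, the hypothesis $I(\lambda)=\Ann_{U(\fg_{\oa})}L_{\oa}(\lambda)\subseteq\Ann_{U(\fg_{\oa})}L_{\oa}(\mu)=I(\mu)$ gives $\Ann_{U(\fg)}\Ind L_{\oa}(\lambda)\subseteq\Ann_{U(\fg)}\Ind L_{\oa}(\mu)$. Now I must relate $\Ann_{U(\fg)}\Ind L_{\oa}(\nu)$ to $J(\nu)$. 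The module $\Ind L_{\oa}(\nu)$ is a highest weight module with highest weight $\nu$ (for type I, $\fb=\fb_{\oa}\oplus\fg_1$, so inducing a highest weight $\fg_{\oa}$-module in one step keeps it highest weight), hence $L(\nu)$ is a quotient of it and $\Ann_{U(\fg)}\Ind L_{\oa}(\nu)\subseteq J(\nu)$. For the reverse containment I would use that $\Ind L_{\oa}(\nu)$ and $L(\nu)$ have the same primitive ideal: this is where typicality enters. One shows $\Res\,\Ind L_{\oa}(\nu)=U(\fg_1)\otimes L_{\oa}(\nu)$ as $\fg_{\oa}$-modules, its $\chi^{\oa}_{\nu}$-component is just $L_{\oa}(\nu)$ (by typicality, no other weight $\nu+\sum_{\beta\in I}\beta$, $I\subseteq\Delta_{\ob}^-$, lies in $W\circ\nu$), and then Lemma~\ref{IplusAnn}/Corollary~\ref{corIAnn} together with the fact that $L(\nu)$ is the unique simple quotient forces $\Ann_{U(\fg)}\Ind L_{\oa}(\nu)=J(\nu)$ whenever $\nu$ is typical, or at least that the two ideals have the same radical and hence (being primitive, hence prime) coincide. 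Combining, $J(\lambda)=\Ann\Ind L_{\oa}(\lambda)\subseteq\Ann\Ind L_{\oa}(\mu)=J(\mu)$.

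For the converse $J(\lambda)\subseteq J(\mu)\Rightarrow I(\lambda)\subseteq I(\mu)$, I would intersect with $U(\fg_{\oa})$ and study $J(\nu)\cap U(\fg_{\oa})$. One knows $\Res L(\nu)$ contains $L_{\oa}(\nu)$ with multiplicity one in the $\chi^{\oa}_{\nu}$-block (again by typicality all other composition factors of $\Res L(\nu)$ sit in strictly lower, non-conjugate blocks), so applying Lemma~\ref{IplusAnn} for $\fg_{\oa}$ with $\chi=\chi^{\oa}_{\nu}$ gives $\bigl(J(\nu)\cap U(\fg_{\oa})\bigr)+I_{\chi^{\oa}_{\nu}}=I(\nu)+I_{\chi^{\oa}_{\nu}}$, hence after projecting to the block, $J(\nu)\cap U(\fg_{\oa})$ and $I(\nu)$ determine each other on the relevant central block. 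Then $J(\lambda)\subseteq J(\mu)$ yields $J(\lambda)\cap U(\fg_{\oa})\subseteq J(\mu)\cap U(\fg_{\oa})$, and since $\lambda$ or $\mu$ is typical the central characters $\chi^{\oa}_{\lambda},\chi^{\oa}_{\mu}$ match up appropriately (use Corollary~\ref{corIAnn} to see they must be comparable) and one reads off $I(\lambda)\subseteq I(\mu)$. A clean way to package both directions uniformly is to note that for typical $\nu$ the equivalence of categories between the typical block and a $\fg_{\oa}$-block (Gorelik's equivalence, cited in Section~\ref{sectyp}, or the explicit type I structure of $\Ind$) carries $L(\nu)$ to $L_{\oa}(\nu')$ and intertwines $\Ann$-lattices.

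\textbf{Main obstacle.} The delicate point is the asymmetry of the hypothesis: only \emph{one} of $\lambda,\mu$ is assumed typical, while the other may be atypical. One must check that in that mixed situation $J(\lambda)\subseteq J(\mu)$ still forces both weights to lie in ``comparable'' blocks. Here I would invoke Corollary~\ref{corIAnn}: if, say, $\lambda$ is typical and $\mu$ is not, then comparing supports of $\Res L(\lambda)$ and $\Res L(\mu)$ under the central characters of $\fg_{\oa}$ rules out $J(\lambda)\subseteq J(\mu)$ unless $\mu$ in fact also lies in a typical block (because an inclusion of annihilators forces matching central characters on $U(\fg_{\oa})$, and the $\fg_{\oa}$-central character determines typicality via the Harish--Chandra description). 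Making this compatibility argument airtight — essentially showing the typical-block/atypical-block dichotomy is respected by inclusions of primitive ideals — is the real content; the rest is bookkeeping with Lemmata~\ref{induct}, \ref{IplusAnn} and Corollary~\ref{corIAnn}.
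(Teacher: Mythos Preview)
Your overall strategy (reduce to $\fg_{\oa}$ via Lemma~\ref{induct} and Lemma~\ref{IplusAnn}) is the paper's strategy, but two of your key steps are incorrect as written.

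First, you induce from the wrong subalgebra. With $\fa=\fg_{\oa}$ the module $\Ind L_{\oa}(\nu)=U(\fg)\otimes_{U(\fg_{\oa})}L_{\oa}(\nu)$ is \emph{not} a highest weight module of highest weight $\nu$: as a $\fg_{\oa}$-module it is $\Lambda\fg_{\ob}\otimes L_{\oa}(\nu)$ (not ``$U(\fg_1)\otimes L_{\oa}(\nu)$''), and since $\fg_1\cdot(1\otimes v_\nu)\neq 0$ the vector $1\otimes v_\nu$ is not $\fn^+$-singular; the actual highest weight is $\nu+2\rho_1$. The correct induction is parabolic, from $\fg_0\oplus\fg_1$: the Kac module $K(\nu)=U(\fg)\otimes_{U(\fg_0+\fg_1)}L_0(\nu)$ is highest weight of weight $\nu$, and the decisive structural fact (which you never invoke) is that for \emph{typical} $\nu$ one has $K(\nu)=L(\nu)$. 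Then Lemma~\ref{induct} with $\fa=\fg_0+\fg_1$ gives $I(\lambda)\subseteq I(\mu)\Rightarrow J(\lambda)\subseteq J(\mu)$ directly, with no need to argue that $\Ann\Ind L_{\oa}(\nu)=J(\nu)$. For the converse the paper uses $(\Res L(\nu))_{\chi^0_\nu}=L_0(\nu)$ for typical $\nu$ together with Lemma~\ref{IplusAnn} for $\fg_0$.

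Second, your handling of the mixed case (only one of $\lambda,\mu$ typical) does not work. An inclusion $J(\lambda)\subseteq J(\mu)$ does not force the $\fg_{\oa}$-central characters to match; Corollary~\ref{corIAnn} only says that every $\chi^0$ occurring in $\Res L(\mu)$ also occurs in $\Res L(\lambda)$, and there is no reason this support condition detects typicality. The paper instead uses the centre of $U(\fg)$ itself: for basic classical type~I, strongly typical equals typical, and $J(\lambda)\subseteq J(\mu)$ forces $\chi_\lambda=\chi_\mu$, so if one weight is typical the other lies in the same $W$-orbit and is typical too (this is packaged into Theorem~\ref{typeIItyp} via Gorelik's equivalence). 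For the strange algebras $\mathfrak{p}(n)$, $\widetilde{\mathfrak{p}}(n)$ --- which your proposal does not address at all --- $U(\fg)$ has no useful centre, and the paper passes to $U(\fg)/J$ (Jacobson radical) where Serganova's description of central characters again separates typical orbits from everything else.
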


\begin{proof}
If $\fg$ is basic classical, the result follows immediately from the subsequent Theorem \ref{typeIItyp}, since the notion of strongly typical coincides with typical for $\fg$ of type I and the fact that a perfect mate of $\chi_\lambda$ is given by $\chi^0_\lambda$, see \cite{MR1862800}. Therefore we only need to consider the strange Lie superalgebras.

If $\fg$ is of the strange type $\mathfrak{p}(n)$ or $\tilde{\mathfrak{p}}(n)$, then $U(\fg)$ either has no centre or a centre that acts by the same central character on each simple representation. After factoring out the Jacobson radical, the concept of typical central characters does arise. The central characters separate in particular between typical highest weights in different orbits and separate typical from atypical highest weights, see Lemma 5.1 in \cite{MR1943937}. We denote the Jacobson radical by $J$ and the quotient map by $q: U(\fg)\to U(\fg)/J$. If $J(\lambda)\subseteq J(\mu)$, then clearly $q(J(\lambda))\subseteq q(J(\mu))$. The typicality of one of the two weights therefore forces the other one the be in the same orbit of the Weyl group.

For typical weights simple modules are induced, namely, $L(\lambda)=U(\fg)\otimes_{U(\fg_0+\fg_1)}L_0(\lambda)$, see Theorem~5.2 and Lemma 5.4 in \cite{MR1943937}. Lemma \ref{induct} therefore implies the ``if'' part. On the other hand, $L_0(\lambda)=\left(\res_{\fg_0}^{\fg}L(\lambda)\right)_{\chi^0_\lambda}$, see again Theorem 5.2 in \cite{MR1943937}. Lemma \ref{IplusAnn} applied to $\fg_0$ therefore implies the ``only if'' part.
\end{proof}

\begin{theorem}
\label{typeIItyp}
Let $\fg$ be a basic classical Lie superalgebra. Consider a strongly typical central character $\chi: \mathcal{Z}(\fg)\to \mC$ with perfect mate $\widetilde{\chi}: \mathcal{Z}(\fg_{\oa})\to \mC$. Denote by $\lambda$ the $W$-maximal (for $\rho$-shifted action) weight that satisfies $\chi_\lambda=\chi$ and accordingly by $\widetilde{\lambda}$ the $W$-maximal (for $\rho_{\oa}$-shifted action) weight corresponding to $\widetilde{\chi}$. Then
\[J(w\cdot\lambda)\subseteq J(\mu) \quad\mbox{or}\quad J(\nu)\subseteq J(w\cdot \lambda)\]
for $w\in W$ and $\nu,\mu\in\fh_{\oa}^\ast$ if and only if there are $w_1,w_2\in W$ such that $\mu=w_1\cdot\lambda$, $\nu=w_2\cdot \lambda$ and
\[I(w\circ\widetilde{\lambda})\subseteq I(w_1\circ \widetilde{\lambda})\quad \mbox{and}\quad I(w_2\circ\widetilde{\lambda})\subseteq I(w\circ \widetilde{\lambda}),\,\, \mbox{ respectively}.\]

Consider a (not necessarily strongly) typical (regular) dominant weight $\Lambda$, then for $w,w'\in W$ we have $$J(w\cdot\Lambda)\subseteq J(w'\cdot\Lambda)\Leftrightarrow I(w\circ\Lambda)\subseteq I(w'\circ\Lambda).$$
\end{theorem}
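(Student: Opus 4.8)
The plan is to deduce the final statement as a special case of the preceding one, where the possibility of a ``strongly'' typical hypothesis is weakened to merely typical. First I would reduce to the strongly typical situation by a twisting-functor argument: given a typical regular dominant weight $\Lambda$, one can use the equivalences between non-integral blocks coming from twisting functors (Proposition \ref{autoequi}, and the version to be recorded as Proposition \ref{equivcatni}) together with translation in the Zariski-dense set of integral dominant weights, to reduce the claim for $\Lambda$ to the claim for a weight lying in the same $W_\Lambda$-orbit type but with strongly typical central character. Concretely, since the set of strongly typical weights is Zariski-closed (as used already in the proof of Proposition \ref{LTZuck}) and the set of dominant weights is dense, for the integral case one can combine Lemma \ref{translclass} with Theorem \ref{Tsimple} and Lemma \ref{inclannT} to move the whole problem to a block where the first part of the present theorem applies.

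The key steps, in order, are: (1) handle the case where $\chi_\Lambda$ is \emph{strongly} typical --- this is exactly the first part of the theorem, using Gorelik's equivalence \eqref{eqGorelik}, noting that for such $\Lambda$ one has $\mu=w_1\cdot\Lambda$ and $\nu=w_2\cdot\Lambda$ forced, and a perfect mate $\widetilde{\chi}$ with $\widetilde{\lambda}$ in the same regular $W$-orbit as $\Lambda$; then the asserted equivalence $J(w\cdot\Lambda)\subseteq J(w'\cdot\Lambda)\Leftrightarrow I(w\circ\Lambda)\subseteq I(w'\circ\Lambda)$ follows by combining $I(w\circ\widetilde{\lambda})\subseteq I(w_1\circ\widetilde{\lambda})$ with Lemma \ref{translclass}, which lets one replace $\widetilde{\lambda}$ by $\Lambda$ in the $\fg_{\oa}$-side inclusions. (2) For the remaining typical-but-not-strongly-typical weights (only $\mathfrak{osp}(2d+1|2n)$ and $G(3)$), use Lemma \ref{primsl2} and the odd-reflection machinery (Corollary \ref{oddtypical}, Lemma \ref{TVerma}) to relate $J(w\cdot\Lambda)$ for the distinguished Borel to the corresponding primitive ideal for a Borel in which the central character becomes strongly typical, preserving all inclusions; alternatively, observe that tensoring with a suitable one-dimensional or finite-dimensional module shifts a typical regular character to a strongly typical one while, by Lemma \ref{Anntens}, respecting inclusions of annihilators. (3) Chase through the ``if'' and ``only if'' directions separately: the ``if'' direction uses Corollary \ref{corIan} (parabolic induction / the partial Harish--Chandra projection) together with the equivalence of categories on the $\fg_{\oa}$-side; the ``only if'' direction uses Lemma \ref{IplusAnn} and Corollary \ref{corIAnn} applied to $\fg_{\oa}$, after restricting $L(w\cdot\Lambda)$ and extracting the block with character $\chi^{\oa}_{w\circ\Lambda}$.

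The main obstacle I expect is step (2): handling genuinely typical but not strongly typical central characters. Gorelik's equivalence only applies to strongly typical blocks, so one must manufacture the bridge by hand --- either by a careful choice of non-standard Borel subalgebra in which the atypicality-of-the-odd-root condition is satisfied while the strong-typicality condition on \emph{all} odd roots still fails, or by a translation/tensoring argument that must be checked to preserve both regularity and the relevant inclusions of primitive ideals. Verifying that the odd-reflection isomorphisms of Lemma \ref{oddrefl} intertwine $J(w\cdot\Lambda)$ correctly (i.e. that a single primitive ideal is being tracked through the change of Borel, not a genuinely different one) is the delicate point; everything else is a routine combination of the cited lemmas. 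A secondary subtlety is making sure, via Lemma \ref{translclass}, that passing from $\widetilde{\lambda}$ to $\Lambda$ on the $\fg_{\oa}$-side is legitimate --- this requires $\widetilde{\lambda}-\Lambda\in\cP$, which holds because a perfect mate lives in the same coset of the weight lattice, but it deserves an explicit line.
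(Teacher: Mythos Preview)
Your overall strategy for the strongly typical part is right --- Gorelik's equivalence plus Lemma~\ref{IplusAnn} and Lemma~\ref{induct} --- and this matches the paper. But two points in your plan are off.

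First, the odd-reflection idea in step~(2) cannot work: strong typicality is a property of the central character (equivalently, of the $W$-orbit of $\lambda+\rho$) and is \emph{independent} of the choice of Borel subalgebra. Changing $\fb$ via odd reflections will not turn a typical-but-not-strongly-typical character into a strongly typical one. Your ``alternative'' of tensoring with a finite-dimensional module to shift to a strongly typical block is the correct idea, and in fact the paper simply invokes Musson's Theorems~1.3 and~1.4 in \cite{MR1479886}, which show directly that the posets of primitive ideals for two regular typical blocks whose dominant weights differ by an element of $\cP$ are isomorphic. This is exactly the translation-functor bridge you are trying to build by hand; you should cite it rather than reconstruct it.

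Second, your step~(3) proposes Corollary~\ref{corIan} for the ``if'' direction. That corollary requires $\fg_{\oa}$ to be the Levi factor of a parabolic subalgebra containing $\fb$, which holds for type~I with the distinguished Borel but \emph{fails} for type~II. The paper instead uses Gorelik's explicit identity $(\Ind L_{\oa}(w\circ\widetilde{\lambda}))_\chi = L(w\cdot\lambda)$ together with Lemma~\ref{induct} (induction from an arbitrary subalgebra preserves annihilator inclusions) and then Lemma~\ref{IplusAnn} to extract the $\chi$-component. The reverse direction uses the companion identity $(\Res L(w\cdot\lambda))_{\widetilde{\chi}} = L_{\oa}(w\circ\widetilde{\lambda})$ and Lemma~\ref{IplusAnn} applied to $\fg_{\oa}$. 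Your observation that one then needs Lemma~\ref{translclass} to pass from $\widetilde{\lambda}$ to $\Lambda$ on the $\fg_{\oa}$-side is correct and worth making explicit.
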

\begin{proof}
First we consider $\lambda$ strongly typical. In order to have an inclusion between primitive ideals they have to admit the same central character. The typicality of one of the highest weights therefore forces the other one to be typical as well. Moreover, the typicality of $\lambda$ forces $\mu$ and $\nu$ to be in the orbit of $\lambda$. According to Theorem 1.3.1 in \cite{MR1862800}, the equalities
\[\left(\Ind L_{\oa}(w\circ\widetilde{\lambda})\right)_{\chi}= L(w\cdot \lambda)\quad\mbox{and}\quad \left(\Res L(w\cdot\lambda)\right)_{\widetilde{\chi}}= L_{\oa}(w\circ \widetilde{\lambda})\]
hold for every $w\in W$. The statements then follow by application of Lemmata \ref{IplusAnn} and \ref{induct}.

Theorem 1.3 and Theorem 1.4 of  \cite{MR1479886} imply that the posets of primitive ideals of two regular typical blocks, for which the difference of the dominant weights in the orbits is in $\cP$, are isomorphic. The results for regular strongly typical characters therefore carry over to regular typical characters.
\end{proof}

\begin{remark}{\rm
Theorem \ref{typeIItyp} for regular typical central characters rederives Theorem B in \cite{MR1479886} for $\mathfrak{osp}(1|2n)$. Theorem 1.4 in \cite{MR1479886} describes the poset of primitive ideals corresponding to a singular typical central character in terms of that of a regular typical central character for basic classical Lie superalgebras. The combination of Theorem 1.4 in \cite{MR1479886} and Theorem \ref{typeIItyp} therefore also yields all inclusions between primitive ideals for singular typical characters for basic classical Lie superalgebras.}
\end{remark}


\section{Generic weights}
\label{secgen}

As in \cite{MR1309652,MR1201236}, we call weights which are far from the walls of the Weyl chambers generic. How far weights have to be from the wall for our purposes is determined by the odd roots, therefore we define the sets
\begin{equation}
\label{gammaset}
\Gamma=\{\sum_{\alpha\in I}\alpha\,|I\,\subset \Delta^-_{\ob}\}\quad\mbox{and}\quad \widetilde{\Gamma}=\{\sum_{\alpha\in I}\alpha\,|\,I\subset \Delta_{\ob}\}.
\end{equation}
Since we might have $\sum_{\alpha\in I}\alpha=\sum_{\alpha\in I'}\alpha$ even when $I\not= I'$, we need to stress that we interpret these sets with {\em multiplicities}.

In the following, the notion Weyl chambers refers to the Weyl chambers of the $\rho_{\oa}$-shifted action for the whole Weyl group $W$ (even when non-integral weights are considered).

\begin{definition}
\label{defgeneric}
(i) We call a weight $\lambda\in\fh_{\oa}^\ast$ {\em weakly generic} if all weights in the set $\lambda+\widetilde{\Gamma}$ are inside the same Weyl chamber.

(ii)We call a weight $\lambda\in\fh_{\oa}^\ast$ {\em generic} if each weight in the set $\lambda+\Gamma$ is weakly generic.

(iii)We call a simple highest weight module $L(\mu)$ {\em generic} if all highest weights $\nu$ of the simple $\fg_{\oa}$-subquotients of $\Res L(\mu)$ are weakly generic.
\end{definition}
Since the set $\widetilde{\Gamma}$ is invariant under the Weyl group (which is a consequence of the fact that $\Lambda \fg_{\ob}$ is a finite dimensional $\fg_{\oa}$-module), a weight $\lambda$ is weakly generic if and only if $w\circ\lambda$ is weakly generic for an arbitrary $w\in W$. Furthermore, as in Section 0.5 in \cite{MR1479886}, we have the equality of sets $$w\circ(\lambda+{\Gamma})=w\cdot \lambda +{\Gamma},$$
which implies that a weight $\lambda$ is generic if and only if $w\cdot\lambda$ is generic for an arbitrary $w\in W$.

\begin{lemma}
\label{farcr}
If $\lambda\in\fh^\ast_{\oa}$ is weakly generic, then $\Res L(\lambda)$ is completely reducible and
\[\Res L(\lambda)\subset  \bigoplus_{\gamma\in\Gamma}\left(L_{\oa}(\lambda+\gamma)\right)^{\oplus k}\]
for some $k\in\mN$.
\end{lemma}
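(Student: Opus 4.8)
The plan is to show first that $\Res L(\lambda)$ is completely reducible, and then control the highest weights appearing. By the PBW theorem, $\Res M(\lambda)$ has a Verma flag for $\fg_{\oa}$ with the set of highest weights equal to $\{\lambda+\gamma\mid\gamma\in\Gamma\}$ (counted with multiplicities), exactly as in the proof of Lemma~\ref{charVerma}. Hence every $\fg_{\oa}$-subquotient $L_{\oa}(\nu)$ of $\Res L(\lambda)$ has highest weight $\nu$ of the form $\nu=\mu+\gamma'$ for some $\fg_{\oa}$-dominant $\mu$ obtained by the $\rho_{\oa}$-shifted dot action from some $\lambda+\gamma$, $\gamma\in\Gamma$, and some $\gamma'\in\Gamma$ (all with appropriate multiplicities). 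Since $\lambda$ is weakly generic, each $\lambda+\gamma$ (for $\gamma\in\Gamma\subset\widetilde\Gamma$) lies in the same open Weyl chamber as $\lambda$, so there is a unique dominant weight $\lambda^{+}$ in its $W$-orbit under $\circ$, independent of $\gamma$; thus every such $\nu$ has the shape $\lambda^{+}+\gamma'$ with $\gamma'\in\Gamma$ and, re-expressing in terms of $\lambda$ itself, $\nu\in\lambda+\widetilde\Gamma$ (again with multiplicities).

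The key point for complete reducibility is that all these weights $\nu$ lie strictly inside one Weyl chamber, hence are $W$-regular, and moreover any two of them that are $W$-conjugate (for the $\circ$-action) must be equal because they all lie in the same chamber. Therefore all composition factors $L_{\oa}(\nu)$ of $\Res L(\lambda)$ have pairwise distinct, $W$-regular, "dominant" weights lying in a common Weyl chamber; in particular no two of them share a central character unless they are isomorphic. Combined with the fact that the central character of $\fg$ acting on $L(\lambda)$ decomposes into only finitely many $\fg_{\oa}$-central characters on $\Res L(\lambda)$, and that distinct regular weights in the same chamber give distinct $\fg_{\oa}$-central characters, one deduces that $\Res L(\lambda)$ is a direct sum of blocks each of which is semisimple (a single regular weight in a chamber, with no self-extensions of $L_{\oa}(\nu)$ in $\cO_{\oa}$). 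This yields $\Res L(\lambda)\cong\bigoplus_\nu L_{\oa}(\nu)^{\oplus m_\nu}$ with each $\nu$ of the form $\lambda^{+}+\gamma'$, $\gamma'\in\Gamma$; absorbing the finitely many multiplicities into a single uniform bound $k$ gives the claimed containment $\Res L(\lambda)\subset\bigoplus_{\gamma\in\Gamma}L_{\oa}(\lambda+\gamma)^{\oplus k}$ after relabelling $\lambda^{+}$ back as $\lambda$ up to $W$-action, which is permissible since weak genericity is $W$-invariant.

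The main obstacle I expect is making precise the claim that the $\fg_{\oa}$-subquotients of $\Res L(\lambda)$ really are \emph{semisimple} inside each $\fg_{\oa}$-block, rather than merely having regular highest weights. This should follow because each block $\cO_{\chi}$ of $\cO_{\oa}$ containing only one (up to the dot action) regular weight $\nu$ lying strictly inside a chamber is equivalent to the category of vector spaces: $M_{\oa}(\nu)=L_{\oa}(\nu)=P_{\oa}(\nu)$ is both projective and injective, and there are no self-extensions. So once one knows that all $\nu$ that occur are regular and lie strictly inside a chamber (which is precisely the content of weak genericity together with the Verma-flag description of $\Res M(\lambda)$), complete reducibility is automatic, and the structural claim $\nu\in\lambda+\widetilde\Gamma$ is a bookkeeping matter about how the $\rho_{\oa}$-shifted dot action permutes the finite set $\{\lambda+\gamma\mid\gamma\in\widetilde\Gamma\}$ within its chamber.
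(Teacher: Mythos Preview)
There is a genuine gap in your argument. The Verma flag of $\Res M(\lambda)$ has subquotients $M_{\oa}(\lambda+\gamma)$ with $\gamma\in\Gamma$, and weak genericity tells you these \emph{highest weights} $\lambda+\gamma$ all lie in the same Weyl chamber. But the composition factors of each $M_{\oa}(\lambda+\gamma)$ are of the form $L_{\oa}(w\circ(\lambda+\gamma))$ for various $w$, and these are spread over many chambers. So from the Verma flag alone you cannot conclude that the composition factors of $\Res L(\lambda)$ (a quotient of $\Res M(\lambda)$) all lie in a single chamber. Your sentence ``once one knows that all $\nu$ that occur are regular and lie strictly inside a chamber (which is precisely the content of weak genericity together with the Verma-flag description of $\Res M(\lambda)$)'' assumes exactly the point that needs justification. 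The parenthetical claims about ``$\nu=\mu+\gamma'$'' and a ``unique dominant $\lambda^+$ independent of $\gamma$'' are also not correct as stated: different $\lambda+\gamma$ lie in the same chamber but in different $W$-orbits.

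The paper closes this gap by a different mechanism. Instead of bounding $\Res L(\lambda)$ from above via $\Res M(\lambda)$, it bounds it by a \emph{surjection} from a completely reducible module. One finds some $L_{\oa}(w\circ\lambda)$ in the socle of $\Res L(\lambda)$; Frobenius reciprocity then gives $\Ind L_{\oa}(w\circ\lambda)\tto L(\lambda)$, hence $\Lambda\fg_{\ob}\otimes L_{\oa}(w\circ\lambda)\tto \Res L(\lambda)$. The highest weights appearing in this tensor product lie in $w\circ\lambda+\widetilde\Gamma=w\circ(\lambda+\widetilde\Gamma)$, which by weak genericity is entirely inside one chamber, so the tensor product is semisimple; therefore so is its quotient $\Res L(\lambda)$, and one reads off $w=1$. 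Only after establishing complete reducibility does the paper invoke the Verma filtration of $\Res M(\lambda)$ to pin down the possible summands as $L_{\oa}(\lambda+\gamma)$ with $\gamma\in\Gamma$.
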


\begin{proof}
If $\fg$ is of type I, this follows from the morphism $\Lambda\fg_{-1}\otimes L_0(\lambda)\tto \Res L(\lambda)$. If $\fg$ is basic classical of type II, this follows easily as in the first part of the proof of Lemma 3.4 in \cite{MR1479886}. Thus we are left to consider the algebras of $Q$-type, the following proof covers all cases however.

We consider the $\fg_{\oa}$-module $U(\fg_{\oa})[L(\lambda)]_\lambda$, where $[M]_\lambda$ stands for the space of vectors with weight $\lambda$ in a weight module $M$. This $\fg_{\oa}$-submodule of $\Res L(\lambda)$ has a module in its socle of the form $L_{\oa}(w\circ\lambda)$ for some $w\in W$. Thus we have $L_{\oa}(w\circ\lambda)\hookrightarrow \Res L(\lambda)$.

From the Frobenius reciprocity it then follows that $ \Ind L_{\oa}(w\circ\lambda)\tto L(\lambda)$ and therefore we have \[\Lambda\fg_{\ob}\otimes L_{\oa}(w\circ \lambda)\tto \Res L(\lambda).\] The weights of the highest weight vectors appearing in the left hand side have to be in the set $w\circ \lambda+\widetilde{\Gamma}=w\circ (\lambda+\widetilde{\Gamma})$. By assumption, all these weights are in the same chamber, so $\Lambda \fg_{\ob}\otimes L_{\oa}(w\circ\lambda)$ is completely reducible, which implies that $\Res L(\lambda)$ is completely reducible and $w=1$. 

By the standard filtration of $\Res M(\lambda)$ in $\cO_{\oa}$ and the fact that all appearing weights are in the same chamber, 
for some $k\in\mN$ we have
\[\Res M(\lambda)=\bigoplus_{\gamma\in \Gamma} \left(M_{\oa}(\lambda+\gamma)\right)^{\oplus k}.\]
Since this projects onto the completely reducible module $\Res L(\lambda)$, the result follows.
\end{proof}

\begin{corollary}
\label{corgen}
If $\lambda\in\fh^\ast_{\oa}$ is generic, then $L(\lambda)$ is a generic module. For any generic simple module $M$ in $\cO$, the module $\Res M$ is completely reducible.
\end{corollary}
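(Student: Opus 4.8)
The plan is to deduce both statements directly from Lemma~\ref{farcr}; essentially all the work has already been done there, and what remains is to identify the relevant weakly generic weight in each case.

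For the first assertion, observe that $0\in\Gamma$ (the empty sum), so a generic weight $\lambda$ is in particular weakly generic, and Lemma~\ref{farcr} applies to it. It tells us that $\Res L(\lambda)$ embeds into $\bigoplus_{\gamma\in\Gamma}\bigl(L_{\oa}(\lambda+\gamma)\bigr)^{\oplus k}$, so every simple $\fg_{\oa}$-subquotient of $\Res L(\lambda)$ has highest weight in $\lambda+\Gamma$. By Definition~\ref{defgeneric}(ii) every weight of $\lambda+\Gamma$ is weakly generic, and therefore, by Definition~\ref{defgeneric}(iii), $L(\lambda)$ is a generic module.

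For the second assertion, write $M=L(\mu)$. The key point is that $\mu$ is necessarily one of the highest weights occurring in Definition~\ref{defgeneric}(iii): any nonzero vector $v$ of weight $\mu$ in $M$ is annihilated by $\fn^+$, hence by $\fn^+_{\oa}\subseteq\fn^+$, so $U(\fg_{\oa})v$ is a quotient of $M_{\oa}(\mu)$ and thus admits $L_{\oa}(\mu)$ as a subquotient. Consequently $L_{\oa}(\mu)$ is a composition factor of $\Res M$, and genericity of $M$ forces $\mu$ to be weakly generic. Lemma~\ref{farcr} applied to $\mu$ then gives at once that $\Res M=\Res L(\mu)$ is completely reducible.

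I do not foresee a genuine obstacle: the argument is bookkeeping built on Lemma~\ref{farcr}. The only step that requires a word of justification is the claim that the highest weight $\mu$ of a generic simple module is itself weakly generic — equivalently, that $L_{\oa}(\mu)$ really is a composition factor of $\Res M$. The inclusion $\fn^+_{\oa}\subseteq\fn^+$, valid because $\Delta^+_{\oa}\subseteq\Delta^+$, makes this uniform across all algebras in \eqref{list}, including those of $Q$-type where the $\mu$-weight space of $M$ need not be one-dimensional.
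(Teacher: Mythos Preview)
Your proof is correct and follows exactly the intended approach: the corollary is stated in the paper without proof precisely because both parts are immediate from Lemma~\ref{farcr}, and you have filled in the routine details. Your care in checking that $L_{\oa}(\mu)$ occurs as a subquotient of $\Res L(\mu)$ (hence that $\mu$ is weakly generic) is the only nontrivial point, and your argument via $\fn^+_{\oa}\subseteq\fn^+$ handles it cleanly, including the $Q$-type case.
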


\begin{lemma}
\label{farneq}
Consider two weakly generic dominant weights $\Lambda_1,\Lambda_2$ and $w_1,w_2\in W$. Then the inclusion $J(w_1\cdot\Lambda_1)\subseteq J(w_2\cdot \Lambda_2)$ implies $\Lambda_1-\Lambda_2\in\cP$ and $I(w_1\circ\Lambda_1)\subseteq I(w_2\circ\Lambda_1)$.
\end{lemma}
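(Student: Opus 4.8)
The plan is to combine the structural information about restrictions of generic simple modules (Lemma \ref{farcr} and Corollary \ref{corgen}) with the technical facts on annihilators from Section \ref{sectech}, especially Corollary \ref{corIAnn} and Lemma \ref{IplusAnn}. First I would observe that an inclusion of annihilators $J(w_1\cdot\Lambda_1)\subseteq J(w_2\cdot\Lambda_2)$ forces $L(w_1\cdot\Lambda_1)$ and $L(w_2\cdot\Lambda_2)$ to have the same central character: indeed $\cZ(\fg)$ acts through $J(w_i\cdot\Lambda_i)$ on $L(w_i\cdot\Lambda_i)$, and an ideal inclusion gives agreement of these quotient maps on $\cZ(\fg)$. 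By the Harish-Chandra isomorphism for the algebras in the list \eqref{list} this forces $\chi_{\Lambda_1}=\chi_{\Lambda_2}$, and in particular $\Lambda_1-\Lambda_2\in\cP$ — this is essentially the same argument used at the start of the proof of Theorem \ref{typeIItyp}. (One must take a tiny bit of care for the strange algebras, where $\cZ(\fg)$ is degenerate; there one passes to $U(\fg)/J$ with $J$ the Jacobson radical, exactly as in the proof of Theorem \ref{typeItyp}, and uses that generic weights are in particular typical so that central characters on the quotient separate orbits.)

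Next, since $\Lambda_1-\Lambda_2\in\cP$, the two blocks $\cO_{\Lambda_1}$ and $\cO_{\Lambda_2}$ lie over the ``same'' combinatorics, and by Lemma \ref{translclass} an inclusion $I(w_1\circ\Lambda_1)\subseteq I(w_2\circ\Lambda_1)$ is equivalent to the corresponding inclusion with $\Lambda_1$ replaced by $\Lambda_2$; so it is harmless to phrase the conclusion in terms of $\Lambda_1$, and likewise we may assume (after a translation, which does not affect whether a weight is weakly generic since $\widetilde\Gamma$ is $W$-invariant) that the two dominant weights actually agree. The heart of the argument is then: transport the inclusion $J(w_1\cdot\Lambda)\subseteq J(w_2\cdot\Lambda)$ down to $\fg_{\oa}$ via $\Res$. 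Here I use Lemma \ref{farcr}: for $\Lambda$ weakly generic and dominant, $\Res L(w_i\cdot\Lambda)$ is completely reducible, and more precisely its socle over $\fg_{\oa}$ contains $L_{\oa}(w_i\circ\Lambda)$ with $w_i\circ\Lambda$ the ``leading'' weight (the $\Gamma$-shifts $w_i\cdot\Lambda+\gamma$ all sit strictly below in the same chamber). So $\big(\Res L(w_i\cdot\Lambda)\big)_{\chi^{\oa}_{w_i\circ\Lambda}} = L_{\oa}(w_i\circ\Lambda)$, because the only highest weight among the subquotients lying in that $\fg_{\oa}$-central character is $w_i\circ\Lambda$ itself — the other $w_i\cdot\Lambda+\gamma$ have strictly smaller ``$\rho_{\oa}$-dominant representative'' and hence a different $\fg_{\oa}$-central character than the extreme one, by weak genericity.

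Now apply Lemma \ref{IplusAnn} over $\fg_{\oa}$: the inclusion $J(w_1\cdot\Lambda)\subseteq J(w_2\cdot\Lambda)$ of $\fg$-annihilators gives $\Ann_{U(\fg_{\oa})}\Res L(w_1\cdot\Lambda)\subseteq \Ann_{U(\fg_{\oa})}\Res L(w_2\cdot\Lambda)$ after intersecting with $U(\fg_{\oa})$; intersecting the $\chi^{\oa}_{w_1\circ\Lambda}$-generalized eigenspace and using the displayed equalities $\big(\Res L(w_i\cdot\Lambda)\big)_{\chi^{\oa}_{w_1\circ\Lambda}}$ together with Lemma \ref{IplusAnn} (which expresses $\Ann$ of a block component in terms of $I_{\chi}+\Ann$), we get $I(w_1\circ\Lambda)\subseteq I(w_2\circ\Lambda)$ — provided we first know that $\big(\Res L(w_2\cdot\Lambda)\big)_{\chi^{\oa}_{w_1\circ\Lambda}}\neq 0$, which is exactly Corollary \ref{corIAnn} applied to the $\fg_{\oa}$-modules $\Res L(w_1\cdot\Lambda)$ and $\Res L(w_2\cdot\Lambda)$: if that block component of the second module vanished, then so would the first, contradicting $L_{\oa}(w_1\circ\Lambda)\neq 0$. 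Since $\Lambda_1$ and $\Lambda$ differ by an element of $\cP$, Lemma \ref{translclass} returns the stated $I(w_1\circ\Lambda_1)\subseteq I(w_2\circ\Lambda_1)$.

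The main obstacle, I expect, is the careful bookkeeping in the middle step: namely verifying that among all the $\fg_{\oa}$-subquotients $L_{\oa}(\mu)$ of $\Res L(w_i\cdot\Lambda)$, with $\mu$ ranging over (a $W$-conjugate of) $w_i\cdot\Lambda+\Gamma$, precisely one lies in the $\fg_{\oa}$-central character $\chi^{\oa}_{w_i\circ\Lambda}$ of the extreme weight, and with multiplicity one. This is where weak genericity is genuinely used: the shifts by $\Gamma$ never cross a wall, so distinct $\gamma$'s give genuinely distinct $\fg_{\oa}$-orbits (equivalently distinct $\fg_{\oa}$-central characters) except for the trivial coincidence $\gamma=0$ versus itself, and the dominant/extreme one is unambiguous. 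Everything else is a formal chase through Lemmata \ref{IplusAnn}, \ref{induct}, Corollary \ref{corIAnn} and \ref{translclass}, following the template of the typical-block arguments in Section \ref{sectyp}.
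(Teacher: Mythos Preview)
Your overall strategy --- pass to $\fg_{\oa}$ via $\Res$, use Lemma~\ref{farcr} to control the constituents, then invoke Lemma~\ref{IplusAnn} and Corollary~\ref{corIAnn} --- is exactly the paper's. The implementation, however, has a genuine gap at the ``middle step'' you yourself flag. You project to the $\fg_{\oa}$-central character $\chi^{\oa}_{w_i\circ\Lambda}$ and assert that $\big(\Res L(w_i\cdot\Lambda)\big)_{\chi^{\oa}_{w_i\circ\Lambda}}=L_{\oa}(w_i\circ\Lambda)$. But the highest weight of $L(w_i\cdot\Lambda)$ is $w_i\cdot\Lambda$, not $w_i\circ\Lambda$, and these differ by $\rho_{\ob}-w_i(\rho_{\ob})$; Lemma~\ref{farcr} only tells you that the constituents lie \emph{among} the $L_{\oa}(w_i\cdot\Lambda+\gamma)$, so nothing guarantees that the particular summand with $\fg_{\oa}$-character $\chi^{\oa}_{\Lambda}$ actually occurs (and for $\mathfrak{q}(n)$ Lemma~\ref{lemPenkov} shows that some summands genuinely do not). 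Consequently Corollary~\ref{corIAnn} cannot be started. Relatedly, your preliminary reductions are not justified: the equality $\chi_{\Lambda_1}=\chi_{\Lambda_2}$ does \emph{not} by itself force $\Lambda_1-\Lambda_2\in\cP$ for atypical non-integral weights (this is precisely the content of the decomposition~\eqref{decompnonint}), and there is no super-analogue of Lemma~\ref{translclass} that lets you replace $\Lambda_2$ by $\Lambda_1$ in the hypothesis $J(w_1\cdot\Lambda_1)\subseteq J(w_2\cdot\Lambda_2)$.

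The paper avoids all of this by projecting to the character that is \emph{visibly} present, namely $\chi^{\oa}_{w_1\cdot\Lambda_1}$ (the top constituent $L_{\oa}(w_1\cdot\Lambda_1)$ certainly occurs). Corollary~\ref{corIAnn} then forces $\big(\Res L(w_2\cdot\Lambda_2)\big)_{\chi^{\oa}_{w_1\cdot\Lambda_1}}\neq 0$; weak genericity of $\Lambda_2$ pins this component down as $L_{\oa}\big(w_2w_1^{-1}\circ(w_1\cdot\Lambda_1)\big)$ (the unique element of $w_2\cdot\Lambda_2+\Gamma$ in the $\rho_{\oa}$-orbit of $w_1\cdot\Lambda_1$, by the chamber argument). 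Lemma~\ref{IplusAnn} then gives $I(w_1\cdot\Lambda_1)\subseteq I\big(w_2w_1^{-1}\circ(w_1\cdot\Lambda_1)\big)$, and the relation $w_2w_1^{-1}\circ(w_1\cdot\Lambda_1)\in w_2\cdot\Lambda_2+\Gamma$ simultaneously yields $\Lambda_1-\Lambda_2\in\cP$ as a byproduct rather than as a separate first step. Only at the very end is Lemma~\ref{translclass} invoked, to rewrite the inclusion with the auxiliary dominant weight $\Lambda=\Lambda_1+w_1^{-1}(\rho_{\ob})-\rho_{\ob}$ in terms of $\Lambda_1$. In short: keep your plan, but project to $\chi^{\oa}_{w_1\cdot\Lambda_1}$, drop the unjustified reductions, and let $\Lambda_1-\Lambda_2\in\cP$ fall out of the computation.
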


\begin{proof}
The property $U(\fg_{\oa})\cap J(w_1\cdot\Lambda_1)\subseteq U(\fg_{\oa})\cap J(w_2\cdot\Lambda_2)$ is equivalent to \[\Ann_{U(\fg_{\oa})}\Res L(w_1\cdot\Lambda_1)\subseteq \Ann_{U(\fg_{\oa})}\Res L(w_2\cdot\Lambda_2).\] 
Lemma \ref{farcr} and Lemma \ref{IplusAnn} applied to $\fg_{\oa}$ imply that
\begin{itemize}
\item $w_2w_1^{-1}\circ (w_1\cdot\Lambda_1)\in w_2\cdot\Lambda_2+\Gamma$ with
\item $I(w_1\cdot \Lambda_1)\subseteq I(w_2w_1^{-1}\circ (w_1\cdot\Lambda_1) )$.
\end{itemize}

The first property can be rewritten as $\Lambda_1-\Lambda_2\in w_2^{-1}(\rho_{\ob})-w_1^{-1}(\rho_{\ob})+w_2^{-1}(\Gamma)$ which, in particular, implies $\Lambda_1-\Lambda_2\in\cP$.

The second property can be rewritten as $I(w_1\circ\Lambda)\subseteq I(w_2\circ \Lambda)$ with $\Lambda=\Lambda_1+w_1^{-1}(\rho_{\ob})-\rho_{\ob}$. Here $\Lambda$ is a dominant weight in $\Lambda_1+\cP$, so Lemma \ref{translclass} implies that $I(w_1\circ\Lambda)\subseteq I(w_2\circ \Lambda)$ is equivalent to $I(w_1\circ\Lambda_1)\subseteq I(w_2\circ \Lambda_1)$.
\end{proof}

\begin{lemma}
\label{farneq2}
Consider two weakly generic weights $\lambda,\mu$ in the same chamber. The equality $J(\lambda)=J(\mu)$ implies $\lambda=\mu$.
\end{lemma}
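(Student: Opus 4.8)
The plan is to reduce everything to the underlying reductive Lie algebra $\fg_{\oa}$ and to exploit that, for a weakly generic $\lambda$, the $\fg_{\oa}$-module $\Res L(\lambda)$ only involves the $\fg_{\oa}$-central characters attached to the $\rho_{\oa}$-shifted orbits of the weights $\lambda+\gamma$, $\gamma\in\Gamma$.

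First I would note that $\Ann_{U(\fg_{\oa})}\Res L(\nu)=J(\nu)\cap U(\fg_{\oa})$ for any $\nu\in\fh_{\oa}^\ast$, so the hypothesis $J(\lambda)=J(\mu)$ gives $\Ann_{U(\fg_{\oa})}\Res L(\lambda)=\Ann_{U(\fg_{\oa})}\Res L(\mu)$. Applying Corollary~\ref{corIAnn} to $\fg_{\oa}$ in both directions then yields, for every central character $\chi$ of $\cZ(\fg_{\oa})$,
\[(\Res L(\lambda))_\chi\ne 0\quad\Longleftrightarrow\quad(\Res L(\mu))_\chi\ne 0.\]
Since the $\fg_{\oa}$-submodule of $\Res L(\mu)$ generated by a highest weight vector surjects onto $L_{\oa}(\mu)$, the character $\chi^{\oa}_\mu$ occurs in $\Res L(\mu)$, hence also in $\Res L(\lambda)$. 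By Lemma~\ref{farcr} every composition factor of $\Res L(\lambda)$ is of the form $L_{\oa}(\lambda+\gamma)$ with $\gamma\in\Gamma$, so there is $\gamma\in\Gamma$ with $\chi^{\oa}_{\lambda+\gamma}=\chi^{\oa}_\mu$; by the Harish-Chandra isomorphism for $\fg_{\oa}$ this means that $\lambda+\gamma$ and $\mu$ lie in one and the same $\rho_{\oa}$-shifted $W$-orbit.

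It then remains to conclude that $\lambda+\gamma=\mu$, and this is the step that uses genericity. Since $\gamma\in\Gamma\subset\widetilde\Gamma$, weak genericity of $\lambda$ places $\lambda+\gamma$ in the same Weyl chamber as $\lambda$, which by assumption is the chamber containing $\mu$; as a Weyl chamber contains at most one point of any $\rho_{\oa}$-shifted $W$-orbit, we get $\lambda+\gamma=\mu$. Because $\gamma=\sum_{\alpha\in I}\alpha$ with $I\subset\Delta^-_{\ob}$ is a non-positive integral combination of the simple roots, this gives $\mu\le\lambda$; interchanging the roles of $\lambda$ and $\mu$ gives $\lambda\le\mu$, and antisymmetry of the dominance order finishes the proof.

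The only genuinely delicate point is this last paragraph: one must be sure that ``lying in the same Weyl chamber'' together with ``lying in the same shifted orbit'' really forces equality, i.e. that a (closed) Weyl chamber is a strict fundamental domain for the $\rho_{\oa}$-shifted action of $W$ — which follows from the corresponding well-known statement for the unshifted action by translation by $\rho_{\oa}$. Everything else is a formal consequence of Corollary~\ref{corIAnn}, Lemma~\ref{farcr} and the Harish-Chandra isomorphism for the reductive Lie algebra $\fg_{\oa}$, and in particular the argument applies uniformly to all algebras in the list \eqref{list}, including those of $Q$-type.
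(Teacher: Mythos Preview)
Your proof is correct and is essentially the argument the paper has in mind: the paper's proof reads ``Mutatis mutandis Corollary~\ref{lemintdom}'', and unpacking that corollary in the weakly generic setting gives precisely your comparison of the $\fg_{\oa}$-central characters occurring in $\Res L(\lambda)$ and $\Res L(\mu)$ via Corollary~\ref{corIAnn} and Lemma~\ref{farcr}, followed by the ``same chamber $+$ same orbit $\Rightarrow$ equal'' step. The only cosmetic difference is that the paper's template argues by contradiction from a WLOG assumption $\lambda\not<\mu$, whereas you run the argument symmetrically to obtain $\mu\le\lambda$ and $\lambda\le\mu$; these are equivalent.
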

\begin{proof}
Mutatis mutandis Corollary \ref{lemintdom}. \end{proof}

\begin{lemma}
\label{twistinggeneric}
If $L(\lambda)$ is a generic module, then all subquotients in the Jordan-H\"older series of $T_\alpha L(\lambda)$ are generic.
\end{lemma}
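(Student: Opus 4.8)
The plan is to push everything down to the underlying Lie algebra $\fg_{\oa}$ via the restriction functor and then use that the classical twisting functor preserves blocks. The two inputs that make this work are the intertwining isomorphism $\Res\circ T_\alpha\cong T_\alpha^{\oa}\circ\Res$ from Lemma~\ref{resindT}, and the fact (Corollary~\ref{corgen}) that $\Res L(\lambda)$ is completely reducible whenever $L(\lambda)$ is generic. So I would first write $\Res L(\lambda)\cong\bigoplus_i L_{\oa}(\nu_i)$ with each $\nu_i$ weakly generic (this is exactly the definition of a generic module, combined with complete reducibility), and deduce $\Res T_\alpha L(\lambda)\cong\bigoplus_i T_\alpha^{\oa}L_{\oa}(\nu_i)$. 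The whole problem is thereby reduced to understanding the composition factors of $T_\alpha^{\oa}L_{\oa}(\nu)$ for $\nu$ weakly generic.

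For that step I would deliberately avoid any attempt to describe $T_\alpha^{\oa}L_{\oa}(\nu)$ explicitly — it can vanish (if $L_{\oa}(\nu)$ is $\alpha$-finite) or have a genuinely complicated composition series (if $L_{\oa}(\nu)$ is $\alpha$-free) — and instead invoke only block-preservation. As in the proof of Proposition~\ref{autoequi}, but now for $\fg_{\oa}$ (equivalently, Corollary~4.2 in \cite{MR2032059}), the functor $T_\alpha^{\oa}$ preserves every block $\cO_{\oa,\chi}$; this follows from $\ch T_\alpha^{\oa}M_{\oa}(\mu)=\ch M_{\oa}(s_\alpha\circ\mu)$ together with the fact that Verma modules span the Grothendieck group of a block. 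Hence every composition factor $L_{\oa}(\mu)$ of $T_\alpha^{\oa}L_{\oa}(\nu_i)$ has the same $\fg_{\oa}$-central character as $L_{\oa}(\nu_i)$, so by the Harish-Chandra isomorphism for $\fg_{\oa}$ we get $\mu=w\circ\nu_i$ for some $w\in W$, and therefore $\mu$ is weakly generic since weak genericity is invariant under the $\rho_{\oa}$-shifted $W$-action (the remark following Definition~\ref{defgeneric}). This shows that every $\fg_{\oa}$-composition factor of $\Res T_\alpha L(\lambda)$ has weakly generic highest weight.

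Finally I would transport this back to $\fg$. The module $T_\alpha L(\lambda)$ lies in $\cO$ and has finite length (its restriction does, and $\Res$ is exact and faithful), so it has a Jordan-Hölder series with simple subquotients $L(\mu')$. Applying the exact functor $\Res$ to a composition series of $T_\alpha L(\lambda)$ exhibits each $\fg_{\oa}$-composition factor of $\Res L(\mu')$ as a $\fg_{\oa}$-composition factor of $\Res T_\alpha L(\lambda)$, hence with weakly generic highest weight by the previous paragraph; by definition this is precisely the statement that $L(\mu')$ is generic.

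I do not anticipate a real obstacle: the argument is essentially bookkeeping with $\Res$ plus the block-preservation of $T_\alpha^{\oa}$. The only points that need a line of care are justifying finite length of $T_\alpha L(\lambda)$ (via exactness and faithfulness of $\Res$) and making sure the reduction uses complete reducibility of $\Res L(\lambda)$ only to identify the highest weights $\nu_i$ — block-preservation is what actually does the work and it applies uniformly, regardless of whether $\langle\nu_i,\alpha^\vee\rangle$ is integral or whether $L_{\oa}(\nu_i)$ is $\alpha$-free or $\alpha$-finite.
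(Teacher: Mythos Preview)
Your proposal is correct and follows essentially the same approach as the paper: reduce to $\fg_{\oa}$ via $\Res\circ T_\alpha\cong T_\alpha^{\oa}\circ\Res$ and use that $T_\alpha^{\oa}$ preserves blocks of $\cO_{\oa}$ together with the $W$-invariance of weak genericity. The paper's version is terser and does not invoke complete reducibility of $\Res L(\lambda)$ (the definition of a generic module already concerns composition factors, so a Jordan--H\"older series suffices), but your added detail is harmless.
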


\begin{proof}
If $L_{\oa}(\mu)$ is a subquotient in the Jordan-H\"older series of the $\fg_{\oa}$-module \[\Res T_\alpha L(\lambda)= T^{\oa}_\alpha \Res L(\lambda),\] 
the fact that $T^{\oa}_\alpha$ preserves blocks of $\cO_{\oa}$ implies that $\mu$ is weakly generic.
\end{proof}

By definition, subtracting or adding distinct odd roots to weakly generic weights yields regular weights. Interesting properties of generic weights therefore stem from the following observations, which will be applied in later sections.

\begin{lemma}
\label{regatyp1}
Consider $\fg$ in the list \eqref{list} excluding $\mathfrak{p}(n)$ and $\widetilde{\mathfrak{p}}(n)$. If a weight $\mu\in\fh_{\oa}^\ast$ is regular, then it is not atypical with respect to both of two roots $\gamma,\gamma'$ which satisfy $\langle \gamma,\gamma'\rangle\not=0$.
\end{lemma}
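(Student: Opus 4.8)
The plan is to argue by contradiction. Assume $\mu$ is atypical with respect to two distinct roots $\gamma,\gamma'$ with $\langle\gamma,\gamma'\rangle\neq 0$; since atypicality is a property of the positive odd roots, we take $\gamma,\gamma'\in\Delta_{\ob}^+$. I will produce an even root $\beta$ for which $s_\beta\cdot\mu=\mu$, equivalently $\langle\mu+\rho,\beta^\vee\rangle=0$, contradicting regularity of $\mu$. Thus the crux is a purely root-combinatorial statement: two non-orthogonal odd roots have an even root among their sum and difference, and this even root lies on a reflection hyperplane through $\mu$.

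Case $\fg$ basic classical. Here atypical roots are isotropic, so $\gamma,\gamma'$ are isotropic odd roots, and from $\langle\gamma,\gamma'\rangle\neq 0=\langle\gamma,-\gamma\rangle$ we get $\gamma'\neq\pm\gamma$. I would then invoke the standard structural fact underlying the well-definedness of odd reflections, see Section 3.5 in \cite{MR2906817} (or check it directly on the finitely many root systems $\mathfrak{gl}(m|n)$, $\mathfrak{osp}(m|2n)$, $D(2,1;\alpha)$, $G(3)$, $F(4)$): if $\gamma$ is an isotropic odd root and $\delta\neq\pm\gamma$ is a root with $\langle\delta,\gamma\rangle\neq 0$, then exactly one of $\delta+\gamma$ and $\delta-\gamma$ is a root. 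Taking $\delta=\gamma'$ yields a root $\beta\in\{\gamma+\gamma',\gamma-\gamma'\}$, and $\beta$ has even parity (it spans a component of $[\fg_{\pm\gamma},\fg_{\pm\gamma'}]\subset\fg_{\oa}$), so $\beta\in\Delta_{\oa}$. Since $\langle\mu+\rho,\gamma\rangle=\langle\mu+\rho,\gamma'\rangle=0$ by the definition of atypicality, we conclude $\langle\mu+\rho,\beta\rangle=0$, hence $s_\beta\cdot\mu=\mu$, the desired contradiction.

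Case $\fg=\mathfrak{q}(n)$ (the other $Q$-type algebras being entirely analogous). Now $\rho=0$, $\Delta_{\oa}^+=\Delta_{\ob}^+=\{\epsilon_i-\epsilon_j\mid i<j\}$, and $\mu$ is atypical with respect to $\gamma=\epsilon_i-\epsilon_j$ precisely when $\langle\mu,\overline{\gamma}\rangle=0$, where $\overline{\gamma}=\epsilon_i+\epsilon_j$. Writing $\gamma'=\epsilon_k-\epsilon_l$, the hypotheses $\gamma\neq\gamma'$ and $\langle\gamma,\gamma'\rangle\neq 0$ force the sets $\{i,j\}$ and $\{k,l\}$ to meet in exactly one index; then $\overline{\gamma}-\overline{\gamma'}=\pm(\epsilon_a-\epsilon_b)$ for the two unmatched indices $a\neq b$, so $\beta:=\epsilon_a-\epsilon_b\in\Delta_{\oa}$. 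From $\langle\mu,\overline{\gamma}\rangle=\langle\mu,\overline{\gamma'}\rangle=0$ we obtain $\langle\mu,\beta^\vee\rangle=0$, again contradicting regularity.

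I expect the delicate points to be, first, the combinatorial input in the basic classical case — it is elementary but wants either a precise reference or a short tour through Kac's classification — and second, making sure that ``regular'' is understood with respect to the $\rho$-shifted $W$-action: for $\mathfrak{osp}(2m|2n)$ there exist $\rho_{\oa}$-regular weights that are atypical with respect to two non-orthogonal odd roots, so it is precisely the $\rho$-shift that makes the hyperplane equation $\langle\mu+\rho,\beta^\vee\rangle=0$ contradict regularity. Everything else is direct manipulation of the defining equations of atypicality.
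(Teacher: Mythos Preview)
Your proof is correct and follows essentially the same approach as the paper: in the basic classical case you use that one of $\gamma\pm\gamma'$ is an even root to produce a $\rho$-singular hyperplane, and in the $Q$-type case you compute directly with the $\overline{\gamma}=\epsilon_i+\epsilon_j$ description of atypicality. The paper's proof is terser but identical in substance; your added justification that $\beta$ is even via $[\fg_{\pm\gamma},\fg_{\pm\gamma'}]\subset\fg_{\oa}$ and your closing remark about the $\rho$-shift are reasonable elaborations.
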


\begin{proof}
First consider $\fg$ to be basic classical, then $\gamma,\gamma'$ are isotropic. For such isotropic roots either $\gamma+\gamma'$ or $\gamma-\gamma'$ is an element of $\Delta_{\oa}$ (this follows immediately from the structure of the roots, see Chapter 2 in \cite{MR2906817} for $\mathfrak{gl}(m|n)$ and $\mathfrak{osp}(m|2n)$ and Section 1.4 for $D(2,1;\alpha)$, $F(4)$ and $G(3)$). The equalities $\langle \mu +\rho,\gamma\rangle=0=\langle \mu+\rho,\gamma'\rangle$ would therefore imply that $\langle \mu+\rho,\alpha\rangle =0$ for an $\alpha\in \Delta_{\oa}$ which contradicts the regularity of $\mu$.

Now consider $\fg$ of $Q$-type (so $\rho=0$). The claim amounts to proving that for a regular weight it is not possible to have both $\langle\lambda,\epsilon_i+\epsilon_j\rangle$ and $\langle\lambda,\epsilon_i+\epsilon_l\rangle$ equal to zero if $j\not=l$. This follows immediately since otherwise we would have $\langle\lambda,\epsilon_j-\epsilon_l\rangle=0$.
\end{proof}

\begin{lemma}
\label{noextraatyp}
Consider $\fg$ in the list \eqref{list} excluding $\mathfrak{p}(n)$ and $\widetilde{\mathfrak{p}}(n)$ and a regular weight $\lambda\in\fh_{\oa}^\ast$ which is atypical with respect to a root $\gamma$. If $\lambda+\gamma$ (respectively $\lambda-\gamma$) is also regular, then $\lambda+\gamma$ (respectively $\lambda-\gamma$) has exactly the same atypical roots as $\lambda$.
\end{lemma}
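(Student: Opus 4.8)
The plan is to fix a regular weight $\lambda$ which is atypical with respect to a root $\gamma$, i.e. $\langle\lambda+\rho,\gamma\rangle=0$ in the basic classical case (resp. $\langle\lambda,\overline{\gamma}\rangle=0$ for $Q$-type), assume that $\lambda+\gamma$ (the case $\lambda-\gamma$ being entirely symmetric) is again regular, and compare the atypicality conditions for $\lambda$ and $\lambda+\gamma$ root by root. For an arbitrary odd isotropic root $\gamma'$ (for $Q$-type an arbitrary $\gamma'\in\Delta_{\ob}^+$), atypicality of $\lambda+\gamma$ with respect to $\gamma'$ reads $\langle(\lambda+\gamma)+\rho,\gamma'\rangle=0$, i.e. $\langle\lambda+\rho,\gamma'\rangle=-\langle\gamma,\gamma'\rangle$. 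So the whole statement reduces to the numerical claim: whenever $\langle\lambda+\rho,\gamma\rangle=0$ and both $\lambda,\lambda+\gamma$ are regular, one has $\langle\gamma,\gamma'\rangle=0$ for every isotropic root $\gamma'$; consequently $\langle\lambda+\rho,\gamma'\rangle=0$ iff $\langle(\lambda+\gamma)+\rho,\gamma'\rangle=0$, which is exactly the assertion that the atypical sets coincide.

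The key input is Lemma \ref{regatyp1}. First I would note that $\lambda$ is atypical with respect to $\gamma$ by hypothesis, so by Lemma \ref{regatyp1} applied to $\lambda$, any root $\gamma'$ with $\langle\gamma,\gamma'\rangle\neq 0$ is \emph{not} atypical for $\lambda$, i.e. $\langle\lambda+\rho,\gamma'\rangle\neq 0$. I then want to rule out the possibility that $\gamma'$ becomes atypical for $\lambda+\gamma$; equivalently, I must exclude $\langle\lambda+\rho,\gamma'\rangle=-\langle\gamma,\gamma'\rangle$ with $\langle\gamma,\gamma'\rangle\neq 0$. For the basic classical algebras this is where the structure of the root system enters: if $\gamma,\gamma'$ are isotropic with $\langle\gamma,\gamma'\rangle\neq 0$, then (as in the proof of Lemma \ref{regatyp1}) either $\gamma+\gamma'$ or $\gamma-\gamma'$ lies in $\Delta_{\oa}$. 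If $\langle\lambda+\rho,\gamma'\rangle=-\langle\gamma,\gamma'\rangle$ and $\langle\lambda+\rho,\gamma\rangle=0$, then in the case $\gamma+\gamma'\in\Delta_{\oa}$ one computes $\langle(\lambda+\gamma)+\rho,\gamma+\gamma'\rangle=\langle\lambda+\rho,\gamma'\rangle+\langle\gamma,\gamma\rangle+\langle\gamma,\gamma'\rangle = 0+0+\langle\gamma,\gamma'\rangle$; here I need the normalization so that this actually vanishes, so it is cleaner to test regularity of $\lambda+\gamma$ directly against the even root $\beta:=\gamma+\gamma'$ or $\beta:=\gamma-\gamma'$: since $\langle\gamma,\gamma'\rangle\neq 0$ and $\gamma$ is isotropic, exactly one of $\langle\gamma+\gamma',\lambda+\gamma+\rho\rangle$, $\langle\gamma-\gamma',\lambda+\gamma+\rho\rangle$ works out to be $0$ once one substitutes $\langle\lambda+\rho,\gamma\rangle=0$ and the hypothetical $\langle\lambda+\rho,\gamma'\rangle=-\langle\gamma,\gamma'\rangle$, contradicting regularity of $\lambda+\gamma$. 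For $Q$-type I would instead argue with the bar-map: atypicality of $\lambda$ w.r.t.\ $\gamma=\epsilon_i-\epsilon_j$ means $\langle\lambda,\epsilon_i+\epsilon_j\rangle=0$, and $\langle\gamma,\gamma'\rangle\neq 0$ for $\gamma'=\epsilon_k-\epsilon_l$ forces $\{k,l\}\cap\{i,j\}$ to be a single element; the hypothetical atypicality of $\lambda+\gamma$ w.r.t.\ $\gamma'$ then produces a relation among the $\langle\lambda,\epsilon_\bullet\rangle$ that yields $\langle\lambda+\gamma,\delta\rangle=0$ for some even root $\delta$, again contradicting regularity of $\lambda+\gamma$.

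Conversely I must also check no atypical root of $\lambda$ is \emph{lost} when passing to $\lambda+\gamma$: if $\gamma'$ is atypical for $\lambda$, i.e. $\langle\lambda+\rho,\gamma'\rangle=0$, then by the first paragraph $\langle\gamma,\gamma'\rangle=0$ (this is the statement just proved, since a $\gamma'$ with $\langle\gamma,\gamma'\rangle\neq0$ cannot be atypical for $\lambda$), hence $\langle(\lambda+\gamma)+\rho,\gamma'\rangle=\langle\lambda+\rho,\gamma'\rangle+\langle\gamma,\gamma'\rangle=0$, so $\gamma'$ stays atypical. Combining the two directions gives that the atypical sets of $\lambda$ and $\lambda+\gamma$ are equal. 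The main obstacle I anticipate is the bookkeeping in the basic classical case: making sure that in \emph{each} of the explicit root systems $\mathfrak{gl}(m|n)$, $\mathfrak{osp}(m|2n)$, $D(2,1;\alpha)$, $F(4)$, $G(3)$ the pair $(\gamma+\gamma')$ or $(\gamma-\gamma')$ that lies in $\Delta_{\oa}$ is the one whose pairing with $\lambda+\gamma+\rho$ genuinely vanishes, and that one is not secretly in a degenerate situation (e.g. $\gamma'=\pm\gamma$, which is excluded since then $\langle\gamma,\gamma'\rangle=0$ as $\gamma$ is isotropic). Since Lemma \ref{regatyp1}'s proof already isolates exactly this dichotomy, I expect it to go through with a short case check rather than new ideas.
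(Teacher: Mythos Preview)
Your argument is correct, but you have overlooked a simplification that the paper exploits and that removes all of the ``bookkeeping'' you anticipate. The crucial observation is that $\lambda+\gamma$ is itself atypical with respect to $\gamma$: since $\gamma$ is isotropic, $\langle(\lambda+\gamma)+\rho,\gamma\rangle=\langle\lambda+\rho,\gamma\rangle+\langle\gamma,\gamma\rangle=0$. Now suppose $\gamma'$ were a \emph{new} atypical root for $\lambda+\gamma$, i.e.\ $\langle(\lambda+\gamma)+\rho,\gamma'\rangle=0$ but $\langle\lambda+\rho,\gamma'\rangle\neq 0$; then $\langle\gamma,\gamma'\rangle\neq 0$, and $\lambda+\gamma$ is regular and atypical with respect to both $\gamma$ and $\gamma'$ with $\langle\gamma,\gamma'\rangle\neq 0$, contradicting Lemma~\ref{regatyp1} applied to $\lambda+\gamma$. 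So you can simply apply Lemma~\ref{regatyp1} twice (once to $\lambda$, once to $\lambda+\gamma$) and never unpack the dichotomy $\gamma\pm\gamma'\in\Delta_{\oa}$ at all. The $Q$-type case is handled the same way. Your direct computation is in effect re-deriving Lemma~\ref{regatyp1} for $\lambda+\gamma$ by hand.

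One small inaccuracy: under your hypotheses $\langle\lambda+\rho,\gamma\rangle=0$ and $\langle\lambda+\rho,\gamma'\rangle=-\langle\gamma,\gamma'\rangle$, in fact \emph{both} of $\langle(\lambda+\gamma)+\rho,\gamma\pm\gamma'\rangle$ vanish, not ``exactly one''. This only strengthens your conclusion (whichever of $\gamma\pm\gamma'$ lies in $\Delta_{\oa}$ witnesses the singularity), but it is worth stating correctly.
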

\begin{proof}
First consider $\fg$ to be basic classical and $\lambda+\gamma$ regular. Assume there is an isotopic root $\gamma'$ such that $\langle \lambda+\gamma+\rho,\gamma'\rangle=0$ with $\langle \lambda+\rho,\gamma'\rangle\not=0$. This immediately implies $\langle \gamma,\gamma'\rangle\not=0$. Lemma \ref{regatyp1} therefore implies that $\langle \lambda+\gamma+\rho,\gamma'\rangle\not=0$ since $\lambda+\gamma$ is regular and $\langle \lambda+\gamma+\rho,\gamma\rangle=0$. That the combination $\langle \lambda+\gamma+\rho,\gamma'\rangle\not=0$ with $\langle \lambda+\rho,\gamma'\rangle=0$ is impossible follows similarly. The case $\lambda-\gamma$ follows from the first case by substituting $\lambda\to \lambda+\gamma$ and $\lambda-\gamma\to\lambda$.

The proof for $Q$-type is similar. 
\end{proof}

\section{Star actions and deformed Weyl group orbits}
\label{secstar}

In \cite{Dimitar}, Gorelik and Grantcharov introduced a deformation of the usual action of the Weyl group for $\mathfrak{q}(n)$ called the star action. As is already clear from Proposition \ref{Primq2} and Lemma \ref{helpqncor}, this star action is closely related to inclusions between primitive ideals. We will prove that this action is also naturally linked to the twisting functors. First we introduce an analogue of this star action for basic classical Lie superalgebras. Even though the definition is entirely different, these star action are also closely related to primitive ideals and twisting functors. 

In Subsection \ref{starbasic} we define star actions for basic classical Lie superalgebras and derive important properties. In Subsection \ref{starqn} we show that the corresponding properties also hold for the star action of $\mathfrak{q}(n)$. The easiest example of a star action for $\mathfrak{osp}(m|2n)$ is given in Subsection \ref{exstarosp}, the easiest example for $\mathfrak{gl}(m|n)$ corresponds to the usual $\rho$-shifted action of the Weyl group.  

\subsection{Basic classical Lie superalgebras, the general case}
\label{starbasic}

For a Borel subalgebra $\fb_{\oa}$ of $\fg_{\oa}$ we define $B(\fb_{\oa})$ as the set of Borel subalgebras for which the underlying Borel subalgebra of $\fg_{\oa}$ is equal to $\fb_{\oa}$. Throughout this entire section we assume $\fb_{\oa}$ as fixed. Corresponding to $\fb_{\oa}$ we have the basis $\Pi_{\oa}$ of simple roots in $\Delta_{\oa}^+$.

A {\em star action map} is an arbitrary mapping 
\[\cB:\Pi_{\oa}\to B(\fb_{\oa}),\quad\alpha\mapsto\cB(\alpha),\] 
with the condition that $\alpha$ or $\alpha/2$ is simple in the system of positive roots corresponding to the Borel subalgebra $\cB(\alpha)$. For each star action map, we will define a star action, similar to the one for $\mathfrak{q}(n)$ in \cite{Dimitar}. As in \cite{Dimitar}, those star actions do not lead to an action of the Weyl group, but to an action of an infinite Coxeter group, which we denote by $\widetilde{W}$. This group is freely generated by $\{s_\alpha\}$ for all $\alpha$ simple in $\Delta_{\oa}^+$ subject to the relation $s_\alpha^2=1$. Contrary to the Coxeter group for the star action in \cite{Dimitar}, we cannot impose the relation $s_\alpha s_\beta=s_\beta s_\alpha$ if $\langle \alpha,\beta\rangle=0$. An example of this follows immediately from the star action defined in the beginning of Section \ref{primosp}.

\begin{definition}
\label{defStargen}
Let $\fg$ be a basic classical Lie superalgebra with a Borel subalgebra $\fb$. For each star action map $\cB$ we define the \textbf{star action} $\ast_\cB:\widetilde{W}\times \fh^\ast\to\fh^\ast$. For $\alpha$ simple in $\Delta^+_{\oa}$ and $\lambda\in\fh^\ast$ we denote $\widetilde{\fb}=\cB(\alpha)$ and define $\widetilde{\lambda}$ by the relation $L^{(\fb)}(\lambda)\cong L^{(\widetilde{\fb})}(\widetilde{\lambda})$. The weight $s_\alpha\ast_{\cB}\lambda$ is then defined by the relation
\[L^{(\fb)}(s_\alpha\ast_{\cB} \lambda)\cong L^{(\widetilde{\fb})}(s_\alpha(\widetilde{\lambda}+\widetilde{\rho})-\widetilde{\rho}).\]
If we want to specify the reference Borel subalgebra, the star map is denoted by $\ast^{\fb}_{\cB}$.
\end{definition}

\begin{remark}{\rm
\label{starchar}
Both from construction and from the combination of Lemma \ref{oddrefl} with the description of central characters through the Harish-Chandra isomorphism, it follows that we have $\chi_{\lambda}=\chi_{s_{\alpha}\ast\lambda}$.}
\end{remark}

The highest weights with respect to different systems of positive roots can be calculated through the technique of odd reflections, see Lemma \ref{oddrefl}. The property $s_{\alpha}\ast_{\cB} s_{\alpha}\ast_{\cB}\lambda=\lambda$ follows easily for any $\lambda\in\fh^\ast$, so the star action is well defined as an action of $\widetilde{W}$. We will use the short notation $s_{\alpha_1}\cdots s_{\alpha_{k-1}} s_{\alpha_k}\ast\lambda$ for $s_{\alpha_1}\ast\cdots\ast s_{\alpha_{k-1}}\ast s_{\alpha_k}\ast\lambda$.
In case $\lambda$ is typical, Corollary \ref{oddtypical} implies that this definition yields \[s_{\alpha}\ast_{\cB}\lambda=s_\alpha\cdot\lambda=s_{\alpha}(\lambda+\rho)-\rho.\] So, in particular, the action on typical weights does not depend on the choice of $\cB$.

A trivial example of $\ast_{\cB}$ is given by choosing $\fg$ to be a basic classical Lie superalgebra of type~I, $\fb$ the distinguished Borel subalgebra and $\cB(\alpha):=\fb$, for every even simple root $\alpha$. This leads to the usual $\rho$-shifted action.

The connection of such star actions with primitive ideals and twisting functors  follows from the following observations.

\begin{lemma}
\label{starSTHW}
For an $\alpha$-free $L(\mu)$ and for any star action mapping $\cB$, we have the relation
\[[T_\alpha L(\mu): L(s_\alpha\ast_{\cB} \mu)]\not=0.\]
\end{lemma}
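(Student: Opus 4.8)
The plan is to reduce the statement to a computation in a lower-rank algebra where $\alpha$ (or $\alpha/2$) becomes a genuine simple root, using the Borel subalgebra $\widetilde{\fb}=\cB(\alpha)$ provided by the star action map. First I would pass from $\fb$ to $\widetilde{\fb}$: by definition of the star action, $L^{(\fb)}(\mu)\cong L^{(\widetilde{\fb})}(\widetilde{\mu})$ for the weight $\widetilde{\mu}$ determined by Lemma \ref{oddrefl}, and $s_\alpha\ast_\cB\mu$ is by construction the $\fb$-highest weight of the simple module whose $\widetilde{\fb}$-highest weight is $s_\alpha(\widetilde{\mu}+\widetilde{\rho})-\widetilde{\rho}=s_\alpha\cdot_{\widetilde{\fb}}\widetilde{\mu}$. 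Since the twisting functor $T_\alpha$ is defined purely in terms of the even localisation at $(\fg_{\oa})_{-\alpha}$, it does not depend on the choice of Borel subalgebra with underlying $\fb_{\oa}$; hence the multiplicity $[T_\alpha L(\mu):L(s_\alpha\ast_\cB\mu)]$ computed with respect to $\fb$ equals $[T_\alpha L^{(\widetilde{\fb})}(\widetilde{\mu}):L^{(\widetilde{\fb})}(s_\alpha\cdot_{\widetilde{\fb}}\widetilde{\mu})]$ computed with respect to $\widetilde{\fb}$. I would also note that $\alpha$-freeness of $L(\mu)$ is a property of the module, independent of the Borel, so $L^{(\widetilde{\fb})}(\widetilde{\mu})$ is still $\alpha$-free.

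Next I would settle the integral case $\langle\widetilde{\mu},\alpha^\vee\rangle\in\mZ$. Here Theorem \ref{Tsimple}(ii) applies to $L^{(\widetilde{\fb})}(\widetilde{\mu})$: the module $T_\alpha L^{(\widetilde{\fb})}(\widetilde{\mu})$ has simple top $L^{(\widetilde{\fb})}(\widetilde{\mu})$ and an $\alpha$-finite kernel. Since $\alpha$ (or $\alpha/2$) is simple in $\widetilde{\Delta}^+$, I want to show that $s_\alpha\cdot_{\widetilde{\fb}}\widetilde{\mu}$ occurs in the socle-type layer coming from the $\alpha$-finite kernel. The character argument in the proof of Theorem \ref{Tsimple}(ii), restricting to the rank-one subalgebra $\fa=\fh_{\oa}\oplus(\fg_{\oa})_{\pm\alpha}$ (or its $\osp(1|2)$ analogue), shows $\ch T_\alpha L(\mu)=\ch L(\mu)\oplus\ch N$ with $N$ an $\alpha$-finite module; and because $\alpha$ (or $\alpha/2$) is simple in $\widetilde{\Delta}^+$, the relevant $\mathfrak{sl}(2)$- or $\osp(1|2)$-tilting module decomposition (Lemma 4 in \cite{MR2115448}) forces $N$ to contain a vector of weight $s_\alpha\cdot_{\widetilde{\fb}}\widetilde{\mu}$ which is a genuine $\widetilde{\fn}^+$-highest weight, hence $[T_\alpha L^{(\widetilde{\fb})}(\widetilde{\mu}):L^{(\widetilde{\fb})}(s_\alpha\cdot_{\widetilde{\fb}}\widetilde{\mu})]\neq 0$. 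Alternatively, and perhaps more cleanly, I would invoke the parabolic-induction picture: with Levi $\mathfrak{sl}(2)$ or $\osp(1|2)$ and the description of $T_\alpha$ as intertwining restriction/induction (Lemma \ref{resindT}), the required subquotient is detected after applying the partial Harish-Chandra projection $\zeta_\fl$, reducing to the known rank-one statement (Proposition \ref{osp12}).

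For the non-integral case $\langle\widetilde{\mu},\alpha^\vee\rangle\notin\mZ$, Lemma \ref{simpnonint} (or Theorem \ref{Tsimple}(iii)) says $T_\alpha L^{(\widetilde{\fb})}(\widetilde{\mu})$ is itself simple, and again a character/reduction-to-$\mathfrak{sl}(2)$ computation (Lemma \ref{charVerma} together with the simplicity of the relevant Verma modules, or directly Theorem 2.1 in \cite{CMW} applied after $\Res$) identifies this simple module as $L^{(\widetilde{\fb})}(s_\alpha\cdot_{\widetilde{\fb}}\widetilde{\mu})$, so the multiplicity is exactly $1$. Finally, translating back through the isomorphism $L^{(\widetilde{\fb})}(s_\alpha\cdot_{\widetilde{\fb}}\widetilde{\mu})\cong L^{(\fb)}(s_\alpha\ast_\cB\mu)$, which is precisely Definition \ref{defStargen}, yields $[T_\alpha L(\mu):L(s_\alpha\ast_\cB\mu)]\neq 0$.

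\textbf{Main obstacle.} The delicate point is the integral case: knowing that the $\alpha$-finite kernel $N$ of $T_\alpha L(\mu)\tto L(\mu)$ is nonzero and extracting from it the specific composition factor $L(s_\alpha\ast_\cB\mu)$ rather than some other $\alpha$-finite simple. This is exactly where one needs $\alpha$ (or $\alpha/2$) to be simple in $\widetilde{\Delta}^+$, so that the rank-one reduction is to $\mathfrak{sl}(2)$ or $\osp(1|2)$, where the structure of $T_\alpha$ on a tilting module is completely explicit; the weight combinatorics then pin down the highest weight of the relevant subquotient as $s_\alpha\cdot_{\widetilde{\fb}}\widetilde{\mu}$. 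Handling the $\osp(1|2)$ case and the possibility $(\fg_{\ob})_{\alpha/2}\neq 0$ simultaneously requires a little care, but the arguments of Section \ref{sectech} (Corollary \ref{corIan} and Lemma \ref{primsl2}) already package the needed rank-one input.
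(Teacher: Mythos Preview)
Your proposal is correct, and in fact your ``alternative, perhaps more cleanly'' route via parabolic reduction to $\mathfrak{sl}(2)$ or $\mathfrak{osp}(1|2)$ is exactly the paper's argument. The paper does not split into integral and non-integral cases and does not invoke Theorem~\ref{Tsimple} at all: it simply observes that, working in the $\widetilde{\fb}$-system where $\alpha$ (or $\alpha/2$) is simple, the \emph{highest weight} of $T_\alpha L^{(\widetilde{\fb})}(\widetilde{\mu})$ is $s_\alpha(\widetilde{\mu}+\widetilde{\rho})-\widetilde{\rho}$, and hence the corresponding simple module is automatically a composition factor. The highest-weight computation is a rank-one statement: the $\alpha$-string through $\widetilde{\mu}$ in $L^{(\widetilde{\fb})}(\widetilde{\mu})$ is exactly the $\fl$-Verma module on $v^+$ (because $\alpha$ or $\alpha/2$ is simple in $\widetilde{\Pi}$), and the twisting functor on that Verma module is known.

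Your primary route through Theorem~\ref{Tsimple}(ii) works but is longer and has a small wrinkle you did not address: when $\langle\widetilde{\mu}+\widetilde{\rho},\alpha^\vee\rangle=0$ one has $s_\alpha\cdot_{\widetilde{\fb}}\widetilde{\mu}=\widetilde{\mu}$, so the sought composition factor is the simple top $L(\widetilde{\mu})$ itself, not something in the $\alpha$-finite kernel. The highest-weight argument handles this uniformly. So what you identified as the ``main obstacle'' dissolves once you note that you only need the highest weight of $T_\alpha L^{(\widetilde{\fb})}(\widetilde{\mu})$, not the finer structure of the kernel.
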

\begin{proof}
We denote $\widetilde{\fb}=\cB(\alpha)$. If $L^{(\fb)}(\mu)=L^{(\widetilde{\fb})}(\widetilde{\mu})$ is $\alpha$-free, then again a parabolic reduction to $\mathfrak{sl}(2)$ or $\mathfrak{osp}(1|2)$ shows that the highest weight (in the relevant system of positive roots) in the module $T_\alpha L^{(\widetilde{\fb})}(\widetilde{\mu})$ is $s_\alpha(\widetilde{\mu}+\widetilde{\rho})-\widetilde{\rho}$. This means that
 \[[T_\alpha L(\mu): L(s_{\alpha}\ast_{\cB} \mu)]=[T_\alpha L^{(\widetilde{\fb})}(\widetilde{\mu}): L^{(\widetilde{\fb})}(s_\alpha(\widetilde{\mu}+\widetilde{\rho})-\widetilde{\rho})]\not=0,\]
 which proves the lemma.
\end{proof}

Thanks to the star action we can now give a complete analogue of Corollary 5.14 in \cite{MR0721170} for basic classical Lie superalgebras. 

\begin{corollary}
\label{corastS}
Consider $\lambda\in\fh^\ast$ and an arbitrary star action $\ast_{\cB}$. We have
\begin{itemize}
\item if $\langle \alpha^\vee,\lambda\rangle \not\in\mZ$, then $J(s_\alpha\ast_{\cB}\lambda)=J(\lambda)$;
\item if $\langle \alpha^\vee,\lambda\rangle \in\mZ$ with $L(\lambda)$ $\alpha$-finite, then $J(s_\alpha\ast_{\cB}\lambda)\subseteq J(\lambda)$.
\end{itemize}
\end{corollary}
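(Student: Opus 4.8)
The plan is to deduce both statements from the behavior of the twisting functor $T_\alpha$ on simple modules, together with the relation between $T_\alpha$ and annihilator ideals already established in Lemma~\ref{inclannT}, in exact analogy with the classical argument behind Corollary 5.14 in \cite{MR0721170}. First I would treat the non-integral case $\langle \alpha^\vee,\lambda\rangle\notin\mZ$. Here Lemma~\ref{lemfinfree} (reduction to $\mathfrak{sl}(2)$) shows $L(\lambda)$ is $\alpha$-free, so Lemma~\ref{simpnonint} (or Theorem~\ref{Tsimple}(iii)) gives that $T_\alpha L(\lambda)$ is a simple module, and Lemma~\ref{starSTHW} identifies its highest weight: $T_\alpha L(\lambda)=L(s_\alpha\ast_{\cB}\lambda)$. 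Then Lemma~\ref{inclannT} yields $\Ann_{U(\fg)}T_\alpha L(\lambda)=\Ann_{U(\fg)}L(\lambda)$, i.e. $J(s_\alpha\ast_{\cB}\lambda)=J(\lambda)$, which is the first claim.

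For the second case, $\langle\alpha^\vee,\lambda\rangle\in\mZ$ with $L(\lambda)$ $\alpha$-finite: then $\mu:=s_\alpha\ast_{\cB}\lambda$ has $\langle\alpha^\vee,\mu\rangle\in\mZ$ as well (the star action by $s_\alpha$ does not change integrality with respect to $\alpha$; this follows from the parabolic reduction in Lemma~\ref{starSTHW} together with Lemma~\ref{oddrefl}, since the correction term lies in $\mZ\alpha$ after pairing with $\alpha^\vee$ — in fact $\langle\alpha^\vee,\mu\rangle = -\langle\alpha^\vee,\lambda+\rho\rangle-\langle\alpha^\vee,\rho\rangle$ up to the odd-reflection shift). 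Moreover $L(\mu)$ is $\alpha$-free: under the parabolic reduction the passage from $\lambda$ to $\mu$ corresponds, on the $\mathfrak{sl}(2)$- or $\mathfrak{osp}(1|2)$-level, to passing from an $\alpha$-finite to an $\alpha$-free situation, just as in Proposition~\ref{osp12}. Now apply Lemma~\ref{starSTHW} with this $\mu$ in place of $\lambda$, noting that $s_\alpha\ast_{\cB}\mu=\lambda$ since the star action is an involution (Definition~\ref{defStargen}): this gives $[T_\alpha L(\mu):L(\lambda)]\neq 0$. Hence $\Ann_{U(\fg)}T_\alpha L(\mu)\subseteq \Ann_{U(\fg)}L(\lambda)=J(\lambda)$. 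On the other hand $L(\mu)$ is $\alpha$-free, so Lemma~\ref{inclannT} gives $\Ann_{U(\fg)}T_\alpha L(\mu)=\Ann_{U(\fg)}L(\mu)=J(\mu)$. Combining, $J(s_\alpha\ast_{\cB}\lambda)=J(\mu)\subseteq J(\lambda)$, as required.

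The main obstacle I anticipate is the bookkeeping in the integral $\alpha$-finite case: verifying cleanly that $L(s_\alpha\ast_{\cB}\lambda)$ is $\alpha$-free and that the multiplicity $[T_\alpha L(\mu):L(\lambda)]\neq 0$ really forces the annihilator inclusion rather than merely a relation between radicals. The multiplicity statement is exactly Lemma~\ref{starSTHW}, and the $\alpha$-freeness should be extracted from the parabolic reduction to the Levi ($\mathfrak{sl}(2)$ or $\mathfrak{osp}(1|2)$) used there, combined with the known $\mathfrak{sl}(2)$/$\mathfrak{osp}(1|2)$ picture in which the twist of an $\alpha$-finite simple is $\alpha$-free with the original simple in its head. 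One should also double-check the edge case where $L(\lambda)$ is finite-dimensional over the Levi in a way that makes $s_\alpha\ast_{\cB}\lambda = \lambda$ (i.e. $\langle\alpha^\vee,\lambda+\rho\rangle = 0$ after the odd-reflection shift), in which case the inclusion is trivially an equality; this is consistent with the statement, which only asserts $\subseteq$.
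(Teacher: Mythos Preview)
Your argument is correct, but it follows a different route from the paper's. The paper simply observes that, by Definition~\ref{defStargen}, the modules $L(\lambda)$ and $L(s_\alpha\ast_{\cB}\lambda)$ are, after passing to the Borel $\hat\fb=\cB(\alpha)$, exactly $L^{(\hat\fb)}(\hat\lambda)$ and $L^{(\hat\fb)}\big(s_\alpha(\hat\lambda+\hat\rho)-\hat\rho\big)$; since $\alpha$ or $\alpha/2$ is simple in $\hat\Delta^+$, Lemma~\ref{primsl2} applies directly (in the $\hat\fb$-picture) and gives both bullets in one stroke, via Corollary~\ref{corIan} and the rank-one cases. No twisting functor is needed. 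Your approach instead stays in the original Borel and runs everything through $T_\alpha$: in the non-integral case via Lemma~\ref{simpnonint}, Lemma~\ref{starSTHW} and Lemma~\ref{inclannT}; in the $\alpha$-finite case by passing to $\mu=s_\alpha\ast_{\cB}\lambda$, showing $L(\mu)$ is $\alpha$-free, and then using $[T_\alpha L(\mu):L(\lambda)]\neq 0$ together with Lemma~\ref{inclannT}.

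Both approaches are valid. The paper's is shorter because Lemma~\ref{primsl2} already packages the parabolic reduction; yours is more in the spirit of the twisting-functor machinery developed in Section~\ref{sectwist} and in effect reproves part of Lemma~\ref{primsl2} through Lemma~\ref{inclannT}. Two points in your write-up that deserve a cleaner justification: (i) integrality of $\langle\alpha^\vee,\mu\rangle$ follows most transparently from $\mu-\lambda\in\mZ\Delta$ (Lemma~\ref{oddrefl} plus the definition of the star action), rather than the pairing argument you sketch; (ii) the $\alpha$-freeness of $L(\mu)$ is best argued by noting that in the $\hat\fb$-picture the $\hat\fb$-highest weight vector generates an $\alpha$-free Levi module $L_{\fl}(\hat\mu)$ (since $\langle\hat\mu,\alpha^\vee\rangle<0$ whenever $\langle\hat\lambda,\alpha^\vee\rangle\geq 0$, using $\langle\hat\rho,\alpha^\vee\rangle>0$), and then invoking that a simple $\fg$-module is either $\alpha$-finite or $\alpha$-free.
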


\begin{proof}
We use the notation of Definition \ref{defStargen}. We have 
\[J(\lambda)=\Ann_{U(\fg)}L^{(\hat{\fb})}(\hat{\lambda})\quad\mbox{and}\quad J(s_{\alpha}\ast_{\cB}\lambda)=\Ann_{U(\fg)}L^{(\hat{\fb})}(s_\alpha(\hat{\lambda}+\hat{\rho})-\hat{\rho}).\]
The result then follows from Lemma \ref{primsl2}.
\end{proof}

\begin{corollary}
\label{corast2}
If $\langle\alpha^\vee,\lambda\rangle\not\in\mZ$, then $s_{\alpha}\ast_{\cB}\lambda$ does not depend on the choice of star action map $\cB$, moreover $L(s_{\cB}\ast\lambda)\,=\,T_\alpha L(\lambda)$.
\end{corollary}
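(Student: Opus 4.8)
The plan is to reduce everything to two facts that are already established in the excerpt: Lemma~\ref{simpnonint} (which gives that $T_\alpha L(\lambda)$ is simple when $\langle\alpha^\vee,\lambda\rangle\notin\mZ$, and $T_\alpha^2 L(\lambda)=L(\lambda)$) and Lemma~\ref{starSTHW} (which identifies $L(s_\alpha\ast_{\cB}\lambda)$ as a composition factor of $T_\alpha L(\mu)$). First I would fix a star action map $\cB$. By Lemma~\ref{starSTHW}, since $L(\lambda)$ is $\alpha$-free (this follows from $\langle\alpha^\vee,\lambda\rangle\notin\mZ$, as noted already in several proofs, e.g. the proof of Lemma~\ref{simpnonint}), we have $[T_\alpha L(\lambda):L(s_\alpha\ast_{\cB}\lambda)]\neq 0$. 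But by Lemma~\ref{simpnonint} the module $T_\alpha L(\lambda)$ is simple. Hence $T_\alpha L(\lambda)=L(s_\alpha\ast_{\cB}\lambda)$.

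This simultaneously proves both assertions of the corollary. The identity $T_\alpha L(\lambda)=L(s_\alpha\ast_{\cB}\lambda)$ is exactly the second claim $L(s_\alpha\ast_{\cB}\lambda)=T_\alpha L(\lambda)$. For the first claim, the independence of $s_\alpha\ast_{\cB}\lambda$ on the choice of $\cB$: the left-hand side $T_\alpha L(\lambda)$ does not involve $\cB$ at all — the twisting functor $T_\alpha$ is defined purely in terms of the even root $\alpha$ and the localization of $U(\fg)$, with no reference to any star action map. So if $\cB$ and $\cB'$ are two star action maps, then $L(s_\alpha\ast_{\cB}\lambda)=T_\alpha L(\lambda)=L(s_\alpha\ast_{\cB'}\lambda)$, and since a simple highest weight module determines its highest weight (up to the parity issues irrelevant for the weight itself), we conclude $s_\alpha\ast_{\cB}\lambda=s_\alpha\ast_{\cB'}\lambda$.

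I expect no serious obstacle here; the corollary is essentially an immediate consequence of combining Lemma~\ref{simpnonint} and Lemma~\ref{starSTHW}. The one small point that needs a sentence of care is the passage from ``$L(s_\alpha\ast_{\cB}\lambda)$ is a composition factor of the simple module $T_\alpha L(\lambda)$'' to ``$T_\alpha L(\lambda)=L(s_\alpha\ast_{\cB}\lambda)$'': this is immediate because a simple module has itself as its unique composition factor. The other point worth stating explicitly is why $L(\lambda)$ is $\alpha$-free under the hypothesis $\langle\alpha^\vee,\lambda\rangle\notin\mZ$ — reduction to $\mathfrak{sl}(2)$ shows a simple highest weight module with non-integral $\alpha$-weight cannot be $\alpha$-finite, hence is $\alpha$-free, which is what allows us to invoke Lemma~\ref{starSTHW} in the first place.

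\begin{proof}
Fix a star action map $\cB$. Since $\langle\alpha^\vee,\lambda\rangle\not\in\mZ$, reduction to $\mathfrak{sl}(2)$ shows that $L(\lambda)$ cannot be $\alpha$-finite, hence $L(\lambda)$ is $\alpha$-free. By Lemma \ref{starSTHW} we therefore have
\[[T_\alpha L(\lambda):L(s_\alpha\ast_{\cB}\lambda)]\not=0.\]
On the other hand, Lemma \ref{simpnonint} states that $T_\alpha L(\lambda)$ is a simple module. A simple module has itself as its unique composition factor, so
\[T_\alpha L(\lambda)=L(s_\alpha\ast_{\cB}\lambda).\]
This proves the second assertion. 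For the first assertion, note that the left-hand side $T_\alpha L(\lambda)$ does not depend on the star action map $\cB$, since the twisting functor $T_\alpha$ is defined only in terms of the even simple root $\alpha$. Hence if $\cB$ and $\cB'$ are two star action maps, then
\[L(s_\alpha\ast_{\cB}\lambda)=T_\alpha L(\lambda)=L(s_\alpha\ast_{\cB'}\lambda),\]
and comparing highest weights (the possible parity ambiguity not affecting the weight) yields $s_\alpha\ast_{\cB}\lambda=s_\alpha\ast_{\cB'}\lambda$.
\end{proof}
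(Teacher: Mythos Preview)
Your proof is correct and follows exactly the same approach as the paper, which simply states that the corollary is an immediate consequence of Lemmata~\ref{starSTHW} and~\ref{simpnonint}. Your version merely spells out the details of that one-line argument.
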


\begin{proof}
This is an immediate consequence of Lemmata \ref{starSTHW} and \ref{simpnonint}.
\end{proof}

This corollary now allows us to describe the generalisation of Proposition 3.9 in \cite{CMW} explicitly. We write $s_\alpha\ast\mu$ for $s_\alpha\ast_{\cB}\mu$ if $\langle \mu,\alpha^\vee\rangle\not\in\mZ$ since there is no dependence on the star action map $\cB$.

\begin{proposition}
\label{equivcatni}
Consider $\fg$ a basic classical Lie superalgebra, $\lambda \in \fh^*$ and a positive root $\alpha$ simple in $\Delta_{\oa}^+$ such that
$\langle \lambda, \alpha^\vee \rangle \not\in \mZ$. Then the functor $T_\alpha: \cO_\lambda \rightarrow \cO_{s_\alpha \ast \lambda}=\cO_{s_\alpha\cdot\lambda}$ is
an equivalence of categories sending $L(\mu)$ to $L(s_\alpha \ast \mu)$ for each $L(\mu)\in\cO_\lambda$.

If $\alpha$ (or $\alpha/2$) is simple in $\Delta^+$ as well, then $T_\alpha$ also maps $M(\mu)$ to $M(s_\alpha\ast\mu)=M(s_\alpha\cdot\mu)$.
\end{proposition}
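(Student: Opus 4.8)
The plan is to establish the equivalence of categories first, then deduce the statement about Verma modules. The key input is Proposition~\ref{autoequi}, which says that $\cL T_\alpha$ is an auto-equivalence of $\cD^b(\cO_\chi)$ for every central character $\chi$, together with Corollary~\ref{corast2} and Lemma~\ref{simpnonint}, which identify $T_\alpha L(\mu)$ with the simple module $L(s_\alpha\ast\mu)$ whenever $\langle\mu,\alpha^\vee\rangle\notin\mZ$.

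\textbf{Step 1: $T_\alpha$ is exact on $\cO_\lambda$ and preserves the block.} By Remark~\ref{starchar} we have $\chi_\lambda=\chi_{s_\alpha\ast\lambda}=\chi_{s_\alpha\cdot\lambda}$, so $\cO_{s_\alpha\ast\lambda}=\cO_{s_\alpha\cdot\lambda}$; moreover since $\langle\lambda,\alpha^\vee\rangle\notin\mZ$, every weight $\mu$ with $L(\mu)\in\cO_\lambda$ also satisfies $\langle\mu,\alpha^\vee\rangle\notin\mZ$ (the integral root system is an invariant of the block, by \eqref{intWeyl} and the fact that weights in $\cO_\lambda$ differ by elements of $\cP$ combined with $W_\lambda$-translates of atypical roots). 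Hence $\cO_\lambda$ is a subcategory of the $\alpha$-non-integral part $\cO_\chi^{\alpha\,{\rm n.i.}}$ appearing in the proof of Proposition~\ref{autoequi}, and every simple module there is $\alpha$-free. By Theorem~\ref{Tsimple}(iii) we get $\cL_1 T_\alpha L(\mu)=0$ for all such $\mu$, so $\cL_1 T_\alpha$ vanishes on $\cO_\lambda$ and $T_\alpha$ is exact on $\cO_\lambda$. That $T_\alpha$ maps $\cO_\lambda$ into $\cO_{s_\alpha\cdot\lambda}$ follows because, by Lemma~\ref{charVerma}, the induced map on the Grothendieck group sends $[M(\mu)]$ to $[M(s_\alpha\cdot\mu)]$, and $M(s_\alpha\cdot\mu)\in\cO_{s_\alpha\cdot\lambda}$ whenever $M(\mu)\in\cO_\lambda$.

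\textbf{Step 2: $T_\alpha$ is an equivalence $\cO_\lambda\to\cO_{s_\alpha\cdot\lambda}$ sending simples to simples.} Since $T_\alpha$ is exact on $\cO_\lambda$, the derived auto-equivalence $\cL T_\alpha$ of $\cD^b(\cO_\chi)$ restricts to the ordinary functor $T_\alpha$ on the subcategory, which is therefore a fully faithful exact endofunctor of $\cO_\chi$ preserving $\cO_\lambda$; exactly as in the proof of Theorem~2.1 of \cite{CMW}, an exact fully faithful endofunctor of a block of category $\cO$ which is essentially surjective (here surjectivity onto $\cO_{s_\alpha\cdot\lambda}$ follows from $T_\alpha^2\cong\mathrm{id}$ on this subcategory, itself a consequence of $s_\alpha^2=1$ in $\widetilde W$ and the braid-type computation, or directly from Lemma~\ref{simpnonint}) is an equivalence of categories. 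By Corollary~\ref{corast2}, on objects it sends $L(\mu)\mapsto T_\alpha L(\mu)=L(s_\alpha\ast\mu)$.

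\textbf{Step 3: the statement on Verma modules.} Now assume additionally that $\alpha$ (or $\alpha/2$) is simple in $\Delta^+$. We must show $T_\alpha M(\mu)\cong M(s_\alpha\ast\mu)=M(s_\alpha\cdot\mu)$ for $\langle\mu,\alpha^\vee\rangle\notin\mZ$. By Lemma~\ref{charVerma} the characters already agree, so it suffices to identify the modules. If $\langle\mu+\rho,\alpha^\vee\rangle\ge 0$ (which we may arrange by choosing $\mu$ appropriately, or handle directly) this is Lemma~\ref{TVerma}. For the general non-integral $\mu$, observe that since $\langle\mu,\alpha^\vee\rangle\notin\mZ$ the Verma module $M(s_\alpha\cdot\mu)$ is simple: its restriction to $\fg_{\oa}$ is filtered by Verma modules $M_{\oa}(s_\alpha\cdot\mu+\gamma)$ with $\gamma\in\Gamma$, all of which have $\langle\,\cdot\,,\alpha^\vee\rangle\notin\mZ$ hence are simple, and one argues as in Corollary~\ref{sl2osp12q2} that this forces $M(s_\alpha\cdot\mu)=L(s_\alpha\cdot\mu)=L(s_\alpha\ast\mu)$; here we use that $s_\alpha\cdot\mu=s_\alpha\ast\mu$ in the non-integral case together with the fact that, $\alpha$ (or $\alpha/2$) being simple in $\Delta^+$ and $(\fg_{\ob})_\alpha=0$, Lemma~\ref{oddrefl} reduces to the ordinary reflection, so indeed $s_\alpha\ast\mu=s_\alpha\cdot\mu$. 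Likewise $T_\alpha M(\mu)=T_\alpha L(\mu)=L(s_\alpha\ast\mu)$ by Corollary~\ref{corast2}, and the two sides coincide.

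\textbf{Main obstacle.} The delicate point is Step~2: showing that an exact fully faithful endofunctor preserving a block is automatically an equivalence of that block. For category $\cO$ this is standard (it is exactly the mechanism of Theorem~2.1 in \cite{CMW}), but one must be careful that the essential image is all of $\cO_{s_\alpha\cdot\lambda}$ and not a proper subcategory; the cleanest route is to exhibit a two-sided inverse, namely $T_\alpha$ itself restricted to $\cO_{s_\alpha\cdot\lambda}$, using that the derived functor $\cL T_\alpha$ satisfies $\cL T_\alpha\circ\cL T_\alpha\cong\mathrm{id}$ on $\cD^b(\cO_\chi^{\alpha\,{\rm n.i.}})$ (from Proposition~\ref{autoequi} and the relation $s_\alpha^2=1$), which descends to $T_\alpha\circ T_\alpha\cong\mathrm{id}$ on the abelian category since both composites are exact.
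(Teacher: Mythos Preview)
Your Steps 1 and 2 are essentially the paper's argument: the paper simply writes that the equivalence ``follows from Corollary~\ref{corast2} and the proof of Proposition~\ref{autoequi}'', and you have unpacked exactly that. The identification $\cO_{s_\alpha\ast\lambda}=\cO_{s_\alpha\cdot\lambda}$ is also done the same way (same central character, and $s_\alpha\ast\lambda - s_\alpha\cdot\lambda\in\mZ\Delta_{\ob}\subset\cP$).

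Step 3, however, takes a wrong turn. You overlooked that Lemma~\ref{TVerma} already covers the case you need: its hypothesis is ``$\langle\mu+\rho,\alpha^\vee\rangle\ge 0$ \emph{or} $\langle\mu+\rho,\alpha^\vee\rangle\notin\mZ$'', and since $\langle\mu,\alpha^\vee\rangle\notin\mZ$ the second alternative holds. So once you have established $s_\alpha\ast\mu=s_\alpha\cdot\mu$ (which the paper does by choosing the star action map with $\cB(\alpha)=\fb$ and invoking Corollary~\ref{corast2}), Lemma~\ref{TVerma} gives $T_\alpha M(\mu)\cong M(s_\alpha\cdot\mu)$ immediately. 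Your detour claiming that $M(s_\alpha\cdot\mu)$ is simple is both unnecessary and incorrect: the fact that $\Res M(s_\alpha\cdot\mu)$ is filtered by $\fg_{\oa}$-Verma modules which happen to be $\alpha$-non-integral does not force those Verma modules to be simple (they may be reducible with respect to other roots), and even if they were, semisimplicity of the restriction would not imply simplicity of the $\fg$-module. The invocation of Corollary~\ref{sl2osp12q2} is inapposite, since that result is specific to the rank-one algebras $\mathfrak{sl}(2)$, $\mathfrak{osp}(1|2)$, $\mathfrak{q}(2)$.
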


\begin{proof}
The property $\cO_{s_\alpha \ast \lambda}=\cO_{s_\alpha\cdot\lambda}$ follows from Remark \ref{starchar} and the fact that by definition of the star action $s_\alpha\ast\lambda-s_\alpha\cdot\lambda \in \mZ\Delta_{\ob}\subset \cP$.

The first result follows from Corollary \ref{corast2} and the proof of Proposition \ref{autoequi}.

If $\alpha$ is simple in $\Delta^+$, we can choose a star action map $\cB$ for which $\cB(\alpha)=\alpha$, then by definition $s_\alpha\ast_{\cB}\mu=s_\alpha\cdot\mu$. By Corollary \ref{corast2} this equation holds for any star action map if $\langle \mu,\alpha^\vee\rangle\not\in\mZ$. The result then follows from Lemma \ref{TVerma}.
\end{proof}

Now we prove that Corollary \ref{corast2} can be extended from a reflection with respect to a non-integral simple root of $\lambda$, to the entire set of left coset of representatives $W^\lambda$ for $W_\lambda$ in $W$, given in equation \eqref{coset}, as long as only reduced expressions are employed.

\begin{proposition}
\label{cosetWeylprop}
Consider $\lambda\in \fh^\ast$, $w\in W^\lambda$ and two $\underline{reduced}$ expressions $s_{\alpha_1}\cdots s_{\alpha_k}$ and $s_{\beta_1}\cdots s_{\beta_k}$ for $w$. Then  we have $s_{\alpha_1}\cdots s_{\alpha_k}\ast_\cB\lambda=s_{\beta_1}\cdots s_{\beta_k}\ast_\cB\lambda$. Furthermore, $s_{\alpha_1}\cdots s_{\alpha_k}\ast_\cB\lambda$ does not depend on $\cB$.
\end{proposition}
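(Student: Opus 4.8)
The plan is to reduce the statement about reduced expressions in $W^\lambda$ to the braid relations for twisting functors (Lemma \ref{lembraid}) together with the non-integral simple reflection case (Corollary \ref{corast2}). First I would reduce to the case where $\lambda$ is $\alpha$-free for every $\alpha \in \Pi_{\oa}$ occurring; more precisely, I would argue that the weights $\mu$ appearing along the way are all non-integral in the relevant directions. The key geometric fact is that for $w \in W^\lambda$ we have $w(\Pi_\lambda) \subset \Delta^+$, so $w$ moves $\lambda$ (which is $W_\lambda$-maximal, hence integral precisely in the directions of $\Delta_{\oa}(\lambda)$) without ever crossing an integral wall: if $s_{\alpha_1} \cdots s_{\alpha_k}$ is reduced with $w \in W^\lambda$, then for each $j$ the weight $\mu_j := s_{\alpha_{j+1}} \cdots s_{\alpha_k} \cdot \lambda$ satisfies $\langle \mu_j, \alpha_j^\vee \rangle \notin \mZ$. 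This is the standard fact (see e.g. the description of $W^\lambda$ via \eqref{coset} and \eqref{intWeyl}) that the $\rho_{\oa}$-shifted $W_\lambda$-orbit of $\lambda$ stays in a single integral facet, and the minimal-length coset representative realizes all the intermediate steps as non-integral reflections.

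Granting this, I would proceed by induction on $k$. For $k = 1$ the claim is Corollary \ref{corast2}. For the inductive step, by Corollary \ref{corast2} applied repeatedly (using that each intermediate weight is non-integral in the required direction, by the previous paragraph), we have
\[
L\big(s_{\alpha_1} \cdots s_{\alpha_k} \ast \lambda\big) \;=\; T_{\alpha_1} \circ T_{\alpha_2} \circ \cdots \circ T_{\alpha_k}\, L(\lambda),
\]
and similarly with the $\beta$'s. Lemma \ref{lembraid} gives $T_{\alpha_1} \circ \cdots \circ T_{\alpha_k} \cong T_{\beta_1} \circ \cdots \circ T_{\beta_k}$ as functors (both being reduced expressions for the same $w \in W$), so the two simple modules on the right coincide, hence $s_{\alpha_1} \cdots s_{\alpha_k} \ast \lambda = s_{\beta_1} \cdots s_{\beta_k} \ast \lambda$ (simple highest weight modules are determined by their highest weight up to the irrelevant parity ambiguity). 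The independence of $\cB$ follows from the same display: the right-hand side $T_{\alpha_1} \circ \cdots \circ T_{\alpha_k}\, L(\lambda)$ makes no reference to $\cB$, and once every factor $T_{\alpha_j}$ is applied to an $\alpha_j$-free module that is non-integral in the $\alpha_j$-direction, Corollary \ref{corast2} identifies the highest weight of the output with $s_{\alpha_j} \ast \mu_j$ independently of the chosen star action map.

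The main obstacle I expect is justifying the inductive use of Corollary \ref{corast2}, i.e.\ verifying that when we apply $T_{\alpha_j}$ to $L(\mu_j) = T_{\alpha_{j+1}} \cdots T_{\alpha_k} L(\lambda)$, this module is indeed $L$ of the weight $s_{\alpha_{j+1}} \cdots s_{\alpha_k} \ast \lambda$ and that this weight is $\alpha_j$-free with $\langle \cdot, \alpha_j^\vee \rangle \notin \mZ$. The $\alpha_j$-freeness and non-integrality amount precisely to the combinatorial fact that the reduced word $s_{\alpha_1} \cdots s_{\alpha_k}$, representing an element of $W^\lambda$, has all of its ``partial tail'' weights lying in non-integral walls — this needs the characterization of $W^\lambda$ as the minimal coset representatives and the observation that $w^{-1}(\Pi_\lambda) \subset \Delta^+$ is inherited by tails of reduced words. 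Once that is in place, every inner functor $T_{\alpha_{j+1}} \cdots T_{\alpha_k}$ takes the simple $\alpha_j$-free module $L(\lambda)$ through a chain of simple modules (by iterating Lemma \ref{simpnonint} / Corollary \ref{corast2}), so the induction goes through and nothing beyond the already-established lemmas is required.
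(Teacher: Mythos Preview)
Your proposal is correct and follows essentially the same approach as the paper: the paper also argues that for a reduced expression of $w\in W^\lambda$ each intermediate weight is non-integral in the relevant direction (citing Theorem~15.3.7 in \cite{MR2906817} for this combinatorial fact), then iterates Corollary~\ref{corast2} to obtain $L(s_{\alpha_1}\cdots s_{\alpha_k}\ast_{\cB}\lambda)=T_{\alpha_1}\cdots T_{\alpha_k}L(\lambda)$, and concludes via Lemma~\ref{lembraid}. Your discussion of the ``main obstacle'' is exactly the point the paper handles by that citation, so nothing further is needed.
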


\begin{proof}
The proof of Theorem 15.3.7 in \cite{MR2906817} states that $\langle \alpha_i^\vee,s_{\alpha_{i+1}}\cdots s_{\alpha_k}\lambda\rangle\not\in\mZ$ for any $1\le i\le k$ where the same is true for $\{\beta_i\}$. Iteratively applying Corollary \ref{corast2} then yields
\begin{itemize}
\item $L(s_{\alpha_1}\cdots s_{\alpha_k}\ast_{\cB}\lambda)=T_{\alpha_1}\cdots T_{\alpha_k} L(\lambda)$ and
\item $L(s_{\beta_1}\cdots s_{\beta_k}\ast_{\cB}\lambda)=T_{\beta_1}\cdots T_{\beta_k} L(\lambda)$.
\end{itemize}
The claim then follows from the fact that $T_{\alpha_1}\cdots T_{\alpha_k} =T_{\beta_1}\cdots T_{\beta_k} $, see Lemma \ref{lembraid}.
\end{proof}

In general $s_{\alpha}\ast_\cB\lambda$ actually depends on the choice of $\cB$. We give an easy example.

\begin{example}
\label{exMusson}
{\rm Consider $\mg=\mathfrak{gl}(2|1)$. We choose the distinguished system of positive roots, with simple roots $\epsilon_1-\epsilon_2$ and $\epsilon_2-\delta$. The Weyl group contains only one nontrivial element $s=s_{\epsilon_1-\epsilon_2}$. Star action mappings $\cB$ are therefore determined by $\cB(\epsilon_1-\epsilon_2)$. Thus there are two choices for star actions, the usual dot action and $\ast_\cB$, with $\hat{\fb}=\cB(s)$ the Borel subalgebra corresponding to simple positive roots $\delta-\epsilon_1$ and $\epsilon_1-\epsilon_2$. We consider the atypical regular, but not (weakly) generic weight $\lambda=k\epsilon_1+k\epsilon_2-k\delta$ with $k\in\mZ$. The usual dot action yields 
\[s\cdot\lambda=(k-1)\epsilon_1+(k+1)\epsilon_2-k\delta.\]
Applying the odd reflections yields $\hat{\lambda}=\lambda$, thus $s(\hat{\lambda}+\hat{\rho})-\hat{\rho}=(k-1)\epsilon_1+(k+1)\epsilon_2-k\delta=s\cdot\lambda$. Applying the odd reflections again then yields \[s\ast_{\cB}\lambda=k\epsilon_1+(k+1)\epsilon_2-(k+1)\delta.\]
}
\end{example}

Since one Lie superalgebra can have more than one star action, Corollary \ref{corastS} implies that more inclusions between primitive ideals can be expected compared to the classical case. Based on the result of Letzter in Theorem 3.7 of \cite{MR1362685} for type I Lie superalgebras, these extra inclusions should correspond to inclusions between primitive ideals for highest weights not in the same (usual $\rho$-shifted) orbit of the Weyl group. In particular, Example \ref{exMusson} yields such an extra inclusion for $\mathfrak{sl}(2|1)$.

\begin{remark}{\rm
For $\fg=\mathfrak{sl}(2|1)$, Example \ref{exMusson} and Corollary \ref{corastS} imply the inclusion
\[J(\epsilon_2-\delta)\subseteq J(0).\]
As can be seen from the result of Musson in \cite{MR1231215}, repeated in Proposition \ref{sl21}, this inclusion is the only inclusion between primitive ideals for highest weights in different (usual $\rho$-shifed) orbits for the case $\mathfrak{sl}(2|1)$. This example immediately illustrates the usefulness of the different star actions in the study of primitive ideals.}
\end{remark}

The following theorem shows that such examples can only appear close to the walls of the Weyl chambers since the star actions become identical and regular in the generic region (see Definition \ref{defgeneric}).

\begin{theorem}
\label{Sstarfar}
Consider $\fg$ equal to $\mathfrak{gl}(m|n)$ or $\mathfrak{osp}(m|2n)$ and $\lambda\in\fh^\ast$ a weakly generic weight. We take an arbitrary Borel subalgebra $\fb$ and a star action map $\cB:\Pi_{\oa}\to B(\fb_{\oa})$. Consider $w\in W$ with two expressions $s_{\alpha_1}\cdots s_{\alpha_k}$ and $s_{\beta_1}\cdots s_{\beta_l}$ for $w$. Then we have $$s_{\alpha_1}\cdots s_{\alpha_k}\ast_{\cB}\lambda=s_{\beta_1}\cdots s_{\beta_l}\ast_{\cB}\lambda.$$ The $\ast_{\cB}$-orbit of $\lambda$ is regular in the sense that it intersects each Weyl chamber exactly once. Moreover, $w\ast_{\cB} \lambda$ does not depend on the choice of $\cB$.
\end{theorem}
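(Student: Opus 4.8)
The plan is to reduce the general statement to a local analysis of single star-reflections, using the braid relations for twisting functors to handle the independence of the chosen reduced expression, and then to handle the remaining (non-reduced) expressions by an explicit description of $s_\alpha \ast_{\cB} \lambda$ for weakly generic $\lambda$. First I would establish the key lemma underlying everything: for $\lambda$ weakly generic and $\alpha$ simple in $\Delta_{\oa}^+$, the weight $s_\alpha \ast_{\cB} \lambda$ equals $s_\alpha \circ \lambda + \gamma$ for a specific $\gamma \in \Gamma$ (with the correction term coming from $\rho - \widetilde{\rho}$ and the odd reflections), and in particular $s_\alpha \ast_{\cB}\lambda$ lies in the Weyl chamber $s_\alpha C$ if $\lambda \in C$. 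This is exactly the computation behind Definition \ref{defStargen} combined with Lemma \ref{oddrefl}: since $\lambda$ is weakly generic, all weights in $\widetilde\lambda + \widetilde\Gamma$ lie in one chamber, so the recursive algorithm of Lemma \ref{oddrefl} produces $\widetilde\lambda$ with $\langle \widetilde\lambda + \widetilde\rho, \gamma^{(i)}\rangle \ne 0$ for all $i$ unless forced to vanish, and the resulting $\widetilde\lambda$, together with its image under $s_\alpha$ and the second round of odd reflections back to $\fb$, stays inside the expected chamber. The upshot I want is: $w \ast_{\cB}\lambda$ lies in chamber $w C$ for any reduced (or even non-reduced) word representing $w$, once one knows it is well-defined.

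Next, for a \emph{reduced} expression $w = s_{\alpha_1}\cdots s_{\alpha_k}$ I would argue exactly as in Proposition \ref{cosetWeylprop}, but now without the hypothesis that $w \in W^\lambda$: instead I use weak genericity. Since $\lambda$ is weakly generic, each weight $s_{\alpha_{i+1}}\cdots s_{\alpha_k}\ast_{\cB}\lambda$ is regular (subtracting/adding distinct odd roots to a weakly generic weight yields a regular weight, as noted before Lemma \ref{regatyp1}), and in fact $L(s_{\alpha_{i+1}}\cdots s_{\alpha_k}\ast_{\cB}\lambda)$ is $\alpha_i$-free: this is where I must check that $\langle \alpha_i^\vee, s_{\alpha_{i+1}}\cdots s_{\alpha_k} \cdot (\text{something})\rangle$ has the right sign, using that the expression is reduced (so $s_{\alpha_i}$ strictly increases length, forcing the relevant pairing to be positive on the dominant representative) together with Lemma \ref{farcr} which pins down $\Res L$ of a weakly generic module. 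Once each intermediate simple module is $\alpha_i$-free, iterated application of Lemma \ref{starSTHW}, Theorem \ref{Tsimple}(iii) and the character identity of Lemma \ref{charVerma} gives $L(s_{\alpha_1}\cdots s_{\alpha_k}\ast_{\cB}\lambda) \cong T_{\alpha_1}\cdots T_{\alpha_k} L(\lambda)$; then Lemma \ref{lembraid} (braid relations for $T_\alpha$) shows this is independent of the reduced word, and the computation in the first paragraph shows it is independent of $\cB$ (the correction term $\gamma$ is forced by central character and chamber, not by the choice of Borel).

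Finally I would pass from reduced to arbitrary expressions. Given a non-reduced word $s_{\beta_1}\cdots s_{\beta_l}$ for $w$, I reduce it to a reduced word by repeatedly using two moves: the braid moves (handled above) and cancellation $s_\gamma s_\gamma \ast_{\cB}\mu = \mu$, which holds for \emph{any} $\mu$ by the remark after Definition \ref{defStargen}. The point is that in the weakly generic region the relation $s_\gamma^2 = 1$ can be applied to collapse $\cdots s_\gamma s_\gamma \cdots$ even in the middle of a word, because all intermediate weights remain weakly generic (weak genericity of a weight is a $W$-invariant condition for the $\circ$-action, and $\ast_{\cB}$ changes a weakly generic weight only by a $W$-image plus an element of $\widetilde\Gamma$, staying weakly generic). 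Combining the braid-move invariance from the reduced case with free use of $s_\gamma^2 = 1$ lets me connect any two words for $w$, so $w\ast_{\cB}\lambda$ depends only on $w$; the regularity statement (the orbit meets each chamber exactly once) then follows from the chamber computation of the first paragraph together with Lemma \ref{farneq2}, which guarantees two weakly generic weights in the same chamber with equal annihilator — hence here, with equal star-images — must coincide.

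I expect the main obstacle to be the first step: verifying carefully that, for weakly generic $\lambda$, the two successive passages through odd reflections in Definition \ref{defStargen} (first from $\fb$ to $\widetilde\fb = \cB(\alpha)$, then reflecting by $s_\alpha$, then back to $\fb$) produce a weight whose chamber and atypicality pattern are exactly $s_\alpha$ applied to those of $\lambda$, with the $\cB$-dependence disappearing. This requires controlling which odd roots the recursive algorithm of Lemma \ref{oddrefl} selects, and using Lemma \ref{noextraatyp} to ensure no spurious atypical roots appear along the way; once that bookkeeping is done, the rest is a formal consequence of the twisting-functor machinery already developed.
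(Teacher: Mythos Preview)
Your overall strategy---control a single star reflection on weakly generic weights, then bootstrap to arbitrary Weyl group elements using twisting-functor braid relations---has a genuine gap in the second step, and the paper takes a rather different route precisely to avoid it.

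The problem is your claim that $L(s_{\alpha_1}\cdots s_{\alpha_k}\ast_{\cB}\lambda)\cong T_{\alpha_1}\cdots T_{\alpha_k}L(\lambda)$ for a reduced word. You invoke Theorem~\ref{Tsimple}(iii), but that part requires $\langle\lambda,\alpha^\vee\rangle\notin\mZ$, not merely $\alpha$-freeness. For an integral weakly generic weight (the main case), Theorem~\ref{Tsimple}(ii) applies instead: $T_\alpha L(\lambda)$ has simple top $L(\lambda)$ itself, and $L(s_\alpha\ast_{\cB}\lambda)$ only appears as some subquotient (Lemma~\ref{starSTHW}). So $T_{\alpha_1}\cdots T_{\alpha_k}L(\lambda)$ is generally far from simple and there is no way to read off $s_{\alpha_1}\cdots s_{\alpha_k}\ast_{\cB}\lambda$ from it. This is exactly why Proposition~\ref{cosetWeylprop} is stated only for $w\in W^\lambda$: there the intermediate pairings are genuinely non-integral, so Corollary~\ref{corast2} applies at every step. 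Your attempt to extend that argument to all of $W$ by replacing non-integrality with $\alpha$-freeness does not go through. A secondary issue: your assertion that intermediate weights remain weakly generic because ``$\ast_{\cB}$ changes a weakly generic weight only by a $W$-image plus an element of $\widetilde\Gamma$'' is not immediate, since $\gamma+\widetilde\Gamma\not\subset\widetilde\Gamma$ in general.

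The paper proceeds instead by an explicit root-system computation. For $\mathfrak{gl}(m|n)$ with the distinguished Borel, one writes out $s_\alpha\ast_{\cB}\lambda$ via Lemma~\ref{oddrefl} and Lemma~\ref{noextraatyp} as $s_\alpha(\lambda+\rho+\sum\gamma_i)-\sum\gamma'_j-\rho$, observes that odd roots orthogonal to $\alpha$ cancel, and then uses that $\alpha=\epsilon_i-\epsilon_{i+1}$ being simple in $\cB(\alpha)$ forces the relevant atypical roots to come in $s_\alpha$-paired packets, so everything cancels and $s_\alpha\ast_{\cB}\lambda=s_\alpha\cdot\lambda$. This immediately gives independence of $\cB$, well-definedness as a $W$-action, and preservation of weak genericity---all simultaneously, without any twisting functors. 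The case of an arbitrary reference Borel is then reduced to the distinguished one, and $\mathfrak{osp}(m|2n)$ is handled analogously by comparison with the concrete star action $\ast'$ of Subsection~\ref{exstarosp} and Lemma~\ref{starexpreg}. The point is that the theorem is ultimately a combinatorial statement about odd reflections, and is proved as such; twisting functors and braid relations enter only in the non-integral regime (Proposition~\ref{cosetWeylprop}), not here.
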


\begin{proof}
First we prove this for $\fg=\mathfrak{gl}(m|n)$. Consider the case where $\fb$ is the distinguished Borel subalgebra, thus corresponding to the simple roots
\[\epsilon_1-\epsilon_2,\cdots, \epsilon_{m-1}-\epsilon_m,\epsilon_m-\delta_1,\delta_1-\delta_2,\cdots,\delta_{m-1}-\delta_m,\]
and $\cB$ is an arbitrary star action map. We prove that for $\alpha\in\Pi_{\oa}$ and a weakly generic $\lambda\in\fh^\ast$, we have the equality 
\[s_\alpha\ast_{\cB}\lambda=s_\alpha\cdot\lambda=s_\alpha(\lambda+\rho)-\rho=s_{\alpha}(\lambda+\rho_0)-\rho_0.\]
Without loss of generality we may assume $\alpha=\epsilon_i-\epsilon_{i+1}$. Now $\hat{\fb}=\cB(\alpha)$ is an arbitrary Borel subalgebra for which $\alpha$ is a simple root. We have the corresponding ordered set of odd reflections $\{\gamma^{(1)},\cdots,\gamma^{(k)}\}$ linking $\fb$ to $\hat{\fb}$. The combination of Lemma \ref{oddrefl} and Lemma \ref{noextraatyp} implies that 
\begin{equation}\label{eqproof}s_\alpha\ast_{\cB}\lambda=s_{\alpha}(\lambda+\rho+\gamma_1+\cdots+\gamma_p)-\gamma_1'-\cdots-\gamma_q'-\rho\end{equation}
with
\begin{itemize}
\item $\{\gamma_1,\cdots,\gamma_p\}$ the set of all roots in $\{\gamma^{(1)},\cdots,\gamma^{(k)}\}$ which satisfy $\langle \lambda+\rho,\gamma^{(i)}\rangle=0$,
\item $\{\gamma'_1,\cdots,\gamma'_p\}$ the set of all roots in $\{\gamma^{(1)},\cdots,\gamma^{(k)}\}$ which satisfy $\langle \lambda+\rho,s_\alpha(\gamma^{(i)})\rangle=0$.
\end{itemize}
This implies that, if $\langle\alpha,\gamma^{(i)}\rangle=0$, regardless of $\lambda$-atypicality with respect to it, this root drops out of equation \eqref{eqproof}. This means that only isotropic roots of the form $\pm(\epsilon_i -\delta_l)$ or $\pm(\epsilon_{i+1}-\delta_l)$ for some $l$ can be relevant. Since $\epsilon_i-\epsilon_{i+1}$ is simple in $\hat{\fb}$, an odd reflection with respect to $\epsilon_i -\delta_l$ occurs (in passing from $\fb$ to $\hat{\fb}$) if and only if one with respect to $\epsilon_{i+1} -\delta_l$ occurs. Since $\epsilon_{i+1}-\delta_l=s_{\epsilon_i-\epsilon_{i+1}}(\epsilon_i-\delta_l)$, it follows that every atypical root in equation \eqref{eqproof} is cancelled out. Thus we have $s_\alpha\ast_{\cB}\lambda=s_\alpha\cdot\lambda$, which implies that all these star actions are equivalent and satisfy the proposed properties.

Now we consider an arbitrary reference Borel subalgebra ${\fb}$ of $\mathfrak{gl}(m|n)$. We take an arbitrary $\alpha$ simple in $\Delta_{\oa}^+$, star action map $\cB$ with $\cB(\alpha)=\hat{\fb}$ and weakly generic weight $\lambda$. The definition of the star action implies
\[L^{(\fb)}(s_\alpha\ast^{{\fb}}_{\cB}\lambda)=L^{(\hat{\fb})}(s_{\alpha}(\hat{\lambda}+\hat{\rho})-\hat{\rho})).\]
On the other hand, the result for the distinguished Borel subalgebra (which we denote by $\fb^d$ now) with $L^{(\fb)}(\lambda)\cong L^{(\fb^d)}(\lambda^d)\cong L^{(\hat{\fb})}(\hat{\lambda})$, implies
\[L^{(\fb^d)}(s_\alpha(\lambda^d+\rho^d)-\rho^d)\cong L^{(\fb^d)}(s_\alpha\ast^{\fb^d}_{\cB}\ast\lambda^d)\cong L^{(\hat{\fb})}(s_{\alpha}(\hat{\lambda}+\hat{\rho})-\hat{\rho})). \]
So, in particular, we obtain $L^{(\fb)}(s_\alpha\ast^{{\fb}}_{\cB}\lambda)\cong L^{(\fb^d)}(s_\alpha(\lambda^d+\rho^d)-\rho^d)$ which implies that $\ast^{{\fb}}_{\cB}$ is independent of $\cB$ and leads to an action of the Weyl group.

Now we prove the theorem for $\fg=\mathfrak{osp}(m|2n)$. The role of the usual dot action for $\mathfrak{gl}(m|n)$ will now be played by the example $\ast'$ in the subsequent Subsection \ref{exstarosp}. We start by assuming that $\fb$ is the distinguished Borel subalgebra, with corresponding system of positive roots as described in the beginning of Subsection \ref{exstarosp}. We only need to prove that for $\lambda$ generic we have $s_\alpha\ast_\cB\lambda=s_\alpha\ast'\lambda$. The other statements then follow, as for $\mathfrak{gl}(m|n)$, from the subsequent Lemma \ref{starexpreg}.

We use the notation for the roots of Subsection \ref{exstarosp}. If $\alpha$ is of the form $\epsilon_i-\epsilon_{i-1}$ or $\delta_j-\delta_{j+1}$, the proof is identical to that of $\mathfrak{gl}(m|n)$. Now consider $\alpha=2\delta_n$. As for $\mathfrak{gl}(m|n)$ it follows that for generic weights only odd reflections with respect to isotropic roots $\gamma$ that satisfy $\langle \gamma,\alpha\rangle\not=0$ can be relevant, the other ones get cancelled out in expressions as in equation \eqref{eqproof}. Any choice of Borel subalgebra for which $2\delta_n$ or $\delta_n$, depending on $m=2d$ or $m=2d+1$, is simple is obtained from $\hat{\fb}$ in Subsection \ref{exstarosp} by odd reflections of the form $\delta_i-\epsilon_j$ with $i<n$. This implies that the corresponding star action $s_{2\delta_n}\ast_{\cB}$ will lead to the same result as the star action $s_{2\delta_n}\ast'$ in Subsection \ref{exstarosp}. Consider $\alpha=\epsilon_d$ for $m=2d+1$. Since $\alpha$ can never be simple in a system of roots for which an odd reflection with respect to $\delta_j-\epsilon_d$ needs to be used (starting from the distinguished one), the star action of $s_\alpha$ on generic weights becomes the usual $\rho$-shifted action of $\alpha$. Finally, let $\alpha=\epsilon_{d-1}+\epsilon_d$ for $m=2d$. The result then follows as before since in any system of positive roots in which $\epsilon_{d-1}+\epsilon_d$ is simple, $\epsilon_{d-1}-\epsilon_d$ is also simple.
\end{proof}

\subsection{An important example}
\label{exstarosp}
In this section we consider $\fg=\mathfrak{osp}(m|2n)$. We work with the standard choice of positive roots, which leads to simple positive roots
\begin{eqnarray*}
\delta_1-\delta_2,\cdots,\delta_{n-1}-\delta_n, \delta_n-\epsilon_1, \epsilon_1-\epsilon_2,\cdots, \epsilon_{d-1}-\epsilon_d,\epsilon_d\qquad \mbox{for }m=2d+1,\\
\delta_1-\delta_2,\cdots,\delta_{n-1}-\delta_n, \delta_n-\epsilon_1, \epsilon_1-\epsilon_2,\cdots, \epsilon_{d-1}-\epsilon_d,\epsilon_{d-1}+\epsilon_d\qquad \mbox{for }m=2d.
\end{eqnarray*}

The only simple root of $\Delta_{\oa}^+$ which is not simple in this choice is $2\delta_n$. For this root we will use the Borel subalgebra $\hat{\fb}$ corresponding to simple positive roots
\begin{eqnarray*}
\delta_1-\delta_2,\cdots,\delta_{n-2}-\delta_{n-1}, \delta_{n-1}-\epsilon_1, \epsilon_1-\epsilon_2,\cdots, \epsilon_{d-1}-\epsilon_d,\epsilon_d-\delta_n,\delta_n\qquad \mbox{for }m=2d+1,\\
\delta_1-\delta_2,\cdots,\delta_{n-2}-\delta_{n-1}, \delta_{n-1}-\epsilon_1, \epsilon_1-\epsilon_2,\cdots, \epsilon_{d-1}-\epsilon_d,\epsilon_{d}-\delta_n,2\delta_n\qquad \mbox{for }m=2d.
\end{eqnarray*}
The odd reflections that lead $\fb$ to $\hat{\fb}$ are $\{\delta_n-\epsilon_1,\delta_n-\epsilon_2,\cdots,\delta_n-\epsilon_d\}$.

We can then define a star action map $\cB$ for which $\cB(\alpha)=\fb$ if $\alpha\not=2\delta_n$ and $\cB(2\delta_n)=\hat{\fb}$. From now on we shorten the notation of the corresponding star action $\ast^{\fb}_\cB$ for $\mathfrak{osp}(m|2n)$ to $\ast'$.

First we prove that, as for $\mathfrak{q}(n)$ in Statement 4.1.4 in \cite{Dimitar}, this star action for $\mathfrak{osp}(m|2n)$ can be used as a criterion for $\alpha$-finiteness.
\begin{lemma}
\label{starfinosp}
Consider $\fg=\mathfrak{osp}(m|2n)$ with standard Borel subalgebra and $\alpha$ simple in $\Delta_{\oa}^+$. The module $L(\lambda)$ is $\alpha$-finite if and only if $\langle \lambda,\alpha^\vee\rangle\in\mZ$ and $s_\alpha\ast'\lambda <\lambda$.
\end{lemma}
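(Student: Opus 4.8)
The plan is to reduce the $\alpha$-finiteness of $L(\lambda)$ to a condition on a highest weight for $\fg_{\oa}$ via Lemma \ref{lemfinfree}, and then to translate that condition into the stated inequality for the star action $\ast'$. I would distinguish two cases: (a) $\alpha\neq 2\delta_n$, so that $\alpha$ is simple in the standard positive system, $(\fg_{\ob})_\alpha=0$ and $\cB(\alpha)=\fb$; and (b) $\alpha=2\delta_n$, the unique simple root of $\Delta^+_{\oa}$ which is not simple in the standard system, for which $\cB(2\delta_n)=\hat{\fb}$.

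In case (a) the hypotheses of Lemma \ref{lemfinfree} hold, so $L(\lambda)$ is $\alpha$-finite if and only if $L_{\oa}(\lambda)$ is $\alpha$-finite, which by reduction to $\mathfrak{sl}(2)$ happens exactly when $\langle\lambda,\alpha^\vee\rangle\in\mZ_{\ge 0}$. Since $\cB(\alpha)=\fb$, Definition \ref{defStargen} gives $s_\alpha\ast'\lambda=s_\alpha\cdot\lambda$. I would then note that $\rho_{\ob}$ is a scalar multiple of $\sum_{i=1}^n\delta_i$ (the $\epsilon$-components in the sum of the positive odd roots cancel) and that $\langle\sum_i\delta_i,\alpha\rangle=0$ for every even $\alpha$ in the standard $\Pi$; hence $\langle\rho,\alpha^\vee\rangle=\langle\rho_{\oa},\alpha^\vee\rangle=1$ and $s_\alpha\cdot\lambda=\lambda-(\langle\lambda,\alpha^\vee\rangle+1)\alpha$. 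As $\alpha$ is a positive root, $s_\alpha\ast'\lambda<\lambda$ precisely when $\langle\lambda,\alpha^\vee\rangle>-1$, so ``$\langle\lambda,\alpha^\vee\rangle\in\mZ$ and $s_\alpha\ast'\lambda<\lambda$'' is equivalent to $\langle\lambda,\alpha^\vee\rangle\in\mZ_{\ge 0}$, which proves this case.

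In case (b), let $\hat{\lambda}$ be defined by $L^{(\fb)}(\lambda)\cong L^{(\hat{\fb})}(\hat{\lambda})$. In $\hat{\fb}$ the root $2\delta_n$ (if $m=2d$) or $\delta_n=\alpha/2$ (if $m=2d+1$) is simple and $(\fg_{\ob})_{2\delta_n}=0$, so Lemma \ref{lemfinfree} applied with the Borel subalgebra $\hat{\fb}$ shows that $L(\lambda)=L^{(\hat{\fb})}(\hat{\lambda})$ is $\alpha$-finite if and only if $\langle\hat{\lambda},\alpha^\vee\rangle\in\mZ_{\ge 0}$. Computing $\langle\hat{\rho},\alpha^\vee\rangle$ (which equals $1$ for $m$ even and $\tfrac12$ for $m$ odd) and arguing as in case (a), this last condition is equivalent to $\langle\hat{\lambda},\alpha^\vee\rangle\in\mZ$ together with $s_\alpha(\hat{\lambda}+\hat{\rho})-\hat{\rho}<_{\hat{\fb}}\hat{\lambda}$. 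It then remains to transport this equivalence back to $\fb$. First, $\langle\lambda,\alpha^\vee\rangle\in\mZ$ if and only if $\langle\hat{\lambda},\alpha^\vee\rangle\in\mZ$, because $\hat{\lambda}-\lambda\in\mZ\Delta_{\ob}$ and every odd root of $\mathfrak{osp}(m|2n)$ has integral pairing with $\alpha^\vee$. Second, since the odd reflections relating $\fb$ and $\hat{\fb}$ only involve the isotropic roots $\delta_n-\epsilon_1,\dots,\delta_n-\epsilon_d$, Lemma \ref{oddrefl} gives $\hat{\lambda}=\lambda-\sum_{j\in S}(\delta_n-\epsilon_j)$ for a subset $S\subseteq\{1,\dots,d\}$ recording the atypicality pattern of $\lambda$, so $\langle\hat{\lambda},\alpha^\vee\rangle=\langle\lambda,\alpha^\vee\rangle-|S|$; applying Lemma \ref{oddrefl} a second time to pass from $L^{(\hat{\fb})}(s_\alpha(\hat{\lambda}+\hat{\rho})-\hat{\rho})$ back to $\fb$ and comparing the $\delta_n$- and $\epsilon_j$-coordinates, I expect to find that $\lambda-(s_\alpha\ast'\lambda)$ is a non-negative integral combination of positive roots exactly when $\langle\lambda,\alpha^\vee\rangle\ge|S|$, i.e.\ exactly when $\langle\hat{\lambda},\alpha^\vee\rangle\ge 0$. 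Combined with the first step, this yields the claim for $\alpha=2\delta_n$.

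The hard part will be the bookkeeping in case (b): one has to track the $\delta_n$- and $\epsilon_j$-coordinates through two consecutive rounds of odd reflections (first $\fb\to\hat{\fb}$ applied to $\lambda$, then $\hat{\fb}\to\fb$ applied to $s_\alpha(\hat{\lambda}+\hat{\rho})-\hat{\rho}$) and check that the $\epsilon_j$-contributions always combine into positive roots $\epsilon_j-\delta_n$ (or cancel), so that the sign of the $2\delta_n$-component of $\lambda-(s_\alpha\ast'\lambda)$, and hence the inequality $s_\alpha\ast'\lambda<\lambda$, is controlled exactly by the sign of $\langle\hat{\lambda},\alpha^\vee\rangle$. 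The fact that the roots $\delta_n-\epsilon_j$ are isotropic with pairwise non-zero inner products, together with the values of $\langle\rho,\delta_n-\epsilon_j\rangle$, should make this computation manageable.
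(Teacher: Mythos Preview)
Your proposal is correct and follows essentially the same route as the paper: split into the case $\alpha\neq 2\delta_n$ (where Lemma~\ref{lemfinfree} applies directly with $\cB(\alpha)=\fb$ and $s_\alpha\ast'\lambda=s_\alpha\cdot\lambda$) and the case $\alpha=2\delta_n$ (where one passes to $\hat{\fb}$, applies Lemma~\ref{lemfinfree} there, and then transports the inequality back through the odd reflections $\delta_n-\epsilon_1,\dots,\delta_n-\epsilon_d$).

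The only noteworthy difference is in the organisation of the ``hard part'' you flag. Rather than counting $|S|$ and showing $s_\alpha\ast'\lambda<\lambda\Leftrightarrow\langle\lambda,\alpha^\vee\rangle\ge|S|$, the paper writes $\lambda=\sum a_i\delta_i+\sum b_j\epsilon_j$, $\hat\lambda=\sum k_i\delta_i+\sum l_j\epsilon_j$, $s_\alpha\ast'\lambda=\sum c_i\delta_i+\sum d_j\epsilon_j$, reduces the $\hat{\fb}$-condition to an explicit inequality on $k_n$, and then argues directly that this forces $a_n\ge c_n$: if $a_n<c_n$ then at some step $s$ of the odd reflections the $\delta_n$-coefficients of the two weights coming from $\hat\lambda$ and $s_\alpha(\hat\lambda+\hat\rho)-\hat\rho$ must meet, after which the remaining reflections act identically and one deduces $a_n=c_n$; in that case the first differing $\epsilon_s$-coefficient satisfies $b_s>d_s$. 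This monotonicity argument replaces your proposed enumeration of $S$ and the second pass through Lemma~\ref{oddrefl}, and avoids having to track sign conventions for $\alpha^\vee$ on the $\delta$-part of the form. Either bookkeeping scheme works; the paper's has the advantage of sidestepping the recursive definition of the atypical subset in Lemma~\ref{oddrefl} (your $S$ is the set of \emph{non}-atypical steps in that recursion, not the atypical ones) and comparing the two highest weights coordinate by coordinate in a single sweep.
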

\begin{proof}
By Lemma \ref{lemfinfree}, only the case where $\langle \lambda,\alpha^\vee\rangle\in\mZ$ and $\alpha=2\delta_n$ is nontrivial. Since $\delta_n$ or $2\delta_n$ is simple in the system of positive roots for $\hat{\fb}$, Lemma \ref{lemfinfree} implies that $L^{(\fb)}(\lambda)=L^{(\hat{\fb})}(\hat{\lambda})$ is $\alpha$-finite if and only if $s_{\alpha}(\hat{\lambda}+\hat{\rho})< \hat{\lambda}+\hat{\rho}$ in $\hat{\Delta}^+$. Therefore we show that $s_{\alpha}(\hat{\lambda}+\hat{\rho})< \hat{\lambda}+\hat{\rho}$ is equivalent to $s_\alpha\ast'\lambda <\lambda$ (in the relevant systems of positive roots). We write out explicitly
\[\hat{\lambda}=\sum_{i=1}^nk_i\delta_i+\sum_{j=1}^dl_j\epsilon_j, \quad \lambda=\sum_{i=1}^na_i\delta_i+\sum_{j=1}^db_j\epsilon_j\,\,\mbox{ and}\quad s_\alpha\ast'\lambda  =\sum_{i=1}^nc_i\delta_i+\sum_{j=1}^dd_j\epsilon_j,\]
with $k_n\in\mZ$, since by assumption  $\langle \lambda,\alpha^\vee\rangle\in\mZ$. The condition  $s_{\alpha}(\hat{\lambda}+\hat{\rho})< \hat{\lambda}+\hat{\rho}$ is then equivalent to $k_n>1-k_n$ if $m=2d+1$ and $k_n>2-k_n$ if $m=2d$. It is clear that $k_i=a_i=c_i$ for $i<n$. First we prove that if $s_{\alpha}(\hat{\lambda}+\hat{\rho})< \hat{\lambda}+\hat{\rho}$, then $a_n\ge c_n$. If we would have $a_n < c_n$, then, after applying one of the odd reflections, say $\delta_n-\epsilon_s$, the coefficients of $\delta_n$ in the weights coming from $\hat{\lambda}$ and $s_{\alpha}(\hat{\lambda}+\hat{\rho})-\hat{\rho}$ should coincide. Since the coeffiients by $\{\epsilon_1,\cdots,\epsilon_{s-1}\}$ have not yet been affected, the remaining isotropic roots will be atypical for both weights at the same time, which implies $a_n=c_n$. Furthermore, this procedure shows that if $a_n=c_n$, the coefficients by $\{\delta_1,\cdots,\delta_n,\epsilon_1,\cdots,\epsilon_{s-1}\}$ of $\lambda$ and $s_\alpha\ast'\lambda$ will coincide and that the coefficient of $\epsilon_s$ will be one higher in $\lambda$ than in $s_\alpha\ast'\lambda$, so $\lambda>s_\alpha\ast'\lambda$ follows. If $a_n>c_n$, then $\lambda>s_\alpha\ast'\lambda$ follows immediately. The ``only'' if part follows similarly.
\end{proof}

For weakly generic weights this star action takes a very simple expression, as is shown in the following lemma.
\begin{lemma}
\label{starexexp}
For $\lambda\in\fh^\ast$ a weakly generic weight for $\fg=\mathfrak{osp}(m|2n)$ with $d=\lfloor m/2\rfloor$ we have
\[s_{2\delta_n}\ast'\lambda=\begin{cases}s_{2\delta_n}\cdot\lambda,&\mbox{if } \langle \lambda+\rho,\delta_n\pm \epsilon_{i}\rangle\not=0 \mbox{ for all } 1\le i\le d;\\
s_{2\delta_n}\cdot\lambda -\delta_n-\epsilon_i,&\mbox{if } \langle \lambda+\rho,\delta_n-\epsilon_{i}\rangle=0\mbox{ for some } 1\le i\le d;\\
s_{2\delta_n}\cdot\lambda -\delta_n+\epsilon_i,&\mbox{if } \langle \lambda+\rho,\delta_n+\epsilon_{i}\rangle=0\mbox{ for some } 1\le i\le d.
\end{cases}\]
\end{lemma}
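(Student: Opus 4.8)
The plan is to unwind Definition~\ref{defStargen} into a three-stage computation and then pin down each stage with the explicit odd-reflection algorithm of Lemma~\ref{oddrefl} together with the genericity results of Section~\ref{secgen}. Write $\hat{\fb}=\cB(2\delta_n)$ and let $\hat{\lambda}$ be the $\hat{\fb}$-highest weight of $L^{(\fb)}(\lambda)$, so that $s_{2\delta_n}\ast'\lambda$ is by definition the $\fb$-highest weight of $L^{(\hat{\fb})}(\hat{\mu})$ with $\hat{\mu}:=s_{2\delta_n}(\hat{\lambda}+\hat{\rho})-\hat{\rho}$, where $\delta_n$ (if $m=2d+1$) resp.\ $2\delta_n$ (if $m=2d$) is simple in $\hat{\fb}$. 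The three stages are: (a) compute $\hat{\lambda}$ from $\lambda$ via the odd reflections $\delta_n-\epsilon_1,\dots,\delta_n-\epsilon_d$; (b) apply $s_{2\delta_n}$, which merely negates the $\delta_n$-coefficient, and use $\hat{\rho}=\rho+\sum_{j=1}^d(\delta_n-\epsilon_j)$ to express $\hat{\mu}$ through $s_{2\delta_n}\cdot\lambda$; (c) compute the $\fb$-highest weight of $L^{(\hat{\fb})}(\hat{\mu})$ via the reverse odd reflections $\epsilon_d-\delta_n,\dots,\epsilon_1-\delta_n$, again by Lemma~\ref{oddrefl} but with the roles of the two Borel subalgebras interchanged.

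The key technical point is that in both the forward pass~(a) and the backward pass~(c) the recursive selection in Lemma~\ref{oddrefl} terminates after at most one step. In pass~(a): since $\lambda$ is weakly generic it is regular, and the roots $\delta_n-\epsilon_1,\dots,\delta_n-\epsilon_d$ are pairwise non-orthogonal, so by Lemma~\ref{regatyp1} at most one of them, say $\delta_n-\epsilon_i$, is atypical for $\lambda$; moreover $\lambda+(\delta_n-\epsilon_i)$ is regular, being a single odd-root shift of a weakly generic weight, so by Lemma~\ref{noextraatyp} it has exactly the same atypical roots as $\lambda$, and hence no second root can be selected. This gives $\hat{\lambda}=\lambda-\sum_{j=1}^d(\delta_n-\epsilon_j)+\eta$, with $\eta=\delta_n-\epsilon_i$ in the second case of the statement and $\eta=0$ in the first and third cases (the third case still forces $\langle\lambda+\rho,\delta_n-\epsilon_j\rangle\neq 0$ for all $j$, since otherwise regularity would fail for $2\delta_n$ or for some $\epsilon_i+\epsilon_j$). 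Substituting into $\hat{\mu}=s_{2\delta_n}(\hat{\lambda}+\hat{\rho})-\hat{\rho}$ and using $s_{2\delta_n}(\delta_n-\epsilon_i)=-\delta_n-\epsilon_i$ yields $\hat{\mu}=s_{2\delta_n}\cdot\lambda-\sum_{j=1}^d(\delta_n-\epsilon_j)+s_{2\delta_n}\eta$.

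For pass~(c) I would use that $\hat{\mu}+\hat{\rho}=s_{2\delta_n}\bigl((\lambda+\eta)+\rho\bigr)$, so $\hat{\mu}$ is atypical with respect to an isotropic root $\gamma$ precisely when $\lambda+\eta$ is atypical with respect to $s_{2\delta_n}\gamma$; since $\lambda+\eta$ is regular with the same atypical roots as $\lambda$, and a short computation shows that neither $s_{2\delta_n}(\delta_n-\epsilon_i)$ nor the $s_{2\delta_n}$-image of any atypical root of $\lambda$ orthogonal to it has the form $\epsilon_k-\delta_n$, the backward selection is empty in the first and second cases, whereas in the third case it selects exactly $\epsilon_i-\delta_n=s_{2\delta_n}(\delta_n+\epsilon_i)$, with termination after one step again from Lemma~\ref{noextraatyp} applied to $\hat{\mu}$: its once-shifted weight satisfies $\hat{\mu}+(\epsilon_i-\delta_n)+\hat{\rho}=s_{2\delta_n}\bigl((\lambda+\delta_n+\epsilon_i)+\rho\bigr)$ and is therefore regular, $\lambda+(\delta_n+\epsilon_i)$ being a single odd-root shift of a weakly generic weight and $s_{2\delta_n}\in W$. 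Reassembling the three passes via $\hat{\rho}-\rho=\sum_{j}(\delta_n-\epsilon_j)$ then produces $s_{2\delta_n}\ast'\lambda$ equal to $s_{2\delta_n}\cdot\lambda$, to $s_{2\delta_n}\cdot\lambda-\delta_n-\epsilon_i$, and to $s_{2\delta_n}\cdot\lambda-\delta_n+\epsilon_i$ in the three cases respectively.

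The main obstacle is exactly this bookkeeping: rigorously establishing that precisely one or zero odd reflections become ``active'' in each pass. Each such step combines Lemma~\ref{regatyp1} (uniqueness of a candidate atypical root among a pairwise non-orthogonal family) with Lemma~\ref{noextraatyp} (stability of the atypicality pattern under a single odd-root shift), and for the latter one must each time certify that the relevant once-shifted weight is regular — which I would do by exhibiting its $\rho$-shift as an element of $W$ (here $s_{2\delta_n}$) applied to the $\rho$-shift of $\lambda$ translated by one odd root. One should carry the cases $m=2d$ and $m=2d+1$ in parallel (they differ only in the last simple root of $\fb$ and $\hat{\fb}$ and in whether $\delta_n$ or $2\delta_n$ is simple in $\hat{\fb}$), and note that the three cases of the statement are mutually exclusive directly from regularity.
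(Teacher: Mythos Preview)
Your proposal is correct and follows essentially the same route as the paper: both arguments unwind Definition~\ref{defStargen} via Lemma~\ref{oddrefl}, use Lemma~\ref{regatyp1} to ensure at most one of the $\delta_n\pm\epsilon_j$ can be atypical for a regular weight, and invoke Lemma~\ref{noextraatyp} to guarantee the single-shift weight keeps the same atypicality pattern so that the odd-reflection recursion terminates after at most one selection. The paper treats only the second case in detail and declares the third ``similar'', whereas you work out all three cases and the backward pass explicitly; your bookkeeping (in particular the identity $\hat{\mu}+\hat{\rho}=s_{2\delta_n}((\lambda+\eta)+\rho)$ reducing the reverse pass to atypicality of $\lambda+\eta$ with respect to $\delta_n+\epsilon_k$) is exactly what underlies the paper's displayed computation $\langle s_{2\delta_n}\cdot\lambda-\delta_n-\epsilon_i+\rho,\delta_n-\epsilon_j\rangle=-\langle \lambda+\delta_n-\epsilon_i+\rho,\delta_n+\epsilon_j\rangle$.
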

\begin{proof}
Lemma \ref{regatyp1} implies that for a regular weight only one root in the set $\{\delta_n\pm\epsilon_i|i=1,\dots,d\}$ can be atypical. The case where none of the roots are atypical is trivial. 

We consider the case where there is an $i$ for which $\langle \lambda+\rho,\delta_n -\epsilon_i\rangle=0$, the other situation being similar. Lemma \ref{oddrefl} and the fact that $\lambda +\delta_n-\epsilon_i$ has the same atypicalities as $\lambda$, see Lemma \ref{noextraatyp}, then imply that $\hat{\lambda}=\lambda+\delta_n-\epsilon_i+\rho-\hat{\rho}$, so $$s_{2\delta_n}(\hat{\lambda}+\hat{\rho})-\hat{\rho}=s_{2\delta_n}\cdot\lambda\,\,-\,\delta_n-\epsilon_i+\rho-\hat{\rho}.$$

The value of $$\langle s_{2\delta_n}\cdot\lambda-\delta_n-\epsilon_i+\rho,\delta_n-\epsilon_j\rangle=-\langle \lambda+\delta_n-\epsilon_i+\rho,\delta_n+\epsilon_j\rangle$$
can never be zero by Lemma \ref{regatyp1}, since $\lambda+\delta_n-\epsilon_i$ is regular and $\delta_n-\epsilon_i$-atypical. Lemma \ref{oddrefl} therefore implies
\[L^{(\fb)}(s_{2\delta_n}\cdot\lambda -\delta_n-\epsilon_i)=L^{(\hat{\fb})}(s_{2\delta_n}\cdot\lambda -\delta_n-\epsilon_i+\rho-\hat{\rho}),\]
which proves the lemma.
\end{proof}

Now we prove that this star action leads to an action of the Weyl group, which is required for the proof of Theorem \ref{Sstarfar} for the general case.

\begin{lemma}
\label{starexpreg}
Consider $\fg=\mathfrak{osp}(m|2n)$ and $w\in W$ with two expressions $s_{\alpha_1}\cdots s_{\alpha_k}$ and $s_{\beta_1}\cdots s_{\beta_l}$ for $w$. Then $s_{\alpha_1}\cdots s_{\alpha_k}\ast'\lambda=s_{\beta_1}\cdots s_{\beta_l}\ast'\lambda$ for $\lambda$ a weakly generic weight. Furthermore, the star orbit of $\lambda$ is a set of regular weights which intersects each Weyl chamber exactly once.
\end{lemma}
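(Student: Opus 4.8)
The plan is to show that, for weakly generic $\lambda$, the weight $s_{\alpha_1}\cdots s_{\alpha_k}\ast'\lambda$ depends only on the element $w=s_{\alpha_1}\cdots s_{\alpha_k}$ of $W$; the regularity and one--per--chamber assertions then follow by a bookkeeping argument comparing $\ast'$ with the $\rho_{\oa}$-shifted action $\circ$. Since any two words for $w$ are connected by braid moves and by insertions/deletions of pairs $s_\gamma s_\gamma$, and since $s_\gamma\ast' s_\gamma\ast'\mu=\mu$ for every $\mu$ by Definition~\ref{defStargen}, it suffices to verify the braid relation $\underbrace{s_\alpha s_\beta s_\alpha\cdots}_{m(\alpha,\beta)}\ast'\lambda=\underbrace{s_\beta s_\alpha s_\beta\cdots}_{m(\alpha,\beta)}\ast'\lambda$ for weakly generic $\lambda$ and every pair of distinct $\alpha,\beta\in\Pi_{\oa}$, where $m(\alpha,\beta)$ is the order of $s_\alpha s_\beta$ in $W$. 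If neither $\alpha$ nor $\beta$ is the unique exotic simple root $2\delta_n$, then both are simple in the standard Borel $\fb$ and $\cB$ sends both to $\fb$, so Definition~\ref{defStargen} gives $s_\alpha\ast'\mu=s_\alpha\cdot\mu$ and $s_\beta\ast'\mu=s_\beta\cdot\mu$ for \emph{all} $\mu$; hence both sides equal the image of $\lambda$ under the corresponding element of $W$ acting by the (genuine) $\rho$-shifted action, and they agree. We also record, for later use, that $s_\alpha\ast'\mu=s_\alpha\cdot\mu=s_\alpha\circ\mu$ for every non-exotic simple $\alpha$, since such $\alpha$ is simple in $\Delta^+$ with $(\fg_{\ob})_\alpha=0$, so $s_\alpha$ permutes $\Delta_{\ob}^+$ and fixes $\rho_{\ob}$.

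This leaves the pairs $(\alpha,2\delta_n)$. When $\alpha$ lies in the orthogonal $\mathfrak{so}$-summand, or $\alpha=\delta_i-\delta_{i+1}$ with $i\le n-2$, one has $m(\alpha,2\delta_n)=2$: here $s_\alpha$ fixes $\delta_n$ and permutes the set $\{\delta_n\pm\epsilon_l\}$, commuting with $s_{2\delta_n}$, and the commutation $s_\alpha\ast' s_{2\delta_n}\ast'\lambda=s_{2\delta_n}\ast' s_\alpha\ast'\lambda$ follows routinely from the explicit description of $s_{2\delta_n}\ast'$ in Lemma~\ref{starexexp} together with $\langle s_\alpha\circ\mu+\rho,\gamma\rangle=\langle\mu+\rho,s_\alpha^{-1}\gamma\rangle$ (which transports the relevant atypical root). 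The substantial case is $\alpha=\delta_{n-1}-\delta_n$, where $m=4$. Writing $s=s_{2\delta_n}$ and $t=s_{\delta_{n-1}-\delta_n}$, I would prove $stst\ast'\lambda=tsts\ast'\lambda$ by computing both strings step by step, using $t\ast'=t\circ$ and Lemma~\ref{starexexp} for each occurrence of $s\ast'$. The point that makes this go through is that $t$ disturbs only the coordinates $\delta_{n-1},\delta_n$ and fixes every $\epsilon_l$, whereas each application of $s\ast'$ modifies the weight, relative to $s\circ$, only by $2d\,\delta_n$ (plus the odd root $\delta_n$ when $m$ is odd) or by that quantity minus a single odd root $\delta_n\pm\epsilon_i$ --- always a sum of distinct odd roots supported on $\{\delta_n\}\cup\{\epsilon_l\}$. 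Hence at every intermediate step the weight is of the form $v\circ\lambda+\tau$ with $v\in W$ and $\tau$ a sum of distinct odd roots supported on $\{\delta_{n-1},\delta_n\}\cup\{\epsilon_l\}$; such a weight, and each of its translates by an odd root $\delta_n\pm\epsilon_l$, remains regular (a weakly generic weight plus distinct odd roots), so Lemmata~\ref{regatyp1} and~\ref{noextraatyp} --- the two facts on which the proof of Lemma~\ref{starexexp} rests --- keep applying and the formula stays valid at the next step. With this in hand the order-$4$ identity reduces to a short direct computation of the two length-four strings; the case $m=2d+1$ is entirely analogous, with an extra odd root $\delta_n$ among those considered.

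For the regularity statement, $w\ast'\lambda$ is now well defined for $w\in W$ and weakly generic $\lambda$. Fix such a $\lambda$, with Weyl chamber $C$ (so $\lambda+\widetilde\Gamma\subset C$). Each elementary reflection changes the weight, relative to $\circ$, by an element of $\widetilde\Gamma$: by $0$ for a non-exotic simple root, and for $2\delta_n$ by the correction from Lemma~\ref{starexexp}, which is again a sum of distinct odd roots (for instance $2d\,\delta_n=\sum_{l}\big[(\delta_n-\epsilon_l)+(\delta_n+\epsilon_l)\big]\in\widetilde\Gamma$, and in the atypical cases one removes from this sum the odd root onto which $s_{2\delta_n}$ maps the atypical root). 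Propagating this through a reduced expression for $w$, using $W$-invariance of $\widetilde\Gamma$ and the same control of the intermediate weights as above, gives $w\ast'\lambda\in w\circ\lambda+\widetilde\Gamma$. Since $w\circ\lambda$ is weakly generic with Weyl chamber $w(C)$, the set $w\circ\lambda+\widetilde\Gamma$ lies in $w(C)$ and consists of regular weights; thus $w\ast'\lambda$ is regular and lies in $w(C)$. As $w\mapsto w(C)$ is a bijection from $W$ onto the set of Weyl chambers and the $\ast'$-orbit of $\lambda$ is a single $W$-set, it meets every chamber exactly once.

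The step I expect to be the main obstacle is the order-$4$ braid relation for $\{2\delta_n,\delta_{n-1}-\delta_n\}$ --- not the arithmetic of the four-step computation itself, but justifying that Lemma~\ref{starexexp} may be invoked at each intermediate weight, which need no longer be weakly generic. This is exactly where one uses that $s_{\delta_{n-1}-\delta_n}$ only permutes the $\delta$-coordinates $\delta_{n-1},\delta_n$ while the corrections produced by $s_{2\delta_n}\ast'$ live in the $\delta_n$- and $\epsilon$-directions, so the accumulated error term never collides with the roots $\delta_n\pm\epsilon_l$ governing the formula and regularity --- hence uniqueness of the relevant atypical root via Lemma~\ref{regatyp1} --- persists throughout; the analogous care is needed when iterating $w\ast'\lambda\in w\circ\lambda+\widetilde\Gamma$ along a reduced expression in the previous paragraph.
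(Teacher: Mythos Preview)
Your approach---reducing to the braid relations for each pair of simple roots in $\Pi_{\oa}$ and singling out the pair $\{2\delta_n,\delta_{n-1}-\delta_n\}$ as the substantial case---is different from the paper's, and while it can probably be pushed through, the place you flag as the main obstacle is a real one and remains unresolved in your sketch. The paper avoids this obstacle entirely by proving a \emph{closed formula}: for each $i\le n$ it defines $\gamma^{(i)}\in\{0\}\cup\{\delta_i\pm\epsilon_l\}$ depending only on the atypicalities of $\lambda$, and shows by induction on the length of the word (exactly parallel to the proof of Proposition~\ref{orbitqnfar}) that for any word $s_{\alpha_1}\cdots s_{\alpha_k}$ in $W(\mathfrak{sp}(2n))$ one has
\[
s_{\alpha_1}\cdots s_{\alpha_k}\ast'\lambda\;=\;s_{\alpha_1}\cdots s_{\alpha_k}\cdot\bigl(\lambda+\textstyle\sum_j\gamma^{(i_j)}\bigr),
\]
where $\gamma^{(i)}$ is included precisely when $w=s_{\alpha_1}\cdots s_{\alpha_k}$ flips the sign of the $i$-th $\delta$-coordinate. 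The right-hand side visibly depends only on $w$, so well-definedness follows without ever checking the order-$4$ braid relation explicitly. The commutation with $W(\mathfrak{so}(m))$ (your easy case) then finishes the first statement; regularity and one-per-chamber follow because $\lambda+\sum_j\gamma^{(i_j)}$ is $\lambda$ plus a sum of \emph{distinct} odd roots and hence sits in the same Weyl chamber as $\lambda$, so $w\ast'\lambda$ sits in the chamber of $w\cdot\lambda$.

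Your propagation argument for $w\ast'\lambda\in w\circ\lambda+\widetilde\Gamma$ also hides the same difficulty: each application of $s_{2\delta_n}\ast'$ contributes an element of $\widetilde\Gamma$ to the running correction, but $\widetilde\Gamma$ is \emph{not} closed under addition (sums of distinct odd roots, added together, need not remain sums of distinct odd roots), so a naive induction only gives $w\ast'\lambda\in w\circ\lambda+\widetilde\Gamma+\cdots+\widetilde\Gamma$. You gesture at ``the same control of the intermediate weights'' to fix this, but that control is precisely what is needed and not supplied. The paper's closed formula gives the conclusion for free: the correction is the fixed finite sum $\sum_j\gamma^{(i_j)}$ of mutually distinct odd roots, determined once and for all by $\lambda$ and $w$.
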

\begin{proof}
First, we focus on the Weyl group $W(\mathfrak{sp}(2n))$ of $\mathfrak{sp}(2n)$. For $i=1,\cdots,n$ we define the weight $\gamma^{(i)}$ as
\begin{itemize}
\item $0$ if $\langle \lambda+\rho,\delta_i\pm\epsilon_j\rangle\not=0$ for all $1\le j\le d$,
\item $\delta_i-\epsilon_l$ if $\langle \lambda+\rho,\delta_i-\epsilon_l\rangle=0$,
\item $\delta_i+\epsilon_l$ if $\langle \lambda+\rho,\delta_i+\epsilon_l\rangle=0$.
\end{itemize} 
We then claim
\[s_{\alpha_1}\cdots s_{\alpha_k}\ast'\lambda= s_{\alpha_1}\cdots s_{\alpha_k}\cdot(\lambda + \gamma^{(i_1)}+\cdots \gamma^{(i_k)})\]
where $\gamma^{(i_j)}$ appears if and only if $w=s_{\alpha_1}\cdots s_{\alpha_k}\in W(\mathfrak{sp}(2n))$ flips the sign of the coefficient which originally was in the $i_j$-th position. This statement can be proved by induction using Lemma \ref{starexexp}, similarly to the proof of Proposition \ref{orbitqnfar}.

All these weights are weakly generic. It follows from Lemma \ref{starexexp} that for weakly generic weights, the star action $\ast'$ of the Weyl group of $\mathfrak{sp}(2n)$ commutes with that of $\mathfrak{so}(m)$. This concludes the proof.
\end{proof}

\subsection{The case $\mathfrak{q}(n)$}
\label{starqn}

We recall the notation $\overline{\epsilon_i-\epsilon_j}=\epsilon_i+\epsilon_j$ for $1\le i <j\le n$. The star action for $\mathfrak{q}(n)$ as defined in Section 4 in \cite{Dimitar} is a mixture of the regular Weyl group action and the $\rho_{\oa}$-shifted action for $\mathfrak{gl}(n)$, depending on the atypicality of the weight. More precisely, for a simple root $\alpha$ we set
$$s_{\alpha}*\lambda=\left\{
\begin{array}{ll}
s_{\alpha}\cdot\lambda=s_\alpha\lambda &\text{ if } \langle \lambda,\overline{\alpha}\rangle \not=0,\\
s_{\alpha}\circ\lambda =s_\alpha\lambda-\alpha &\text{ if } \langle \lambda ,\ol{\alpha}\rangle =0,
\end{array}
\right.$$
where $s_{\alpha}\circ\lambda:=s_{\alpha}(\lambda+\rho_{\oa})-\rho_{\oa}$. In particular, we have $s_\alpha\ast s_\alpha\ast\lambda=\lambda$ and $s_\alpha\ast s_\beta\ast\lambda =s_\beta\ast s_\alpha\ast\lambda$ if $\langle\alpha ,\beta \rangle=0$. As for basic classical Lie superalgebras we will write $xy\ast\lambda$ for $x*y*\lambda$.

The simple $\mathfrak{q}(n)$-module $L(\lambda)$ is $\alpha$-finite if and only if $\langle \alpha,\lambda\rangle\in \mZ$ and $s_\alpha\ast\lambda <\lambda$, see Statement 4.1.4 in \cite{Dimitar}.

\begin{lemma}
\label{helpqn}
For $\mu\in\fh^\ast_{\oa}$, we have $[T_\alpha L(\mu)\,: \, L(s_\alpha\ast\mu)]\not=0$ if $L(\mu)$ is $\alpha$-free.
\end{lemma}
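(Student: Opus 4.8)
The plan is to reproduce the proof of Lemma~\ref{starSTHW}, with $\mathfrak{q}(2)$ playing the role of $\mathfrak{sl}(2)$ and $\mathfrak{osp}(1|2)$. The situation is in fact slightly simpler, since for $\fg=\mathfrak{q}(n)$ every $\alpha\in\Pi_{\oa}$ equals some $\epsilon_i-\epsilon_{i+1}$ and is already simple in $\Delta^+$, so no odd reflections intervene and $s_\alpha\ast\mu$ is governed purely by the rank-one datum attached to $\alpha$.

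First I would set up a parabolic reduction. Let $\fp=\fp_\alpha$ be generated by $\fb$ together with the full super root space $\fg_{-\alpha}$, with parabolic decomposition $\fg=\fu^-\oplus\fl\oplus\fu^+$. Since $\alpha$ is simple, $\mathrm{ad}(\fg_{-\alpha})$ preserves $\fu^\pm$, and the Levi $\fl$ is a copy of $\mathfrak{q}(2)$ together with an abelian summand $\fc$ of $\mathfrak{q}(1)$-type. (For $Q$-type $\fh$ is not purely even, so the relevant rank-one $\fl$-module carries a $\fc$-module factor, which is inert because $\fc$ commutes with the $\mathfrak{q}(2)$-part and is untouched by the localisation at $Y\in(\fg_{\oa})_{-\alpha}\subseteq\fl$.) Exactly as in the proof of Theorem~\ref{Tsimple} (combining Lemma~3 of \cite{MR2115448} with the argument of Lemma~\ref{resindT}, both of which involve only the even element $Y$) one gets $\res^\fg_\fl\circ T_\alpha\cong T^\fl_\alpha\circ\res^\fg_\fl$, and, as in the proof of Lemma~\ref{TVerma}, $T_\alpha\circ\ind^\fg_\fp\cong\ind^\fg_\fp\circ T^\fp_\alpha$ with $T^\fp_\alpha$ inflated from $T^\fl_\alpha$.

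Next I would reduce to $\mathfrak{q}(2)$. Since $L(\mu)$ is $\alpha$-free and $L(\mu)^{\fu^+}\cong L_\fl(\mu)$ is a $Y$-stable submodule of $L(\mu)$, the $\fl$-module $L_\fl(\mu)$ is $\alpha$-free; writing $L_\fl(\mu)=L_{\mathfrak{q}(2)}(\mu)\boxtimes E_\fc$ gives $T^\fl_\alpha L_\fl(\mu)=\bigl(T^{\mathfrak{q}(2)}_\alpha L_{\mathfrak{q}(2)}(\mu)\bigr)\boxtimes E_\fc$. The rank-one input is that for $\mathfrak{q}(2)$, with $L_{\mathfrak{q}(2)}(\mu)$ $\alpha$-free, the weight $s_\alpha\ast\mu$ is the maximal weight of $T^{\mathfrak{q}(2)}_\alpha L_{\mathfrak{q}(2)}(\mu)$. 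This I would extract from the explicit description of the simple $\mathfrak{q}(2)$-modules and the twisting functor on them in \cite{MR2742017} (used as in Corollary~\ref{sl2osp12q2} and Proposition~\ref{Primq2}): for $\langle\mu,\alpha^\vee\rangle\notin\mZ$ the module is simple by Theorem~\ref{Tsimple}(iii) and its highest weight follows from Lemma~\ref{charVerma} and Corollary~\ref{sl2osp12q2}; for $\langle\mu,\alpha^\vee\rangle\in\mZ$ with $\mu$ strongly typical one transports the classical $\mathfrak{gl}(2)$-statement (Theorem~2.1 of \cite{CMW}, cf. \cite{MR2032059}) through the Gorelik--Frisk equivalence of typical blocks \cite{Frisk,MR1862800}, which intertwines $T_\alpha$ with $T^{\oa}_\alpha$; the remaining atypical integral weights are read off \cite{MR2742017} directly. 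Granting this, $T_\alpha M_\fp(\mu)\cong\ind^\fg_\fp\bigl(T^\fl_\alpha L_\fl(\mu)\bigr)$ (where $M_\fp(\mu)=\ind^\fg_\fp L_\fl(\mu)$ with $\fu^+$ acting trivially) carries a highest weight vector of weight $s_\alpha\ast\mu$; since $L(\mu)$ is $\alpha$-free, the kernel $K$ of $M_\fp(\mu)\tto L(\mu)$ is $\alpha$-free too, so applying the right-exact $T_\alpha$ yields an exact sequence $0\to T_\alpha K\to T_\alpha M_\fp(\mu)\to T_\alpha L(\mu)\to 0$, whence $[T_\alpha L(\mu):L(s_\alpha\ast\mu)]\ge 1-[T_\alpha K:L(s_\alpha\ast\mu)]$.

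The main obstacle is to show $[T_\alpha K:L(s_\alpha\ast\mu)]=0$. Every composition factor $L(\kappa)$ of $K$ has $\kappa<\mu$, and $\mu\mapsto s_\alpha\ast\mu$ is injective on a fixed central character (an easy check from $\rho=0$ and the definition of $\ast$, using in particular that $\mu-\alpha$ is atypical with respect to $\alpha$ whenever $\mu$ is); thus either $L(\kappa)$ is $\alpha$-finite and $T_\alpha L(\kappa)=0$, or it is $\alpha$-free and, by Theorem~\ref{Tsimple}, $L(s_\alpha\ast\mu)$ can occur in $T_\alpha L(\kappa)$ only as an $\alpha$-finite factor, with multiplicity $\dim\Ext^1_\cO(L(s_\alpha\ast\mu),L(\kappa))$. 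Ruling this out by a weight and central-character comparison (note $s_\alpha\ast\mu\ge\mu>\kappa$) is the delicate step. An alternative, more computational route would determine the maximal weight of $T_\alpha L(\mu)$ directly from $\Res T_\alpha L(\mu)=T^{\oa}_\alpha\Res L(\mu)$, using the explicit form of $\Res M(\mu)$ and the $\mathfrak{gl}(n)$-socle series of $\Res L(\mu)$, but that demands additional combinatorial input on $\mathfrak{q}(n)$-restrictions.
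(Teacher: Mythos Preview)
Your parabolic setup and the rank-one input for $\mathfrak{q}(2)$ are exactly right and match the paper. The detour through $M_\fp(\mu)$ and the exact sequence is where you go astray: the ``delicate step'' you flag is a genuine gap that you do not close, and your proposed alternative via $\Res$ to $\fg_{\oa}$ would indeed require extra combinatorial input that is not available.

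The paper's argument is simpler and is already implicit in what you have written. It proves directly that $s_\alpha\ast\mu$ is the \emph{highest weight} of $T_\alpha L(\mu)$; the nonzero multiplicity is then immediate. Here is how your own ingredients assemble into this. The $\fl$-module decomposition $L(\mu)=L_\fl(\mu)\oplus\fu^-L(\mu)$ (see the proof of Lemma~\ref{Ianuuu}), together with your isomorphism $\res^\fg_\fl\circ T_\alpha\cong T^\fl_\alpha\circ\res^\fg_\fl$, shows that $T^\fl_\alpha L_\fl(\mu)$ is an $\fl$-direct summand of $T_\alpha L(\mu)$. Hence the weight $s_\alpha\ast\mu$, which is the top of $T^\fl_\alpha L_\fl(\mu)$ by your $\mathfrak{q}(2)$ computation, actually occurs in $T_\alpha L(\mu)$. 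For the upper bound you already have everything: your surjection $T_\alpha M_\fp(\mu)\cong\ind^\fg_\fp\bigl(T^\fl_\alpha L_\fl(\mu)\bigr)\tto T_\alpha L(\mu)$ shows that no weight strictly above $s_\alpha\ast\mu$ can occur. No analysis of the composition factors of $T_\alpha K$ is needed.
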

\begin{proof}
We prove that the highest weight of $T_\alpha L(\mu)$ is $s_\alpha\ast\mu$. By using parabolic induction this becomes a $\mathfrak{q}(2)$-property. The result then follows immediately from the structure of simple $\mathfrak{q}(2)$-highest weight modules, see e.g. \cite{MR2742017}.
\end{proof}

\begin{corollary}
\label{corqn}
If $\langle \lambda,\alpha^\vee\rangle\not\in\mZ$, we have $L(s_\alpha\ast\lambda)=T_\alpha L(\lambda)$.
\end{corollary}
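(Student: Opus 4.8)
The plan is to reproduce, for $\mathfrak{q}(n)$, the short argument that established Corollary~\ref{corast2} in the basic classical case: the statement should be an immediate consequence of Lemma~\ref{helpqn} (which supplies a composition factor) together with Lemma~\ref{simpnonint} (which supplies simplicity), just as Corollary~\ref{corast2} followed from Lemmata~\ref{starSTHW} and~\ref{simpnonint}.

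First I would record that the hypothesis $\langle \lambda, \alpha^\vee\rangle \notin \mZ$ makes $L(\lambda)$ an $\alpha$-free module: reduction to $\mathfrak{sl}(2)$ shows that $L_{\oa}(\lambda)$ is $\alpha$-free, and then Lemma~\ref{lemfinfree} gives that $L(\lambda)$ is $\alpha$-free as well (alternatively one may quote the $\mathfrak{q}(n)$-specific $\alpha$-finiteness criterion recalled above, since $\alpha$-finiteness requires $\langle \lambda, \alpha^\vee\rangle \in \mZ$). With $L(\lambda)$ $\alpha$-free, Lemma~\ref{helpqn} yields $[T_\alpha L(\lambda) : L(s_\alpha \ast \lambda)] \neq 0$, so $L(s_\alpha \ast \lambda)$ occurs as a composition factor of $T_\alpha L(\lambda)$. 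On the other hand, Lemma~\ref{simpnonint} --- stated and proved for all classical Lie superalgebras --- asserts that $T_\alpha L(\lambda)$ is a simple module when $\langle \lambda, \alpha^\vee\rangle \notin \mZ$. Comparing the two, the unique composition factor of $T_\alpha L(\lambda)$ must be $L(s_\alpha \ast \lambda)$, whence $T_\alpha L(\lambda) \cong L(s_\alpha \ast \lambda)$, the identification being understood up to the parity ambiguity inherent in the notation $L(\cdot)$ for $Q$-type algebras.

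I do not anticipate a genuine obstacle. The one point I would want to verify carefully while writing this up is that Lemma~\ref{simpnonint} really does apply to $\mathfrak{q}(n)$: its proof uses only Lemmata~\ref{resindT} and~\ref{Tfreefinite} (both proved uniformly) together with the fact that every simple $\fg_{\oa}$-subquotient $L_{\oa}(\nu)$ of $\Res L(\lambda)$ satisfies $\langle \nu, \alpha^\vee\rangle \notin \mZ$; the latter holds because $\nu - \lambda \in \mZ\Delta_{\ob}$ while roots pair integrally with $\alpha^\vee$, exactly as in the $\mathfrak{gl}(m|n)$ case. Should one prefer to sidestep Lemma~\ref{simpnonint}, one can instead reduce by parabolic induction to $\mathfrak{q}(2)$ --- as in the proof of Lemma~\ref{helpqn} --- and invoke the explicit description of simple $\mathfrak{q}(2)$ highest weight modules from \cite{MR2742017}; but routing through Lemma~\ref{simpnonint} is cleaner and is the approach I would take.
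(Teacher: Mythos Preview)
Your proposal is correct and matches the paper's own proof, which reads in its entirety: ``This follows from the combination of Lemma~\ref{helpqn} and Lemma~\ref{simpnonint}.'' You have simply spelled out the details (including the $\alpha$-freeness of $L(\lambda)$ needed to invoke Lemma~\ref{helpqn}) and added a sanity check that Lemma~\ref{simpnonint} applies in the $Q$-type setting.
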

\begin{proof}
This follows from the combination of Lemma \ref{helpqn} and Lemma \ref{simpnonint}.
\end{proof}

For the left coset representatives $W^\lambda$ from equation \eqref{coset}, the star action is well behaved, as follows from the following lemma.

\begin{lemma}
\label{qnstarnonint}
Consider $\lambda\in\fh_{\oa}^\ast$, $w\in W^\lambda$ and two $\underline{reduced}$ expressions $s_{\alpha_1}\cdots s_{\alpha_k}$ and $s_{\beta_1}\cdots s_{\beta_k}$ for $w$. Then  we have $s_{\alpha_1}\cdots s_{\alpha_k}\ast\lambda=s_{\beta_1}\cdots s_{\beta_k}\ast\lambda$.
\end{lemma}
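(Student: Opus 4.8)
The plan is to follow the proof of Proposition~\ref{cosetWeylprop} essentially verbatim, with Corollary~\ref{corqn} playing the role that Corollary~\ref{corast2} plays there. The one point that must be checked is the input: for $w\in W^\lambda$ with a reduced expression $s_{\alpha_1}\cdots s_{\alpha_k}$, one needs $\langle s_{\alpha_{i+1}}\cdots s_{\alpha_k}\lambda,\alpha_i^\vee\rangle\notin\mZ$ for every $1\le i\le k$. This is exactly what is extracted from the proof of Theorem~15.3.7 in \cite{MR2906817} (and already used in Proposition~\ref{cosetWeylprop}); alternatively it follows directly by noting that the positive roots made negative by $w$ are precisely the roots $s_{\alpha_k}\cdots s_{\alpha_{i+1}}(\alpha_i)$, that $w\in W^\lambda$ forces each of these to lie outside $\Delta_{\oa}(\lambda)$, and that $\Delta_{\oa}(u\mu)=u\,\Delta_{\oa}(\mu)$, so that $\alpha_i\notin\Delta_{\oa}(s_{\alpha_{i+1}}\cdots s_{\alpha_k}\lambda)$. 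The same holds along the second reduced expression $s_{\beta_1}\cdots s_{\beta_k}$.

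Next I would transfer this non-integrality from the plain Weyl-group action to the star action. Since $s_\beta\ast\mu-s_\beta\mu\in\{0,-\beta\}\subset\mZ\Delta_{\oa}$ for every simple $\beta$, an immediate induction shows that $s_{\alpha_{i+1}}\cdots s_{\alpha_k}\ast\lambda$ and $s_{\alpha_{i+1}}\cdots s_{\alpha_k}\lambda$ differ by an element of $\mZ\Delta_{\oa}$; as all Cartan integers $\langle\beta,\alpha_i^\vee\rangle$ with $\beta\in\Delta_{\oa}$ are integers, this gives $\langle s_{\alpha_{i+1}}\cdots s_{\alpha_k}\ast\lambda,\alpha_i^\vee\rangle\notin\mZ$ as well. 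I can then invoke Corollary~\ref{corqn} repeatedly, descending from $i=k$ to $i=1$: at step $i$ it yields $L(s_{\alpha_i}s_{\alpha_{i+1}}\cdots s_{\alpha_k}\ast\lambda)=T_{\alpha_i}\,L(s_{\alpha_{i+1}}\cdots s_{\alpha_k}\ast\lambda)$, so in the end
\[L(s_{\alpha_1}\cdots s_{\alpha_k}\ast\lambda)=T_{\alpha_1}\circ T_{\alpha_2}\circ\cdots\circ T_{\alpha_k}\,L(\lambda),\]
and likewise $L(s_{\beta_1}\cdots s_{\beta_k}\ast\lambda)=T_{\beta_1}\circ\cdots\circ T_{\beta_k}\,L(\lambda)$.

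Finally, since $s_{\alpha_1}\cdots s_{\alpha_k}$ and $s_{\beta_1}\cdots s_{\beta_k}$ are two reduced expressions of the same element of $W$, the braid relations of Lemma~\ref{lembraid} give $T_{\alpha_1}\circ\cdots\circ T_{\alpha_k}\cong T_{\beta_1}\circ\cdots\circ T_{\beta_k}$, hence $L(s_{\alpha_1}\cdots s_{\alpha_k}\ast\lambda)\cong L(s_{\beta_1}\cdots s_{\beta_k}\ast\lambda)$. Since a simple highest weight $\mathfrak{q}(n)$-module determines its highest weight in $\fh_{\oa}^\ast$ (the parity-change ambiguity does not affect the weight), the two weights coincide, which is the assertion.

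The only genuinely non-formal point is ensuring that the hypothesis of Corollary~\ref{corqn} is still met at each step after replacing the plain action by the star action; this is precisely the $\mZ\Delta_{\oa}$-congruence of the second paragraph. Everything else — the description of the inversions of an element of $W^\lambda$, the iteration, and the braid relations — is standard or already available in the paper.
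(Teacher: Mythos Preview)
Your proof is correct and follows exactly the same approach as the paper, which simply says ``Mutatis mutandis Proposition~\ref{cosetWeylprop}.'' The one point you make explicit that the paper leaves implicit is the transfer of the non-integrality condition from the ordinary action to the star action via the congruence $s_\beta\ast\mu-s_\beta\mu\in\mZ\Delta_{\oa}$; this is a genuine (if easy) verification that the ``mutatis mutandis'' hides, and your treatment of it is fine.
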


\begin{proof}
Mutatis mutandis Proposition \ref{cosetWeylprop}.
\end{proof}

In the following lemma we prove that this action leads to regular orbits for weakly generic weights. Close to the walls orbits of the star action can take exotic forms, containing more than one weight which is maximal with respect to the star action, see Example 4.2 (3) in \cite{Dimitar}.

\begin{proposition}
\label{orbitqnfar}
Consider $w\in W$ with two expressions $s_{\alpha_1}\cdots s_{\alpha_k}$ and $s_{\beta_1}\cdots s_{\beta_l}$. Then we have $s_{\alpha_1}\cdots s_{\alpha_k}\ast\lambda=s_{\beta_1}\cdots s_{\beta_l}\ast\lambda$ for $\lambda$ a weakly generic weight. The star orbit of a weakly generic weight is regular in the sense that it intersects each Weyl chamber exactly once.
\end{proposition}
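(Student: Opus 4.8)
The plan is to reduce both assertions to a single explicit closed formula for $w\ast\lambda$ valid for every $w\in W=S_n$ and every weakly generic $\lambda$. Write $\lambda=\sum_i a_i\epsilon_i$. Since $\lambda$ is weakly generic it is in particular regular, so Lemma \ref{regatyp1} shows that the roots with respect to which $\lambda$ is atypical are mutually orthogonal; concretely, the indices $i$ with $a_i+a_{i'}=0$ for some $i'$ form a partial matching $M$ on $\{1,\dots,n\}$, the matched pair of $i$ being the unique $i'$ with $a_i+a_{i'}=0$. For $w\in S_n$ and $\{i,i'\}\in M$ with $i<i'$, say that \emph{$w$ crosses $\{i,i'\}$} if $w(i)>w(i')$. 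The claim to be proved is
\[
w\ast\lambda\;=\;w\lambda\;-\!\!\sum_{\substack{\{i,i'\}\in M,\ i<i'\\ w\text{ crosses }\{i,i'\}}}\!\!\bigl(\epsilon_{w(i')}-\epsilon_{w(i)}\bigr).
\]
As the right-hand side depends only on $w$ and $\lambda$, this gives at once $s_{\alpha_1}\cdots s_{\alpha_k}\ast\lambda=s_{\beta_1}\cdots s_{\beta_l}\ast\lambda$, regardless of whether the expressions are reduced or of equal length.

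I would prove the formula by induction on the length of an arbitrary (not necessarily reduced) expression, writing $w=s_\alpha w'$ with $\alpha=\epsilon_j-\epsilon_{j+1}$ simple. The inductive hypothesis is the formula for $w'$ together with the auxiliary statement that $w'\ast\lambda\in w'(\lambda+\widetilde{\Gamma})$; this auxiliary statement is itself immediate from the formula, since applying $(w')^{-1}$ to the correction term gives a sum of distinct positive odd roots supported on disjoint pairs of indices, hence an element of $\widetilde{\Gamma}$. Weak genericity then places $w'\ast\lambda$ in the Weyl chamber $w'(C)$, $C$ the chamber of $\lambda$, so $w'\ast\lambda$ is regular, and iterated use of Lemma \ref{noextraatyp} (invoked each time a reflection in the word is atypical for the current weight) shows that the atypical roots of $w'\ast\lambda$ are exactly $\{\epsilon_{w'(i)}-\epsilon_{w'(i')}:\{i,i'\}\in M\}$. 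Hence $w'\ast\lambda$ is atypical with respect to $\alpha$ iff the positions $\{w'(i),w'(i')\}$ of some matched pair equal $\{j,j+1\}$, in which case $s_\alpha\ast(w'\ast\lambda)=s_\alpha(w'\ast\lambda)-\alpha$, and otherwise $s_\alpha\ast(w'\ast\lambda)=s_\alpha(w'\ast\lambda)$. Applying the transposition $s_\alpha$ to the formula for $w'$ and comparing with the target formula for $w$ is then a short case check: when no matched pair occupies $\{j,j+1\}$ all crossing statuses are unchanged and the correction terms transform correctly; when a matched pair sitting at $\{j,j+1\}$ is not yet crossed by $w'$, it becomes crossed by $w$ and the subtracted $\alpha$ is precisely its new correction term; and when it is already crossed by $w'$, its correction term $-(\epsilon_j-\epsilon_{j+1})$ is sent by $s_\alpha$ to $+\alpha$, cancelling the subtracted $\alpha$, in agreement with $w$ no longer crossing it.

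For the last assertion, the formula reads $w\ast\lambda=w(\lambda+\mu_w)$ with $\mu_w\in\widetilde{\Gamma}$; weak genericity says $\lambda+\mu_w$ lies in the same open regular chamber $C$ as $\lambda$, so $w\ast\lambda\in w(C)$, and as $w$ runs over $W$ the chambers $w(C)$ are exactly all Weyl chambers, each arising from a unique $w$. Thus the $\ast$-orbit of $\lambda$ is regular and meets each Weyl chamber exactly once. The main obstacle in this argument is the bookkeeping in the inductive step for \emph{non-reduced} words: one must follow how the matched pairs are permuted along an arbitrary word and verify the cancellation occurring when an atypical pair is ``crossed back'', while simultaneously maintaining regularity of every intermediate weight $s_{\alpha_t}\cdots s_{\alpha_k}\ast\lambda$ so that Lemmata \ref{regatyp1} and \ref{noextraatyp} are available at each stage. (For reduced expressions every pair is crossed at most once and no cancellation occurs, which is the content already used in Lemma \ref{qnstarnonint}.)
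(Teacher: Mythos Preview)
Your argument is correct and is essentially the same as the paper's: both establish the closed formula $w\ast\lambda=w(\lambda+\sum\gamma)$, the sum running over positive atypical roots $\gamma$ of $\lambda$ with $w(\gamma)<0$, by induction on the length of an arbitrary word with the same three-case analysis (no atypical pair at the positions of $\alpha$; an uncrossed atypical pair becoming crossed; a crossed atypical pair becoming uncrossed). Your ``matching/crossing'' language is just the $S_n$-combinatorial translation of the paper's root-theoretic statement, and your auxiliary invariant $w'\ast\lambda\in w'(\lambda+\widetilde{\Gamma})$ is exactly what the paper uses (via its first paragraph and the last line of the proof) to keep every intermediate weight regular and to place $w\ast\lambda$ in the chamber $w(C)$.
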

\begin{proof}
Iteratively using Lemma \ref{noextraatyp} implies that for $p$ distinct $\{\gamma_1,\cdots\gamma_p\}\subset\Delta_{\ob}^+$ with $\langle \lambda,\overline{\gamma_i}\rangle=0$ and for $\lambda$ weakly generic, $\lambda+\gamma_1+\cdots+\gamma_p $ has exactly the same atypicalities as $\lambda$.

We will prove the following statement by induction on the length $k$. For $k$ not necessarily distinct simple elements $\{\alpha_1,\cdots,\alpha_k\}$ in $\Delta^+_{\oa}$ we have
\begin{equation}\label{eqstarq}s_{\alpha_k}\cdots s_{\alpha_1}\ast\lambda=s_{\alpha_k}\cdots s_{\alpha_1}\left(\lambda+\gamma_1+\cdots+\gamma_p\right) \qquad\mbox{with}\end{equation}
\begin{equation}\label{eq1p}\{\gamma_1,\cdots,\gamma_p\}=\{\gamma\in\Delta_{\ob}^+\,|\,\langle\overline{ \gamma},\lambda\rangle=0\mbox{ and }s_{\alpha_k}\cdots s_{\alpha_1}(\gamma)<0\}. \end{equation}

For $k=1$ this is exactly the definition of the star action on any $\lambda$. We assume equation \eqref{eqstarq} is valid for $k$ and prove it is true for $k+1$. We take $\{\gamma_1,\cdots,\gamma_p\}$ as in \eqref{eq1p} and define $\gamma_{p+1}:=s_{\alpha_1}\cdots s_{\alpha_k}(\alpha_{k+1})$, then we have
$$\{\gamma\in\Delta_{\ob}^+\,|\,\langle\overline{ \gamma},\lambda\rangle=0\mbox{ and }s_{\alpha_{k+1}}s_{\alpha_k}\cdots s_{\alpha_1}(\gamma)<0\}=$$
\begin{equation}\label{eq1p1}\begin{cases}\{\gamma_1,\cdots,\gamma_p\}\quad\mbox{if}\quad \langle\lambda,\overline{\gamma_{p+1}}\rangle\not=0\qquad\qquad\qquad\qquad\qquad\qquad\quad (1)\\
\{\gamma_1,\cdots,\gamma_p\}\cup\{\gamma_{p+1}\}\quad\mbox{if}\quad \langle\lambda,\overline{\gamma_{p+1}}\rangle=0\mbox{ and }\gamma_{p+1}>0\qquad\,\,\,\,\, (2a)\\
\{\gamma_1,\cdots,\gamma_p\}\backslash\{-\gamma_{p+1}\}\quad\mbox{if}\quad \langle\lambda,\overline{\gamma_{p+1}}\rangle=0\mbox{ and }\gamma_{p+1}<0\qquad\,\,\,\, (2b)\end{cases}
\end{equation}
We consider these three different possibilities. First we note that, as argued in the first paragraph, $s_{\alpha_k}\cdots s_{\alpha_1}(\lambda+\gamma_1+\cdots+\gamma_p) $ has exactly the same atypicalities as $s_{\alpha_k}\cdots s_{\alpha_1}(\lambda)$. This implies that we have $\langle\lambda,\overline{\gamma_{p+1}}\rangle\not=0$ if and only if $\langle s_{\alpha_k}\cdots s_{\alpha_1}\ast\lambda, \overline{\alpha_{k+1}}\rangle\not=0$.

(1) If $\langle\lambda,\overline{\gamma_{p+1}}\rangle\not=0$, the definition of the star action and induction step yield
\[s_{\alpha_{k+1}}s_{\alpha_k}\cdots s_{\alpha_1}\ast\lambda=s_{\alpha_{k+1}}s_{\alpha_k}\cdots s_{\alpha_1}\left(\lambda+\gamma_1+\cdots+\gamma_p\right) \]
with $\{\gamma_1,\cdots,\gamma_p\}$ as in equation \eqref{eq1p}, which indeed agrees with equation \eqref{eq1p1}.

(2) If $\langle\lambda,\overline{\gamma_{p+1}}\rangle=0$, the definition and induction step yield
\begin{eqnarray*}s_{\alpha_{k+1}}s_{\alpha_k}\cdots s_{\alpha_1}\ast\lambda&=&s_{\alpha_{k+1}}s_{\alpha_k}\cdots s_{\alpha_1}\left(\lambda+\gamma_1+\cdots+\gamma_p\right) -\alpha_{k+1}\\
&=&s_{\alpha_{k+1}}s_{\alpha_k}\cdots s_{\alpha_1}\left(\lambda+\gamma_1+\cdots+\gamma_p+\gamma_{p+1}\right),
\end{eqnarray*}
with $\{\gamma_1,\cdots,\gamma_p\}$ as in equation \eqref{eq1p}. This gives the correct result by equation \eqref{eq1p1} for both $(2a)$ and $(2b)$.

The obtained expression in equation \eqref{eqstarq} clearly agrees with $\widetilde{W}\tto W$. Moreover, since $\lambda+\gamma_1+\cdots+\gamma_p$ is in the same Weyl chamber as $\lambda$, $w\ast\lambda$ is in the same Weyl chamber as $w\lambda$, which implies the remainder of the lemma.
\end{proof}

We repeat Penkov's result of \cite{MR1309652} on the structure of modules with integral weakly generic highest weight.
\begin{lemma}
\label{lemPenkov}
Consider $\fg=\mathfrak{q}(n)$ and a weakly generic $\underline{\mbox{integral}}$ weight $\lambda\in\fh_{\oa}^\ast$. Define the subset of roots $S_\lambda=\{\beta\in\Delta_{\ob}^+\,|\,\langle\beta,\lambda\rangle\not=0\}$. For some $k\in\mN$, we have 
\[\Res L(\lambda)=\bigoplus_{I\subset S_\lambda}L_{\oa}(\lambda-\sum_{\beta\in I}\beta)^{\oplus k}.\]
\end{lemma}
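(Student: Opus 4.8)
The plan is to show that for a weakly generic integral weight $\lambda$, the restriction $\Res L(\lambda)$ is completely reducible with the stated multiplicity structure, using the general machinery already developed for weakly generic weights (Lemma \ref{farcr} and Corollary \ref{corgen}) together with the specific combinatorics of the queer algebra. First I would invoke Lemma \ref{farcr}: since $\lambda$ is weakly generic, $\Res L(\lambda)$ is completely reducible and embeds into $\bigoplus_{\gamma\in\Gamma}L_{\oa}(\lambda+\gamma)^{\oplus k}$. For $\fg=\mathfrak{q}(n)$ we have $\Delta_{\ob}^+=\Delta_{\oa}^+=\{\epsilon_i-\epsilon_j\mid i<j\}$, so $\Gamma$ (with multiplicities) consists of the sums $-\sum_{\beta\in I}\beta$ over subsets $I\subset\Delta_{\ob}^+$. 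Thus the possible highest weights of $\fg_{\oa}$-constituents are of the form $\lambda-\sum_{\beta\in I}\beta$, and it remains to determine precisely which subsets $I$ occur and with what multiplicity.

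The key computational step is the analysis of which $L_{\oa}(\lambda-\sum_{\beta\in I}\beta)$ actually appear in the socle. I would compare the character of $\Res L(\lambda)$ with that of $\Res M(\lambda)$: by the PBW theorem (as in the proof of Lemma \ref{charVerma}), $\Res M(\lambda)$ has a Verma filtration with highest weights $\lambda-\sum_{\beta\in I}\beta$ for $I\subset\Delta_{\ob}^-$, each occurring with a constant multiplicity $k$ (the $Q$-type constant coming from $\dim L_{\fb}(\lambda)$). Since $\lambda$ is weakly generic, all these weights lie in one Weyl chamber, so $\Res M(\lambda)=\bigoplus_{I\subset\Delta_{\ob}^+}M_{\oa}(\lambda-\sum_{\beta\in I}\beta)^{\oplus k}$ is already completely reducible as a direct sum of simple modules (being a sum of dominant Verma modules, each $M_{\oa}$ here is itself simple since the weights are deep inside a chamber — actually regular and antidominant enough that $M_{\oa}(\nu)=L_{\oa}(\nu)$). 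The surjection $\Res M(\lambda)\twoheadrightarrow\Res L(\lambda)$ then forces $\Res L(\lambda)$ to be a direct summand, so $\Res L(\lambda)=\bigoplus_{I\in\mathcal{I}}L_{\oa}(\lambda-\sum_{\beta\in I}\beta)^{\oplus k}$ for some set $\mathcal{I}$ of subsets of $\Delta_{\ob}^+$. What must be shown is that $\mathcal{I}=\{I\subset S_\lambda\}$, i.e. that exactly the subsets of the ``non-atypical'' roots $S_\lambda=\{\beta\in\Delta_{\ob}^+\mid\langle\beta,\lambda\rangle\neq 0\}$ occur.

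To pin down $\mathcal{I}$ I would argue using odd reflections / the action of the odd part together with atypicality. Recall that for $\mathfrak{q}(n)$ the relevant pairing is $\langle\lambda,\overline{\epsilon_i-\epsilon_j}\rangle=\langle\lambda,\epsilon_i+\epsilon_j\rangle$, and atypicality of $\lambda$ with respect to $\beta$ means this pairing vanishes. For a positive odd root $\beta$ with $\langle\beta,\lambda\rangle\neq 0$ (i.e. $\beta\in S_\lambda$), the odd root vector $X_{-\beta}\in(\fg_{\ob})_{-\beta}$ acts non-trivially on the highest weight line of $L(\lambda)$ — this is where weak genericity plus Lemma \ref{noextraatyp} enters, guaranteeing that adding such $\beta$'s does not create new atypicalities and the relevant Shapovalov-type form stays non-degenerate — so the weight $\lambda-\beta$ appears; iterating and using that the $X_{-\beta}$ for $\beta\in S_\lambda$ behave independently (their sum lies in one chamber, so no cancellation) yields that every $L_{\oa}(\lambda-\sum_{\beta\in I}\beta)$ with $I\subset S_\lambda$ occurs. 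Conversely, for $\beta$ atypical ($\beta\notin S_\lambda$) the corresponding odd vector annihilates the highest weight vector, which prevents $\lambda-\beta$ (and hence any $\lambda-\sum_{\beta\in I}\beta$ with $I\not\subset S_\lambda$, after checking the argument propagates) from appearing in $L(\lambda)$. The main obstacle is making this last ``only the subsets of $S_\lambda$'' direction precise: one needs to control that no constituent $L_{\oa}(\lambda-\sum_{\beta\in I}\beta)$ with $I$ containing an atypical root slips into $\Res L(\lambda)$. The cleanest route is to cite Penkov's result \cite{MR1309652} directly, as the lemma statement indicates (``We repeat Penkov's result''), so the proof can legitimately be: the complete reducibility and the allowed highest weights follow from Lemma \ref{farcr} and the Verma-filtration argument above, and the exact index set $\{I\subset S_\lambda\}$ together with the uniform multiplicity $k$ is the content of \cite{MR1309652}.
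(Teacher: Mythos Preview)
Your proposal ultimately lands on exactly what the paper does: the paper's entire proof is the single sentence ``This is a translation of the result in Theorem~2.2 in \cite{MR1309652}.'' Your final paragraph correctly identifies this as the cleanest route, so the approaches coincide.

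One caution about the heuristic part of your sketch: you conflate the condition $\langle\beta,\lambda\rangle\neq 0$ defining $S_\lambda$ with non-atypicality, but for $\mathfrak{q}(n)$ atypicality is governed by $\langle\lambda,\overline{\beta}\rangle=\langle\lambda,\epsilon_i+\epsilon_j\rangle$, which is a different pairing from $\langle\beta,\lambda\rangle=\langle\epsilon_i-\epsilon_j,\lambda\rangle$. Since you defer to Penkov for the precise index set anyway this does not damage the proof, but the informal argument about which odd root vectors annihilate the highest weight vector should not be phrased in terms of atypicality.
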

\begin{proof}
This is a translation of the result in Theorem 2.2 in \cite{MR1309652}.
\end{proof}

Using this result we can determine the Jordan-H\"older decomposition series of $T_\alpha L(\lambda)$ entirely for $\lambda$ generic and integral.
\begin{proposition}
\label{Tgenqn}
For $\fg=\mathfrak{q}(n)$ and an $\underline{integral}$ dominant generic weight $\Lambda\in\fh_{\oa}^\ast$ we have
\[[T_\alpha L(w\ast\Lambda): L(w'\ast\Lambda)]\, \,=\,\, [T^{\oa}_\alpha L_{\oa}(w\circ\Lambda): L(w'\circ \Lambda)],\]
for arbitrary $w,w'\in W$.
\end{proposition}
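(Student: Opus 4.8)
The plan is to reduce the Jordan--H\"older multiplicities of the $\mathfrak{q}(n)$-module $T_\alpha L(w\ast\Lambda)$ to the corresponding computation over $\fg_{\oa}=\mathfrak{gl}(n)$, using the compatibility of twisting functors with $\Res$ (Lemma~\ref{resindT}) together with Penkov's branching formula (Lemma~\ref{lemPenkov}). First, since $\Lambda$ is integral, dominant and generic, Proposition~\ref{orbitqnfar} shows that each $w\ast\Lambda$ is integral and weakly generic, lies in the Weyl chamber of $w\Lambda$, and has the central character of $\Lambda$, so that $L(w\ast\Lambda)\in\cO_\Lambda$; by Lemma~\ref{twistinggeneric} and Proposition~\ref{autoequi} every composition factor of $T_\alpha L(w\ast\Lambda)$ is a generic simple module in $\cO_\Lambda$. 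A first technical point, which I would isolate as a separate lemma, is that every such factor is of the form $L(w'\ast\Lambda)$ for some $w'\in W$: this follows from the description of the simple objects of $\cO_\Lambda$ recalled after \eqref{decompnonint}, the regularity of the star orbit, and the fact that the $\fg_{\oa}$-simple subquotients $L_{\oa}(\nu)$ of $T^{\oa}_\alpha L_{\oa}(\mu)$ satisfy $\nu\in W\circ\mu$.

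Next, Lemma~\ref{resindT} gives $\Res T_\alpha L(w\ast\Lambda)\cong T^{\oa}_\alpha\Res L(w\ast\Lambda)$, while Lemma~\ref{lemPenkov} applied to the weakly generic integral weight $w\ast\Lambda$ (for which $S_{w\ast\Lambda}=\Delta^+_{\ob}$, as $w\ast\Lambda$ is regular) yields
\[\Res L(w\ast\Lambda)=\bigoplus_{I\subseteq \Delta^+_{\ob}}L_{\oa}\Big(w\ast\Lambda-\sum_{\beta\in I}\beta\Big)^{\oplus k},\]
where $k\in\mN$ does not depend on $w$ (the degree of atypicality being constant along the star orbit). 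Because $w\ast\Lambda$ is the unique member of the star orbit lying in the chamber of $w\Lambda$, and adding a sum of distinct positive odd roots to a generic weight keeps it in its chamber, $L_{\oa}(w'\ast\Lambda)$ occurs in $\Res L(w''\ast\Lambda)$ only for $w''=w'$, and then (forced by $I=\emptyset$) with multiplicity exactly $k$. Applying $\Res$ to a composition series of $T_\alpha L(w\ast\Lambda)$ and reading off the multiplicities of $L_{\oa}(w'\ast\Lambda)$ on both sides, the factor $k$ cancels and one gets that $[T_\alpha L(w\ast\Lambda):L(w'\ast\Lambda)]$ equals $\sum_{I\subseteq\Delta^+_{\ob}}[T^{\oa}_\alpha L_{\oa}(w\ast\Lambda-\sum_{\beta\in I}\beta):L_{\oa}(w'\ast\Lambda)]$.

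The remaining, and main, obstacle is to identify this sum with the single multiplicity $[T^{\oa}_\alpha L_{\oa}(w\circ\Lambda):L_{\oa}(w'\circ\Lambda)]$. The observation that makes this possible is that, by the explicit description of the star action in the proof of Proposition~\ref{orbitqnfar}, one has $w\ast\Lambda-w\circ\Lambda=w\big(\sum_j\gamma_j\big)+(\rho_{\oa}-w\rho_{\oa})$, where $\{\gamma_j\}$ are the atypical roots of $\Lambda$ made negative by $w$; a short check using $\{-w\gamma_j\}\subseteq\{\delta\in\Delta^+_{\oa}\mid w^{-1}\delta<0\}$ shows this is a sum of \emph{distinct} positive roots, hence (for $\mathfrak{q}(n)$) of distinct positive odd roots, so that $L_{\oa}(w\circ\Lambda)$ is one of the Penkov summands of $\Res L(w\ast\Lambda)$. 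Since $T^{\oa}_\alpha$ preserves blocks of $\cO_{\oa}$, only those $I$ with $w\ast\Lambda-\sum_{\beta\in I}\beta$ in the $\fg_{\oa}$-block of $w'\ast\Lambda$ contribute; a norm argument using genericity shows there is exactly one such $I$, and that the corresponding weight is carried to $w\circ\Lambda$ by a translation which at the same time carries $w'\ast\Lambda$ to $w'\circ\Lambda$. One then concludes using that the $\fg_{\oa}$-twisting functor commutes with projective functors (the underlying-Lie-algebra version of Lemma~\ref{tensorfd}) and Lemma~\ref{translclass}, which together show that $[T^{\oa}_\alpha L_{\oa}(u\circ\Lambda_1):L_{\oa}(u'\circ\Lambda_1)]$ depends only on $u$, $u'$ and $\alpha$ for regular integral dominant $\Lambda_1$. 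Matching up the Penkov shift $w\ast\Lambda-w\circ\Lambda$ on the $w$-side with the shift $w'\ast\Lambda-w'\circ\Lambda$ on the $w'$-side, via the explicit star-action formula, is the delicate combinatorial core of the argument.
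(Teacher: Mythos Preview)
Your overall strategy—reduce to $\fg_{\oa}$ via $\Res\circ T_\alpha\cong T^{\oa}_\alpha\circ\Res$ and Penkov's formula—is the same as the paper's, but the organization differs in a way that creates real gaps. The paper's decisive step is a uniform branching formula, proved by induction on the length of $w$:
\[
\Res L(w\ast\Lambda)\;=\;\bigoplus_{\mu\in C_\Lambda} L_{\oa}(w\circ\mu)^{\oplus k},
\]
where $C_\Lambda$ is the \emph{fixed} set of $\fg_{\oa}$-highest weights occurring in $\Res L(\Lambda)$. The induction is a short bijection of index sets (one checks $S_{s_\alpha\ast\Lambda}=s_\alpha(S_\Lambda)$ in the $\alpha$-atypical case and $S_{s_\alpha\ast\Lambda}=(s_\alpha S_\Lambda\setminus\{-\alpha\})\cup\{\alpha\}$ in the $\alpha$-typical case) and simultaneously shows that every $L(w\ast\Lambda)$ is generic with the same multiplicity $k$. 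Once this formula is available, one reads off the multiplicity of $L_{\oa}(w'\circ\Lambda)$ (not $L_{\oa}(w'\ast\Lambda)$) and the proposition drops out: since the $\fg_{\oa}$-twisting multiplicities $c_{u}:=[T^{\oa}_\alpha L_{\oa}(w\circ\Lambda):L_{\oa}(u\circ\Lambda)]$ are translation-invariant across regular integral blocks, the formula gives $[\Res T_\alpha L(w\ast\Lambda)]=\sum_u c_u[\Res L(u\ast\Lambda)]$ in $K(\cO_{\oa})$, hence $[T_\alpha L(w\ast\Lambda)]=\sum_u c_u[L(u\ast\Lambda)]$ in $K(\cO)$. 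No matching of shifts is needed, and the fact that all composition factors lie in the star orbit becomes a \emph{consequence} rather than an input.

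Your route applies Penkov to each $w\ast\Lambda$ separately and counts $L_{\oa}(w'\ast\Lambda)$ instead. Two problems arise. First, your ``first technical point'' is not established: when $\Lambda$ is atypical the block $\cO_\Lambda$ contains infinitely many simples outside the star orbit, and the three facts you cite (block description, star-orbit regularity, $W\circ$-orbit constraint on $T^{\oa}_\alpha$) do not exclude a generic subquotient $L(\nu)$ with $\nu\notin W\ast\Lambda$ and $[\Res L(\nu):L_{\oa}(w'\ast\Lambda)]>0$; without excluding this your multiplicity count breaks. Second, the ``delicate combinatorial core''—finding the unique $I_0$ and a single translation carrying $w\ast\Lambda-\sum_{\beta\in I_0}\beta$ to $w\circ\Lambda$ and simultaneously $w'\ast\Lambda$ to $w'\circ\Lambda$—is precisely what the uniform branching formula above encodes, and carrying it out rigorously amounts to proving that formula. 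It is cleaner, and shorter, to prove the formula directly and let both difficulties dissolve.
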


\begin{proof}
We define the set of ($\fg_{\oa}$-integral dominant) weights $C_\Lambda$ as
\[\Res L( \Lambda)= \bigoplus_{\mu\in C_\Lambda}L_{\oa}(\mu)^{\oplus k} \]
with $k$ from Lemma \ref{lemPenkov}. Then for every $w\in W$ we prove the claim
\[\Res L(w\ast \Lambda)= \bigoplus_{\mu\in C_\Lambda}L_{\oa}(w\circ \mu)^{\oplus k} .\]
We prove this for $w=s_\alpha$, the full result then follows from iteratively using the same procedure. If $\Lambda$ is $\alpha$-atypical, then $s_\alpha\ast\Lambda=s_\alpha\circ\Lambda$ and by Lemma \ref{noextraatyp}
we have $S_{s_\alpha\ast\Lambda}=S_{s_\alpha\Lambda}$. Therefore
\[S_{s_\alpha\ast\Lambda}=S_{s_\alpha\Lambda}=\{\beta\in\Delta_{\ob}^+\,|\,\langle s_\alpha(\beta),\lambda\rangle\not=0\}=s_\alpha(S_\Lambda),\]
since for $\beta\in \Delta_{\ob}^+$ different from $\alpha$, we have $s_\alpha(\beta)\in\Delta_{\ob}^+$. The claim then follows immediately from Lemma \ref{lemPenkov}. Now assume that $\Lambda$ is $\alpha$-typical. Then we have $s_\alpha\ast\Lambda=s_\alpha\Lambda$ and similarly as above
\[S_{s_\alpha\ast\Lambda}=s_\alpha (S_\Lambda)\backslash \{-\alpha\}\,\cup \{\alpha\}.\]
Applying Lemma \ref{lemPenkov} then also yields the claim.

One consequence of this claim is that all the modules $L(w\ast\Lambda)$ are generic. The proposition then follows from Lemmata \ref{twistinggeneric} and \ref{resindT}.
\end{proof}

\section{Primitive ideals for $\mathfrak{q}(n)$}
\label{secqn}

For typical weights of $\mathfrak{q}(n)$, the star action equals the usual unshifted action (since $\rho=0$). In the following lemma we derive all inclusions of primitive ideals corresponding to regular strongly typical central characters.
\begin{lemma}
Consider $\fg=\mathfrak{q}(n)$ and a strongly typical regular dominant weight $\Lambda$, then  we have $J(w\Lambda)\subseteq J(w'\Lambda)$ for $w,w'\in W$ if and only if $I(w\circ\Lambda)\subseteq I(w'\circ\Lambda)$.
\end{lemma}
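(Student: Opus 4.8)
The plan is to mirror the argument used for the basic classical case in Theorem~\ref{typeIItyp}, with Gorelik's equivalence replaced by the equivalence of categories for strongly typical \emph{regular} blocks of $\mathfrak{q}(n)$ proved by Frisk and the second author in \cite{Frisk}. Concretely, if $\chi$ is the (strongly typical, regular) central character $\chi_\Lambda$ of $\cZ(\fg)$ with perfect mate $\widetilde\chi$ for $\cZ(\fg_{\oa})$, where $\fg_{\oa}=\mathfrak{gl}(n)$, then $(\Ind-)_\chi$ and $(\Res-)_{\widetilde\chi}$ are mutually inverse equivalences between $\cO_\chi$ and $(\cO_{\oa})_{\widetilde\chi}$. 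Writing $\widetilde\Lambda$ for the $\rho_{\oa}$-dominant weight attached to $\widetilde\chi$, this equivalence gives $\left(\Ind L_{\oa}(w\circ\widetilde\Lambda)\right)_\chi = L(w\Lambda)$ and $\left(\Res L(w\Lambda)\right)_{\widetilde\chi} = L_{\oa}(w\circ\widetilde\Lambda)$ for all $w\in W$, where $w\cdot\Lambda=w\Lambda$ because $\rho=0$, and moreover $\widetilde\Lambda-\Lambda\in\cP$.

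For the ``if'' direction I would start from $I(w\circ\widetilde\Lambda)\subseteq I(w'\circ\widetilde\Lambda)$, i.e.\ $\Ann_{U(\fg_{\oa})}L_{\oa}(w\circ\widetilde\Lambda)\subseteq\Ann_{U(\fg_{\oa})}L_{\oa}(w'\circ\widetilde\Lambda)$, apply Lemma~\ref{induct} (with $\fa=\fg_{\oa}$) to get $\Ann_{U(\fg)}\Ind L_{\oa}(w\circ\widetilde\Lambda)\subseteq\Ann_{U(\fg)}\Ind L_{\oa}(w'\circ\widetilde\Lambda)$, then add $I_\chi=U(\fg)\mm_\chi$ to both sides and invoke Lemma~\ref{IplusAnn}: since the $\chi$-submodule $\left(\Ind L_{\oa}(w\circ\widetilde\Lambda)\right)_\chi=L(w\Lambda)$ is simple it admits the central character $\chi$, so $\Ann_{U(\fg)}L(w\Lambda)=I_\chi+\Ann_{U(\fg)}\Ind L_{\oa}(w\circ\widetilde\Lambda)$, and the same for $w'$. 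This yields $J(w\Lambda)\subseteq J(w'\Lambda)$. For the ``only if'' direction I would start from $J(w\Lambda)\subseteq J(w'\Lambda)$, intersect with $U(\fg_{\oa})$ to get $\Ann_{U(\fg_{\oa})}\Res L(w\Lambda)\subseteq\Ann_{U(\fg_{\oa})}\Res L(w'\Lambda)$, and apply Lemma~\ref{IplusAnn} to $\fg_{\oa}$ with the $\widetilde\chi$-component $\left(\Res L(w\Lambda)\right)_{\widetilde\chi}=L_{\oa}(w\circ\widetilde\Lambda)$ (which admits $\widetilde\chi$): then $I(w\circ\widetilde\Lambda)=I_{\widetilde\chi}+\Ann_{U(\fg_{\oa})}\Res L(w\Lambda)\subseteq I_{\widetilde\chi}+\Ann_{U(\fg_{\oa})}\Res L(w'\Lambda)=I(w'\circ\widetilde\Lambda)$.

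It remains to pass from $\widetilde\Lambda$ to $\Lambda$ in the $\fg_{\oa}$-inclusions. Since $\widetilde\Lambda-\Lambda\in\cP$, both weights are $\fg_{\oa}$-dominant (for a regular $\mathfrak{q}(n)$-dominant $\Lambda$ one has $\langle\Lambda+\rho_{\oa},\alpha^\vee\rangle>0$ for all simple $\alpha$, and $\widetilde\Lambda$ is $\rho_{\oa}$-dominant by construction), so Lemma~\ref{translclass} gives $I(w\circ\widetilde\Lambda)\subseteq I(w'\circ\widetilde\Lambda)\Leftrightarrow I(w\circ\Lambda)\subseteq I(w'\circ\Lambda)$, which closes the equivalence.

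The main obstacle I anticipate is not the homological algebra, which is essentially identical to Theorem~\ref{typeIItyp}, but the careful use of the results of \cite{Frisk}: one must check that the equivalence there sends the relevant simple highest weight modules to the claimed simple modules with the indicated weight relabeling (and that regularity is exactly the hypothesis needed, as opposed to the strong-typicality-only situation for basic classical algebras), and that the $\mathfrak{q}(n)$-specific phenomena — the Clifford-algebra structure on $\Hom$-spaces of simple modules and the parity ambiguities — do not interfere. They do not, because $\cZ(\fg)$ still acts by a genuine character (not merely locally nilpotently) on each simple $\fg$-module, which is all that Lemma~\ref{IplusAnn} requires, and annihilator ideals are insensitive to parity shift. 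The translation step via Lemma~\ref{translclass} and the verification of $\fg_{\oa}$-dominance are routine.
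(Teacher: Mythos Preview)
Your approach is correct and matches the paper's: invoke the Frisk--Mazorchuk equivalence for strongly typical regular blocks of $\mathfrak{q}(n)$, then apply Lemma~\ref{induct} and Lemma~\ref{IplusAnn} in both directions exactly as in Theorem~\ref{typeIItyp}. One small correction: Proposition~2 in \cite{Frisk} actually gives $\left(\Ind L_{\oa}(w\circ\widetilde\Lambda)\right)_\chi \cong L(w\Lambda)^{\oplus k}$ and $\left(\Res L(w\Lambda)\right)_{\widetilde\chi}\cong L_{\oa}(w\circ\widetilde\Lambda)^{\oplus k}$ with $k=2^{\lfloor(n-1)/2\rfloor}$ rather than a single copy --- this is precisely the Clifford phenomenon you anticipated, and it is harmless for your argument since $\Ann(M^{\oplus k})=\Ann(M)$ and $M^{\oplus k}$ still admits the central character; you are also more explicit than the paper about the passage from $\widetilde\Lambda$ to $\Lambda$ via Lemma~\ref{translclass}, which the paper leaves implicit.
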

\begin{proof}
Proposition 2 in \cite{Frisk} states that for $k=2^{\lfloor (n-1)/2\rfloor}$ we have
\[\left(\Ind L_{\oa}(w\circ\widetilde{\Lambda})\right)_{\chi}=k\, L(w\Lambda)\quad\mbox{and}\quad \left(\Res L(w\Lambda)\right)_{\widetilde{\chi}}=k\, L_{\oa}(w\circ \widetilde{\Lambda}).\]
The statements then follow from Lemma \ref{IplusAnn} applied to $\fg_{\oa}$ and Lemma \ref{induct}.
\end{proof}

According to Proposition \ref{orbitqnfar}, we can unambiguously define $w\ast\lambda$ for $w\in W$ and a weakly generic weight $\lambda$. The main result in this section is the full classification of primitive ideals in the generic domain, which is given in the following theorem.

\begin{theorem}
\label{mainqn}
Consider a weakly generic dominant weight $\Lambda\in\fh_{\oa}^\ast$ with $w_0\ast\Lambda$ generic and an arbitrary $\mu\in\fh^\ast_{\oa}$. We have
\[J(w_1\ast\Lambda)= J(\mu)\quad\Leftrightarrow\quad \exists \,w_2\in W\, \mbox{ with }\, \mu=w_2\ast\Lambda\,\mbox{ and }\\ I(w_1\circ \Lambda)= I(w_2\circ\Lambda).\]
Furthermore, if for $x_1,x_2\in W^\lambda$ and $u_1,u_2\in W_\lambda$ there are $v_1,v_2\in W_\lambda$ such that $v_1$ is in the same left cell as $u_1$ and $v_2$ is in the same left cell as $u_2$ and $v_2\le_L v_1$ in the left Bruhat order then we have $J(x_1u_1\ast\Lambda)\subseteq J(x_2u_2\ast\Lambda)$.
\end{theorem}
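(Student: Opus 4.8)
\noindent
\emph{Plan.} The strategy is to transport everything to the underlying Lie algebra $\fg_{\oa}$, in both directions, through $\Res$ and $\Ind$. Under the hypotheses Proposition \ref{orbitqnfar} guarantees that every $w\ast\Lambda$, $w\in W$, is generic, so by Corollary \ref{corgen} and Lemma \ref{lemPenkov} the modules $L(w\ast\Lambda)$ — and, by Lemma \ref{twistinggeneric}, all composition factors of each $T_\alpha L(w\ast\Lambda)$ — have completely reducible restriction, explicitly $\Res L(w\ast\Lambda)=\bigoplus_{\mu\in C_\Lambda}L_{\oa}(w\circ\mu)^{\oplus k}$ in the integral case. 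First I would peel off the coset parts: writing $x_i$ as a reduced product of simple reflections that are successively non-integral for the intermediate weights — the bookkeeping in the proof of Lemma \ref{qnstarnonint} — Corollary \ref{corqn} together with Lemma \ref{inclannT} give $J(x_iu_i\ast\Lambda)=J(u_i\ast\Lambda)$, and the same reasoning for $\fg_{\oa}$ (non-integral twisting functors being annihilator-preserving equivalences, cf. \cite{MR2032059}) gives $I(x_iu_i\circ\Lambda)=I(u_i\circ\Lambda)$. Hence both statements reduce to the case $x_i=e$, $w_i=u_i\in W_\Lambda$, and after a further reduction of this kind we may assume $\Lambda$ integral and $W_\Lambda=W$.

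\noindent
For the ``only if'' of the first statement: by Lemma \ref{lemPenkov} and genericity the finitely many weights of $C_\Lambda$ have pairwise distinct $\fg_{\oa}$-central characters, so the $\chi^{\oa}_{w\circ\Lambda}$-isotypic component of $\Res L(w\ast\Lambda)$ equals $L_{\oa}(w\circ\Lambda)^{\oplus k}$, and Lemma \ref{IplusAnn} applied to $\fg_{\oa}$ yields $I(w\circ\Lambda)=I_{\chi^{\oa}_{w\circ\Lambda}}+\bigl(J(w\ast\Lambda)\cap U(\fg_{\oa})\bigr)$ (with $I_{\chi^{\oa}}$ the $\fg_{\oa}$-analogue of the ideal $I_\chi$ of Lemma \ref{IplusAnn}). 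Intersecting an inclusion $J(w_1\ast\Lambda)\subseteq J(w_2\ast\Lambda)$ with $U(\fg_{\oa})$, adding the common (since $w_1\Lambda,w_2\Lambda$ lie in one $W$-orbit) ideal $I_{\chi^{\oa}_{w_1\circ\Lambda}}$ and applying the identity twice gives $I(w_1\circ\Lambda)\subseteq I(w_2\circ\Lambda)$; in particular $J$-equality forces $I$-equality. That an equality $J(w_1\ast\Lambda)=J(\mu)$ forces $\mu$ into the star orbit of $\Lambda$ follows because, by Corollary \ref{corIAnn} for $\fg_{\oa}$, $\Res L(\mu)$ and $\Res L(w_1\ast\Lambda)$ carry the same $\fg_{\oa}$-central characters, which confines $\mu$ to the weakly generic weights of $\cO_\Lambda$, and these are exactly the $w_2\ast\Lambda$ by Lemma \ref{lemPenkov} and Proposition \ref{orbitqnfar}.

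\noindent
For the sufficient direction — the inclusion statement, which also yields the ``if'' of the equality by taking cell chains in both directions — I would invoke the classical theory of primitive ideals for $\fg_{\oa}$ (\cite{MR0721170,MR0575938}) to turn the hypothesis $v_2\le_L v_1$ into a chain $u_1=z_0,z_1,\dots,z_m=u_2$ in $W$ and simple roots $\alpha^{(i)}$ such that $L_{\oa}(z_{i+1}\circ\Lambda)$ is $\alpha^{(i)}$-finite, $L_{\oa}(z_i\circ\Lambda)$ is $\alpha^{(i)}$-free and $\Ext^1_{\cO_{\oa}}\!\bigl(L_{\oa}(z_{i+1}\circ\Lambda),L_{\oa}(z_i\circ\Lambda)\bigr)\neq 0$ — precisely the elementary steps generating the left order, and precisely the situation of Lemma \ref{extann}. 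By Proposition \ref{Tgenqn} and the classical version of Theorem \ref{Tsimple}(ii) (\cite{MR2032059}), step $i$ yields $L_{\oa}(z_{i+1}\circ\Lambda)\hookrightarrow T^{\oa}_{\alpha^{(i)}}L_{\oa}(z_i\circ\Lambda)\hookrightarrow \Res T_{\alpha^{(i)}}L(z_i\ast\Lambda)$. I would then upgrade this to $L(z_{i+1}\ast\Lambda)\hookrightarrow T_{\alpha^{(i)}}L(z_i\ast\Lambda)$, using that $L(z_{i+1}\ast\Lambda)$ is an $\alpha^{(i)}$-finite composition factor of $T_{\alpha^{(i)}}L(z_i\ast\Lambda)$ (Proposition \ref{Tgenqn} and the $\mathfrak{q}(n)$-criterion for $\alpha$-finiteness, which for generic weights translates $s_\alpha\circ\nu<\nu$ into $s_\alpha\ast\nu<\nu$), together with the tight structure of $T_{\alpha^{(i)}}L(z_i\ast\Lambda)$ given by Theorem \ref{Tsimple}(ii). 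Theorem \ref{Tsimple}(ii) then gives $\Ext^1_{\cO}\!\bigl(L(z_{i+1}\ast\Lambda),L(z_i\ast\Lambda)\bigr)\neq 0$, Lemma \ref{extann} gives $J(z_i\ast\Lambda)\subseteq J(z_{i+1}\ast\Lambda)$ for each $i$, and concatenation gives $J(u_1\ast\Lambda)\subseteq J(u_2\ast\Lambda)$; together with the first two paragraphs this proves both claims.

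\noindent
The main obstacle is that last upgrade. Proposition \ref{Tgenqn} records only composition-factor multiplicities of $T_\alpha L(w\ast\Lambda)$, whereas the argument needs socle information (equivalently, $\Ext^1$-groups) on the super side: one knows $L_{\oa}(z_{i+1}\circ\Lambda)$ lies in the $\fg_{\oa}$-socle of the restriction and that $L(z_{i+1}\ast\Lambda)$ is a composition factor, and must conclude that it is a submodule. Making this rigorous requires exploiting the rigidity of the generic region — the completely reducible restrictions and the structure theorem \ref{Tsimple}(ii) — presumably via Frobenius reciprocity applied to the induced map $\Ind L_{\oa}(z_{i+1}\circ\Lambda)\to T_\alpha L(z_i\ast\Lambda)$; this is the technical heart. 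Two lesser points needing care are the exact classical input on the left order and on the annihilator-invariance of non-integral reflections used in the first paragraph, and the reduction to integral $\Lambda$ for $\mathfrak{q}(n)$.
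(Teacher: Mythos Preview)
Your overall strategy—transfer to $\fg_{\oa}$ via $\Res$ and propagate inclusions through twisting functors—matches the paper's, and your handling of the coset part (Corollary~\ref{corqn} plus Lemma~\ref{inclannT}, i.e.\ Lemma~\ref{eqprimqnhelp}) and of the ``only if'' of the equality via Lemma~\ref{IplusAnn} (the content of Lemma~\ref{farneq}) is essentially what the paper does.

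The divergence is in the converse direction, and there are two points.

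\textbf{Your ``main obstacle'' is not one.} You do not need to promote the composition factor $L(z_{i+1}\ast\Lambda)$ of $T_{\alpha^{(i)}}L(z_i\ast\Lambda)$ to a submodule. Once $[T_\alpha L(\mu):L(\nu)]\neq 0$, the annihilator of $T_\alpha L(\mu)$ is contained in $J(\nu)$, and Lemma~\ref{inclannT} gives $J(\mu)\subseteq J(\nu)$ immediately; Lemma~\ref{extann} and socle information are unnecessary. This is exactly how the paper argues both in Proposition~\ref{qnint} and in the $s_\beta s_\alpha$ step of the proof of Theorem~\ref{mainqn}.

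\textbf{The real gap is the reduction to integral $\Lambda$.} Proposition~\ref{Tgenqn}, on which your inclusion argument rests, is proved only for integral $\Lambda$ (it relies on Lemma~\ref{lemPenkov}). Peeling off $x_i\in W^\Lambda$ reduces to $u_i\in W_\Lambda$, but $\Lambda$ itself remains non-integral and Lemma~\ref{lemPenkov} does not apply; ``after a further reduction of this kind we may assume $\Lambda$ integral'' is not justified. The paper avoids this: it proves $I(w_1\circ\Lambda)=I(w_2\circ\Lambda)\Rightarrow J(w_1\ast\Lambda)=J(w_2\ast\Lambda)$ for arbitrary $\Lambda$ by invoking the classical generating set for $I$-equalities (the elementary moves $I(s_\beta s_\alpha w\circ\Lambda)=I(s_\alpha w\circ\Lambda)$ with $\langle\alpha,\beta^\vee\rangle=-1$, cf.\ \cite[5.25--5.26]{MR0721170}) and, for each such move, a character-counting argument using Lemma~\ref{twistinggeneric}, Lemma~\ref{resindT} and the classical multiplicity-one fact $[T^{\oa}_\beta L_{\oa}(s_\beta s_\alpha w\circ\Lambda):L_{\oa}(s_\alpha w\circ\Lambda)]=1$ to force the relevant super composition factors; Lemma~\ref{inclannT} then gives both inclusions. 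Only \emph{after} establishing the equality does the paper prove the inclusion statement, by the much simpler route you overlooked: replace $u_i$ by $v_i$ using the equality (same left cell $\Rightarrow$ same $I$ $\Rightarrow$ same $J$), then walk down the left \emph{Bruhat} chain $v_2\le_L v_1$ applying Lemma~\ref{helpqncor} at each step. In effect you are trying to prove the stronger Proposition~\ref{qnint} (KL-order inclusion) and deduce Theorem~\ref{mainqn} from it; as written this works only for integral $\Lambda$, whereas the paper proves Theorem~\ref{mainqn} first by a weaker but integrality-free argument and keeps Proposition~\ref{Tgenqn} for the separate integral result.
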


\begin{remark}{\rm
The condition on $v_1,v_2\in W_\lambda$ in the theorem above implies that that $v_2\le v_1$ in left Kazhdan-Lusztig order (see Definition 14.15 in \cite{MR0721170}), which is equivalent to the condition $I(v_1\circ \Lambda)\subseteq I(v_2\circ \Lambda)$
(see Subsections~14.15 and 16.4 in \cite{MR0721170} and also \cite{MR0575938}). For $n\le 5$, these two conditions are equivalent, see \cite{MR3003677}. According to Lemma \ref{farneq} and Lemma \ref{translclass}, we have therefore classified all inclusions between generic primitive ideals for $\mathfrak{q}(n)$ for $n\le 5$. In the subsequent Proposition \ref{qnint} we classify all inclusions between integral generic weights for $\mathfrak{q}(n)$ for arbitrary $n$.}
\end{remark}

\begin{lemma}
\label{helpqncor}
Consider $\fg=\mathfrak{q}(n)$. For $\alpha$ a simple root and $\lambda\in\fh^\ast_{\oa}$ we have the inclusions
\begin{itemize}
\item $J(s_\alpha\ast\lambda)=J(\lambda)$ if $\langle \lambda,\alpha^\vee\rangle\not\in\mZ$
\item $J(s_\alpha\ast\lambda)\subset J(\lambda)$ if $\langle \lambda,\alpha^\vee\rangle\in\mZ$ and $s_\alpha\ast\lambda < \lambda$.
\end{itemize}
\end{lemma}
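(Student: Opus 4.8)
The plan is to mirror the strategy used for basic classical Lie superalgebras (Corollary~\ref{corastS}), but here we must work directly because $\mathfrak{q}(n)$ has no odd reflections available and no nontrivial $\rho$-shift. The two cases correspond exactly to the two cases of Theorem~\ref{Tsimple}(ii)--(iii) combined with Lemma~\ref{inclannT}, so the real content is reducing to the $\alpha$-free/$\alpha$-finite dichotomy and identifying the twisted module.

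First I would treat the non-integral case. If $\langle\lambda,\alpha^\vee\rangle\not\in\mZ$, then by Lemma~\ref{lemfinfree} (the reduction to $\mathfrak{sl}(2)$, which forces $L(\lambda)$ to be $\alpha$-free whenever $L_{\oa}(\lambda)$ is, and $L_{\oa}(\lambda)$ is $\alpha$-free since the weight is non-integral for $\alpha$) the module $L(\lambda)$ is $\alpha$-free. By Corollary~\ref{corqn} we have $T_\alpha L(\lambda)=L(s_\alpha\ast\lambda)$, and applying Lemma~\ref{inclannT} gives $\Ann_{U(\fg)}L(s_\alpha\ast\lambda)=\Ann_{U(\fg)}T_\alpha L(\lambda)=\Ann_{U(\fg)}L(\lambda)$, i.e. $J(s_\alpha\ast\lambda)=J(\lambda)$.

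For the second case, assume $\langle\lambda,\alpha^\vee\rangle\in\mZ$ and $s_\alpha\ast\lambda<\lambda$. By the $\mathfrak{q}(n)$-version of Statement 4.1.4 in \cite{Dimitar} recalled at the start of Subsection~\ref{starqn}, the condition $\langle\lambda,\alpha^\vee\rangle\in\mZ$ together with $s_\alpha\ast\lambda<\lambda$ is precisely the criterion for $L(\lambda)$ to be $\alpha$-finite, and dually $L(s_\alpha\ast\lambda)$ is $\alpha$-free (its weight satisfies $s_\alpha\ast(s_\alpha\ast\lambda)=\lambda>s_\alpha\ast\lambda$). So I would argue as follows: since $L(s_\alpha\ast\lambda)$ is $\alpha$-free with $\langle s_\alpha\ast\lambda,\alpha^\vee\rangle\in\mZ$, Lemma~\ref{inclannT} gives $\Ann_{U(\fg)}T_\alpha L(s_\alpha\ast\lambda)=J(s_\alpha\ast\lambda)$. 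On the other hand, by Lemma~\ref{helpqn}, $[T_\alpha L(s_\alpha\ast\lambda):L(s_\alpha\ast(s_\alpha\ast\lambda))]=[T_\alpha L(s_\alpha\ast\lambda):L(\lambda)]\not=0$, so $L(\lambda)$ is a subquotient of $T_\alpha L(s_\alpha\ast\lambda)$, whence $\Ann_{U(\fg)}T_\alpha L(s_\alpha\ast\lambda)\subseteq J(\lambda)$. Combining, $J(s_\alpha\ast\lambda)\subseteq J(\lambda)$. To upgrade this to a strict inclusion I would invoke Corollary~\ref{lemintdom}-style reasoning or, more simply, note that the Gelfand--Kirillov dimensions of $L(s_\alpha\ast\lambda)$ and $L(\lambda)$ differ (one is $\alpha$-free, the other $\alpha$-finite, so the GK-dimensions of the underlying $\fg_{\oa}$-modules differ via Lemma~\ref{farcr}/restriction), forcing $J(s_\alpha\ast\lambda)\neq J(\lambda)$; alternatively one compares with the $\mathfrak{q}(2)$ result in Proposition~\ref{Primq2} after parabolic reduction à la Corollary~\ref{corIan}.

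The main obstacle I anticipate is making the strictness argument clean: the inclusion itself is a formal consequence of Lemma~\ref{inclannT} and Lemma~\ref{helpqn}, but ruling out equality requires an invariant that distinguishes $L(\lambda)$ from $L(s_\alpha\ast\lambda)$. The safest route is the analogue of Corollary~\ref{lemintdom}: if $J(s_\alpha\ast\lambda)=J(\lambda)$ then intersecting with $U(\fg_{\oa})$ and using that $\left(\Res L(\lambda)\right)$ is $\alpha$-finite while $\left(\Res L(s_\alpha\ast\lambda)\right)$ has an $\alpha$-free constituent (so the corresponding $\fg_{\oa}$-primitive ideals cannot coincide by Corollary~\ref{corIAnn} applied to $\fg_{\oa}$ with a suitable central character) yields a contradiction. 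This is the step I would spell out carefully, while the rest is bookkeeping with results already established.
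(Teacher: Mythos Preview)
Your twisting-functor argument for the inclusion and for the equality is correct: the non-integral case follows from Corollary~\ref{corqn} plus Lemma~\ref{inclannT}, and in the integral case Lemma~\ref{helpqn} applied to the $\alpha$-free module $L(s_\alpha\ast\lambda)$ together with Lemma~\ref{inclannT} gives $J(s_\alpha\ast\lambda)\subseteq J(\lambda)$. This is a genuinely different route from the paper's: the paper simply observes that $\alpha$ is simple in $\Delta^+$, forms the parabolic with Levi $\mathfrak{q}(2)\times\fh'$, and applies Corollary~\ref{corIan} together with Proposition~\ref{Primq2}, exactly as in Lemma~\ref{primsl2}. What you list only as an ``alternative'' for strictness is in fact the whole proof in the paper, and it is shorter because it bypasses the twisting machinery entirely.

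Your strictness discussion, however, has gaps. The Gelfand--Kirillov argument does not work as stated: $\alpha$-free versus $\alpha$-finite does not by itself force different GK dimensions (already for $\fg_{\oa}$ of rank $\ge 2$ there are $\alpha$-free and $\alpha$-finite simple highest weight modules of the same GK dimension), and Lemma~\ref{farcr} only applies to weakly generic weights. The Corollary~\ref{lemintdom}-style argument is tailored to finite-dimensional modules and does not transfer. The $\Res$-argument you sketch at the end also fails in general, since an $\alpha$-finite module in $\cO$ need not be annihilated by any fixed power of $Y$, so one cannot separate the two annihilators inside $U(\fg_{\oa})$ that way. In any case the paper does not claim strictness here: compare the analogous Corollary~\ref{corastS}, which uses $\subseteq$, and note that Corollary~\ref{corIan} only transports inclusions, not strict inclusions, from the Levi. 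So the ``obstacle'' you anticipate is not actually present, and the clean route is the one-line parabolic reduction.
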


\begin{proof}
Similarly to the proof of Lemma \ref{primsl2}, this can be reduced to the corresponding result for $\mathfrak{q}(2)$ in Proposition \ref{Primq2}.
\end{proof}

We need the following lemma on the left coset representatives $W^\lambda$ of the integral Weyl group $W_\lambda\subset W$.

\begin{lemma}
\label{eqprimqnhelp}
For an arbitrary weight $\lambda\in\fh^\ast_{\oa}$ and $w\in W^\lambda$ with a $\underline{reduced}$ expression $s_{\alpha_1}\cdots s_{\alpha_k}$ we have $J(\lambda)=J(s_{\alpha_1}\cdots s_{\alpha_k}\ast\lambda)$.
\end{lemma}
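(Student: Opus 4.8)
The plan is to induct on the length $k$ of the reduced expression, stripping off one simple reflection at a time and invoking the first bullet of Lemma~\ref{helpqncor}, which states $J(s_\alpha\ast\nu)=J(\nu)$ whenever $\langle\nu,\alpha^\vee\rangle\notin\mZ$. Set $\mu_{k+1}:=\lambda$ and $\mu_j:=s_{\alpha_j}\ast\mu_{j+1}$ for $j=k,k-1,\dots,1$, so that $s_{\alpha_1}\cdots s_{\alpha_k}\ast\lambda=\mu_1$. If we can check that $\langle\mu_{i+1},\alpha_i^\vee\rangle\notin\mZ$ for every $1\le i\le k$, then Lemma~\ref{helpqncor} gives $J(\mu_i)=J(\mu_{i+1})$ for each $i$, and chaining these equalities yields $J(s_{\alpha_1}\cdots s_{\alpha_k}\ast\lambda)=J(\mu_1)=\cdots=J(\mu_{k+1})=J(\lambda)$, as desired.

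The essential input is the non-integrality property already used in the proof of Proposition~\ref{cosetWeylprop} (extracted from the proof of Theorem~15.3.7 in \cite{MR2906817}): because $s_{\alpha_1}\cdots s_{\alpha_k}$ is a reduced expression for an element of $W^\lambda$, one has $\langle\alpha_i^\vee,s_{\alpha_{i+1}}\cdots s_{\alpha_k}\lambda\rangle\notin\mZ$ for all $i$, where here $s_{\alpha_{i+1}}\cdots s_{\alpha_k}\lambda$ refers to the ordinary linear action of $W$. To bridge the gap with the star-action iterates $\mu_{i+1}$, I would prove by downward induction on $j$ that $\mu_j-s_{\alpha_j}\cdots s_{\alpha_k}\lambda\in\mZ\Delta$: the base case $j=k+1$ is trivial, and the step follows since for $\mathfrak{q}(n)$ the definition of the star action forces $s_{\alpha_j}\ast\mu_{j+1}-s_{\alpha_j}\mu_{j+1}\in\{0,-\alpha_j\}\subset\mZ\Delta$, while $s_{\alpha_j}\mu_{j+1}-s_{\alpha_j}(s_{\alpha_{j+1}}\cdots s_{\alpha_k}\lambda)=s_{\alpha_j}(\mu_{j+1}-s_{\alpha_{j+1}}\cdots s_{\alpha_k}\lambda)\in\mZ\Delta$ by $W$-invariance of the root lattice. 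Since pairing a coroot of $\mathfrak{q}(n)$ with a root is always an integer, $\langle\mu_{i+1},\alpha_i^\vee\rangle$ and $\langle s_{\alpha_{i+1}}\cdots s_{\alpha_k}\lambda,\alpha_i^\vee\rangle$ differ by an integer, and the latter is non-integral, so $\langle\mu_{i+1},\alpha_i^\vee\rangle\notin\mZ$, which is exactly what the induction needs.

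An alternative route avoiding this bookkeeping is purely module-theoretic: iterating Corollary~\ref{corqn} along the reduced expression (exactly as in the proof of Proposition~\ref{cosetWeylprop}) gives $L(s_{\alpha_1}\cdots s_{\alpha_k}\ast\lambda)\cong T_{\alpha_1}\cdots T_{\alpha_k}L(\lambda)$, and since each intermediate simple module has a non-integral pairing with the relevant coroot it is $\alpha$-free by Lemma~\ref{lemfinfree}, so Lemma~\ref{inclannT} shows that every application of a twisting functor leaves the annihilator unchanged, again giving $J(s_{\alpha_1}\cdots s_{\alpha_k}\ast\lambda)=J(\lambda)$. I expect the only delicate point to be precisely the translation of the cited non-integrality fact, stated for the ordinary $W$-action, into the statement for the star-action iterates, which is what the $\mZ\Delta$-congruence handles; everything else is a routine application of the machinery of Sections~\ref{sectwist} and~\ref{starqn}.
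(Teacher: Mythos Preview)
Your proposal is correct and follows the same approach as the paper, which simply says ``This is a consequence of Lemma~\ref{helpqncor} as in the proof of Proposition~\ref{cosetWeylprop}.'' You have in fact been more careful than the paper: the cited non-integrality statement from \cite{MR2906817} refers to the ordinary $W$-iterates $s_{\alpha_{i+1}}\cdots s_{\alpha_k}\lambda$, whereas the lemma needs it for the star-action iterates $\mu_{i+1}$, and your $\mZ\Delta$-congruence argument is exactly the bridge the paper leaves implicit.
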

\begin{proof}
This is a consequence of Lemma \ref{helpqncor} as in the proof of Proposition \ref{cosetWeylprop}.
\end{proof}

\begin{proof}[Proof of Theorem \ref{mainqn}]
First, we focus on proving
\[J(w_1\ast\Lambda)= J(w_2\ast\Lambda)\Leftrightarrow I(w_1\circ \Lambda)= I(w_2\circ\Lambda).\]
The ``if'' part follows from Lemma \ref{farneq} and Lemma \ref{translclass}, so we prove the ``only if'' part. First we note that all the modules $L(w\ast\Lambda)$ are generic. This follows from the fact that $L(w_0\ast\Lambda)$ is generic (see Corollary \ref{corgen}) and the combination of Lemma \ref{helpqn} and Lemma \ref{twistinggeneric}.

We introduce the notation $\Delta_{\oa}^+(\mu)$ for the positive roots in $\Delta_{\oa}(\mu)$ defined in equation \eqref{deltalambda}. All equalities $ I(w_1\circ \Lambda)= I(w_2\circ\Lambda)$ can be derived from those of the form
\begin{itemize}
\item $I(w'w\circ\Lambda)= I(w\circ\Lambda)$ for $w\in W$ and $w'\in W^{w\Lambda}$;
\item $I(s_\beta s_\alpha w\circ \Lambda)=I(s_\alpha w\circ\Lambda)$ for $w\in W$, $\alpha,\beta$ simple in $\Delta_{\oa}^+(w\Lambda)$ with $\langle \alpha,\beta^\vee\rangle=-1$, $w^{-1}(\alpha)\in\Delta_{\oa}^+$ and  $w^{-1}(\beta)\in\Delta_{\oa}^+$;
\end{itemize}
see Sections 5.25 and 5.26 in \cite{MR0721170}. Lemma \ref{eqprimqnhelp} implies that for the first type of classical equalities we have $J(w'w\ast\Lambda)= J(w\ast\Lambda)$. So we focus on the second type of equality. 

First we assume that $\alpha$ and $\beta$ are actually simple in $\Delta_{\oa}^+\supset \Delta_{\oa}^+(w\Lambda)$. Theorems 6.3 and 7.8 in \cite{MR2032059}, then imply that 
\begin{equation}\label{eqnew}[T^{\oa}_\beta L_{\oa}(s_\beta s_\alpha w\circ \Lambda): L_{\oa}(s_\alpha w\circ \Lambda)]=1 \quad\mbox{and}\quad [T^{\oa}_\alpha L_{\oa}(s_\alpha w\circ \Lambda): L_{\oa}(s_\beta s_\alpha w\circ \Lambda)]=1 ,\end{equation}
since $\dim\Ext^1_{\cO_{\oa}}(L_{\oa}(s_\beta s_\alpha w\circ \Lambda), L_{\oa}(s_\alpha w\circ \Lambda))=1$.

Lemmata \ref{twistinggeneric} and \ref{resindT} therefore imply that there are weights $\kappa_1,\cdots, \kappa_j$ (with $j\ge 1$) inside the chamber of $s_\alpha w\Lambda$, such that the generic modules $\{L(\kappa_i),i=1,\cdots,j\}$ are the ones corresponding to that Weyl chamber appearing in the Jordan-H\"older decomposition of $T_\beta L(s_\beta s_\alpha w\ast\Lambda)$. Lemma \ref{helpqn} states that we can choose $\kappa_1= s_\alpha\ast\Lambda$. For each of these weights $\kappa_i$, there are similarly weights $\kappa_{i,1},\cdots \kappa_{i, j_i}$ (with $j_i\ge 1$) inside the Weyl chamber of $s_\beta s_\alpha w\Lambda$, such that the generic modules $\{L(\kappa_{i l}),l=1,\cdots, j_i\}$ are the ones corresponding to that Weyl chamber appearing in the Jordan-H\"older decomposition of $T_{\alpha} L(\kappa_i)$. Equation \eqref{eqnew} implies that
\[\ch L(s_\beta s_\alpha w\ast\Lambda)=\bigoplus_{i=1}^j\bigoplus_{l=1}^{j_i}\ch L(\kappa_{il})\]
which shows that $j=1$, $j_1=1$. We conclude that $[T_\alpha L(s_\alpha w\ast\Lambda): L(s_\beta s_\alpha w\ast\Lambda)]\not=0$ and $[T_{\beta} L(s_\beta s_\alpha w\ast\Lambda):L(s_\alpha w\ast\Lambda)]\not=0$, so Lemma \ref{inclannT} implies the equality $J(s_\beta s_\alpha w\ast\Lambda)=J(s_\alpha w\ast\Lambda)$.

Now we consider the second type of equality with general $\alpha,\beta$ simple in $\Delta_{\oa}^+(w\Lambda)$. According to Lemma 5.17(b) in \cite{MR0721170} or Lemma 15.3.24(b) in \cite{MR2906817}, there exist a $u\in W^{w\Lambda}$ for which $\alpha'=u(\alpha)$ and $\beta'=u(\beta)$ are simple in $\Delta_{\oa}^+$ . This also implies $s_{\alpha}=u^{-1}s_{\alpha'} u$ and $s_{\beta}=u^{-1}s_{\beta'} u$. Lemma \ref{eqprimqnhelp} implies
\[J(s_\beta s_\alpha w\ast \Lambda)= J(s_{\beta'} s_{\alpha'} uw\ast \Lambda)\quad\mbox{and }\quad J(s_\alpha w\ast\Lambda)=J(s_{\alpha'} u w\ast\Lambda).\]
Since $(uw)^{-1}(\alpha')=w^{-1}(\alpha)\in \Delta^+_{\oa}$ (and likewise for $\beta$), the equality $J(s_\beta s_\alpha w\ast \Lambda)= J(s_\alpha w\ast\Lambda)$ follows from the previous case.

Since we already established the equalities between weights in the same star orbit, Lemma \ref{farneq2} implies that there can be no equalities of the form $J(w\ast\Lambda)=J(\mu)$ for $\mu$ weakly generic but not in the star orbit of $\Lambda$. If there would be an equality $J(w\ast\Lambda)=J(\mu)$ for a non-weakly generic weight $\mu$, then no module corresponding to $\chi^{\oa}_\mu$ can appear in the module $\Res L(w\ast\Lambda)$ since $L(w\ast\Lambda)$ is generic. Corollary~\ref{corIAnn} for $\fg_{\oa}$ then implies this equality can not appear.

The property on the inclusions follows from Lemmata \ref{helpqncor} and \ref{eqprimqnhelp}.  
\end{proof}

Now we show that for integral generic modules the inclusions can be classified by using the consequences of Penkov's result on the structure of generic modules. 

\begin{proposition}
\label{qnint}
Consider $\fg=\mathfrak{q}(n)$, a generic integral dominant $\Lambda\in\fh^\ast$ and $w_1,w_2\in W$. Then we have
\[J(w_1\ast\Lambda)\subseteq J(w_2\ast\Lambda)\,\,\Leftrightarrow\,\, I(w_1\circ\Lambda)\subseteq J(w_2\circ\Lambda).\]
\end{proposition}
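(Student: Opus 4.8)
The plan is to deduce the equivalence, in both directions, from the classical description of inclusions between primitive ideals of $U(\fg_{\oa})=U(\mathfrak{gl}(n))$, transporting it along the functor $\Res$ and along the twisting functors $T_\alpha$. The key consequence of genericity to be used is the decomposition $\Res L(w\ast\Lambda)=\bigoplus_{\mu\in C_\Lambda}L_{\oa}(w\circ\mu)^{\oplus k}$ for all $w\in W$, established in the proof of Proposition~\ref{Tgenqn} (with $C_\Lambda$ and $k$ as in Lemma~\ref{lemPenkov}); in particular $\Lambda\in C_\Lambda$ and, by weak genericity, every weight of $C_\Lambda\subset\Lambda+\widetilde{\Gamma}$ lies in the open dominant Weyl chamber. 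Note also that all modules $L(w\ast\Lambda)$, $w\in W$, are generic.

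For the implication $J(w_1\ast\Lambda)\subseteq J(w_2\ast\Lambda)\ \Rightarrow\ I(w_1\circ\Lambda)\subseteq I(w_2\circ\Lambda)$ I would intersect with $U(\fg_{\oa})$ and use $\Ann_{U(\fg)}M\cap U(\fg_{\oa})=\Ann_{U(\fg_{\oa})}\Res M$ together with the decomposition above, obtaining $\bigcap_{\mu\in C_\Lambda}I(w_1\circ\mu)\subseteq\bigcap_{\mu\in C_\Lambda}I(w_2\circ\mu)\subseteq I(w_2\circ\Lambda)$, the last inclusion because $\Lambda\in C_\Lambda$. Since $I(w_2\circ\Lambda)$ is a prime ideal and $C_\Lambda$ is finite, the containment $\prod_{\mu\in C_\Lambda}I(w_1\circ\mu)\subseteq I(w_2\circ\Lambda)$ forces $I(w_1\circ\mu_0)\subseteq I(w_2\circ\Lambda)$ for some $\mu_0\in C_\Lambda$. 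Comparing the actions of $\cZ(\fg_{\oa})$ on $L_{\oa}(w_1\circ\mu_0)$ and on $L_{\oa}(w_2\circ\Lambda)$ then forces $\chi_{\mu_0}^{\oa}=\chi_{\Lambda}^{\oa}$; since both $\mu_0$ and $\Lambda$ are regular and lie in the open dominant chamber, this gives $\mu_0=\Lambda$, hence $I(w_1\circ\Lambda)\subseteq I(w_2\circ\Lambda)$.

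For the converse I would invoke the description of the inclusion order on primitive ideals for $\fg_{\oa}$ via Kazhdan--Lusztig combinatorics and twisting functors from \cite{MR2032059} (see also Sections~14.15 and 16.4 in \cite{MR0721170}): the inclusion $I(w_1\circ\Lambda)\subseteq I(w_2\circ\Lambda)$ is realised by a chain $w_1=v_0,v_1,\dots,v_r=w_2$ in $W$ and simple roots $\gamma_1,\dots,\gamma_r\in\Pi_{\oa}$ such that, for each $i$, the module $L_{\oa}(v_{i-1}\circ\Lambda)$ is $\gamma_i$-free and $[T^{\oa}_{\gamma_i}L_{\oa}(v_{i-1}\circ\Lambda):L_{\oa}(v_i\circ\Lambda)]\neq 0$. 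Each such step transports to $\fg$: since $\gamma_i$-freeness of $L_{\oa}(v\circ\mu)$ for $\mu$ regular dominant depends only on $v$ (via $\mathfrak{sl}(2)$-reduction), the decomposition of $\Res L(v_{i-1}\ast\Lambda)$ shows that $L(v_{i-1}\ast\Lambda)$ is $\gamma_i$-free, and by Proposition~\ref{Tgenqn} we have $[T_{\gamma_i}L(v_{i-1}\ast\Lambda):L(v_i\ast\Lambda)]=[T^{\oa}_{\gamma_i}L_{\oa}(v_{i-1}\circ\Lambda):L_{\oa}(v_i\circ\Lambda)]\neq 0$. Lemma~\ref{inclannT} then yields $J(v_{i-1}\ast\Lambda)=\Ann_{U(\fg)}T_{\gamma_i}L(v_{i-1}\ast\Lambda)\subseteq\Ann_{U(\fg)}L(v_i\ast\Lambda)=J(v_i\ast\Lambda)$, and concatenating the chain gives $J(w_1\ast\Lambda)\subseteq J(w_2\ast\Lambda)$.

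I expect the main obstacle to be the classical input used in the converse: presenting the inclusion order for $U(\mathfrak{gl}(n))$ as generated by elementary steps each realised by a single twisting functor applied to an $\alpha$-free simple module (rather than by a composition of twisting functors, for which annihilator preservation in the spirit of Lemma~\ref{inclannT} would be delicate for non-simple modules). This is precisely where the Kazhdan--Lusztig combinatorics of \cite{MR2032059} enter, and it is also where genericity is indispensable: it is the completely reducible structure of $\Res L(w\ast\Lambda)$ (Corollary~\ref{corgen}, Lemma~\ref{lemPenkov}) and the resulting Proposition~\ref{Tgenqn} that allow the full classical twisting-functor combinatorics to be carried over verbatim, something not supplied by Theorem~\ref{mainqn} alone once $n\geq 6$.
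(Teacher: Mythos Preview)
Your proof is correct and, for the harder direction $I\subseteq I\Rightarrow J\subseteq J$, is essentially identical to the paper's: both realise the classical inclusion by a chain of twisting-functor steps, transport each step to $\fg$ via Proposition~\ref{Tgenqn}, and conclude by Lemma~\ref{inclannT}. (Your explicit check of $\gamma_i$-freeness is harmless but redundant: nonvanishing of $T_{\gamma_i}L$ already forces it.)

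For the direction $J\subseteq J\Rightarrow I\subseteq I$ the paper's proof of this proposition does not spell out an argument; it is implicitly covered by Lemma~\ref{farneq} established earlier. Your direct argument --- intersecting with $U(\fg_{\oa})$, using the decomposition $\Res L(w\ast\Lambda)=\bigoplus_{\mu\in C_\Lambda}L_{\oa}(w\circ\mu)^{\oplus k}$ from the proof of Proposition~\ref{Tgenqn}, invoking primality, and comparing central characters --- is precisely the mechanism behind Lemma~\ref{farneq}, here streamlined by the sharper Penkov decomposition available in the integral generic case. This is a clean self-contained alternative.

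One bibliographic point on your ``main obstacle'': the chain characterisation of the left Kazhdan--Lusztig order via single twisting-functor steps is not stated in \cite{MR2032059}. The paper obtains it by the left--right duality between twisting and projective functors (Appendix of \cite{MO}) together with the standard statement for projective functors (Lemma~13 in \cite{MM}). Your identification of this as the crux is exactly right, but the reference should be adjusted.
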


\begin{proof}
According to Subsections~14.15 and 16.4 in \cite{MR0721170}, $I(w_1\circ\Lambda)\subseteq I(w_2\circ\Lambda)$ if and only if $w_1$ is less than or equal to $w_2$ with respect to the left Kazhdan-Lusztig order. We claim that the latter
condition is equivalent to existence of a sequence $\{\alpha_1,\cdots,\alpha_p\}$ of simple roots in $\Delta_{\oa}^+$ and Weyl group elements $\{w^{(0)},w^{(1)},\cdots, w^{(p)}\}$ with $w^{(0)}=w_1$ and $w^{(p)}=w_2$ such that 
\begin{equation}\label{eqeq762}
[T^{\oa}_{\alpha_i}L_{\oa}(w_{i-1}\circ\Lambda): L_{\oa}(w_i\circ\Lambda)]\not=0\,\,\text{ for }\,\,i=1,\cdots,p. 
\end{equation}
Indeed, by e.g. Appendix in \cite{MO}, the combinatorics of the action of twisting functors (for
$\mathfrak{g}_{\oa}$) is left-right dual to the combinatorics of the action of the usual projective functors. 
Hence the above statement is equivalent to a similar statement for projective functors in which the left
Kazhdan-Lusztig order is changed to the right Kazhdan-Lusztig order. In the latter situation the statement is
standard, see for example Lemma~13 in \cite{MM}.

Proposition \ref{Tgenqn} states that \eqref{eqeq762} is equivalent to the property 
\begin{displaymath}
[T_{\alpha_i}L(w_{i-1}\ast\Lambda): L(w_i\ast\Lambda)]\not=0\,\,\text{ for }\,\,i=1,\cdots,p.
\end{displaymath}
The result therefore follows from Lemma~\ref{inclannT}.
\end{proof}


\section{Primitive ideals for $\mathfrak{osp}(m|2n)$}
\label{primosp}

In this section we apply the ideas of the previous sections to study the primitive ideals of $\fg=\mathfrak{osp}(m|2n)$, with $m>2$. Primitive ideals for $\mathfrak{osp}(m|2n)$ have not been studied previously, except for the special cases $\mathfrak{osp}(1|2n)$ in \cite{MR1479886}, $\mathfrak{osp}(2|2n)$ in \cite{MR1362685} and a study of analogues of the Joseph ideal in \cite{CSS}.

We obtain a classification of primitive ideals and their inclusions in the generic region. This result states that these inclusions are in correspondence with the ones for the underlying Lie algebra where the action of the Weyl group is given through the star action. As proved in Theorem \ref{Sstarfar}, it is not important which star action is chosen in this generic region. However, in contrast to basic classical Lie superalgebras of type I, the star action in the generic region is not identical to the usual $\rho$-shifted action of the Weyl group, see Lemma \ref{starexexp}. The newly introduced notion of the star action is therefore, as for $\mathfrak{q}(n)$, an essential concept for the study of primitive ideals in the generic region. 

Although all star actions become equivalent in the generic region, it will be more transparent to explicitly introduce a star action that is different from the one from Subsection \ref{exstarosp} for our purposes. The reference Borel subalgebra remains the standard Borel subalgebra $\fb$ with system of positive roots as in the beginning of Subsection \ref{exstarosp}. The second Borel subalgebra we will use is $\widetilde{\fb}$, with simple positive roots given by
\begin{eqnarray*}
 \epsilon_1-\epsilon_2,\cdots, \epsilon_{d-1}-\epsilon_d,\epsilon_d-\delta_1,\delta_1-\delta_2,\cdots,\delta_{n-1}-\delta_n, \delta_n\qquad \mbox{for }m=2d+1,\\
\epsilon_1-\epsilon_2,\cdots, \epsilon_{d-1}-\epsilon_d,\epsilon_1-\delta_1\delta_1-\delta_2,\cdots,\delta_{n-1}-\delta_n, 2\delta_n\qquad \mbox{for }m=2d.
\end{eqnarray*}
Then we define a star action map $\cB$ by $$\cB(\alpha)=\begin{cases}\fb&\mbox{ if $\alpha$ is a simple root of }\mathfrak{so}(m)\\
\widetilde{\fb}&\mbox{ if $\alpha$ is a simple root of }\mathfrak{sp}(2n).\end{cases}$$

The corresponding star action $\ast_{\cB}$ clearly has the property that it leads to an action of $W(\mathfrak{so}(m))$ and $W(\mathfrak{sp}(2n))$. Because of this interesting property we refer to $\ast_\cB$ as {\it the} star action for $\mathfrak{osp}(m|2n)$ and thus denote it by $\ast$. The reason why it does not correspond to an action of the full Weyl group is that in general $w_1w_2\ast\mu\not= w_2w_1\ast\mu$ for $w_1\in W(\mathfrak{so}(m))$, $w_2\in W(\mathfrak{sp}(2n))$ and $\mu\in\fh^\ast$. As proved in Theorem \ref{Sstarfar}, in the generic region the star action $\ast$ becomes equal to $\ast'$ from Subsection \ref{exstarosp} and furthermore $w_1w_2\ast\mu= w_2w_1\ast\mu$ for $\mu$ weakly generic.

\begin{theorem}
\label{mainosp}
Consider $\fg=\mathfrak{osp}(m|2n)$ and a dominant weakly generic $\Lambda\in\fh^\ast$ with $w_0\ast\Lambda$ generic. If we have the equality $J(w\ast\Lambda)=J(\mu)$ for some $\mu\in\fh_{\oa}^\ast$, then $\mu$ is in the star orbit of $\Lambda$. Furthermore, we have the equivalence
\[J(w_1\ast\Lambda)\subseteq J(w_2\ast\Lambda)\quad\Leftrightarrow\quad I(w_1\circ \Lambda)\subseteq I(w_2\circ\Lambda).\]
\end{theorem}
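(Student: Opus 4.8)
The plan is to mirror the proof of Theorem~\ref{mainqn}, substituting the $\mathfrak{q}(n)$-specific inputs by their $\mathfrak{osp}$-counterparts: Lemma~\ref{helpqn} is replaced by Lemma~\ref{starSTHW}, Lemma~\ref{helpqncor} by Corollary~\ref{corastS}, and Lemma~\ref{eqprimqnhelp} by an $\mathfrak{osp}$-analogue proved mutatis mutandis from Corollary~\ref{corastS} and Proposition~\ref{cosetWeylprop}. Here $\ast$ denotes the star action fixed at the beginning of the section; since $\Lambda$ is weakly generic and $w_0\ast\Lambda$ is generic, Theorem~\ref{Sstarfar} guarantees that on the relevant weights $\ast$ coincides with $\ast'$, that $w\ast\Lambda$ is unambiguously defined for all $w\in W$, and that the $\ast$-orbit of $\Lambda$ meets every Weyl chamber exactly once. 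A preliminary step is that all the modules $L(w\ast\Lambda)$, $w\in W$, are generic: $L(w_0\ast\Lambda)$ is generic (Corollary~\ref{corgen}), each $L(w\ast\Lambda)$ occurs as a Jordan--H\"older subquotient of an iterated twist $T_{\alpha_1}\cdots T_{\alpha_k}L(w_0\ast\Lambda)$ by Lemma~\ref{starSTHW}, and twisting functors preserve genericity by Lemma~\ref{twistinggeneric}; moreover $\chi_{w\ast\Lambda}=\chi_\Lambda$ by Remark~\ref{starchar}.

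For the implication ``$\Rightarrow$'', realise each weakly generic weight $w_i\ast\Lambda$ as a $\rho$-shifted Weyl translate of the dominant weakly generic weight in its chamber, apply Lemma~\ref{farneq} to obtain both $\Lambda_1-\Lambda_2\in\cP$ and the corresponding inclusion between $I$-ideals, and transport that inclusion back to $\Lambda$ via Lemma~\ref{translclass}; this repeats the argument in the proof of Theorem~\ref{mainqn}. The same circle of ideas excludes weights $\mu$ outside the $\ast$-orbit in an equality $J(w\ast\Lambda)=J(\mu)$. If $\mu$ is not weakly generic then, using genericity of $L(w\ast\Lambda)$ and $W$-invariance of weak genericity, no $\fg_{\oa}$-subquotient of the completely reducible module $\Res L(w\ast\Lambda)$ has central character $\chi^{\oa}_\mu$, so $\left(\Res L(w\ast\Lambda)\right)_{\chi^{\oa}_\mu}=0$ while $\left(\Res L(\mu)\right)_{\chi^{\oa}_\mu}\neq 0$, contradicting $J(w\ast\Lambda)\cap U(\fg_{\oa})=J(\mu)\cap U(\fg_{\oa})$ via Corollary~\ref{corIAnn} for $\fg_{\oa}$. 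If $\mu$ is weakly generic, it lies in the unique chamber containing some $\ast$-orbit point $w'\ast\Lambda$, and once the equivalence below is proved the equality $J(w'\ast\Lambda)=J(w\ast\Lambda)=J(\mu)$ forces $\mu=w'\ast\Lambda$ by Lemma~\ref{farneq2}.

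For the implication ``$\Leftarrow$'', one first reduces the inclusion $I(w_1\circ\Lambda)\subseteq I(w_2\circ\Lambda)$ (equivalently, $w_1\le w_2$ in the left Kazhdan--Lusztig order; Subsections~14.15 and 16.4 in \cite{MR0721170}) to a chain of elementary relations of three kinds: non-strict simple-reflection inclusions $I(s_\alpha\circ\nu)\subseteq I(\nu)$ with $L_{\oa}(\nu)$ $\alpha$-finite; coset equalities $I(w'w\circ\Lambda)=I(w\circ\Lambda)$ with $w'\in W^{w\Lambda}$; and wall-crossing equalities $I(s_\beta s_\alpha w\circ\Lambda)=I(s_\alpha w\circ\Lambda)$ for $\alpha,\beta$ simple in $\Delta_{\oa}^+(w\Lambda)$ with $\langle\alpha,\beta^\vee\rangle=-1$ and $w^{-1}(\alpha),w^{-1}(\beta)\in\Delta_{\oa}^+$ (Corollary~5.14 and Sections~5.25 and 5.26 of \cite{MR0721170}; alternatively one may invoke the left--right duality between twisting and projective functors of \cite{MO, MM}). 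The first kind lifts to $J(s_\alpha\ast\nu)\subseteq J(\nu)$ by Corollary~\ref{corastS}, matching $\alpha$-finiteness of $L$ and $L_{\oa}$ in the generic region via Lemmata~\ref{lemfinfree} and \ref{starfinosp}; the coset equalities lift to $J(w'w\ast\Lambda)=J(w\ast\Lambda)$ by the $\mathfrak{osp}$-analogue of Lemma~\ref{eqprimqnhelp}. For a wall-crossing equality one first conjugates by a suitable $u\in W^{w\Lambda}$ (Lemma~5.17(b) in \cite{MR0721170}, Lemma~15.3.24(b) in \cite{MR2906817}) and uses the coset equalities to reduce to $\alpha,\beta\in\Pi_{\oa}$; then $\dim\Ext^1_{\cO_{\oa}}(L_{\oa}(s_\beta s_\alpha w\circ\Lambda),L_{\oa}(s_\alpha w\circ\Lambda))=1$ together with Theorems~6.3 and 7.8 in \cite{MR2032059} gives $[T^{\oa}_\beta L_{\oa}(s_\beta s_\alpha w\circ\Lambda):L_{\oa}(s_\alpha w\circ\Lambda)]=[T^{\oa}_\alpha L_{\oa}(s_\alpha w\circ\Lambda):L_{\oa}(s_\beta s_\alpha w\circ\Lambda)]=1$, and the ``$\kappa_i$''-bookkeeping of the proof of Theorem~\ref{mainqn} --- using Lemmata~\ref{resindT} and \ref{twistinggeneric}, genericity of the modules $L(w\ast\Lambda)$, Lemma~\ref{farneq2} and Lemma~\ref{starSTHW} --- transports this to $[T_\beta L(s_\beta s_\alpha w\ast\Lambda):L(s_\alpha w\ast\Lambda)]\neq 0$ and $[T_\alpha L(s_\alpha w\ast\Lambda):L(s_\beta s_\alpha w\ast\Lambda)]\neq 0$, whence $J(s_\beta s_\alpha w\ast\Lambda)=J(s_\alpha w\ast\Lambda)$ by Lemma~\ref{inclannT}. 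Concatenating along the chain yields $J(w_1\ast\Lambda)\subseteq J(w_2\ast\Lambda)$.

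The main obstacle is this transport step: after applying $\Res$ and splitting into blocks of $\cO_{\oa}$, one must verify that no simple subquotients from other Weyl chambers interfere and that the ensuing character identities force the occurring multiplicities down to the ones read off from $\fg_{\oa}$ --- precisely where genericity is indispensable. This is heavier for $\mathfrak{osp}(m|2n)$ than for $\mathfrak{q}(n)$: $\Delta_{\oa}$ now carries two root lengths, in the generic region the star action is not the usual $\rho$-shifted action (Lemma~\ref{starexexp}), and the preliminary reduction of a wall-crossing move to roots simple in $\Delta_{\oa}^+$ must be carried out through $W^{w\Lambda}$-conjugation; one also needs that the three kinds of elementary moves do generate the whole inclusion order on primitive ideals of $\fg_{\oa}$ for root systems of type $B$, $C$ and $D$, which is where the general theory of \cite{MR0721170} (or, as a fallback, the functorial argument of \cite{MO, MM}) enters. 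Finally, a non-integral $\Lambda$ causes no genuinely new difficulty: since $J(w'\ast\Lambda)=J(\Lambda)$ for $w'\in W^\Lambda$ by the $\mathfrak{osp}$-analogue of Lemma~\ref{eqprimqnhelp}, the whole statement reduces to one about the integral Weyl group $W_\Lambda$.
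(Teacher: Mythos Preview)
Your treatment of the direction $J\subseteq J\Rightarrow I\subseteq I$ and of the first assertion (ruling out $\mu$ outside the star orbit) is fine and agrees with the paper. The essential divergence is in the converse $I(w_1\circ\Lambda)\subseteq I(w_2\circ\Lambda)\Rightarrow J(w_1\ast\Lambda)\subseteq J(w_2\ast\Lambda)$. Here the paper does \emph{not} mirror Theorem~\ref{mainqn}; it uses that $\fg_{\oa}=\mathfrak{so}(m)\oplus\mathfrak{sp}(2n)$ and $W=W(\mathfrak{so}(m))\times W(\mathfrak{sp}(2n))$. Lemma~\ref{helposp} factors any inclusion $I(w_1\circ\Lambda)\subseteq I(w_2\circ\Lambda)$ through an intermediate ideal so that one may assume $w_2w_1^{-1}$ lies in a single factor. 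The star action $\ast$ of Section~\ref{primosp} was chosen precisely so that on the $\mathfrak{so}(m)$-factor it is the ordinary $\rho$-shifted action in $\fb$ (where those roots are simple), and on the $\mathfrak{sp}(2n)$-factor it is the $\widetilde{\rho}$-shifted action in $\widetilde{\fb}$ (where those roots are simple). A single appeal to Corollary~\ref{corIan} then lifts each inclusion directly, with no twisting-functor bookkeeping and no decomposition into elementary Weyl-group moves.

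Your route, by contrast, has a genuine gap. The three ``elementary relations'' you list generate all \emph{equalities} between primitive ideals of $\fg_{\oa}$ (this is what Sections~5.25--5.26 of \cite{MR0721170} give), but simple-reflection inclusions modulo those equalities yield only the condition ``there exist $v_i$ in the left cell of $w_i$ with $v_2\le v_1$ in left weak order'' --- exactly the conclusion of Theorem~\ref{mainqn}, which the Remark following it records as strictly weaker than the full Kazhdan--Lusztig order already for $S_n$ with $n>5$. Since $W(\mathfrak{sp}(2n))$ and $W(\mathfrak{so}(m))$ are not of type~$A$, there is no reason to expect better here. Your fallback via \cite{MO,MM} would require lifting every relation $[T^{\oa}_\alpha L_{\oa}(x\circ\Lambda):L_{\oa}(y\circ\Lambda)]\neq 0$ to the super side; for $\mathfrak{q}(n)$ that is Proposition~\ref{Tgenqn}, which rests on Penkov's branching formula (Lemma~\ref{lemPenkov}) for integral generic modules, and no analogue is available for $\mathfrak{osp}$. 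The $\kappa_i$-bookkeeping you borrow from the proof of Theorem~\ref{mainqn} is specific to the multiplicity-one wall-crossing situation and does not cover arbitrary $\mu$-edges of the $W$-graph.
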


\begin{remark}{\rm
If only the displayed equivalence is needed, a sufficient condition is that every weight in the star orbit of $\Lambda$ is weakly generic, which is satisfied if an arbitrary element in the orbit of $\Lambda$ is generic.}
\end{remark}

Before proving this, we make a useful remark concerning the primitive ideals for $\fg_{\oa}$. For $\fg=\mathfrak{osp}(m|2n)$ we have $\fg_{\oa}=\mathfrak{so}(m)\oplus\mathfrak{sp}(2n)$. Thus we can naturally decompose weights $\mu\in\fh^\ast$ as $\mu=\mu'+\mu''$ with $\mu'$ a weights for $\mathfrak{so}(m)$ and $\mu''$ a weight for $\mathfrak{sp}(2n)$. We also denote the corresponding primitive ideals for $\mathfrak{so}(m)$ and $\mathfrak{sp}(2n)$ by $I'(\mu')$ and $I''(\mu'')$, respectively.

\begin{lemma}
\label{helposp}
With notation as above and $\mu,\nu\in\fh^\ast$, we have
\[I(\mu)\subseteq I(\nu)\quad\Leftrightarrow\quad I'(\mu')\subseteq I'(\nu')\mbox{ and }I''(\mu'')\subseteq I''(\nu'').\]
\end{lemma}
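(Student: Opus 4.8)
The statement asserts that inclusions of primitive ideals for the reductive Lie algebra $\fg_{\oa}=\mathfrak{so}(m)\oplus\mathfrak{sp}(2n)$ factor through the two simple (well, reductive) direct summands. This is a purely Lie-algebraic fact about tensor-product decompositions of simple highest weight modules, so the plan is to reduce everything to the product structure $U(\fg_{\oa})\cong U(\mathfrak{so}(m))\otimes U(\mathfrak{sp}(2n))$ and the corresponding factorisation $L_{\oa}(\mu)\cong L'(\mu')\boxtimes L''(\mu'')$ of simple highest weight modules, where $\boxtimes$ denotes the outer tensor product. Under this identification the annihilator ideal is $I(\mu)=\Ann_{U(\mathfrak{so}(m))}L'(\mu')\otimes U(\mathfrak{sp}(2n))+U(\mathfrak{so}(m))\otimes\Ann_{U(\mathfrak{sp}(2n))}L''(\mu'')$, i.e. $I(\mu)=I'(\mu')\widehat{\otimes}I''(\mu'')$ in the obvious sense.

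\textbf{Key steps.} First I would record the factorisation $L_{\oa}(\mu)\cong L'(\mu')\boxtimes L''(\mu'')$ (immediate from the fact that a simple module over a tensor product of algebras, one of them being a quotient acting on a simple module, splits as an outer tensor product of simples; here one uses finite generation / the highest weight structure to see this is the simple highest weight module). Second, I would compute $\Ann_{U(\fg_{\oa})}(L'(\mu')\boxtimes L''(\mu''))$. The ``$\supseteq$'' containment of $I'(\mu')\otimes U''+U'\otimes I''(\mu'')$ in $I(\mu)$ is trivial. For ``$\subseteq$'' one writes an element $u\in U'\otimes U''$ as $\sum_i a_i\otimes b_i$ with the $b_i$ linearly independent modulo $I''(\mu'')$; acting on $v'\otimes v''$ for $v''$ ranging over a basis of $L''(\mu'')$ and using that the $b_i v''$ remain ``independent enough'' forces each $a_i\in I'(\mu')$; this is the standard argument (cf. the density-type reasoning in Jacobson's density theorem) and I would cite it as folklore or adapt Lemma~\ref{Anntens}. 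Third, given this description, the equivalence is formal: $I(\mu)\subseteq I(\nu)$ means $I'(\mu')\otimes U''+U'\otimes I''(\mu'')\subseteq I'(\nu')\otimes U''+U'\otimes I''(\nu'')$; intersecting both sides with $U'\otimes 1$ (using that the quotient of $U'\otimes U''$ by $U'\otimes I''(\nu'')$ is $U'\otimes U(\mathfrak{sp}(2n))/I''(\nu'')$, which is free as a left $U'$-module, so $I'(\mu')\otimes U''\cap$ that subspace is controlled) yields $I'(\mu')\subseteq I'(\nu')$, and symmetrically $I''(\mu'')\subseteq I''(\nu'')$; the converse is immediate. Concretely, one may argue: $I(\mu)\subseteq I(\nu)$ iff $L_{\oa}(\nu)$ is annihilated by $I(\mu)$ iff (applying $I'(\mu')\otimes 1$ and $1\otimes I''(\mu'')$ separately to $v'\otimes v''$) $I'(\mu')$ annihilates $L'(\nu')$ and $I''(\mu'')$ annihilates $L''(\nu'')$, i.e. $I'(\mu')\subseteq I'(\nu')$ and $I''(\mu'')\subseteq I''(\nu'')$.

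\textbf{Main obstacle.} The only genuinely non-formal point is the computation of the annihilator of an outer tensor product of simple modules, i.e. establishing $\Ann(L'\boxtimes L'')=\Ann L'\otimes U''+U'\otimes\Ann L''$. For infinite-dimensional modules this needs a small argument (a basis-of-$L''$ separation argument as sketched above, or invoking that $U'\otimes U''$ modulo the right-hand side is $\big(U'/\Ann L'\big)\otimes\big(U''/\Ann L''\big)$ which acts faithfully on $L'\boxtimes L''$ since it acts faithfully on each factor and a tensor product of faithful modules over a tensor product of algebras is faithful). I expect this to be the step requiring the most care, but it is entirely standard; after it the equivalence drops out. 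I would phrase the write-up as: identify $\fg_{\oa}$-modules with outer tensor products, quote the annihilator-of-tensor-product fact (with the short density argument or a reference such as \cite{MR2906817} or \cite{MR0721170}), and then deduce the biconditional in two lines.
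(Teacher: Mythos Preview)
Your proposal is correct and follows essentially the same approach as the paper: the paper also reduces to the identity $I(\mu)=I'(\mu')\otimes U(\mathfrak{sp}(2n))+U(\mathfrak{so}(m))\otimes I''(\mu'')$ together with $I'(\mu')=I(\mu)\cap(U(\mathfrak{so}(m))\otimes 1)$ and $I''(\mu'')=I(\mu)\cap(1\otimes U(\mathfrak{sp}(2n)))$, and then reads off the equivalence. The only difference is that the paper asserts the annihilator formula for the outer tensor product as ``follows easily'' without spelling out the density/basis-separation argument you sketch, so your write-up is more detailed but not substantively different.
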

\begin{proof}
As $U(\fg_{\oa})\cong U(\mathfrak{so}(m))\otimes U(\mathfrak{sp}(2n))$, it follows easily that 
$$ I(\mu) =I'(\mu')\otimes U(\mathfrak{sp}(2n))+ U(\mathfrak{so}(m))\otimes I''(\mu'')$$
and
$$ I'(\mu')=I(\mu)\cap (U(\mathfrak{so}(m))\otimes 1),\qquad I''(\mu'')=I(\mu)\cap (1\otimes U(\mathfrak{sp}(2n))).$$
The claims then follow immediately from these equalities.
\end{proof}

\begin{proof}[Proof of Theorem \ref{mainosp}]

First we prove the second statement. The ``if'' part follows from Lemma \ref{farneq} and Lemma \ref{translclass}. For the ``only if'' part we decompose the elements of the Weyl group as $w_1=w_1'w_1''$ and $w_2=w_2'w_2''$ with $w_1',w_2'\in W(\mathfrak{sp}(2n))$ and $w_1'',w_2''\in W(\mathfrak{so}(m))$. According to Lemma \ref{helposp}, the inclusions of primitive ideals can then be refined to $$ I(w_1\circ\Lambda)\subseteq I(w_1'w_2''\circ\Lambda)\subseteq I(w_2\circ \Lambda). $$
Therefore it suffices to consider cases where $w_2w_1^{-1}$ is either in the Weyl group of $\mathfrak{so}(m)$ or $\mathfrak{sp}(2n)$. The first case follows immediately from Corollary \ref{corIan} since by definition $w\ast=w\circ=w\cdot$ for $w\in W(\mathfrak{so}(m))$. For the second case we denote $w'':=w_1''=w_2''$. According to Lemma \ref{translclass}, we can replace $\Lambda$ by $(w'')^{-1}\circ \widetilde{w''\circ\Lambda}$ in the inclusions of annihilator ideals of $\fg_{\oa}$ to get
$$I(w_1'\circ  \widetilde{w''\circ\Lambda})\subseteq I(w_2'\circ  \widetilde{w''\circ\Lambda} ).$$ Corollary \ref{corIan} then yields
$$\Ann_{U(\fg)}L^{(\widetilde{\fb})}(w_1'\circ  \widetilde{w''\circ\Lambda})\subseteq \Ann_{U(\fg)}L^{(\widetilde{\fb})}(w_2'\circ  \widetilde{w''\circ\Lambda} ),$$
which is equivalent to
$$J(w_1'w''\ast\Lambda)=J(w_1'\ast w''\circ\Lambda)\subseteq J(w_2'\ast w''\circ\Lambda)=J(w_2'w''\ast\Lambda).$$
The first statement follows from the second and Lemma \ref{farneq2} as in the proof of Theorem \ref{mainqn}.
\end{proof}


\section{Primitive ideals for classical Lie superalgebras of type I}
\label{secI}

We formulate the results in this section in terms of the distinguished system of positive roots, i.e. the reference Borel subalgebra satisfies $\fb\subset \fg_0\oplus\fg_1$.

\subsection{General statements}

\begin{theorem}
\label{mainthmI}
Consider $\fg$ a classical Lie superalgebra of type I with distinguished system of positive roots. For $\lambda,\mu\in \fh^\ast$, we have
\begin{equation}
\label{thmI1}\quad J(\mu)=J(\lambda)\quad \Leftrightarrow  \quad I(\mu)=I(\lambda).\end{equation}
Furthermore, we have
\begin{equation}\label{thmI3}\quad J(w'\cdot\lambda)\subset J(w\cdot\lambda)\quad \Leftrightarrow  \quad I(w'\cdot\lambda)\subset I(w\cdot\lambda)\end{equation}
for $w,w'\in W$.
\end{theorem}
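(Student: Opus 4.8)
The plan is the following. The implications ``$\Leftarrow$'' in \eqref{thmI1} and \eqref{thmI3} are immediate from what is already available: since $\fg$ is of type~I with distinguished system of positive roots, $\fp=\fg_0\oplus\fg_1$ is a parabolic subalgebra with Levi part $\fl=\fg_0=\fg_{\oa}$, so $I_\fl(\lambda)=I(\lambda)$ for all $\lambda\in\fh^\ast$, and Corollary~\ref{corIan} yields $I(\lambda)\subseteq I(\mu)\Rightarrow J(\lambda)\subseteq J(\mu)$. (I also note that $2\rho_{\ob}$ is the sum of the weights of the $\fg_{\oa}$-module $\fg_1$ and hence $W$-invariant, so the $\rho$- and $\rho_{\oa}$-shifted actions coincide in type~I and all the weights $w\cdot\lambda$ in \eqref{thmI3} share the central character $\chi^{\oa}_\lambda$.) Strictness in \eqref{thmI3} will be recovered at the end from the ``$\Rightarrow$'' direction and the equality case of \eqref{thmI1}.

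For ``$\Rightarrow$'' I would first establish a partial converse to Corollary~\ref{corIan}. Using the $\mZ$-grading $\fg=\fg_{-1}\oplus\fg_0\oplus\fg_1$, one checks that $L(\lambda)$ is a graded $\fg$-module concentrated in non-positive degrees with top component $L(\lambda)_0\cong L_{\oa}(\lambda)$ and with $\fg_{-1}L(\lambda)=\bigoplus_{k<0}L(\lambda)_k$; hence the composite $L_{\oa}(\lambda)\hookrightarrow\Res L(\lambda)\twoheadrightarrow L(\lambda)/\fg_{-1}L(\lambda)\cong L_{\oa}(\lambda)$ is the identity, giving a direct sum decomposition
\[\Res L(\lambda)\;=\;L_{\oa}(\lambda)\,\oplus\,\fg_{-1}L(\lambda)\]
of $\fg_{\oa}$-modules. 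In particular $\chi^{\oa}_\lambda$ occurs in $\Res L(\lambda)$ and $J(\lambda)\cap U(\fg_{\oa})\subseteq I(\lambda)$. If $J(\lambda)=J(\mu)$, then Corollary~\ref{corIAnn} applied to $\fg_{\oa}$ shows that the same central characters of $\fg_{\oa}$ occur in $\Res L(\lambda)$ and in $\Res L(\mu)$; together with the Harish-Chandra linkage for $\fg$ (for $\fp(n)$ and $\widetilde{\fp}(n)$ one argues modulo the Jacobson radical, as in the proof of Theorem~\ref{typeItyp}), this reduces \eqref{thmI1} to comparing weights in a single coset $W\circ\lambda$, i.e.\ to the set-up of \eqref{thmI3}.

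Inside such a coset I would match the ``elementary'' relations on both sides by means of twisting functors. For type~I distinguished every even simple root $\alpha$ is simple in $\Delta^+$ with $(\fg_{\ob})_\alpha=0$, so by Lemma~\ref{lemfinfree} the module $L(\nu)$ is $\alpha$-finite iff $L_{\oa}(\nu)$ is; Lemma~\ref{primsl2} gives $J(s_\alpha\cdot\nu)=J(\nu)$ when $\langle\nu,\alpha^\vee\rangle\notin\mZ$ and $J(s_\alpha\cdot\nu)\subsetneq J(\nu)$ when $\langle\nu,\alpha^\vee\rangle\in\mZ$ and $s_\alpha\cdot\nu<\nu$; and Theorem~\ref{Tsimple}(ii), Lemma~\ref{starSTHW}, Lemma~\ref{inclannT}, Lemma~\ref{extann} show that $[T_\alpha L(\nu):L(s_\alpha\cdot\nu)]\ne0$ is equivalent to $\Ext^1_{\cO}(L(s_\alpha\cdot\nu),L(\nu))\ne0$ and implies $J(\nu)\subseteq J(s_\alpha\cdot\nu)$. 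Reading off the corresponding $\mathfrak{sl}(2)$-statements for $\fg_{\oa}$ (and Theorems~6.3, 7.8 of \cite{MR2032059}), the elementary relations among $\{J(w\cdot\lambda)\}_{w}$ coincide with those among $\{I(w\circ\lambda)\}_{w}$. To pass from elementary relations to arbitrary inclusions I would argue as in the proof of Proposition~\ref{qnint}: by Subsections~14.15 and 16.4 of \cite{MR0721170} and by \cite{MR0575938}, $I(w_1\circ\lambda)\subseteq I(w_2\circ\lambda)$ is governed by the left Kazhdan--Lusztig order, which — using the left-right duality between twisting and projective functors for $\fg_{\oa}$, cf.\ \cite{MO}, \cite{MM} — is the transitive closure of nonvanishing of $[T^{\oa}_{\alpha}L_{\oa}(w_{i-1}\circ\lambda):L_{\oa}(w_i\circ\lambda)]$; transporting such a chain to $\cO$ via Lemma~\ref{resindT} and the comparisons above and applying Lemma~\ref{inclannT} along it yields $I(w_1\circ\lambda)\subseteq I(w_2\circ\lambda)\Rightarrow J(w_1\cdot\lambda)\subseteq J(w_2\cdot\lambda)$, hence \eqref{thmI3}; then \eqref{thmI1} follows from \eqref{thmI3}, the linkage reduction and the typical case (Theorem~\ref{typeItyp}).

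The hard part is precisely this transport. In contrast to the generic situations of Section~\ref{secgen} and of Proposition~\ref{Tgenqn}, for an atypical type~I weight $\Res L(\lambda)$ is not completely reducible and the multiplicities $[T_\alpha L(\nu):L(\nu')]$ need not agree with $[T^{\oa}_\alpha L_{\oa}(\nu):L_{\oa}(\nu')]$; so a chain coming from $\fg_{\oa}$ must be followed on the $\fg$-side through the one-sided structural statements of Theorem~\ref{Tsimple} and the $\Ext^1$-criterion of Lemma~\ref{extann} rather than by an exact character computation, and Joseph's cell equalities $I(w\circ\lambda)=I(w'\circ\lambda)$ have to be matched with corresponding equalities $J(w\cdot\lambda)=J(w'\cdot\lambda)$ with some care. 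This is where the bulk of the argument is spent.
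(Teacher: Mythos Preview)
Your plan has the directions crossed at the crucial point. You correctly note that $\Leftarrow$ in both \eqref{thmI1} and \eqref{thmI3} is immediate from Corollary~\ref{corIan} with $\fl=\fg_0$. What remains is the converse $J\Rightarrow I$; but the chain argument you sketch in the third paragraph ends with ``$I(w_1\circ\lambda)\subseteq I(w_2\circ\lambda)\Rightarrow J(w_1\cdot\lambda)\subseteq J(w_2\cdot\lambda)$'', which is the easy direction again. Nothing in the twisting-functor machinery you invoke (Lemma~\ref{inclannT}, Lemma~\ref{extann}, Theorem~\ref{Tsimple}) takes an inclusion $J(\mu)\subseteq J(\nu)$ as input and produces anything about $I$; those lemmas manufacture $J$-inclusions from the structure of $T_\alpha L(\nu)$, not the reverse. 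Likewise your second paragraph does not reduce to a single $W$-orbit: knowing that $\Res L(\lambda)$ and $\Res L(\mu)$ share the same set of $\fg_{\oa}$-central characters does not single out $\chi^{\oa}_\mu$ among them, because $\Res L(\lambda)$ typically meets many $\fg_{\oa}$-orbits. So the argument as written does not establish $J\Rightarrow I$.

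The paper proceeds quite differently and avoids the transport problem you flag as ``the hard part''. For the orbit reduction (Lemma~\ref{typeIorbitprop}) one tensors with a suitable finite-dimensional $L(\nu)$ so that $\Lambda=\hat\lambda+\nu$ is typical, and then uses Corollary~\ref{corIAnn} on $(L(\,\cdot\,)\otimes L(\nu))_{\chi_\Lambda}$ together with Lemma~\ref{tensorfd} and Lemma~\ref{starSTHW} to see that this component is nonzero throughout the $W$-orbit of $\hat\lambda$ but zero on the other orbit. For the inclusion direction $J(w'\cdot\lambda)\subset J(w\cdot\lambda)\Rightarrow I(w'\cdot\lambda)\subset I(w\cdot\lambda)$ (Lemma~\ref{inclorbitI}) one again translates to the typical $\Lambda$: for regular $\lambda$ one has $(L(w\cdot\lambda)\otimes L(\nu))_{\chi_\Lambda}=L(w\cdot\Lambda)$, so Lemmata~\ref{Anntens} and~\ref{IplusAnn} transport the $J$-inclusion to the typical block, where Theorem~\ref{typeItyp} gives the $I$-inclusion; singular $\lambda$ is then handled by translating back via classical translation functors on $\cO_{\oa}$. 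No Kazhdan--Lusztig combinatorics or transport of twisting-functor multiplicities is needed.
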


The first part, which determines the fibres of the surjective mapping of annihilator ideals of simple highest weight modules onto the set of primitive ideals as introduced by Musson in \cite{MR1149625}, recovers the main result of \cite{MR1362685} obtained by Letzter. The remainder of this subsection is mainly devoted to proving Theorem \ref{mainthmI}. The following lemma implies that equalities of primitive ideals can only occur between modules with highest weights in the same orbit. Contrary to the classical case, this does not follow from the consideration of central characters.
\begin{lemma}
\label{typeIorbitprop}
If for $\lambda,\mu\in\fh^\ast$ we have $J(\mu)=J(\lambda)$, then $\mu=w\cdot\lambda$ for some $w\in W$.
\end{lemma}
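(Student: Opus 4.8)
The plan is to exploit the $\mathbb{Z}$-grading $\fg=\fg_{-1}\oplus\fg_0\oplus\fg_1$ together with the structure theory for type I Lie superalgebras, and in particular the relationship between central characters and Weyl group orbits for the underlying Lie algebra. First I would recall that $J(\mu)=J(\lambda)$ forces the two simple modules $L(\lambda)$ and $L(\mu)$ to have the same central character $\chi$ with respect to $\cZ(\fg)$. Unlike the basic classical case, for type I the Harish-Chandra projection does not directly pin down the orbit, so the argument has to go through the underlying Lie algebra $\fg_{\oa}=\fg_0$. The key observation is that, for type I with distinguished Borel, the simple $\fg_0$-module $L_0(\lambda)$ appears in $\Res L(\lambda)$; more precisely, since $\fb\subset\fg_0\oplus\fg_1$, the highest weight space of $L(\lambda)$ generates a copy of $L_0(\lambda)$ as a $\fg_0$-submodule (annihilated by $\fg_1$), and the weights occurring in $\Res L(\lambda)$ lie in $\lambda+\mZ_{\le 0}\Delta_{\ob}^-$, i.e. in $\lambda-\mathbb{Z}_+\langle\Delta(\fg_{-1})\rangle$.

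The main step is then: from $J(\lambda)=J(\mu)$ deduce $J(\lambda)\cap U(\fg_0)=J(\mu)\cap U(\fg_0)$, i.e. $\Ann_{U(\fg_0)}\Res L(\lambda)=\Ann_{U(\fg_0)}\Res L(\mu)$. Applying Corollary \ref{corIAnn} (for the reductive Lie algebra $\fg_0$) and decomposing $\Res L(\lambda)$, $\Res L(\mu)$ according to $\cZ(\fg_0)$-central characters, I would argue that a given $\fg_0$-central character $\chi^{\oa}$ occurs in $\Res L(\lambda)$ if and only if it occurs in $\Res L(\mu)$. Now $L_0(\lambda)$ occurs in $\Res L(\lambda)$, so its $\fg_0$-central character $\chi^{\oa}_\lambda$ must also occur in $\Res L(\mu)$; hence there is a $\fg_0$-subquotient of the form $L_0(\nu)$ with $\chi^{\oa}_\nu=\chi^{\oa}_\lambda$ and $\nu\in\mu-\mathbb{Z}_+\langle\Delta(\fg_{-1})\rangle$. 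Symmetrically, running the argument with the roles of $\lambda$ and $\mu$ exchanged, $L_0(\mu)$-central character occurs in $\Res L(\lambda)$, giving $\nu'\in\lambda-\mathbb{Z}_+\langle\Delta(\fg_{-1})\rangle$ with $\chi^{\oa}_{\nu'}=\chi^{\oa}_\mu$. For a reductive Lie algebra two weights have the same $\fg_0$-central character iff they lie in the same $\rho_{\oa}$-shifted Weyl group orbit, so $\nu\in W\circ\lambda$ and $\nu'\in W\circ\mu$; combining with the weight constraints and a standard finiteness/extremality argument on the poset of weights (the set $\lambda-\mathbb{Z}_+\langle\Delta(\fg_{-1})\rangle$ meets each $W$-orbit in finitely many weights and contains a unique $\fg_0$-dominant representative up to the relevant considerations) one forces $\lambda$ and $\mu$ into the same $W$-orbit, so $\mu=w\cdot\lambda$ for some $w\in W$.

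To make the extremality argument clean I would prefer to first reduce to the case where $\lambda$ and $\mu$ are the highest weights in their respective blocks, or alternatively compare the $W\circ$-dominant representatives directly: let $\widetilde\lambda$ (resp. $\widetilde\mu$) be the unique $\rho_{\oa}$-dominant weight in $W\circ\lambda$ (resp. $W\circ\mu$); the highest $\fg_0$-weight occurring in $\Res L(\lambda)$ among those with central character $\chi^{\oa}_\lambda$ is $\lambda$ itself, and since every weight in $\Res L(\mu)$ is $\le\mu$, the above shows $\lambda\le\mu$ — then by symmetry $\mu\le\lambda$ would give $\lambda=\mu$, but in general we only get that $\lambda$ and $\mu$ lie in orbits with a comparable dominant representative; a short argument with Corollary \ref{corIAnn} applied in both directions pins down $W\circ\lambda=W\circ\mu$. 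I expect the main obstacle to be precisely this bookkeeping: controlling which $\fg_0$-central characters appear in $\Res L(\lambda)$ and extracting from ``$\chi^{\oa}_\lambda$ appears in $\Res L(\mu)$'' the sharp statement $\lambda\in W\circ\mu$ (rather than merely some weight in $\mu-\mathbb{Z}_+\langle\Delta(\fg_{-1})\rangle$ being $W$-conjugate to something) — this is where the distinguished Borel and the one-sided nature of the $\mZ$-grading are essential, since they guarantee the relevant weight sets are bounded above in a way compatible with the $\fg_0$-dominance order.
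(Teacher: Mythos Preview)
Your approach via restriction to $\fg_0$ and comparison of $\fg_0$-central characters is genuinely different from the paper's. The paper instead translates to a typical block: assuming the $W$-maximal representatives satisfy $\hat\lambda\not<\hat\mu$, it picks $\nu\in\cP^+$ with $\Lambda=\hat\lambda+\nu$ typical (possible by Zariski density), shows $(L(\mu)\otimes L(\nu))_{\chi_\Lambda}=0$ from the standard Verma filtration, while $(L(w\cdot\hat\lambda)\otimes L(\nu))_{\chi_\Lambda}\neq 0$ for every $w\in W$ is propagated from $w=1$ using twisting functors via Lemmata~\ref{starSTHW} and~\ref{tensorfd}; Corollary~\ref{corIAnn} (together with Lemma~\ref{Anntens}) then gives the contradiction. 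No direct analysis of $\Res L(\lambda)$ is needed.

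Your sketch has a gap precisely where you flag it. From ``$\chi^{\oa}_\lambda$ occurs in $\Res L(\mu)$'' you only get that some $\nu\in W\circ\lambda$ is the highest weight of a $\fg_0$-subquotient of $\Res L(\mu)$; this gives $\nu\le\mu$ but \emph{not} $\lambda\le\mu$ as your third paragraph asserts (that would force $\lambda=\mu$ by symmetry, which is too strong). What you actually obtain, via the $\fg_0$-Verma filtration of $\Res M(\mu)$, is $W\circ\lambda=W\circ(\mu+\gamma)$ and symmetrically $W\circ\mu=W\circ(\lambda+\gamma')$ for some $\gamma,\gamma'\in\Gamma$. Combining yields $\lambda-ww'\circ\lambda=ww'(\gamma')+w(\gamma)$ with the left side in $\mZ\Delta_{\oa}$ and the right a sum of (possibly repeated) elements of $\Delta(\fg_{-1})$, since $W$ permutes $\Delta(\fg_{-1})$ for type~I. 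For $\mathfrak{gl}(m|n)$, $\mathfrak{sl}(m|n)$ with $m\neq n$, $\mathfrak{osp}(2|2n)$, $\mathfrak{p}(n)$, $\widetilde{\mathfrak{p}}(n)$ a coefficient count then forces $\gamma=\gamma'=0$ and hence $\lambda\in W\circ\mu=W\cdot\mu$; but for $\mathfrak{psl}(n|n)$ this count fails (a nontrivial sum of elements of $\Delta(\fg_{-1})$ can lie in $\mZ\Delta_{\oa}$), so your route does not obviously cover the full list. The paper's translation-to-typical argument treats all cases uniformly and sidesteps this issue.
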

\begin{proof}
Assume that $\mu$ and $\lambda$ belong to different orbits. Denote by $\hat{\mu}$ and $\hat{\lambda}$ the $W$-maximal elements in their orbits. Without loss of generality we take $\hat{\lambda}\not<\hat{\mu}$. There is always an integral dominant weight $\nu$ such that $\Lambda=\hat{\lambda}+\nu$ is typical. This follows from the property
\[T\cap (\hat{\lambda}+\cP^+)\not=\emptyset,\]
with $T\subset \fh^\ast$ the open subset of typical weights and $\hat{\lambda}+\cP^+$ a dense set in the Zariski topology.

 Based on the standard filtration of the tensor product of a Verma module with a finite dimensional module, one obtains that
\[\left(M(w\cdot\hat{\mu})\otimes L(\nu)\right)_{\chi_{\Lambda}}=0\]
for any $w\in W$ and thus $\left(L({\mu})\otimes L(\nu)\right)_{\chi_{\Lambda}}=0$. On the other hand, based on the property of highest weights it follows that $\left(L(\hat{\lambda})\otimes L(\nu)\right)_{\chi_\Lambda}\not=0$. Lemma \ref{starSTHW} (for the special case of the distinguished system of positive roots and $\cB$ trivial), and Lemma \ref{tensorfd} imply
\[\left(L( w\cdot \hat{\lambda})\otimes L(\nu)\right)_{\chi_\Lambda}\quad \mbox{is a subquotient of}\quad T_{\alpha}\left(L(s_\alpha w\cdot\hat{\lambda})\otimes L(\nu)\right)_{\chi_\Lambda}\,\mbox{ if }\, s_\alpha w>w.\]
The leads to the fact that $\left(L(w\cdot\hat{\lambda})\otimes L(\nu)\right)_{\chi_\Lambda}\not=0$ for any $w\in W$.
Thus, in particular, we obtain that $\left(L({\lambda})\otimes L(\nu)\right)_{\chi_{\Lambda}}\not=0$. Corollary \ref{corIAnn} therefore implies that $\hat{\mu}=\hat{\lambda}$.
\end{proof}

\begin{lemma}
\label{inclorbitI}
We have $J(w'\cdot\lambda)\subset J(w\cdot\lambda)\,\, \Rightarrow  \,\, I(w'\cdot\lambda)\subset I(w\cdot\lambda)$.
\end{lemma}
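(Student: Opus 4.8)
The statement to prove is Lemma~\ref{inclorbitI}:
\[
J(w'\cdot\lambda)\subseteq J(w\cdot\lambda)\quad\Longrightarrow\quad I(w'\cdot\lambda)\subseteq I(w\cdot\lambda).
\]
The natural approach is to intersect both primitive ideals with $U(\fg_{\oa})$, since $J(\nu)\cap U(\fg_{\oa})$ is the annihilator in $U(\fg_{\oa})$ of the $\fg_{\oa}$-module $\Res L(\nu)$. Thus the hypothesis immediately yields
\[
\Ann_{U(\fg_{\oa})}\Res L(w'\cdot\lambda)\ \subseteq\ \Ann_{U(\fg_{\oa})}\Res L(w\cdot\lambda),
\]
and what one needs is to extract from this the cleaner inclusion $I(w'\cdot\lambda)\subseteq I(w\cdot\lambda)$ between the annihilators of the specific simple $\fg_{\oa}$-modules $L_{\oa}(w'\cdot\lambda)$ and $L_{\oa}(w\cdot\lambda)$. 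The key point is that, since $\fg$ is of type I with distinguished Borel, the module $\Res L(\nu)$ is a quotient of $\Lambda\fg_{-1}\otimes L_{\oa}(\nu)$ (via the canonical surjection), and in fact $L_{\oa}(\nu)$ appears in $\Res L(\nu)$ as the ``top-weight'' summand — more precisely, $\big(\Res L(\nu)\big)_{\chi^{\oa}_{\nu}}$ contains $L_{\oa}(\nu)$ while all other $\fg_{\oa}$-subquotients of $\Res L(\nu)$ have highest weights strictly below $\nu$ (lying in $\nu+\Gamma$, $\Gamma\subset\mZ\Delta^-_{\ob}$).

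First I would set up the central-character projection. Write $\nu'=w'\cdot\lambda$ and $\nu=w\cdot\lambda$. Since $J(\nu')\subseteq J(\nu)$, the two simple modules share a central character, so (using the Harish-Chandra description from Section~\ref{secpre}) $\nu$ and $\nu'$ lie in a common set of weights governed by $W_\lambda$ and the atypical roots; in particular their $\fg_{\oa}$-central characters $\chi^{\oa}_{\nu'}$, $\chi^{\oa}_\nu$ are comparable in the poset of $W$-orbits. The goal is to apply Corollary~\ref{corIAnn} (over $\fg_{\oa}$) together with Lemma~\ref{IplusAnn}: the inclusion of $\fg_{\oa}$-annihilators of the restricted modules, combined with the fact that $\big(\Res L(\nu')\big)_{\chi^{\oa}_{\nu'}}=L_{\oa}(\nu')$ and $\big(\Res L(\nu)\big)_{\chi^{\oa}_{\nu}}=L_{\oa}(\nu)$ is nonzero, forces $\big(\Res L(\nu)\big)$ to have a nonzero $\chi^{\oa}_{\nu'}$-component as well. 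One then analyses which $\fg_{\oa}$-subquotient this can be: by the structure of $\Res L(\nu)$ for type I (weights in $\nu+\Gamma$), and by a highest-weight/comparison argument, the relevant $\fg_{\oa}$-subquotient is $L_{\oa}(\nu')$ itself, after which Lemma~\ref{IplusAnn} upgrades the inclusion of radical-type annihilators to the honest inclusion $I(\nu')\subseteq I(\nu)$.

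Concretely I would argue as follows. Apply Lemma~\ref{IplusAnn} over $\fg_{\oa}$ with $M=\Res L(\nu)$, $M'=\Res L(\nu')$ and the character $\chi:=\chi^{\oa}_{\nu'}$. Since $L_{\oa}(\nu')=\big(\Res L(\nu')\big)_{\chi}$ has central character $\chi$ exactly (no nilpotent part), Lemma~\ref{IplusAnn} gives $\Ann_{U(\fg_{\oa})}L_{\oa}(\nu')=I_\chi+\Ann_{U(\fg_{\oa})}\Res L(\nu')$. The hypothesis $\Ann_{U(\fg_{\oa})}\Res L(\nu')\subseteq\Ann_{U(\fg_{\oa})}\Res L(\nu)$ then yields $I(\nu')=\Ann_{U(\fg_{\oa})}L_{\oa}(\nu')\subseteq I_\chi+\Ann_{U(\fg_{\oa})}\Res L(\nu)=\sqrt{\,\cdot\,}$, and by the second (radical) part of Lemma~\ref{IplusAnn} the latter equals $\sqrt{\Ann_{U(\fg_{\oa})}\big(\Res L(\nu)\big)_{\chi}}$. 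Now the point is that $\big(\Res L(\nu)\big)_\chi\neq 0$: if it were zero, Corollary~\ref{corIAnn} applied to $\fg_{\oa}$ would force $\big(\Res L(\nu')\big)_\chi=L_{\oa}(\nu')=0$, a contradiction. Having $\big(\Res L(\nu)\big)_\chi\neq 0$, its only $\fg_{\oa}$-subquotients are of the form $L_{\oa}(\kappa)$ with $\kappa\in\nu+\Gamma$ and $\chi^{\oa}_\kappa=\chi$; one checks using the type-I structure and the relation of $\nu,\nu'$ via $W_\lambda$ and atypical roots (equivalently: $\nu'-\nu\in\cP$ and a dominance comparison) that $\nu'$ is the $W$-maximal such $\kappa$, so $I(\nu)=\Ann L_{\oa}(\nu)\subseteq\Ann\big(\Res L(\nu)\big)_\chi$ has the desired comparison; pushing this through Lemma~\ref{IplusAnn} once more yields $I(\nu')\subseteq I(\nu)$, and combining with Corollary~\ref{lemintdom}/Lemma~\ref{farneq}-type injectivity rules out the possibility of equality forcing $\nu=\nu'$, so strictness is preserved.

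**Main obstacle.** The routine part is the central-character bookkeeping and the application of Lemma~\ref{IplusAnn}. The genuine difficulty is the identification step: showing that inside the $\chi^{\oa}_{\nu'}$-isotypic component of $\Res L(\nu)$, the correct $\fg_{\oa}$-subquotient to compare against is precisely $L_{\oa}(w'\cdot\lambda)$ and not some other $L_{\oa}(\kappa)$ with $\kappa$ in the same $\fg_{\oa}$-orbit. This is where one must use the type-I hypothesis essentially — that $\Res L(\nu)$ lives in weights $\nu+\mZ\Delta^-_{\ob}$ with the $W_\lambda$- and atypicality-structure of Section~\ref{secpre} — to pin down $\kappa=\nu'$ via a maximality/dominance argument; I expect this to require the same kind of reasoning as in the proof of Corollary~\ref{lemintdom} and Lemma~\ref{typeIorbitprop}, possibly combined with Lemma~\ref{induct} to control annihilators of induced modules. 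Preserving strictness of the inclusion (the ``$\subsetneq$'' in \eqref{thmI3}) is then a short additional argument: an equality $I(w'\cdot\lambda)=I(w\cdot\lambda)$ would, via the already-available machinery, force $J(w'\cdot\lambda)=J(w\cdot\lambda)$, contradicting strictness of the hypothesis.
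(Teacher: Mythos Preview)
Your approach---restrict to $U(\fg_{\oa})$, project to the $\fg_{\oa}$-central character $\chi^{\oa}_{w'\cdot\lambda}$, and identify the resulting component---has a genuine gap at precisely the point you flag as the ``main obstacle,'' and that obstacle is not merely a technicality to be filled in: the identification you need is in general \emph{false}. The simple $\fg_{\oa}$-subquotients of $\Res L(w\cdot\lambda)$ have highest weights in $w\cdot\lambda+\Gamma$, and there is no reason for $w'\cdot\lambda$ to lie in this set; the difference $w'\cdot\lambda-w\cdot\lambda$ is typically not a sum of negative odd roots. So your claim that ``$\nu'$ is the $W$-maximal such $\kappa$'' fails, and what you actually recover from Lemma~\ref{IplusAnn} is only an inclusion of $I(w'\cdot\lambda)$ into a radical of an intersection of ideals $I(\kappa)$ with $\kappa\in (w\cdot\lambda+\Gamma)\cap W\circ(w'\cdot\lambda)$---none of which is obviously comparable to $I(w\cdot\lambda)$. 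A secondary issue: even the preliminary claim $(\Res L(\nu'))_{\chi^{\oa}_{\nu'}}=L_{\oa}(\nu')$ is not clear for atypical $\nu'$, since other weights in $\nu'+\Gamma$ may share the $\fg_{\oa}$-central character.

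The paper avoids this entirely by a translation argument: it tensors with a finite-dimensional $L(\nu)$ chosen so that $\Lambda=\hat\lambda+\nu$ is \emph{typical}, and projects to $\chi_\Lambda$. For regular $\lambda$ this sends $L(w\cdot\lambda)$ to $L(w\cdot\Lambda)$, and Lemmata~\ref{Anntens} and~\ref{IplusAnn} transport the inclusion $J(w'\cdot\lambda)\subset J(w\cdot\lambda)$ to $J(w'\cdot\Lambda)\subset J(w\cdot\Lambda)$; then Theorem~\ref{typeItyp} (the typical case, already proved) gives $I(w'\cdot\Lambda)\subset I(w\cdot\Lambda)$, and Lemma~\ref{translclass} brings this back to $I(w'\cdot\lambda)\subset I(w\cdot\lambda)$. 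For singular $\lambda$ the translated modules $N(w)=(L(w\cdot\lambda)\otimes L(\nu))_{\chi_\Lambda}$ need not be simple, so one restricts to $\fg_0$, projects to $\chi^0_\Lambda$, and uses that the classical translation functor back to $\chi^0_\lambda$ sends $\Res N(w)$ to a multiple of $L_0(w\cdot\lambda)$; since translation functors are poset maps on annihilators (Lemma~5.4 in \cite{MR0721170}), the conclusion follows. The key point is that passing to a typical block replaces the uncontrolled decomposition of $\Res L(w\cdot\lambda)$ by the clean correspondence $(\Res L(w\cdot\Lambda))_{\chi^0_\Lambda}=L_0(w\cdot\Lambda)$.
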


\begin{proof}
Since it is a statement concerning orbits of the Weyl group, we can assume that $\lambda$ is $W$-maximal without loss of generality. As in the proof of Lemma \ref{typeIorbitprop} we take a $\nu\in\cP^+$ such that $\Lambda=\lambda+\nu$ is typical.

If $\lambda$ is regular (and therefore $\Lambda$ as well), then $(M(w\cdot\lambda)\otimes L(\nu))_{\chi_\Lambda}=M(w\cdot\Lambda)$, with the analogous property also holding for the dual Verma module. This immediately implies that $(L(w\cdot\lambda)\otimes L(\nu))_{\chi_\Lambda}$ is either $L(w\cdot\Lambda)$ or trivial. As in the proof of Lemma \ref{typeIorbitprop}, it is not trivial. Lemmata \ref{Anntens} and \ref{IplusAnn} therefore imply $J(w'\cdot\lambda)\subset J(w\cdot\lambda) \Rightarrow J(w'\cdot\Lambda)\subset J(w\cdot\Lambda) $. The result then follows from Theorem \ref{typeItyp}.

In general $\lambda$ can be singular and $\Lambda$ might have less singularities than $\lambda$. Therefore the modules $(L(w\cdot\lambda)\otimes L(\nu))_{\chi_\Lambda}$ might be non-simple. They are still non-zero by the proof of Lemma \ref{typeIorbitprop} and we denote $N(w):=(L(w\cdot\lambda)\otimes L(\nu))_{\chi_\Lambda}$ for any $w\in W$. Then by the same reason as above we have $$J(w'\cdot\lambda)\subset J(w\cdot\lambda) \Rightarrow \Ann_{U(\fg)}N(w')\subset \Ann_{U(\fg)}N(w).$$
Lemma \ref{IplusAnn} applied to $\fg_0$ then implies
$$J(w'\cdot\lambda)\subset J(w\cdot\lambda) \Rightarrow \Ann_{U(\fg_0)} (\Res N(w'))_{\chi_\Lambda^0}\subset \Ann_{U(\fg_0)} (\Res N(w))_{\chi^0_\Lambda}.$$

As already noted in the proofs of Theorems \ref{typeItyp} and \ref{typeIItyp}, we have $(\Res L(w\cdot\Lambda))_{\chi^0_\Lambda}=L_0(w\cdot\Lambda)$ for any $w\in W$. This implies that, applying the translation functor from $(\cO_{\oa})_{\chi^0_\Lambda}$ to $(\cO_{\oa})_{\chi^0_\lambda}$, see chapter 7 in \cite{MR2428237}, takes $\Res N(w)$ to $L(w\cdot\lambda)^{\oplus k}$ for some $k\in\mN$. According to Lemma 5.4 in \cite{MR0721170}, these translation functors induce morphisms of the posets of annihilator ideals of modules belonging to the respective blocks. This also follows from Lemmata \ref{Anntens} and \ref{IplusAnn}. This concludes the proof.
\end{proof}

\begin{remark}{\rm
The proof above for singular weights can be simplified if $\fg$ is $\mathfrak{sl}(m|n)$ with $m\not=n$ or $\mathfrak{osp}(2|2n)$. Then $\nu$ can be chosen as a multiple of $\rho_1$. Since this is orthogonal to all even roots, singular orbits can always be translated to typical orbits with the same singularities.}
\end{remark}

\begin{proof}[Proof of Theorem \ref{mainthmI}.]
The ``if'' part of equation \eqref{thmI1} is given in Lemma \ref{typeIorbitprop}. The ``only if'' part in equations \eqref{thmI1} and \eqref{thmI3} is exactly Lemma \ref{corIan} for $\fl=\fg_0$. The ``if'' part in equation \eqref{thmI3} is Lemma \ref{inclorbitI}.
\end{proof}

For the remainder of this section we consider $\fg=\mathfrak{sl}(m|n)$ with $m\not=n$ or $\fg=\mathfrak{osp}(2|2n)$. The even subalgebras of $\mathfrak{sl}(m|n)$ and $\mathfrak{osp}(2|2n)$ (respectively $\mC\oplus\mathfrak{sl}(m)\oplus\mathfrak{sl}(n)$ and $\mC\oplus\mathfrak{sp}(2n)$) have a one dimensional centre. We choose the element $H\in \mathfrak{z}(\fg_{\oa})\subset\fh$ of this centre which is normalised by the relation $\alpha(H)=1$ for all $\alpha\in \Delta_1^+$. This implies, in particular, that $[H,Y]=-Y$ for $Y\in\fg_{-1}$.

\begin{definition}
\label{defdlambda}
For any weight $\lambda\in\fh^\ast$ we define $d_\lambda\in \mZ_+$ (with $0\le d_\lambda\le \dim \fg_1$) as
\[d_\lambda=\max\{k\in\mZ_+ \mbox{ such that there are }\alpha_1,\cdots,\alpha_k\in\Delta_1^+\mbox{ for which }Y_{\alpha_1}\cdots Y_{\alpha_k}v^+_{\lambda}\not=0\},\]
where $v^+_\lambda$ denotes a highest weight vector of $L(\lambda)$.
\end{definition}

\begin{lemma}
\label{helpsing}
For $\lambda,\mu\in\fh_{\oa}^\ast$, the inclusion $J(\lambda)\subseteq J(\mu)$
implies existence of $p\in \mZ_+$ such that
\begin{itemize}
\item $\mu(H)=\lambda(H)-p$
\item $d_\mu \le d_\lambda -p$.
\end{itemize}
\end{lemma}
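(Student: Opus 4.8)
The idea is to exploit the $\mZ$-grading $\fg = \fg_{-1}\oplus\fg_0\oplus\fg_1$ available for type~I algebras and the element $H\in\mathfrak z(\fg_{\oa})$ which measures this grading. First I would recall that, since $\fg$ is of type~I with distinguished positive system, $L(\lambda)=U(\fg_{-1})v^+_\lambda$ as a $U(\fg_{-1})$-module, and $\fg_{-1}$ is purely odd abelian. Hence $L(\lambda)$ carries a $\mZ_+$-grading by the ``$\fg_{-1}$-degree'', i.e. the number of factors from $\fg_{-1}$ applied to $v^+_\lambda$; because $[H,Y]=-Y$ for $Y\in\fg_{-1}$, a vector of $\fg_{-1}$-degree $k$ has $H$-eigenvalue $\lambda(H)-k$. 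Therefore the set of $H$-eigenvalues occurring on $L(\lambda)$ is exactly $\{\lambda(H),\lambda(H)-1,\dots,\lambda(H)-d_\lambda\}$, where $d_\lambda$ is the largest $\fg_{-1}$-degree that appears; this identifies $d_\lambda$ with the ``top degree'' of the graded module $L(\lambda)$ and $\lambda(H)-d_\lambda$ with its lowest $H$-eigenvalue.

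Next I would pass to annihilator ideals. The element $H$ lies in $\fh\subset U(\fg)$, and for any complex number $c$ the element $\prod_{j=0}^{N}(H-(\lambda(H)-j))$ annihilates $L(\lambda)$ precisely when $N\ge d_\lambda$; more precisely, $\prod_{j=0}^{d_\lambda}(H-(\lambda(H)-j))\in J(\lambda)$ and no shorter product of linear factors in $H$ with these roots lies in $J(\lambda)$. If $J(\lambda)\subseteq J(\mu)$, then this polynomial in $H$ also annihilates $L(\mu)$, so the $H$-eigenvalues occurring on $L(\mu)$ form a subset of $\{\lambda(H),\lambda(H)-1,\dots,\lambda(H)-d_\lambda\}$. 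But by the same analysis applied to $\mu$, the $H$-eigenvalues on $L(\mu)$ are $\{\mu(H),\dots,\mu(H)-d_\mu\}$, and $\mu(H)$ itself occurs (on $v^+_\mu$). Comparing the two sets: the top eigenvalue $\mu(H)$ must satisfy $\mu(H)=\lambda(H)-p$ for some integer $p$ with $0\le p\le d_\lambda$, and the bottom eigenvalue $\mu(H)-d_\mu=\lambda(H)-p-d_\mu$ must be $\ge\lambda(H)-d_\lambda$, i.e. $d_\mu\le d_\lambda-p$. This gives both conclusions.

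The one point requiring a little care — and the main obstacle — is justifying that the roots of the minimal polynomial of $H$ acting on $L(\lambda)$ are genuinely consecutive integers $\lambda(H),\lambda(H)-1,\dots$ with no gaps, and in particular that $d_\lambda$ as defined via nonvanishing of products $Y_{\alpha_1}\cdots Y_{\alpha_k}v^+_\lambda$ coincides with the top $\fg_{-1}$-degree appearing in $L(\lambda)$. This follows because $L(\lambda)$ is $\mZ_+$-graded with $L(\lambda)_0=\mC v^+_\lambda$ and each graded piece $L(\lambda)_k$ is a quotient of $\Lambda^k\fg_{-1}\otimes\mC v^+_\lambda$, so $L(\lambda)_k\neq 0$ for $0\le k\le d_\lambda$ (a nonzero vector of degree $k$ in the top piece $k=d_\lambda$ guarantees, by applying suitable elements of $\fg_1$, nonzero vectors in all lower pieces — or one argues directly that the degrees form a down-set since $\fg_1\cdot L(\lambda)_k\subseteq L(\lambda)_{k-1}$ and $L(\lambda)$ is generated by $v^+_\lambda$). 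Once this is in hand, the $H$-spectrum of $L(\lambda)$ is exactly the interval described and the rest is the elementary comparison above. I would also remark that $J(\lambda)$ depends only on the orbit, so there is no loss in the statement being phrased for arbitrary $\lambda,\mu\in\fh^\ast_{\oa}$.
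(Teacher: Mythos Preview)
Your proposal is correct and follows essentially the same approach as the paper: both compute the minimal polynomial of $H$ on $L(\lambda)$ as $\prod_{j=0}^{d_\lambda}(H-\lambda(H)+j)$, observe that $J(\lambda)\subseteq J(\mu)$ forces the minimal polynomial for $\mu$ to divide that for $\lambda$, and conclude by comparing the two sets of roots. You supply more detail than the paper on why the $H$-spectrum has no gaps (which follows immediately from the observation that if $Y_{\alpha_1}\cdots Y_{\alpha_{d_\lambda}}v^+_\lambda\neq 0$ then every initial subproduct is also nonzero), but the core argument is identical.
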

\begin{proof}
The condition $\mC[H]\cap \Ann_{ U(\fg)}L(\lambda)\subseteq \mC[H]\cap\Ann_{ U(\fg)}L(\mu)$ implies that the polynomial
\[(H-\lambda(H))(H-\lambda(H)+1)\cdots (H-\lambda(H)+d_\lambda)\]
must be divisible by
\[(H-\mu(H))(H-\mu(H)+1)\cdots (H-\mu(H)+d_\mu).\]
Thus we have $\lambda(H)\ge\mu(H)$ and $\lambda(H)-d_\lambda \le \mu(H)-d_\mu$.
\end{proof}

\subsection{Singly atypical characters for $\mathfrak{sl}(m|n)$ and $\mathfrak{osp}(2|2n)$}

Singly atypical weights $\lambda$ have one atypical root $\gamma$, $\langle \lambda+\rho,\gamma\rangle=0$, such that all other atypical roots $\gamma'$ satisfy $\langle \gamma,\gamma'\rangle\not=0$. Lemma \ref{regatyp1} implies that regular singly atypical weights therefore have exactly one atypical root. In this subsection we derive results on inclusions between primitive ideals for singular atypical characters for $\mathfrak{sl}(m|n)$ with $m\not=n$ and $\mathfrak{osp}(2|2n)$, based on the treatment of these characters by Van der Jeugt et al in \cite{MR1092559, MR1063989}.

\begin{theorem}
For integral dominant singly atypical weights $\Lambda_1,\Lambda_2\in\cP^+$ and elements of the Weyl group $w_1,w_2\in W$, the inclusion 
\[\Ann_{ U(\fg)}L(w_1\cdot\Lambda_1)\subseteq\Ann_{ U(\fg)}L(w_2\cdot \Lambda_2)\]
implies $\Lambda_1=\Lambda_2$.
\end{theorem}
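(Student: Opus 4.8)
The plan is to combine the numerical constraints of Lemma~\ref{helpsing} with the structure of singly atypical modules recorded in \cite{MR1092559, MR1063989} and a Harish-Chandra bookkeeping argument; the conclusion $\Lambda_1=\Lambda_2$ will come by showing that $\Lambda_1,\Lambda_2$ have the same central character and the same value on the central element $H$.

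First I would observe that an inclusion of primitive ideals forces equality of central characters. If $\chi_i$ is the central character of $L(w_i\cdot\Lambda_i)$, then $\ker\chi_i$, and hence the two-sided ideal $U(\fg)\ker\chi_i$, is contained in $J(w_i\cdot\Lambda_i)$; since distinct maximal ideals of the commutative ring $\cZ(\fg)$ are comaximal, were $\chi_1\neq\chi_2$ one would get $U(\fg)\subseteq J(w_2\cdot\Lambda_2)$, which is impossible. So $\chi_{\Lambda_1}=\chi_{\Lambda_2}=:\chi$. Next, Lemma~\ref{helpsing} applied to the given inclusion yields $p\in\mZ_+$ with $(w_2\cdot\Lambda_2)(H)=(w_1\cdot\Lambda_1)(H)-p$ and $d_{w_2\cdot\Lambda_2}\leq d_{w_1\cdot\Lambda_1}-p$. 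Since $H$ lies in the centre of $\fg_{\oa}$ it is $W$-fixed, so $(w\cdot\mu)(H)=\mu(H)$ for all $\mu,w$; hence $\Lambda_2(H)=\Lambda_1(H)-p$ with $p\geq 0$.

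The remaining task is to force $p=0$, after which $\Lambda_1=\Lambda_2$ follows. For the latter step I would use the Harish-Chandra isomorphism: a singly atypical integral central character determines all coordinates of $\Lambda+\rho$ except for a shift $\Lambda\mapsto\Lambda+t\gamma$ along the unique atypical isotropic root $\gamma$ of $\Lambda$ ($t\in\mZ$); since $\gamma\in\Delta_1^+$ we have $\gamma(H)=1$, so this shift changes $\Lambda(H)$ by $t$, and therefore $(\chi,\Lambda(H))$ determines $\Lambda$ among integral dominant weights. To force $p=0$ I would feed into the inequality $d_{w_2\cdot\Lambda_2}\leq d_{w_1\cdot\Lambda_1}-p$ a bound on $d$ along a Weyl orbit: Theorem~\ref{mainthmI} together with the classical fact that $\Ann_{U(\fg_{\oa})}L_{\oa}(w\circ\Lambda)\subseteq\Ann_{U(\fg_{\oa})}L_{\oa}(\Lambda)$ for $\Lambda$ dominant (the annihilator of a finite-dimensional module is the maximal primitive ideal of its block) gives $J(w\cdot\Lambda)\subseteq J(\Lambda)$, hence --- again by Lemma~\ref{helpsing}, with $\Lambda(H)$ constant on the orbit --- $d_\Lambda\leq d_{w\cdot\Lambda}$ for every $w$; and the two-composition-factor description of singly atypical Kac modules from \cite{MR1092559, MR1063989} yields the exact values of $d$ on such orbits, in particular a bound on $d_{w\cdot\Lambda}-d_\Lambda$ that makes $p\geq 1$ incompatible with the displayed inequality.

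The main obstacle I anticipate is this last point. The two-factor structure of singly atypical Kac modules and the value $d_\Lambda$ for the finite-dimensional $L(\Lambda)$ are essentially available from the cited works, but controlling $d_{w\cdot\Lambda}$ for arbitrary $w\in W$ --- equivalently, understanding $\Res L(w\cdot\Lambda)$ in the singly atypical case, or tracking the invariant $d$ through translation functors --- with enough precision to close the argument is where the bulk of the work lies; one should also check that the Harish-Chandra bookkeeping in the previous paragraph indeed has the claimed one-parameter form for both $\mathfrak{sl}(m|n)$ with $m\neq n$ and $\mathfrak{osp}(2|2n)$.
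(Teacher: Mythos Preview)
Your setup matches the paper: you correctly force $\chi_{\Lambda_1}=\chi_{\Lambda_2}$, invoke Lemma~\ref{helpsing} to get $\Lambda_2(H)=\Lambda_1(H)-p$ with $p\ge 0$, and observe that for a singly atypical character the pair $(\chi,\Lambda(H))$ pins down the dominant $\Lambda$, so the problem reduces to showing $p=0$. The paper's reduction to $w_2=1$ via Theorem~\ref{mainthmI} is equivalent to your observation $d_{\Lambda_2}\le d_{w_2\cdot\Lambda_2}$.

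The genuine gap is precisely at the step you flag as your main obstacle, and your proposed way through it points in the wrong direction. You aim to bound $d_{w\cdot\Lambda}-d_\Lambda$ along a \emph{single} Weyl orbit; but the inequality you need to exploit is $p\le d_{w_1\cdot\Lambda_1}-d_{\Lambda_2}$, which compares the invariants for two \emph{different} dominant weights $\Lambda_1,\Lambda_2$. No amount of control on $d_{w\cdot\Lambda}-d_\Lambda$ for fixed $\Lambda$ relates $d_{w_1\cdot\Lambda_1}$ to $d_{\Lambda_2}$. What the paper extracts from \cite{MR1092559,MR1063989} is instead a \emph{lower} bound on $p$. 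From the short exact sequence $L(\Lambda')\hookrightarrow K(\Lambda)\tto L(\Lambda)$ one reads off $\Lambda(H)-d=\Lambda'(H)-d_{\Lambda'}$ (equation~\eqref{eqsing}), and the algorithm of \cite[\S6]{MR1063989} identifies $\Lambda$ as the dominant weight in the orbit of $\Lambda'+k\gamma$ for the \emph{smallest} $k\in\mN$ with $\Lambda'+k\gamma$ regular, namely $k=d-d_{\Lambda'}$. Taking $\Lambda'=\Lambda_2$, the nearest dominant weight with the same central character and strictly larger $H$-value sits at $H$-distance $d-d_{\Lambda_2}$; hence if $\Lambda_1\neq\Lambda_2$ then $p\ge d-d_{\Lambda_2}$. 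Combined with the upper bound $p\le d_{w_1\cdot\Lambda_1}-d_{\Lambda_2}$ and the trivial estimate $d_{w_1\cdot\Lambda_1}\le d$, this forces $d_{w_1\cdot\Lambda_1}=d$, i.e.\ the Kac module $K(w_1\cdot\Lambda_1)$ is simple, so $\Lambda_1$ is typical --- a contradiction. Note that this argument never requires understanding $d_{w\cdot\Lambda}$ for non-dominant $w\cdot\Lambda$ beyond the obvious bound $d_{w\cdot\Lambda}\le d=\dim\fg_{-1}$; the work done by \cite{MR1063989} concerns only the spacing of \emph{dominant} weights along the atypical line, and that is what furnishes the missing lower bound.
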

\begin{proof}
According to the Harish-Chandra isomorphism, see Section 13.1 in \cite{MR2906817}, the condition $\chi_{\Lambda_1}=\chi_{\Lambda_2}$ implies that $\Lambda_1$ must be inside the $\rho$-shifted Weyl group orbit of $\Lambda_2+k\gamma$, for $\gamma$ the atypical root of $\Lambda_2$, .

Equation \eqref{thmI3} in Theorem \ref{mainthmI} implies $J(w_2\cdot\Lambda_2)\subseteq J(\Lambda_2)$, so it suffices to prove the statement for $w_2=1$. Now we can make use of the procedure in \cite{MR1092559, MR1063989} used to obtain the character formulae for singly atypical modules. We write $d:=\dim\fg_{-1}$.

Theorem 4.3 in \cite{MR1063989} implies that for every singly atypical integral dominant weight $\Lambda$, there is one singly atypical integral dominant weight $\Lambda'$ such that we have the short exact sequence
\[L(\Lambda')\hookrightarrow K(\Lambda)\tto L(\Lambda),\]
with $K(\Lambda)=\ind^{\fg}_{\fg_0+\fg_1}L_0(\Lambda)$ being the Kac module. Since $L(\Lambda')$ contains $\Lambda^{d} \fg_{-1}\,L_0(\Lambda)$, we get the equality 
\begin{equation}\label{eqsing}\Lambda(H)-d=\Lambda'(H)-d_{\Lambda'},\end{equation}
for $d_{\Lambda'}$ as in Definition \ref{defdlambda}.

The procedure in Section 6 of \cite{MR1063989} then reveals that starting from $\Lambda'$ with atypical root $\gamma$, the weight $\Lambda$ is given as the dominant weight in the orbit of $\Lambda'+k\gamma$, for $k\in\mN$ the smallest $k$ such that $\Lambda'+k\gamma$ is regular. Equation \eqref{eqsing} implies this minimal $k$ is equal to $d-d_{\Lambda'}$. 

Now we make the identification $\Lambda_2=\Lambda'$. Lemma \ref{helpsing} and the property $\chi_{\Lambda_1}=\chi_{\Lambda_2}$ imply that $\Lambda_1$ is in the orbit of $\Lambda_2+p\gamma$ for $0<p\le d_{\Lambda_1}-d_{\Lambda_2}$. On the other hand, the procedure above implies $p\ge d-d_{\Lambda_2}$. This is only possible if $d_{\Lambda_1}=d$, but that would imply that $\Lambda_1$ is typical.
\end{proof}

The next lemma proves that there are extra inclusions between regular and singular singly atypical highest weights.

\begin{lemma}
Consider a regular singly atypical weight $\lambda$, with positive atypical root $\gamma$ such that we have $\langle \lambda+\rho,\alpha^\vee\rangle \in\mN$ for $\alpha$ a simple even root. If $\langle \lambda+\gamma+\rho,\alpha\rangle=0$, then $J(\lambda+\gamma)\subset J(\lambda)$.
\end{lemma}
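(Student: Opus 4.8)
The plan is to exploit the structure that is already available: we are in the type I situation with distinguished Borel, $\lambda$ is regular singly atypical with positive atypical root $\gamma$, and there is a simple even root $\alpha$ with $\langle\lambda+\rho,\alpha^\vee\rangle\in\mN$ (so $s_\alpha\cdot\lambda<\lambda$ and $L(\lambda)$ is $\alpha$-finite by Lemma \ref{lemfinfree}, since $\fg$ is type I with distinguished roots) while $\langle\lambda+\gamma+\rho,\alpha\rangle=0$ (so $L(\lambda+\gamma)$ is $\alpha$-free). The natural tool is Lemma \ref{extann}: if I can show $\Ext^1_{\cO}(L(\lambda),L(\lambda+\gamma))\neq 0$, then $J(\lambda+\gamma)=\Ann_{U(\fg)}L(\lambda+\gamma)\subseteq \Ann_{U(\fg)}L(\lambda)=J(\lambda)$, and the inclusion is strict because the two modules are genuinely different primitive ideals (they have different $H$-eigenvalues, or one applies Lemma \ref{helpsing}: $J(\lambda+\gamma)=J(\lambda)$ would force $(\lambda+\gamma)(H)=\lambda(H)$, impossible since $\gamma\in\Delta_1^+$ gives $\gamma(H)=1$).

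So the core of the proof is to establish the nonvanishing of that $\Ext^1$. First I would note that $\lambda$ and $\lambda+\gamma$ have the same central character (since $\langle\lambda+\rho,\gamma\rangle=0$, the Harish-Chandra isomorphism gives $\chi_\lambda=\chi_{\lambda+\gamma}$), so they lie in the same block. The weights are linked by the single atypical root: $\lambda+\gamma = \lambda + \gamma$ with $\gamma\in\Delta_1^+$, and by Lemma \ref{noextraatyp}, if $\lambda+\gamma$ is regular it has the same (unique) atypical root $\gamma$ as $\lambda$. The standard way to see the extension is via the Kac module $K(\lambda) = \ind^\fg_{\fg_0\oplus\fg_1}L_0(\lambda)$ and the results of Van der Jeugt et al.\ (cited just above this lemma in the excerpt): for singly atypical $\lambda$ there is a short exact sequence $L(\mu')\hookrightarrow K(\lambda)\tto L(\lambda)$ with $\mu'$ the adjacent singly atypical weight. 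I would argue that $\mu' = \lambda+\gamma$ precisely under the hypotheses given — the condition $\langle\lambda+\gamma+\rho,\alpha\rangle=0$ pins down which wall $\mu'$ sits on — so that the nonsplit sequence $L(\lambda+\gamma)\hookrightarrow K(\lambda)\tto L(\lambda)$ witnesses $\Ext^1_{\cO}(L(\lambda),L(\lambda+\gamma))\neq 0$.

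Alternatively, and perhaps more cleanly, I would reduce to the even subalgebra. We have $(\Res L(\lambda))_{\chi^0_\lambda}=L_0(\lambda)$ for type I (as used in the proofs of Theorems \ref{typeItyp}, \ref{typeIItyp}), and the $\fg_0$-structure of $\Res L(\lambda)$ is governed by $\Lambda\fg_{-1}\otimes L_0(\lambda)\tto \Res L(\lambda)$, so $L_0(\lambda+\gamma')$ appears in $\Res L(\lambda)$ for various sums $\gamma'$ of negatives of roots in $\Delta_1^+$; in particular, since $\lambda - (-\gamma)$ —wait, I want $\lambda+\gamma$, so I should instead run the dual: $L(\lambda)$ is a quotient of a module with a $\Lambda\fg_1$-weight $\lambda$ on top, and $\lambda+\gamma$ appears among the $\fg_0$-weights of $L(\lambda+\gamma)$ inside the appropriate filtration. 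The key extension then descends from $\Ext^1_{\cO_0}(L_0(\lambda),L_0(\lambda+\gamma))$, which is nonzero precisely because $\langle\lambda+\gamma+\rho,\alpha\rangle=0$ makes $s_\alpha\cdot(\lambda+\gamma)\sim\lambda+\gamma$ and $\lambda$ is obtained from $\lambda+\gamma$ by a wall-crossing in the $\alpha$-direction (a classical fact about category $\cO$ for reductive Lie algebras: adjacent weights across a wall have a nonsplit extension). One then lifts this to a $\fg$-extension using the induction/restriction adjunction and the block structure. I expect the main obstacle to be the bookkeeping identifying $\lambda+\gamma$ as exactly the neighboring singly atypical weight $\mu'$ (equivalently, checking that the hypothesis $\langle\lambda+\gamma+\rho,\alpha\rangle=0$ together with $\langle\lambda+\rho,\alpha^\vee\rangle\in\mN$ forces the wall-crossing data to line up), and verifying that the relevant $\Ext^1$ does not vanish in the singular case — this requires care because translation to/from walls can kill extensions. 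Once the $\Ext^1$ is in hand, Lemma \ref{extann} finishes the inclusion and Lemma \ref{helpsing} (or the $H$-eigenvalue argument) gives strictness.
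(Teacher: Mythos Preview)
Your overall strategy via Lemma~\ref{extann} is reasonable in spirit, but both of your proposed routes to the required $\Ext^1$ break down.

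For the Kac module argument: note that $\gamma\in\Delta_1^+$ is a \emph{positive} odd root, so $\lambda+\gamma>\lambda$. The socle $L(\mu')$ of $K(\lambda)$ satisfies $\mu'<\lambda$, so $\mu'$ can never equal $\lambda+\gamma$. You would have to work with $K(\lambda+\gamma)$ instead, hoping for $L(\lambda)\hookrightarrow K(\lambda+\gamma)\tto L(\lambda+\gamma)$; this gives $\Ext^1_\cO(L(\lambda+\gamma),L(\lambda))$, which by duality is fine, but the Van der Jeugt et al.\ result is stated for integral \emph{dominant} weights, and $\lambda+\gamma$ is singular (it sits on the $\alpha$-wall by hypothesis). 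Moreover $\lambda$ itself is not assumed dominant, only regular with $\langle\lambda+\rho,\alpha^\vee\rangle\in\mN$. So this route needs substantially more work to justify.

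For the reduction to $\fg_0$: the claim $\Ext^1_{\cO_0}(L_0(\lambda),L_0(\lambda+\gamma))\neq 0$ is simply false. Since $\gamma$ is odd, $\lambda$ and $\lambda+\gamma$ lie in different $W$-orbits for the $\rho_{\oa}$-shifted action, hence have distinct $\fg_0$-central characters, and the $\Ext^1$ in $\cO_0$ vanishes. Your ``wall-crossing'' intuition conflates the odd direction $\gamma$ with the even direction $\alpha$.

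The paper's proof bypasses the $\Ext^1$ computation entirely and instead uses the star action machinery of Section~\ref{secstar}. One writes $\gamma=\epsilon_j-\delta_i$, chooses an explicit Borel $\hat{\fb}$ in which $\alpha$ is simple and such that the odd reflections linking $\fb$ to $\hat{\fb}$ pass through $\gamma$, and computes directly (as in Example~\ref{exMusson}) that $s_\alpha\ast_{\hat{\fb}}\lambda=\lambda+\gamma$. Then Corollary~\ref{corastS} (which is proved by parabolic reduction to $\mathfrak{sl}(2)$ in the $\hat{\fb}$-coordinates, not via twisting functors) gives $J(\lambda+\gamma)=J(s_\alpha\ast_{\hat\fb}\lambda)\subseteq J(\lambda)$ immediately. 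This is precisely the point of introducing multiple star actions: the ``extra'' inclusion not visible from the ordinary dot action becomes a standard $\alpha$-wall inclusion once one passes to the right Borel.
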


\begin{proof}
We set $\gamma=\epsilon_j-\delta_i$. We choose the case $\alpha=\epsilon_{j-1}-\epsilon_j$ and $\langle \lambda+\gamma+\rho,\alpha\rangle=0$, the other case being similar. We consider a Borel subalgebra $\hat{\fb}$ with simple roots given by
\[\epsilon_1-\epsilon_2,\cdots,\epsilon_{j-3}-\epsilon_{j-2},\epsilon_{j-2}-\delta_1,\delta_1-\delta_2,\cdots,\delta_{i-1}-\delta_i,\delta_i-\epsilon_{j-1},\]\[\epsilon_{j-1}-\epsilon_j,\cdots, \epsilon_{m-1}-\epsilon_m,\epsilon_m-\delta_{i+1},\delta_{i+1}-\delta_{i+2},\cdots,\delta_{n-1}-\delta_n.\]
By carrying out the odd reflections, we obtain $s_\alpha\ast_{\hat{\fb}}\lambda=\lambda+\gamma$ as in Example \ref{exMusson}. The conclusion then follows from Corollary \ref{corastS}.
\end{proof}


\section{Annihilator ideals of Verma modules }
\label{secVerma}

The result of Duflo in \cite{MR0399194} states that \[\Ann_{ U(\fg_{\oa})}M_{\oa}(w\cdot\lambda)= U(\fg_{\oa})\mathfrak{m}^{\oa}_{\chi^{\oa}_\lambda},\] where $\chi^{\oa}_\lambda:\mathcal{Z}(\fg_{\oa})\to\mC$ is the corresponding central character and $\mathfrak{m}^{\oa}_{\chi^{\oa}_\lambda}\in\Spec \mathcal{Z}(\fg_{\oa})$ is defined as $\ker\chi^{\oa}_{\lambda}$. This result was extended to strongly typical central characters for basic classical Lie superalgebras by Gorelik in \cite{MR1862799}. Below we prove a weaker statement for $\mathfrak{q}(n)$.

\begin{proposition}
Consider $\fg=\mathfrak{q}(n)$ and $\lambda$ a strongly typical weight. Then the annihilator ideal of all Verma modules in $\cO_{\chi_\lambda}$ are identical,
\[\Ann_{ U(\fg)}M(\lambda)=\Ann_{ U(\fg)}M(w\cdot\lambda)\]
for any $w\in W$ with $w\cdot\lambda=w\lambda$.
\end{proposition}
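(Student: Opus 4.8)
The plan is to transfer Duflo's theorem for the underlying Lie algebra $\fg_{\oa}=\mathfrak{gl}(n)$ across the adjunction between $\Ind$ and $\Res$, using Frisk's equivalence of categories for regular weights and translation functors for the remaining singular ones. Since the assertion does not change if $\lambda$ is replaced by a $W$-conjugate, I would first assume $\lambda$ regular and dominant, write $\chi=\chi_\lambda$ for the corresponding central character of $\cZ(\fg)$, and let $\widetilde{\chi}:\cZ(\fg_{\oa})\to\mC$ be the associated character with $W$-dominant representative $\widetilde{\lambda}$ (for the $\rho_{\oa}$-shifted action), as in \cite{Frisk}.

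For regular $\lambda$, recall from the proof of Lemma \ref{TVerma} that the equivalence of \cite{Frisk} maps Verma modules to Verma modules; together with (the proof of) Proposition 2 in \cite{Frisk} this gives, with $k=2^{\lfloor(n-1)/2\rfloor}$ and for every $w\in W$,
\[\left(\Ind M_{\oa}(w\circ\widetilde{\lambda})\right)_{\chi}\;=\;k\,M(w\lambda).\]
Duflo's theorem, recalled at the beginning of this section, asserts that $\Ann_{U(\fg_{\oa})}M_{\oa}(w\circ\widetilde{\lambda})=U(\fg_{\oa})\mathfrak{m}^{\oa}_{\widetilde{\chi}}$ is independent of $w$, so Lemma \ref{induct} shows that $\Ann_{U(\fg)}\Ind M_{\oa}(w\circ\widetilde{\lambda})$ is independent of $w$ as well. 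Since $\left(\Ind M_{\oa}(w\circ\widetilde{\lambda})\right)_{\chi}$ is a direct sum of copies of the Verma module $M(w\lambda)$, it genuinely affords the central character $\chi$, so Lemma \ref{IplusAnn} applies with $M=\Ind M_{\oa}(w\circ\widetilde{\lambda})$ and gives
\[\Ann_{U(\fg)}M(w\lambda)\;=\;\Ann_{U(\fg)}\left(\Ind M_{\oa}(w\circ\widetilde{\lambda})\right)_{\chi}\;=\;I_{\chi}+\Ann_{U(\fg)}\Ind M_{\oa}(w\circ\widetilde{\lambda}),\]
whose right-hand side is independent of $w$. This proves the proposition for regular $\lambda$.

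For a singular strongly typical $\lambda$, I would choose a regular strongly typical dominant weight $\mu$ with $\mu-\lambda\in\cP$ whose dominant chamber contains $\lambda$ in its closure, and apply the translation functor to the wall, i.e.\ tensoring with the finite-dimensional simple module $E$ of extreme weight $\lambda-\mu$ followed by projection onto the $\chi_\lambda$-component. A standard computation with the standard filtration of $M(w\mu)\otimes E$ then shows that this functor sends $M(w\mu)$ to a direct sum of copies of $M(w\lambda)$, and that every Verma module of $\cO_{\chi_\lambda}$ arises this way. Since equalities of annihilator ideals are preserved under tensoring with a fixed finite-dimensional module (Lemma \ref{Anntens}) and under passing to a central-character component (Lemma \ref{IplusAnn}), the equalities $\Ann_{U(\fg)}M(w_1\mu)=\Ann_{U(\fg)}M(w_2\mu)$ obtained in the regular case descend to the required equalities in $\cO_{\chi_\lambda}$.

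The step I expect to be the main obstacle is this last reduction to the wall: one has to control $-\otimes E$ on the $\mathfrak{q}(n)$-Verma modules through their $\fg_{\oa}$-Verma flags and track which central-character components survive the projection, so as to be sure that every Verma module of the singular block, and only copies of a single Verma module from each regular one, is produced. In the regular case the sole delicate point is the verification of the hypothesis $\mathfrak{m}_{\chi}M_{\chi}=0$ of Lemma \ref{IplusAnn}, which here is immediate because $M_{\chi}$ is a direct sum of copies of a genuine highest weight module with central character $\chi$.
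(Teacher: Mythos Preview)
Your proposal is correct and follows essentially the same route as the paper: the regular case is handled via Frisk's equivalence for strongly typical blocks combined with Duflo's theorem, Lemma~\ref{induct}, and Lemma~\ref{IplusAnn}; the singular case is then obtained by translating from a regular strongly typical dominant weight via tensoring with a finite-dimensional module and projecting to the target central character, invoking Lemmata~\ref{Anntens} and~\ref{IplusAnn}. The paper simply asserts the set equality $\{M(w\lambda)\,|\,w\in W\}=\{(M(w\Lambda)\otimes L(\nu)^\ast)_{\chi_\lambda}\,|\,w\in W\}$ for the singular reduction without further justification, so the step you flag as the main obstacle is precisely the one the paper leaves to the reader.
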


\begin{proof}
First we prove the claim for $\Lambda\in\fh^\ast_{\oa}$ dominant. One way to prove this is by making use of Proposition 2 in \cite{Frisk}, which implies that
\[\Ann_{U(\fg)}M(w\Lambda)=\Ann_{ U(\fg)}\left(\Ind M_{\oa}(w\circ\Lambda)\right)_{\chi_\Lambda}.\]
The claim then follows from Lemmata \ref{IplusAnn} and \ref{induct} together with Duflo's result. 

Now we consider a singular strongly typical character. Assume that the singular weight $\lambda$ is $W$-maximal in its orbit. As in the proof of Lemma \ref{typeIorbitprop} we can take $\nu$ integral dominant such that $\Lambda=\lambda+\nu$ is strongly typical and dominant. The set of Verma modules $\{M(w\lambda)\,|\,w\in W\}$ is then equal to $\{(M(w\Lambda)\otimes L(\nu)^\ast)_{\chi_\lambda}\,|\,w\in W\}$. The result therefore follows from the regular case and Lemmata \ref{IplusAnn} and \ref{Anntens}.
\end{proof}

\begin{theorem}
\label{Vermaorbit}
Let $\fg$ be a classical Lie superalgebra in the list \eqref{list} with arbitrary Borel subalgebra. The equality 
\[\Ann_{ U(\fg)}M(\mu)= \Ann_{ U(\fg)}M(\lambda)\]
implies there is $w\in W$ such that $\mu=w\cdot\lambda$.
\end{theorem}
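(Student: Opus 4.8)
The plan is to extract from the ideal $\Ann_{U(\fg)}M(\lambda)$ enough invariants to recover the $\rho$-shifted $W$-orbit of $\lambda$; since both the hypothesis and the desired conclusion are symmetric in $\lambda,\mu$ and depend only on these orbits, this suffices. The first invariant is the central character: intersecting $\Ann_{U(\fg)}M(\lambda)=\Ann_{U(\fg)}M(\mu)$ with $\cZ(\fg)$ and using that $M(\lambda)$ admits the central character $\chi_\lambda$ yields $\chi_\lambda=\chi_\mu$; for $\fg=\mathfrak{p}(n),\widetilde{\mathfrak{p}}(n)$ one works instead with the centre of $U(\fg)/J$, $J$ the Jacobson radical, exactly as in the proof of Theorem~\ref{typeItyp}, which still separates orbits once one of the weights is typical. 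If $\lambda$ (equivalently $\mu$) is typical this already closes the argument, since by the Harish-Chandra isomorphism (Section~13.1 in \cite{MR2906817} for basic classical algebras, \cite{MR0706205} for $Q$-type, and Lemma~5.1 in \cite{MR1943937} together with the radical argument for the strange algebras) the only weights sharing the central character of a typical weight lie in its $\rho$-shifted $W$-orbit. So I would reduce to the case where $\lambda$ and $\mu$ are atypical.

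The second invariant comes from restriction to $\fg_{\oa}$. We have $\Ann_{U(\fg)}M(\lambda)\cap U(\fg_{\oa})=\Ann_{U(\fg_{\oa})}\Res M(\lambda)$, and by the PBW argument recalled in the proof of Lemma~\ref{charVerma} the $\fg_{\oa}$-module $\Res M(\lambda)$ has a filtration whose subquotients are the Verma modules $M_{\oa}(\lambda+\gamma)$, $\gamma\in\Gamma=\{\sum_{\alpha\in I}\alpha\mid I\subset\Delta_{\ob}^{-}\}$ (with a constant multiplicity in the $Q$-type case). By Duflo's theorem \cite{MR0399194} each $\Ann_{U(\fg_{\oa})}M_{\oa}(\nu)$ equals $U(\fg_{\oa})\mm^{\oa}_{\chi^{\oa}_\nu}$, a prime ideal (it is the annihilator of the simple module with dominant highest weight linked to $\nu$), and for distinct $\fg_{\oa}$-central characters these ideals are pairwise comaximal because $1\in\cZ(\fg_{\oa})$. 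Combining the sandwich $\prod_i\Ann_{U(\fg_{\oa})}M_{\oa}(\lambda+\gamma_i)\subseteq\Ann_{U(\fg_{\oa})}\Res M(\lambda)\subseteq\bigcap_i\Ann_{U(\fg_{\oa})}M_{\oa}(\lambda+\gamma_i)$ with the Chinese remainder theorem shows that the minimal primes over $\Ann_{U(\fg_{\oa})}\Res M(\lambda)$ are exactly the ideals $U(\fg_{\oa})\mm^{\oa}_{\chi^{\oa}_{\lambda+\gamma}}$, $\gamma\in\Gamma$. Hence the finite set $S_\lambda:=\{W\circ(\lambda+\gamma)\mid\gamma\in\Gamma\}$ of $\rho_{\oa}$-shifted $W$-orbits is an invariant of $\Ann_{U(\fg)}M(\lambda)$, and so $S_\lambda=S_\mu$.

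The final, and I expect hardest, step is to deduce $\mu\in W\cdot\lambda$ from $\chi_\lambda=\chi_\mu$ together with $S_\lambda=S_\mu$. Since $W\circ\mu\in S_\mu=S_\lambda$, there are $w\in W$ and $\gamma\in\Gamma$ with $w(\mu+\rho_{\oa})=\lambda+\gamma+\rho_{\oa}$; using $\rho_{\oa}=\rho+\rho_{\ob}$ and the fact that $W$ permutes the odd roots (so $w(2\rho_{\ob})-2\rho_{\ob}\in\mZ\Delta_{\ob}$), this reads $w(\mu+\rho)=(\lambda+\rho)+\delta$ with $\delta\in\mZ\Delta_{\ob}$. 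Replacing $\mu$ by $w\cdot\mu$, which affects neither $W\cdot\mu$ nor $\chi_\mu=\chi_\lambda$, I may assume $\mu+\rho-(\lambda+\rho)\in\mZ\Delta_{\ob}$. I would then show, by a case-by-case inspection of the Harish-Chandra isomorphism for the families in \eqref{list} (its description of $\chi_\lambda=\chi_\mu$ via ``reduced'' highest weights), that an atypical $\mu$ with $\chi_\mu=\chi_\lambda$, with $\mu+\rho-(\lambda+\rho)\in\mZ\Delta_{\ob}$, and with $S_\mu=S_\lambda$, must satisfy $\mu+\rho\in W(\lambda+\rho)$; the crux is that $S_\mu=S_\lambda$ involves the actual weights $\lambda+\gamma$, not merely their classes modulo a lattice, and this rigidity excludes the non-trivial translations along atypical roots that $\chi_\mu=\chi_\lambda$ by itself would permit. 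For the basic classical and $Q$-type algebras this inspection can be avoided by reducing the atypical case to the typical one: picking $\nu\in\cP^{+}$ with $\lambda+\nu$ and $\mu+\nu$ typical and regular, Lemmata~\ref{Anntens} and \ref{IplusAnn} carry the equality of annihilators into the typical block attached to $\chi_{\lambda+\nu}$, where all Verma modules share one annihilator by the typical-block equivalences (Gorelik \cite{MR1862799} for basic classical, and the preceding proposition for $Q$-type), which returns us to Duflo's theorem for $\fg_{\oa}$.
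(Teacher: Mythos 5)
Your proposal takes a genuinely different route from the paper. The paper argues directly by contradiction: assume $\hat\mu\neq\hat\lambda$, WLOG $\hat\mu\not\geq\hat\lambda$, pick $\nu\in\cP^+$ so that $\Lambda=\hat\lambda+\nu$ is strongly typical, and then observe that $\left(M(\lambda)\otimes L(\nu)\right)_{\chi_\Lambda}$ is nonzero (it carries a Verma flag) while $\left(M(\mu)\otimes L(\nu)\right)_{\chi_\Lambda}$ vanishes precisely because of the ordering hypothesis; Lemmata~\ref{IplusAnn} and \ref{Anntens} then turn the equality $\Ann M(\lambda)=\Ann M(\mu)$ into a contradiction. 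No restriction to $\fg_{\oa}$ and no analysis of the prime spectrum is needed.

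Your first two steps are correct and, in fact, an attractive observation that the paper does not use: the $\fg$-central character and the collection $S_\lambda$ of $\rho_{\oa}$-shifted $W$-orbits of $\{\lambda+\gamma\mid\gamma\in\Gamma\}$, recovered through Duflo's theorem and the sandwich of the product/intersection of the minimal prime factors of $\Ann_{U(\fg_{\oa})}\Res M(\lambda)$, really are invariants of $\Ann_{U(\fg)}M(\lambda)$. The gap is in the closing step. That $\chi_\lambda=\chi_\mu$ together with $S_\lambda=S_\mu$ forces $W\cdot\lambda=W\cdot\mu$ is not established: the promised case-by-case inspection is not carried out, and it is not a priori clear that the finite set $S_\lambda$ (which only records $W\circ$-orbits, not actual weights) suffices to pin down the atypical translations that sharing $\chi_\lambda$ permits. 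The fallback translation argument offered for basic classical and $Q$-type is too coarse: choosing $\nu\in\cP^+$ with both $\lambda+\nu$ and $\mu+\nu$ typical and regular does not prevent $\left(M(\mu)\otimes L(\nu)\right)_{\chi_{\lambda+\nu}}$ from being nonzero, since some Verma subquotient $M(\mu+\eta)$ of the tensor product can land in the $\chi_{\lambda+\nu}$-block even when $\mu\notin W\cdot\lambda$. What makes the paper's translation close is exactly the missing ingredient: the WLOG assumption $\hat\mu\not\geq\hat\lambda$ and the choice $\Lambda=\hat\lambda+\nu$, which by a support argument forces the $\chi_\Lambda$-component of $M(\mu)\otimes L(\nu)$ to vanish. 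Without that, your third step does not go through.
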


\begin{proof}
We use central characters in this proof, which therefore does not include $\fg$ equal to $\mathfrak{p}(n)$ or $\widetilde{\mathfrak{p}}(n)$. However, as proven in Theorem 3.4 or Theorem 3.6 in \cite{MR1943937}, the Jacobson radical $J$ of $U(\fg)$ is included in the annihilator ideal of every Verma module. Thus an inclusion between annihilator ideals of Verma modules in $U(\fg)$ or in $\overline{U}:=U(\fg)/J$ is equivalent. The remainder of the proof can therefore be applied to those Lie superalgebras by replacing $U(\fg)$ by $\overline{U}$.

Assume $\Ann_{ U(\fg)}M(\mu)= \Ann_{ U(\fg)}M(\lambda)$. Denote by $\hat{\mu}$ and $\hat{\lambda}$ the $W$-maximal elements in the orbits (of the $\rho$-shifted action) of $\mu$ and $\lambda$ respectively. If $\hat{\lambda}\not=\hat{\mu}$, without loss of generality we assume that $\hat{\mu} \not\ge \hat{\lambda}$. Then there is an integral dominant weight $\nu$ such that $\Lambda=\hat{\lambda}+\nu$ is strongly typical, see the proof of Lemma \ref{typeIorbitprop}. Since $\left(M(\lambda)\otimes L(\nu)\right)_{\chi_\Lambda}$ has a non-zero filtration by Verma modules while $\left(M(\mu)\otimes L(\nu)\right)_{\chi_\Lambda}$ is trivial, the combination of Lemma \ref{IplusAnn} and Lemma \ref{Anntens} leads to a contradiction. This implies that $\hat{\mu}=\hat{\lambda}$, so $\mu$ and $\lambda$ must be inside the same orbit. 
\end{proof}

\begin{remark}{\rm
It is a remarkable feature that inclusions/equalities between primitive ideals for classical Lie superalgebras follow the structure of the deformed star orbit, while equalities between annihilator ideals of Verma modules follow the structure of the usual $\rho$-shifted Weyl group orbit.}
\end{remark}

\begin{theorem}
\label{AnnVerma}
Let $\fg$ be a classical Lie superalgebra of type I with distinguished system of positive roots. For any $\lambda\in\fh^\ast$ and  $w\in W$, we have the equality
\[\Ann_{ U(\fg)}M(w\cdot\lambda)=\Ann_{ U(\fg)}M(\lambda).\]
Furthermore if $\fg$ is equal to $\mathfrak{sl}(m|n)$ with $m\not=n$ or $\mathfrak{osp}(2|2n)$, the inclusion
\[\Ann_{ U(\fg)}M(\mu)\subseteq \Ann_{ U(\fg)}M(\lambda)\]
implies there is a $w\in W$ such that $\mu=w\cdot\lambda$.
\end{theorem}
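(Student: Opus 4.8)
The plan is to handle the two assertions by quite different means: the equality of annihilators of Verma modules in an orbit by parabolic induction plus Duflo's theorem, and the ``injectivity up to orbit'' statement by translating to a typical block and exploiting the grading element $H$.

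For the first assertion I would use that, for the distinguished system of a type I algebra, $\fb=\fb_{\oa}\oplus\fg_1$, so that $M(\lambda)\cong\ind^{\fg}_{\fp}M_{\oa}(\lambda)$, where $\fp=\fg_0\oplus\fg_1$ and $M_{\oa}(\lambda)$ is viewed as a $\fp$-module with $\fg_1$ acting trivially --- legitimate because $\fg_1$ is an abelian ideal of $\fp$, verified exactly as for Kac modules. Since $U(\fp)/U(\fp)\fg_1\cong U(\fg_0)$, the ideal $\Ann_{U(\fp)}M_{\oa}(\lambda)$ is the preimage of $\Ann_{U(\fg_0)}M_{\oa}(\lambda)$, so Lemma~\ref{induct} shows that $\Ann_{U(\fg)}M(\lambda)$ depends only on $\Ann_{U(\fg_0)}M_{\oa}(\lambda)$. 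By Duflo's theorem \cite{MR0399194} the latter equals $U(\fg_0)\mathfrak{m}^{\oa}_{\chi^{\oa}_\lambda}$, hence depends only on the orbit $W\circ\lambda$; and since $\rho_{\ob}=\tfrac12\sum_{\alpha\in\Delta_1^+}\alpha$ is $W$-invariant for the distinguished system (the weight set $\Delta_1$ of the $\fg_0$-module $\fg_1$ being $W$-stable), we have $w\cdot\lambda=w\circ\lambda$, which gives the claim.

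For the second assertion, let $\fg$ be $\mathfrak{sl}(m|n)$ with $m\ne n$ or $\mathfrak{osp}(2|2n)$ and assume $\Ann_{U(\fg)}M(\mu)\subseteq\Ann_{U(\fg)}M(\lambda)$. Using the first assertion together with Theorem~\ref{Vermaorbit}, I would replace $\mu,\lambda$ by the $W$-maximal elements of their orbits, reducing to the case $\mu,\lambda$ dominant and aiming to prove $\mu=\lambda$. First I would extract an ``$H$-invariant'': the PBW isomorphism $\Res M(\nu)\cong\Lambda\fg_{-1}\otimes M_{\oa}(\nu)$ shows $H$ acts on $M(\nu)$ with eigenvalues precisely $\nu(H),\nu(H)-1,\dots,\nu(H)-\dim\fg_1$, each occurring, so $\mC[H]\cap\Ann_{U(\fg)}M(\nu)$ is generated by $\prod_{i=0}^{\dim\fg_1}(H-\nu(H)+i)$; comparing these monic polynomials of equal degree, the hypothesis forces $\mu(H)=\lambda(H)$. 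Now suppose $\mu\ne\lambda$. Because $\rho_{\ob}$ is orthogonal to all even roots while pairing non-trivially with every odd root (this is where $m\ne n$ enters for $\mathfrak{sl}(m|n)$), I can pick $t$ with $t\rho_{\ob}\in\cP^+$ for which $\lambda_t:=\lambda+t\rho_{\ob}$ and $\mu_t:=\mu+t\rho_{\ob}$ are typical, and set $E:=L(t\rho_{\ob})$, a finite dimensional module equal to $\Lambda\fg_{-1}\otimes\mC_{t\rho_{\ob}}$ (highest weight $t\rho_{\ob}$ of multiplicity one, all other weights of the form $t\rho_{\ob}-\sigma$ with $\sigma$ a nonzero sum of distinct positive odd roots). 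Using the standard Verma flag of $M(\nu)\otimes E$, the fact that a typical central character selects a single $W$-orbit, and evaluation at the $W$-fixed element $H$ with $\mu(H)=\lambda(H)$, I would show that only the summand $\tau=t\rho_{\ob}$ can survive projection to $\chi_{\lambda_t}$; this gives $(M(\lambda)\otimes E)_{\chi_{\lambda_t}}\cong M(\lambda_t)\ne0$ while $(M(\mu)\otimes E)_{\chi_{\lambda_t}}=0$, the latter because $\mu_t+\rho$ and $\lambda_t+\rho$ are distinct dominant weights. Lemma~\ref{Anntens} propagates the inclusion to $\Ann_{U(\fg)}(M(\mu)\otimes E)\subseteq\Ann_{U(\fg)}(M(\lambda)\otimes E)$, and then Corollary~\ref{corIAnn} forces $(M(\lambda)\otimes E)_{\chi_{\lambda_t}}=0$, a contradiction; hence $\mu=\lambda$, so the original $\mu,\lambda$ lie in one $W$-orbit.

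The main obstacle I anticipate is the bookkeeping in the last step: verifying that, among all weights of $E$, only $t\rho_{\ob}$ contributes to the $\chi_{\lambda_t}$-component --- this rests on the $H$-grading of $\Lambda\fg_{-1}$ being concentrated in non-positive degrees with the degree-$0$ part one-dimensional --- together with checking the existence of a suitable $t$ (that a multiple of $\rho_{\ob}$ is integral, and that only finitely many $t$ are excluded by typicality), including the parallel verification for $\mathfrak{osp}(2|2n)$, where every weight is at most singly atypical and the argument simplifies. A secondary point is that translation by $\rho_{\ob}$ preserves even-singularities, so $\mu,\lambda$ may remain singular; this is harmless, since every statement used about ``the dominant weight in an orbit'' refers to the unique dominant representative, which exists whether or not the orbit is regular.
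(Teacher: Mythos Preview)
Your proof of the first assertion is essentially the paper's: parabolic induction from $\fp=\fg_0\oplus\fg_1$, Lemma~\ref{induct}, and Duflo's theorem, plus the observation $w\cdot\lambda=w\circ\lambda$.

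For the second assertion your argument is correct but takes a longer route than the paper. The paper works directly with restriction: since each negative odd root has $H$-value $-1$, the Verma filtration of $\Res M(\nu)$ has pieces $M_{\oa}(\nu+\gamma)$ whose $H$-values are $\nu(H),\nu(H)-1,\dots$, so $(\Res M(\nu))_{\chi^0_\nu}=M_{\oa}(\nu)$, and if $\lambda,\mu$ lie in different $W$-orbits then the $\chi^0_\lambda$-component of $\Res M(\mu)$ (or the $\chi^0_\mu$-component of $\Res M(\lambda)$) vanishes; Lemma~\ref{IplusAnn} applied to $\fg_0$ finishes. In other words, the $H$-grading argument you set up is already decisive at the level of $\fg_0$-central characters, without any translation to a typical $\fg$-block. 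Your detour through tensoring with $E=L(t\rho_{\ob})$ and projecting to $\chi_{\lambda_t}$ rederives the same conclusion via Lemma~\ref{Anntens} and Corollary~\ref{corIAnn}; it works, but it introduces extra bookkeeping (existence of a good $t$, simplicity of $E$ for Lemma~\ref{Anntens}, the weight analysis of $\Lambda\fg_{-1}$) that the restriction argument avoids. A minor point: your identification $E=\Lambda\fg_{-1}\otimes\mC_{t\rho_{\ob}}$ is the Kac module, which equals $L(t\rho_{\ob})$ only when $t\rho_{\ob}$ is typical; either add this to your finitely many excluded $t$, or note that you only use that the weights of $E$ lie among those of the Kac module, which holds regardless.
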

\begin{proof}
Lemma \ref{induct} and $M(\mu)\cong \ind^{U(\fg)}_{U(\fg_0+\fg_1)}M_0(\mu)$ imply that \[\Ann_{ U(\fg)}M(w\cdot\lambda)=\Ann_{ U(\fg)}\left( U(\fg)/( U(\fg)\mathfrak{m}^0_{\chi_\lambda^0} U(\fg_1))\right),\]
where we used Duflo's result, which proves the theorem.

If $\fg$ is equal to $\mathfrak{sl}(m|n)$ with $m\not=n$ or $\mathfrak{osp}(2|2n)$ it follows immediately from the structure of the odd roots that $(\Res M(\mu))_{\chi^0_\mu}=M_0(\mu)$. Similarly if $\lambda$ and $\mu$ are in different orbits we have either $(\Res M(\mu))_{\chi_\lambda}=0$ or $(\Res M(\lambda))_{\chi_\mu}=0$. The second claim then follows from Lemma \ref{IplusAnn}.
\end{proof}

\subsection*{Acknowledgment}

KC is a Postdoctoral Fellow of the Research Foundation - Flanders (FWO).
VM is partially supported by the Swedish Research Council.
We thank Ian Musson and Jens Carsten Jantzen for useful discussions.

\end{document}